\def\squiggly{\bgroup \markoverwith{\textcolor{red}{\lower3.5\p@\hbox{\sixly \char58}}}\ULon}
\newtheorem{theorem}[subsection]{Theorem}
\newtheorem{proposition}[subsection]{Proposition}
\newtheorem{lemma}[subsection]{Lemma}
\newtheorem{definition}[subsection]{Definition}
\newtheorem{claim}[subsection]{Claim}
\newtheorem{remark}[subsection]{Remark}
\def\loccitt{\emph{loc. cit.}}
\def\loccit{\emph{loc. cit. }}
\def\fsl{{\mathfrak{sl}}}
\def\fgl{{\mathfrak{gl}}}
\def\hgl{{\widehat{\fgl}}}
\def\fC{\mathfrak{C}}
\def\fS{\mathfrak{S}}
\def\fT{\mathfrak{T}}
\def\BN{{\mathbb{N}}}
\def\BF{{\mathbb{F}}}
\def\BQ{{\mathbb{Q}}}
\def\BZ{{\mathbb{Z}}}
\def\CA{{\mathcal{A}}}
\def\CB{{\mathcal{B}}}
\def\DD{{\mathcal{D}}}
\def\CE{{\mathcal{E}}}
\def\CV{{\mathcal{V}}}
\def\ph{\varphi}
\def\e{\varepsilon}
\def\to{\tilde{o}}
\def\tp{\tilde{p}}
\def\vs{\varsigma}
\def\tP{\tilde{P}}
\def\and{\textrm{ }\&\textrm{ }}
\def\sym{\textrm{Sym}}
\def\tp{\widetilde{\ph}}
\def\CC{{{\mathcal{C}}}}
\def\tp{{\tilde{p}}}
\def\sym{\textrm{Sym}}
\def\nn{{{\BN}}^n}
\def\zz{{{\BZ}}^n}
\def\su{{U_q(\dot{\fsl}_n)}}
\def\sug{{U_q^\geq(\dot{\fsl}_n)}}
\def\sul{{U_q^\leq(\dot{\fsl}_n)}}
\def\sup{{U_q^+(\dot{\fsl}_n)}}
\def\summ{{U_q^-(\dot{\fsl}_n)}}
\def\supm{{U_q^\pm(\dot{\fsl}_n)}}
\def\uui{{U_q(\dot{\fgl}_1)}}
\def\uuig{{U_q^\geq(\dot{\fgl}_1)}}
\def\uuil{{U_q^\leq(\dot{\fgl}_1)}}
\def\uuip{{U_q^+(\dot{\fgl}_1)}}
\def\uuim{{U_q^-(\dot{\fgl}_1)}}
\def\uuipm{{U_q^\pm(\dot{\fgl}_1)}}
\def\uu{{U_q(\dot{\fgl}_n)}}
\def\uuo{{U_q^0(\dot{\fgl}_n)}}
\def\uup{{U_q^+(\dot{\fgl}_n)}}
\def\uum{{U_q^-(\dot{\fgl}_n)}}
\def\uul{{U_q^\leq(\dot{\fgl}_n)}}
\def\uupm{{U_q^\pm(\dot{\fgl}_n)}}
\def\uump{{U_q^\mp(\dot{\fgl}_n)}}
\def\uug{{U_q^\geq(\dot{\fgl}_n)}}
\def\UU{{U_{q,\oq}(\ddot{\fgl}_n)}}
\def\UUpm{{U^\pm_{q, \oq}(\ddot{\fgl}_n)}}
\def\A{{\CA}}
\def\B{\CB}
\def\bd{{\mathbf{d}}}
\def\bk{{\mathbf{k}}}
\def\bl{{\mathbf{l}}}
\def\bs{{\boldsymbol{\vs}}}
\def\la{{\lambda}}
\def\bde{{\boldsymbol{\delta}}}
\def\bla{{\boldsymbol{\la}}}
\def\bmu{{\boldsymbol{\mu}}}
\def\of{{\overline{f}}}
\def\oq{{\overline{q}}}
\def\bara{\bar{a}}
\def\barc{\bar{c}}
\def\bari{\bar{i}}
\def\be{\bar{e}}
\def\bf{\bar{f}}
\def\bA{\bar{A}}
\def\bB{\bar{B}}
\def\bE{\bar{E}}
\def\bF{\bar{F}}
\def\bG{\bar{G}}
\def\bY{\bar{Y}}
\def\bZ{\bar{Z}}
\def\hi{r}
\def\zzz{\frac {\BZ^2}{(n,n)\BZ}}
\def\lhs{\text{LHS}}
\def\rhs{\text{RHS}}
\def\hdeg{\text{hdeg }}
\def\vdeg{\text{vdeg }}
\begin{document}

\title[The PBW basis of $\UU$]{\Large{\textbf{The PBW basis of $\UU$}}}

\author[Andrei Negu\cb t]{Andrei Negu\cb t}
\address{MIT, Department of Mathematics, Cambridge, MA, USA}
\address{Simion Stoilow Institute of Mathematics, Bucharest, Romania}
\email{andrei.negut@gmail.com}

\maketitle

\begin{abstract} We consider the PBW basis of the quantum toroidal algebra of $\fgl_n$, which was developed in \cite{Tor}, and prove commutation relations between its generators akin to the ones studied in \cite{BS} for $n=1$. This gives rise to a new presentation of the quantum toroidal algebra of type $A$.

\medskip

\noindent \textbf{Keywords:} quantum toroidal algebra, PBW basis. 

\end{abstract}

\section{Introduction}

Let us fix $n>1$, and consider the type $A$ quantum affine algebra:
$$
\uu =\su \otimes \uui 
$$
(dots will replace hats in the present paper) whose generators are of two kinds: \\

\begin{itemize}
	
\item the simple root generators $\{x_i^\pm\}_{i \in \BZ/n\BZ}$ of $\su$ \\

\item the imaginary root generators $\{p_{\pm k}\}_{k \in \BN}$ of $\uui$ \\

\end{itemize}

\noindent  Our main object of study is the type $A$ quantum toroidal algebra, which was shown in \cite{Tor} to factor, \underline{as a vector space}, in terms of slope subalgebras:
\begin{equation}
\label{eqn:quant intro}
\UU \cong \bigotimes^\rightarrow_{\mu \in \BQ\sqcup \infty} \CB_\mu 
\end{equation}
where if $\frac ba$ is a reduced fraction with $b \in \BZ$ and $a \in \BN \sqcup 0$, we have:
\begin{equation}
\label{eqn:slope intro}
\CB_{\frac ba}\stackrel{\sim}\longrightarrow U_{q}(\dot{\fgl}_{\frac ng})^{\otimes g}
\end{equation}
where $g = \gcd(n,a)$. The isomorphisms \eqref{eqn:slope intro} imply that the simple and imaginary generators of quantum affine algebras give rise to the following elements of $\UU$:  \\

\begin{itemize}
	
\item simple root generators $P_{[i;j)}^{(k)}$ defined for all: 
\begin{equation}
\label{eqn:indexing 1}
(i, j) \in \zzz, \ i \not \equiv j \text{ mod }n, \ k \in \BZ \ \text{ s.t. } \gcd(k,j-i)=1
\end{equation}

\item imaginary root generators $P_{l \bde, \hi}^{(k')}$ defined for all:
\begin{equation}
\label{eqn:indexing 2}
\hi \in \BZ/g\BZ, \ k' \in \BZ, \ l \in \BZ 
\end{equation} 

\end{itemize}

\noindent We will often extend the notation \eqref{eqn:indexing 1} by setting:
\begin{equation}
\label{eqn:short}
P_{[i;i+nl)}^{(k)} := P_{l \bde, i}^{(k)}
\end{equation}
whenever $\gcd(k,nl) = 1$. \\

\noindent The elements $P_{[i;j)}^{(k)}$ and $P_{l \bde, \hi}^{(k')}$ lie in $\B_\mu \subset \UU$, where the slope $\mu$ is defined as:
$$
\mu = \frac k{j-i} \text{ for \eqref{eqn:indexing 1}} \quad \text{and} \quad \mu = \frac {k'}{nl} \text{ for \eqref{eqn:indexing 2}}
$$
and the subscript of the $P$'s refers to their grading as elements of $\UU$, where:
\begin{equation}
\label{eqn:degrees}
[i;j) = \begin{cases} \bs^i + ... + \bs^{j-1} &\text{if } i\leq j \\ -\bs^j - ... - \bs^{i-1} &\text{if } i > j \end{cases}, \ \bs^i = (\underbrace{0,...,0,1,0,...,0}_{1\text{ on }i-\text{th position}}), \ \bde = (1,...,1)
\end{equation}
lie in $\zz$. See Subsection \ref{sub:summarize} for the definition of the root generators introduced above, as well as for the precise combinatorics behind their indexing sets. As shown in \cite{Tor}, ordered products of the root generators described above give rise to a PBW basis of the quantum toroidal algebra. The main goal of the present paper is to compute commutation relations between the aforementioned root generators, which will allow us to give a new presentation of the quantum toroidal algebra: \\

\begin{theorem}
\label{thm:intro main}

The map $P_{[i;j)}^{(k)} \mapsto p_{[i;j)}^{(k)}$, $P_{l \bde, \hi}^{(k)} \mapsto p_{l \bde, \hi}^{(k)}$ yields an isomorphism:
\begin{equation}
\label{eqn:generators intro}
\UU \stackrel{\sim}\longrightarrow \boxed{ \CC := \Big \langle \CE_{\mu} \Big \rangle_{\mu \in \BQ \sqcup \infty} \Big/\text{relations \eqref{eqn:rel 1 intro}, \eqref{eqn:rel 2 intro}}}
\end{equation}
where for any coprime integers $a,b$ with $g = \gcd(n,a)$, we set:
\begin{equation}
\label{eqn:abstract subalgebra}
\CE_{\frac ba} := U_{q}(\dot{\fgl}_{\frac ng})^{\otimes g} 
\end{equation}
and we label the simple and imaginary root generators by:
\begin{align*}
&U_{q}(\dot{\fgl}_{\frac ng})^{\otimes g} \ni \underbrace{1 \otimes ... \otimes 1 \otimes x_i^\pm \otimes 1 \otimes ... \otimes 1}_{x_i^\pm \text{ on }r-\text{th position}} \quad \leadsto \quad p_{\pm [ai+r;a(i+1)+r)}^{(\pm b)} \in \CE_{\frac ba} \\
&U_{q}(\dot{\fgl}_{\frac ng})^{\otimes g} \ni \underbrace{1 \otimes ... \otimes 1 \otimes p_{\pm k} \otimes 1 \otimes ... \otimes 1}_{p_{\pm k} \text{ on }\hi-\text{th position}} \quad \leadsto \quad p_{\pm 
\frac {ak\bde}g, \hi}^{(\pm \frac {bkn}g)} \ \quad \qquad \in \CE_{\frac ba}
\end{align*}
The relations among the various $p_{[i;j)}^{(k)}, p_{l \bde, \hi}^{(k')} \in \CC$ are, for all indices, as follows:
\begin{equation}
\label{eqn:rel 1 intro}
\Big[ p_{[i;j)}^{(k)}, p_{l\bde, \hi}^{(k')} \Big] = x_{[i;j), l\bde,r}^{(k),(k')}
\end{equation}
if $\left|\det \begin{pmatrix} k & k' \\ j-i & nl \end{pmatrix} \right| = \gcd(k',nl)$, and:
\begin{equation}
\label{eqn:rel 2 intro}
p_{[i;j)}^{(k)} p_{[i';j')}^{(k')} q^{\circ} - p_{[i';j')}^{(k')} p_{[i;j)}^{(k)} q^{\bullet} = y_{[i;j),[i';j')}^{(k),(k')}
\end{equation} 
if $\det \begin{pmatrix} k & k' \\ j-i & j'-i' \end{pmatrix} = \gcd(k+k',j - i + j' - i')$. Above, the elements: 
$$
x_{[i;j), l\bde,r}^{(k),(k')} \in \CE_{\frac {k+k'}{j-i+nl}} \qquad \text{and} \qquad y_{[i;j),[i';j')}^{(k),(k')} \in \CE_{\frac {k+k'}{j-i+j'-i'}}
$$
as well as the numbers $\circ , \bullet \in \{-1,0,1\}$, will be defined explicitly in Theorem \ref{thm:main}. \\

\end{theorem}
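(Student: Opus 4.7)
The plan is to build an inverse map $\Psi \colon \CC \to \UU$ that sends each $p$ to the corresponding $P$, verify it is well-defined by checking the imposed relations hold inside $\UU$, and deduce bijectivity using the PBW decomposition \eqref{eqn:quant intro}. The inverse of $\Psi$ is then the desired isomorphism.

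\textbf{Step 1 (well-definedness of $\Psi$).} On each generating subalgebra $\CE_\mu \subset \CC$, define $\Psi$ via the identification \eqref{eqn:slope intro} composed with $\B_\mu \hookrightarrow \UU$, and extend multiplicatively. For $\Psi$ to descend from the free product of the $\CE_\mu$'s to $\CC$, we must check that the $P$'s satisfy \eqref{eqn:rel 1 intro} and \eqref{eqn:rel 2 intro} in $\UU$. This is the content of Theorem \ref{thm:main} and the computational heart of the paper. I would attack it using the shuffle-algebra realization of the root generators from \cite{Tor}: each $P$ admits an explicit symmetric-function formula, and its commutator can be evaluated as a residue/convolution. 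The determinant/gcd hypothesis on $(k, j-i)$ and $(k', nl)$ (respectively $(k', j'-i')$) is precisely what forces the bidegree of the commutator to sit at a single intermediate slope $\mu''$ strictly between $\mu$ and $\mu'$, so that the RHS lands in a single $\CE_{\mu''}$ and the relation closes in the named generators. The integers $\circ, \bullet \in \{-1,0,1\}$ arise from the $q$-twists built into the shuffle product.

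\textbf{Step 2 (surjectivity).} By construction, the image of $\Psi$ contains each $\B_\mu$, and these subalgebras generate $\UU$ by \eqref{eqn:quant intro}. Thus $\Psi$ is onto.

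\textbf{Step 3 (injectivity via straightening).} Using the defining relations of $\CC$ — both the internal relations inside each $\CE_\mu$ and the commutation relations \eqref{eqn:rel 1 intro}-\eqref{eqn:rel 2 intro} — I would show that every element of $\CC$ lies in the span of ordered monomials $\prod_{\mu_1 < \mu_2 < \ldots} e_{\mu_i}$ with $e_{\mu_i} \in \CE_{\mu_i}$, matching the PBW form in \eqref{eqn:quant intro}. Given such a product out of order, \eqref{eqn:rel 1 intro} or \eqref{eqn:rel 2 intro} swaps adjacent factors of slopes $\mu > \mu'$ at the cost of a correction in $\CE_{\mu''}$; one then recurses on the new monomial. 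Termination is secured by the total bidegree in $\zz$, which is preserved by all relations and bounds the number of nonzero slope factors. Combined with Step 2 and the linear independence of such ordered monomials in $\UU$ (which is exactly what \eqref{eqn:quant intro} asserts), this forces $\Psi$ to be a linear isomorphism.

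\textbf{Main obstacle.} The hard part is Step 1: producing the explicit elements $x,y$ in the correct $\CE_{\mu''}$ together with the correct $q$-twists $\circ, \bullet$. This amounts to extending the $n=1$ calculation of \cite{BS} uniformly to all $n$, and the combinatorics arising from the indexing set \eqref{eqn:indexing 1}-\eqref{eqn:indexing 2} is where the bulk of the bookkeeping lies. The termination issue in Step 3 is secondary once the explicit form of the relations in Step 1 is in hand, since every correction lands in a slope strictly between the two being swapped and the total bidegree is bounded.
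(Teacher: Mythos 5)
Your overall architecture — check the relations in $\UU$ (via the shuffle realization), deduce surjectivity from the factorization \eqref{eqn:quant intro}, and prove injectivity by a PBW straightening argument — matches the paper's strategy, and Steps 1 and 2 are sound in spirit. But there is a genuine gap in Step 3.

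The relations \eqref{eqn:rel 1 intro} and \eqref{eqn:rel 2 intro} are \emph{not} available for arbitrary adjacent pairs of out-of-order generators: they are only imposed when the determinant/gcd hypothesis holds, which geometrically means the lattice triangle spanned by the two degree vectors contains no interior lattice points (equivalently, its doubled area equals a certain gcd). For a generic pair of primitive generators $x \in \CE^+_\lambda$, $y \in \CE^+_\mu$ with $\lambda > \mu$, this condition fails, and your proposed mechanism — ``swap adjacent factors of slopes $\mu > \mu'$ at the cost of a correction in $\CE_{\mu''}$; one then recurses'' — simply cannot be invoked. No imposed relation swaps those two factors. This is the content of Claim \ref{claim:want} in the paper, which is precisely the lemma your Step 3 is silently assuming. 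Its proof is nontrivial: one inducts on the doubled area $\# = kd' - k'd$ of the triangle; when an interior lattice point exists, one first rewrites the generator $x$ as a commutator (up to nonzero multiple and lower-order terms) of two ``closer'' generators $p^{(k_1)}_{[i_1;j_1)}$ and $\tp^{(k_2)}_{[i_2;j_2)}$ inside $\CE_\lambda$ itself — via the type-3 relation applied to a minimal sub-triangle $T'$ — and then uses the Leibniz rule plus the induction hypothesis. Crucially, verifying that this decomposition of $x$ has a nonzero coefficient requires passing through the shuffle side and computing the pairing against primitive elements of $\CB^-_\lambda$; this is an extra nondegeneracy input, not a formal consequence of the relations. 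Your ``termination by total bidegree'' heuristic does not see this: the degree is preserved throughout, so it cannot by itself be the decreasing quantity; the correct well-founded invariant is the triangle area, which decreases under the interior-lattice-point subdivision.

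In short: you have correctly identified the three-step skeleton, but the hardest step is not Step 1, contrary to your assessment. Step 1 is a finite (if laborious) shuffle computation; Step 3 requires a genuinely new reduction (minimal-triangle subdivision + nondegeneracy of the pairing) to handle pairs of generators to which the imposed relations do not directly apply, following the strategy of Burban--Schiffmann and Schiffmann in the $n=1$ case. Without that reduction, your straightening argument cannot get off the ground for a generic pair.
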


\noindent One can better understand the intuition behind Theorem \ref{thm:intro main} by associating vectors: \\

\begin{picture}(100,110)(-110,-27)

\put(0,0){\circle*{2}}\put(20,0){\circle*{2}}\put(40,0){\circle*{2}}\put(60,0){\circle*{2}}\put(80,0){\circle*{2}}\put(100,0){\circle*{2}}\put(120,0){\circle*{2}}\put(0,20){\circle*{2}}\put(20,20){\circle*{2}}\put(40,20){\circle*{2}}\put(60,20){\circle*{2}}\put(80,20){\circle*{2}}\put(100,20){\circle*{2}}\put(120,20){\circle*{2}}\put(0,40){\circle*{2}}\put(20,40){\circle*{2}}\put(40,40){\circle*{2}}\put(60,40){\circle*{2}}\put(80,40){\circle*{2}}\put(100,40){\circle*{2}}\put(120,40){\circle*{2}}\put(0,60){\circle*{2}}\put(20,60){\circle*{2}}\put(40,60){\circle*{2}}\put(60,60){\circle*{2}}\put(80,60){\circle*{2}}\put(100,60){\circle*{2}}\put(120,60){\circle*{2}}

\put(60,20){\vector(2,-1){40}}
\put(60,20){\vector(-1,1){40}}
\put(100,0){\line(-4,3){80}}

\put(42,13){\scriptsize{$(0,0)$}}
\put(85,-8){\scriptsize{$(j-i,k)$}}
\put(10,63){\scriptsize{$(nl,k')$}}

\end{picture}

\noindent $(j-i,k)$ and $(nl,k')$ to the generators $P_{[i;j)}^{(k)}$ and $P_{l\bde,r}^{(k')}$, respectively. Then \eqref{eqn:generators intro} states that generators whose associated vectors are parallel obey the relations in the algebras \eqref{eqn:slope intro}, while relations \eqref{eqn:rel 1 intro} and \eqref{eqn:rel 2 intro} state that generators whose associated vectors are ``close to each other" also obey a certain commutation relation. \\

\noindent Although we only consider $n>1$, the $n=1$ analogue of Theorem \ref{thm:intro main} would be precisely the main result of \cite{S}, where Schiffmann established the isomorphism between the Ding-Iohara-Miki algebra (the $n=1$ version of the quantum toroidal algebra) and the elliptic Hall algebra studied by Burban-Schiffmann in \cite{BS}. Indeed, the $n=1$ version of relations \eqref{eqn:rel 1 intro}, \eqref{eqn:rel 2 intro} were shown in \loccit to hold in the Hall algebra of the category $\fC_1$ of coherent sheaves over an elliptic curve over $\BF_q$. The dream would be to find an analogue $\fC_n$ of the aforementioned category, whose double Hall algebra is isomorphic to the quantum toroidal algebra, for all $n$. While we do not have a precise proposal for such a category, Theorem \ref{thm:intro main} gives hints as to what such a $\fC_n$ should be, by positing that the generators:
\begin{equation}
\label{eqn:new generators 2}
\left\{ P_{[i;j)}^{(k)},  P_{l\bde, \hi}^{(k')} \right\} \in \UU
\end{equation}
be related to the classes of indecomposable objects in $\fC_n$ and that relations \eqref{eqn:rel 1 intro}, \eqref{eqn:rel 2 intro} provide information on the extensions between these objects in $\fC_n$. \\

\noindent The structure of this paper is the following: \\

\begin{itemize}[leftmargin=*]
	
	\item in Section \ref{sec:quantum}, we review the quantum affine algebra $\uu$. \\
	
	\item in Section \ref{sec:shuf}, we review the shuffle algebra $\CA$ and its isomorphism with $\UU$. \\
	
	\item in Section \ref{sec:proof}, we properly state and prove Theorem \ref{thm:intro main}. \\
	
	\item in Section \ref{sec:final}, we state and prove an alternate version of Theorem \ref{thm:intro main}, which will yield a different presentation of $\UU$, that will be used in \cite{Tale}. \\ 
	
\item in Section \ref{sec:index}, we provide an index of notations. \\

\end{itemize}

\noindent I would like to thank Francesco Sala, Olivier Schiffmann and Alexander Tsymbaliuk for many wonderful discussions over the years. I gratefully acknowledge the support of NSF grants DMS--1600375, DMS--1760264 and DMS--1845034, as well as support from the Alfred P. Sloan Foundation. \\

\section{Quantum Algebras}
\label{sec:quantum}

\noindent The main purpose of the present Section is to review the quantum algebra $\uu$.  \\

\begin{itemize}[leftmargin=*]

\item in Subsection \ref{sub:drinfeldouble}, we review bialgebras, pairings and the Drinfeld double  \\

\item in Subsection \ref{sub:su}, we recall the definition of $\su$ as a bialgebra \\

\item in Subsection \ref{sub:heis}, we recall the definition of $\uui$ as a bialgebra; we also point out the non-degeneracy of the pairings \eqref{eqn:village} and \eqref{eqn:town} in Remark \ref{rem:non-degenerate} \\

\item in Subsection \ref{sub:uu}, we study $\uu = \su \otimes \uui$ and its generators $e_{\pm [i;j)}$ \\

\item in Subsection \ref{sub:pbw uu}, we discuss the PBW basis, the grading, and the classification of primitive elements of $\uu$ \\

\item in Subsection \ref{sub:primitive}, we connect the generators $x_i^\pm, p_{\pm k}$ of $\su \otimes \uui$ with the generators $e_{\pm [i;j)}$ of $\uu$ (see Definition \ref{def:correspondence}) \\

\item in Subsection \ref{sub:order n}, we define the order $n$ automorphism of $\uu$, and prove a Proposition that will be used later on \\

\item in Subsection \ref{sub:antipodes}, we consider elements $\be_{\pm [i;j)} \in \uu$, which are closely related to the antipodes of $e_{\pm [i;j)}$ \\

\end{itemize}

\subsection{}\label{sub:drinfeldouble}

All algebras $A$ considered in the present paper are bialgebras over a field $\BF$, meaning that they are endowed with a product and coproduct:
$$
A \otimes A \stackrel{*}\longrightarrow A \qquad \qquad A \stackrel{\Delta}\longrightarrow A \otimes A
$$
which are associative and coassociative, respectively. All our bialgebras will be endowed with a unit $1 : \BF \rightarrow A$ and a counit $\e : A \rightarrow \BF$. The most important property of the data above is compatibility between product and coproduct:
\begin{equation}
\label{eqn:compatibility}
\Delta(a * a') = \Delta(a) * \Delta(a')
\end{equation}
$\forall a,a' \in A$. We will often use Sweedler notation for the coproduct, namely:
\begin{equation}
\label{eqn:sweedler}
\Delta(a) = a_{1} \otimes a_{2}
\end{equation}
$\forall a \in A$, which implies the existence of a hidden summation sign in front of the tensor in the right-hand side (so the full notation would be $\Delta(a) = \sum_i a_{1,i} \otimes a_{2,i}$ with $i$ running over some indexing set). Then \eqref{eqn:compatibility} can be written as:
$$
(aa')_1 \otimes (aa')_2 = a_1a_1' \otimes a_2 a_2'
$$
Given bialgebras $A^+$ and $A^-$, a bialgebra pairing between them:
\begin{equation}
\label{eqn:bialgpair}
\langle \cdot, \cdot \rangle : A^+ \otimes A^- \rightarrow \BF
\end{equation}
is an $\BF$--linear pairing which satisfies the properties:
\begin{align}
&\left \langle a * a', b \right \rangle = \left \langle a\otimes a', \Delta^{\text{op}}(b) \right \rangle \label{eqn:bialg 1} \\
&\left \langle a,b * b' \right \rangle  = \left \langle \Delta(a), b \otimes b' \right \rangle \label{eqn:bialg 2}
\end{align}
for all $a,a' \in A^+$ and $b,b' \in A^-$ (where $\Delta^{\text{op}}$ denotes the opposite coproduct) and $\langle a, 1 \rangle = \e(a)$, $\langle 1, b \rangle = \e(b)$ for all $a$ and $b$. We will often abuse notation by writing:
$$
\langle b,a \rangle = \langle a,b \rangle 
$$
for all $a \in A^+$, $b \in A^-$. \\

\begin{definition}
\label{def:drinfeld}
	
(\cite{D1}) Given any two bialgebras $A^+$ and $A^-$ with a bialgebra pairing \eqref{eqn:bialgpair} between them, consider the vector space:
$$
A = A^+ \otimes A^-
$$ 
It can be made into a bialgebra by requiring that $A^+ \cong A^+ \otimes 1$ and $A^- \cong 1 \otimes A^-$ are sub-bialgebras of $A$, which freely generate $A$ subject to the relations:
\begin{equation}
\label{eqn:drinfeld}
a_1 b_1 \langle a_2,b_2 \rangle = \langle a_1,b_1 \rangle b_2 a_2
\end{equation}
$\forall a \in A^+, b\in A^-$, where in \eqref{eqn:drinfeld} we use Sweedler notation \eqref{eqn:sweedler} for $\Delta(a)$ and $\Delta(b)$. \\	

\end{definition}

\subsection{} 
\label{sub:su}

Throughout the present paper, Kronecker $\delta$ symbols are considered mod $n$, i.e. $\delta_j^i = 1$ if $i \equiv j$ mod $n$, and 0 otherwise. Consider the type $A$ quantum group: \footnote{Note that the algebra below is slightly larger than the usual type $A$ quantum group, as it contains $\psi_1,...,\psi_n$ and not just their ratios. We choose this definition for notational convenience.}
\begin{equation}
\label{eqn:special}
\su = \BQ(q) \Big \langle x_i^\pm, \psi_s^{\pm 1}, c^{\pm 1} \Big \rangle^{i \in \BZ/n\BZ}_{s \in \{1,...,n\}}
\end{equation}
modulo the fact that $c$ is central, as well as the following relations:
\begin{equation}
\label{eqn:special 0}
\psi_s \psi_{s'} = \psi_{s'} \psi_s
\end{equation}
\begin{equation}
\label{eqn:special 1}
\psi_s x^\pm_i  = q^{\pm (\delta_{s}^{i+1} - \delta^i_s)} x^\pm_i \psi_s 
\end{equation}
\begin{equation}
\label{eqn:special 2}
[x_i^\pm, x_j^\pm] = 0 \qquad \qquad \quad \text{if } j \notin \{i-1,i+1\}
\end{equation}
\begin{equation}
\label{eqn:special 3}
[x_i^\pm,[x_i^\pm, x_j^\pm]_q]_{q^{-1}} = 0 \quad \text{if } j \in \{i-1,i+1\}
\end{equation}
\begin{equation}
\label{eqn:special 4}
[x_i^+, x_j^-] =  \frac {\delta_i^j}{q-q^{-1}} \left(\frac {\psi_{i+1}}{\psi_i} - \frac {\psi_i}{\psi_{i+1}} \right)
\end{equation}
\footnote{As written, the $q$-Serre relation \eqref{eqn:special 3} assumes $n>2$. For $n=2$, one must replace it with a similar expression which is cubic in $x_i^\pm$.} for all $i,j \in \BZ/n\BZ$ and $s,s' \in \{1,...,n\}$, where we write:
\begin{equation}
\label{eqn:q comm}
[a,b]_q = ab - q ab
\end{equation}
We will extend the indexing set of the $\psi$'s to all integers by setting:
\begin{equation}
\label{eqn:quasi per}
\psi_{s+n} = c \psi_s
\end{equation}
$\forall s \in \BZ$. The counit given by $\e(x_i^\pm) = 0$, $\e(\psi_s) = 1$ and the coproduct given by:
\begin{align}
&\Delta(c) = c \otimes c, \ \Delta(\psi_s) = \psi_s \otimes \psi_s \label{eqn:cop special 1} \\
&\Delta(x_i^+) = \frac {\psi_{i+1}}{\psi_i} \otimes x_i^+ + x_i^+ \otimes 1 \label{eqn:cop special 2} \\
&\Delta(x_i^-) = 1 \otimes x_i^- + x_i^- \otimes \frac {\psi_i}{\psi_{i+1}} \label{eqn:cop special 3} 
\end{align}
make $\su$ into a bialgebra. It is easy to see that the ``half" subalgebras:
\begin{align}
&\sug = \BQ(q) \Big \langle x_i^+, \psi_s^{\pm 1}, c^{\pm 1} \Big \rangle^{i \in \BZ/n\BZ}_{s \in \{1,...,n\}} \subset \su \label{eqn:positive} \\
&\sul = \BQ(q) \Big \langle x_i^-, \psi_s^{\pm 1}, c^{\pm 1} \Big \rangle^{i \in \BZ/n\BZ}_{s \in \{1,...,n\}} \subset \su \label{eqn:negative}
\end{align}
are bialgebras. There is a bialgebra pairing:
\begin{equation}
\label{eqn:village}
\sug \otimes \sul \stackrel{\langle \cdot, \cdot \rangle}\longrightarrow \BQ(q)
\end{equation}
determined by properties \eqref{eqn:bialg 1}--\eqref{eqn:bialg 2}, the formulas:
\begin{equation}
\label{eqn:pairing village}
\Big \langle x_i^+, x_j^- \Big \rangle = \frac {\delta_j^i}{q^{-1}-q} \qquad \text{and} \qquad \langle \psi_s,\psi_{s'} \rangle = q^{-\delta_{s'}^s}
\end{equation}
(all other pairings between the generators are trivial). It is well-known that \eqref{eqn:special} is a Drinfeld double with respect to the pairing \eqref{eqn:village} (see Subsection 2.10 of \cite{Tor} for the precise statement, as in entails identifying the Cartan elements $\psi_s$ in the positive half \eqref{eqn:positive} with the same-named elements in the negative half \eqref{eqn:negative}). Let:
\begin{equation}
\label{eqn:notation su}
\supm \subset \su
\end{equation}
be the subalgebras generated by $\{x_i^\pm\}_{i \in \BZ/n\BZ}$. \\

\subsection{} \label{sub:heis} 

We also consider the $q$--deformed Heisenberg algebra:
\begin{equation}
\label{eqn:heisenberg}
\uui = \BQ(q) \Big \langle p_{\pm k}, c^{\pm 1} \Big \rangle_{k \in \BN}
\end{equation}
where $c$ is central and the $p_{\pm k}$ all commute, except for:
$$
[p_k, p_{-k}] = k \cdot \frac {c^k - c^{-k}}{q^k-q^{-k}} 
$$
The counit $\e(p_k) = 0$, $\e(c) = 1$ and the coproduct given by $\Delta(c) = c \otimes c$ and:
\begin{align}
&\Delta(p_k) = c^k \otimes p_k + p_k \otimes 1 \label{eqn:cop heis 1} \\
&\Delta(p_{-k}) = 1 \otimes p_{-k} + p_{-k} \otimes c^{-k} \label{eqn:cop heis 2} 
\end{align}
make $\uui$ into a bialgebra. As in the previous Subsection, it is easy to see that the Heisenberg algebra is the Drinfeld double of its two halves:
\begin{align}
&\uuig = \BQ(q) \Big \langle p_{k}, c^{\pm 1} \Big \rangle_{k \in \BN} \subset \uui \label{eqn:positive heis} \\
&\uuil = \BQ(q) \Big \langle p_{-k}, c^{\pm 1} \Big \rangle_{k \in \BN} \subset \uui \label{eqn:negative heis}
\end{align}
with respect to the bialgebra pairing:
\begin{equation}
\label{eqn:town}
\uuig \otimes \uuil \stackrel{\langle \cdot, \cdot \rangle}\longrightarrow \BQ(q)
\end{equation}
which is determined by properties \eqref{eqn:bialg 1}--\eqref{eqn:bialg 2} and the formula:
\begin{equation}
\label{eqn:pairing town}
\Big \langle p_k, p_{-k} \Big \rangle = \frac {k}{q^{-k}-q^k} 
\end{equation}
(all other pairings between the generators are trivial). We will write:
\begin{equation}
\label{eqn:notation heis}
\uuipm \subset \uui
\end{equation}
for the subalgebras generated by $\{p_{\pm k}\}_{k \in \BN}$. \\

\begin{remark}
\label{rem:non-degenerate}

The pairings \eqref{eqn:village} and \eqref{eqn:town} are non-degenerate: for the former this is proved in \cite{J}, while for the latter, non-degeneracy holds because the bases:
\begin{align*}
&\Big\{ p_{\lambda} = p_{\lambda_1} p_{\lambda_2} \dots \Big\}_{\lambda = (\lambda_1 \geq \lambda_2 \geq \dots)} \ \subset \ \uuip \\
&\Big \{p_{-\mu} = p_{-\mu_1} p_{-\mu_2}\dots  \Big\}_{\mu = (\mu_1 \geq \mu_2 \geq \dots)} \subset \uuim
\end{align*}
have the property that $\langle p_{\lambda}, p_{-\mu} \rangle = \delta_{\lambda}^{\mu} \cdot$ non-zero constant. This underscores a slight imprecision in our terminology: when we call the pairings \eqref{eqn:village} and \eqref{eqn:town} non-degenerate, what we are actually claiming (and using in the present paper) is the non-degeneracy of their restrictions to the $\pm$ subalgebras, namely:
\begin{align}
&\sup \otimes \summ \stackrel{\langle \cdot, \cdot \rangle}\longrightarrow \BQ(q) \label{eqn:restriction 1} \\
&\uuip \otimes \uuim \stackrel{\langle \cdot, \cdot \rangle}\longrightarrow \BQ(q) \label{eqn:restriction 2}
\end{align}
One can still make sense of the non-degeneracy of the pairings \eqref{eqn:village} and \eqref{eqn:town}, but this would require certain modifications, such as working over the ring of power series in $\log(q)$, replacing $\psi_s$ by $\log(\psi_s)$ in the definition of the quantum group, and adding more central elements. We will not need this extra layer of complexity. \\

\end{remark}

\subsection{} 
\label{sub:uu}

Let us consider the bialgebra:
\begin{equation}
\label{eqn:quantum as tensor}
\uu = \su \otimes \uui \Big/ (c \otimes 1 - 1 \otimes c)
\end{equation}
By combining the results of \cite{DF} and \cite{D2}, the algebra $\uu$ is generated by:
\begin{equation}
\label{eqn:collection of generators 1}
\Big \langle e_{\pm[i;j)}, \psi_s^{\pm 1}, c^{\pm 1} \Big \rangle^{s \in \{1,...,n\}}_{(i<j) \in \zzz}
\end{equation}
where the generators $e_{\pm [i;j)}$ satisfy quadratic relations (the famous $RTT = TTR$ relations of \cite{FRT, RS}, see \cite{Tor} for an overview), and their coproduct is given by:
\begin{align}
&\Delta(e_{[i;j)}) = \sum_{s=i}^j e_{[s;j)} \frac {\psi_s}{\psi_i} \otimes e_{[i;s)} \label{eqn:cop quant 1} \\
&\Delta(e_{-[i;j)}) = \sum_{s=i}^j e_{-[i;s)} \otimes e_{-[s;j)} \frac {\psi_i}{\psi_s} \label{eqn:cop quant 2}
\end{align}
We conclude that $\uu$ is generated by either the set of generators \eqref{eqn:collection of generators 1} or by:
\begin{equation}
\label{eqn:collection of generators 2}
\Big \langle x_i^\pm, p_{\pm k}, \psi^{\pm 1}_s, c^{\pm 1} \Big \rangle^{i \in \BZ/n\BZ}_{k \in \BN, s \in \{1,...,n\}}
\end{equation}
The connection between these two generating sets is given by:
\begin{align}
&e_{[i;i+1)} = x_i^+(q-q^{-1}) \label{eqn:simple correspondence 1} \\
&e_{-[i;i+1)} = x_i^-(q^{-2}-1) \label{eqn:simple correspondence 2}
\end{align}
We do not know an explicit formula for $p_{\pm k}$ in terms of the $e_{\pm [i;j)}$, but we will explain how to obtain an implicit connection in the following Subsections. \\

\subsection{} 
\label{sub:pbw uu}

The generators $e_{\pm [i;j)}$ give rise to PBW bases of the subalgebras:
\begin{equation}
\label{eqn:notation uu}
\uupm = \BQ(q) \Big \langle x_i^\pm, p_{\pm k} \Big \rangle_{i \in \BZ/n\BZ, k \in \BN} = \BQ(q) \Big \langle e_{\pm[i;j)} \Big \rangle_{(i<j) \in \zzz} \subset \uu
\end{equation}
by which we mean that $\uupm$ is spanned, as a $\BQ(q)$--vector space, by products of $e_{\pm [i;j)}$'s in a specific order (for example, ascending order of $j-i$, and then ascending order of $i$ mod $n$ to break ties). We note the triangular decomposition:
$$
\uu = \uup \otimes \uuo \otimes \uum
$$
where $\uuo = \BQ(q) [\psi_s^{\pm 1}, c^{\pm 1}]_{1\leq s \leq n}$. The algebra $\uu$ is graded by $\zz$, with:
\begin{equation}
\label{eqn:grading}
\deg e_{\pm[i;j)} = \pm [i;j), \qquad \deg p_{\pm k} = \pm k \bde, \qquad \deg \psi_s = 0
\end{equation}
where $[i;j)$ and $\bde = (1,\dots,1)$ are defined in \eqref{eqn:degrees}. For all $s$, consider:
\begin{equation}
\label{eqn:psi}
\ph_s = \frac {\psi_{s+1}}{\psi_s}
\end{equation}
It is easy to see that the coproduct of any element $x \in \uupm$ takes the form:
\begin{align}
&\Delta(x) = \prod_{i=1}^n\ph_i^{k_i} \otimes x + \underbrace{\dots}_{\text{intermediate terms}} + x \otimes 1 \qquad \text{if } x \in \uup \label{eqn:intermediate terms 1} \\
&\Delta(x) = 1 \otimes x + \underbrace{\dots}_{\text{intermediate terms}} + x \otimes \prod_{i=1}^n \ph_i^{k_i} \qquad \text{if } x \in \uum \label{eqn:intermediate terms 2}
\end{align}
if $\deg(x) = (k_1,\dots,k_n) \in \zz$, where the intermediate terms are all of the form $x' \otimes x''$ with $\deg x', \deg x'' \neq 0$. An element $x \in \uupm$ is called primitive if its coproduct has no intermediate terms. The following is a well-known result. \\

\begin{lemma}
\label{lem:primitive}

The only primitive elements of $\uupm$ are $\{x_i^\pm\}_{i \in \BZ/n\BZ}$, $\{p_{\pm k}\}_{k \in \BN}$ and their scalar multiples. \\

\end{lemma} 

\begin{proof} The product of the pairings \eqref{eqn:village} and \eqref{eqn:town} yields a bialgebra pairing:
\begin{equation}
\label{eqn:hamlet}
\uug \otimes \uul \stackrel{\langle \cdot, \cdot \rangle}\longrightarrow \BQ(q)
\end{equation}
such that its restriction to:
\begin{equation}
\label{eqn:restriction 3}
\uup \otimes \uum \stackrel{\langle \cdot, \cdot \rangle}\longrightarrow \BQ(q)
\end{equation}
is non-degenerate (see Remark \ref{rem:non-degenerate}). Thus, let us consider a primitive element: 
$$
a \in \uupm
$$
Because $\Delta(a)$ has no intermediate terms, properties \eqref{eqn:bialg 1} and \eqref{eqn:bialg 2} imply that $a$ pairs trivially with any product of two or more of the generators $\{x_i^\mp\}_{i \in \BZ/n\BZ}$ and $\{p_{\mp k}\}_{k \in \BN}$. By subtracting from $a$ appropriate multiples of $\{x_i^\pm\}_{i \in \BZ/n\BZ}$ and  $\{p_{\pm k}\}_{k \in \BN}$, we may replace the word ``two" in the previous sentence by ``one". Thus, $a$ pairs trivially with $\uump$, hence $a = 0$ by the non-degeneracy of \eqref{eqn:restriction 3}. \\

\end{proof}

\subsection{} \label{sub:primitive} 

Lemma \ref{lem:primitive} implies that $p_{\pm k}$ is the unique, up to constant multiple, sum of products of $e_{\pm [i;j)}$'s which is primitive and has degree $\pm k\bde$. To determine the $p_{\pm k}$ completely, we need to fix this constant multiple. To this end, consider the pairing:
\begin{equation}
\label{eqn:city}
\uug \otimes \uul \stackrel{\langle \cdot, \cdot \rangle}\longrightarrow \BQ(q)
\end{equation}
generated by properties \eqref{eqn:bialg 1}, \eqref{eqn:bialg 2} and the assignments:
\begin{equation}
\label{eqn:pair quantum 1}
\langle \psi_s, \psi_{s'} \rangle = q^{-\delta_{s'}^s}
\end{equation}
\begin{equation}
\label{eqn:pair quantum 2}
\langle e_{[i;j)}, e_{-[i;j)} \rangle = 1-q^{-2}
\end{equation}
and all other pairings among the generators $e_{\pm [i;j)}$ and $\psi_s$ are 0. On general grounds, the pairing \eqref{eqn:city} is the tensor product of the pairings \eqref{eqn:village} and \eqref{eqn:town}, although one would need to modify the latter by replacing the right-hand side of \eqref{eqn:pairing town} by some other non-zero constant which depends on $k$. This has to do with the ambiguity in defining $p_{\pm k}$ up to scalar multiple, which we will now fix. \\

\begin{definition} 
\label{def:correspondence}
	
Fix arbitrary parameters $\pi_+, \pi_-$ such that $\pi_+ \pi_- = q^{-1}$. Let: 
\begin{equation}
\label{eqn:def pk}
p_{\pm k} \in \uupm
\end{equation}
be the unique product of $e_{\pm [i;j)}$'s of degree $\pm k \bde$ which is primitive and satisfies:
\begin{equation}
\label{eqn:normalize}
\Big \langle p_{\pm k}, e_{\mp[u;u+nk)} \Big \rangle = \pm \pi_\pm^{nk}
\end{equation}
for all $u \in \BZ/n\BZ$. \\

\end{definition} 

\noindent Together with \eqref{eqn:simple correspondence 1}--\eqref{eqn:simple correspondence 2}, the Definition above completely determines the correspondence between the two systems of generators \eqref{eqn:collection of generators 1} and \eqref{eqn:collection of generators 2} of $\uu$. The existence of elements $p_{\pm k}$ defined in terms of $e_{\pm [i;j)}$'s which satisfy the conditions of Definition \ref{def:correspondence} was established in \cite{Tor}, where we also proved that:
\begin{equation}
\label{eqn:heisenberg commute}
[p_k, p_l] = k \delta_{k+l}^0 \frac {(c^k-c^{-k})(q^{nk} - q^{-nk})}{(q^k-q^{-k})^2}
\end{equation}

\subsection{} 
\label{sub:order n}

Let $\uu' \subset \uu$ denote the subalgebra generated by all $e_{\pm [i;j)}$ and all ratios \eqref{eqn:psi} and their inverses. It is easy to see that the assignments:
$$
e_{\pm [i;j)} \mapsto e_{\pm [i+1;j+1)}, \qquad \ph_s \mapsto \ph_{s+1}
$$
gives rise to an order $n$ automorphism: 
\begin{equation}
\label{eqn:tau}
\tau : \uu' \rightarrow \uu'
\end{equation} 
Since $\tau$ also preserves the coproduct and the pairing, the uniqueness part of Definition \ref{def:correspondence} implies that $p_{\pm k}$ is invariant under $\tau$. In fact, the uniqueness of primitive elements means that any:
\begin{equation}
\label{eqn:any element}
x^\pm \in \uupm 
\end{equation}
of degree $\notin \{\bs^1,...,\bs^n\}$ is completely determined by the intermediate terms of its coproduct and, if $\deg x^\pm = \pm k\bde$, the extra information of $\langle x^\pm, p_{\mp k} \rangle$. \\

\begin{proposition}
\label{prop:unique}

Suppose we are given elements:
\begin{equation}
\label{eqn:elements}
f_{\pm [i;j)} \in \uupm
\end{equation}
of degree $\pm [i;j)$, for all $(i\leq j) \in \zzz$. Moreover, assume that: \\

\begin{itemize}[leftmargin=*] 
	
\item the elements \eqref{eqn:elements} satisfy \eqref{eqn:cop quant 1}--\eqref{eqn:cop quant 2} with $f$ instead of $e$ \\

\item we have $\tau(f_{\pm [i;j)}) = f_{\pm [i+1;j+1)}$ for all $i<j$ \\

\item we have $f_{\pm [i;i)} = 1$ and $f_{\pm [i;i+1)} = e_{\pm [i;i+1)}$ for all $i$ \\

\end{itemize} 

\noindent Then there exist constants $\alpha_1,\alpha_2,... \in \BQ(q)$ such that:
\begin{equation}
\label{eqn:identity}
f_{\pm [i;j)} = \sum_{k = 0}^{\left \lfloor \frac {j-i}n \right \rfloor} e_{\pm [i;j-nk)} g_{\pm k}
\end{equation}
where $\sum_{k=0}^\infty g_{\pm k} x^k = \exp \left( \sum_{k=1}^\infty \alpha_k p_{\pm k} x^k  \right)$. \\

\end{proposition}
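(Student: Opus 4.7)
I proceed by strong induction on $m := j - i$, focusing on the $+$ case (the $-$ case is analogous, using the coproduct \eqref{eqn:cop quant 2} and $\Delta(p_{-l}) = 1 \otimes p_{-l} + p_{-l} \otimes c^{-l}$). The cases $m \in \{0,1\}$ are given directly by the third hypothesis. For $m \ge 2$, set
\[
F_{[i;j)} := \sum_{k=0}^{\lfloor m/n \rfloor} e_{[i;j-nk)}\, g_k, \qquad h_{[i;j)} := f_{[i;j)} - F_{[i;j)},
\]
where $\alpha_1, \dots, \alpha_{\lfloor (m-1)/n \rfloor}$ have been fixed at earlier stages of the induction and $\alpha_{m/n}$ is regarded as a free parameter when $n \mid m$. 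The plan is to show that $h_{[i;j)}$ is primitive, then invoke the classification of primitive elements recalled in Subsection \ref{sub:order n}.

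Since $c$ is central and $\Delta(p_l) = c^l \otimes p_l + p_l \otimes 1$, one has $\Delta(g_k) = \sum_{a+b=k} c^a g_b \otimes g_a$. Expanding $\Delta(F_{[i;j)})$ via Leibniz, substituting $s := s' + na$ in the inner sum (where $s'$ is the variable in $\Delta(e_{[i;j-nk)}) = \sum_{s'} e_{[s';j-nk)}\frac{\psi_{s'}}{\psi_i} \otimes e_{[i;s')}$, using $\frac{\psi_{s'}}{\psi_i} c^a = \frac{\psi_s}{\psi_i}$), and renaming $(a,b) \to (k'',k')$, yields
\[
\sum_{k',k'' \ge 0} \; \sum_{s=i+nk''}^{j-nk'} e_{[s-nk'';\, j-nk'-nk'')}\, g_{k'}\, \frac{\psi_s}{\psi_i} \otimes e_{[i;s-nk'')}\, g_{k''}.
\]
The decisive input is the identity $e_{[s-nk'';\, j-nk'-nk'')} = e_{[s;\, j-nk')}$, which follows from $\tau^n = \mathrm{id}$ on $\uu'$ (so $e_{[i+n;j+n)} = e_{[i;j)}$). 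After this substitution the interior summands coincide term-by-term with those of $\Delta(f_{[i;j)}) = \sum_s f_{[s;j)} \frac{\psi_s}{\psi_i} \otimes f_{[i;s)}$ expanded via the inductive hypothesis on $f_{[s;j)}$ and $f_{[i;s)}$ for $i < s < j$; the extra contributions at $(s=i,\, k''=0)$ and $(s=j,\, k'=0)$ collapse respectively to $F_{[i;j)} \otimes 1$ and $\frac{\psi_j}{\psi_i} \otimes F_{[i;j)}$ (the latter using $\psi_j/\psi_i = \prod_r \ph_r^{k_r}$ for $(k_1,\dots,k_n) = \deg F_{[i;j)}$). Consequently $\Delta(h_{[i;j)}) = h_{[i;j)} \otimes 1 + \frac{\psi_j}{\psi_i} \otimes h_{[i;j)}$, i.e., $h_{[i;j)}$ is primitive.

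Primitive elements of $\uup$ have degree either $\bs^r$ (multiples of $x_r^+$) or $k\bde$ (multiples of $p_k$); for $m \ge 2$ the former is excluded, and the latter forces $m = nk$. Hence $h_{[i;j)} = 0$ automatically whenever $n \nmid m$. When $m = nk$, write $h_{[i;i+nk)} = \lambda_i\, p_k$; applying $\tau$ and using $\tau(p_k) = p_k$, $\tau(f_{[i;j)}) = f_{[i+1;j+1)}$, and $\tau(g_l) = g_l$ (from $\tau(p_l) = p_l$), one obtains $\lambda_i = \lambda_{i+1}$, so $\lambda_i \equiv \lambda \in \BQ(q)$ is independent of $i$. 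The $l=k$ summand of $F_{[i;i+nk)}$ is $e_{[i;i)}\, g_k = g_k = \alpha_k p_k + (\text{polynomial in } p_1,\dots,p_{k-1})$, so fixing $\alpha_k := \lambda$ (added to any tentative value) absorbs the primitive piece uniformly in $i$ and completes the induction. The principal obstacle is the coproduct interior matching, whose heart is the $\tau^n$-periodicity $e_{[i+n;j+n)} = e_{[i;j)}$ that aligns the two reindexed sums.
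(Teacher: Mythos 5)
Your proof is correct and takes essentially the same approach as the paper's: both arguments proceed by induction on $j-i$, show via the coproduct hypothesis (and the substitution $e_{[s-nk'';\,j-nk'-nk'')} = e_{[s;\,j-nk')}$, which makes $\Delta$ applied to the right-hand side of \eqref{eqn:identity} factor as $\sum_s F_{[s;j)}\frac{\psi_s}{\psi_i}\otimes F_{[i;s)}$) that the difference $f_{[i;j)} - F_{[i;j)}$ is primitive, and then use the classification of primitives plus $\tau$-invariance to pin down $\alpha_k$. The only variation is cosmetic: the paper fixes $\alpha_k$ a priori by matching pairings with $p_{\mp k}$ and then invokes nondegeneracy, while you determine $\alpha_k$ a posteriori by directly absorbing the primitive discrepancy $\lambda p_k$; both routes rely on the same $\tau$-invariance to ensure the scalar is independent of $i$.
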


\begin{proof} Let us prove \eqref{eqn:identity} when $\pm = +$, and leave the analogous case where $\pm = -$ as an exercise to the interested reader. As a consequence of \eqref{eqn:cop heis 1}, we have:
\begin{equation}
\label{eqn:cop group-like}
\Delta(g_k) = \sum_{a+b = k} g_a c^b \otimes g_b
\end{equation}
for all $k \in \BN$. Since: 
$$
g_{k} = \alpha_k p_{k} + \Big(\text{a sum of products of more than one }p_l \Big)
$$
one can inductively define the scalars $\alpha_k$ by the condition that:
\begin{equation}
\label{eqn:identity 2}
\langle f_{[i;i+nk)}, p_{-k} \rangle = \left \langle \sum_{l = 0}^{k} e_{ [i;i+n(k-l))} g_{ l}, p_{- k} \right \rangle 
\end{equation} 
for all $i \in \BZ/n\BZ$ (indeed, because the automorphism $\tau$ permutes the elements $e_{[i;j)}$ and $f_{[i;j)}$ and preserves the elements $p_{-k}$ and $g_{l}$ and the pairing, if \eqref{eqn:identity 2} holds for a single $i$, then it holds for all $i$). Let us now prove \eqref{eqn:identity} by induction on $j-i$. The base case holds by the third bullet in the statement of the Proposition. As for the induction step, the first bullet in the statement of the Proposition implies that:
\begin{equation}
\label{eqn:identity 3}
\Delta(\text{LHS of \eqref{eqn:identity}}) = \sum_{s=i}^j f_{[s;j)} \frac {\psi_s}{\psi_i} \otimes f_{[i;s)}
\end{equation}
while \eqref{eqn:cop quant 1} and \eqref{eqn:cop group-like} imply:
$$
\Delta(\text{RHS of \eqref{eqn:identity}}) = \sum_{k = 0}^{\left \lfloor \frac {j-i}n \right \rfloor} \left( \sum_{s=i}^{j-nk}  f_{[s;j-nk)} \frac {\psi_s}{\psi_i} \otimes f_{[i;s)} \right) \left( \sum_{a+b = k}  g_a c^b \otimes g_b \right)
$$
\begin{equation}
\label{eqn:identity 4}
= \sum_{t = i}^j \left( \sum_{a = 0}^{\left \lfloor \frac {j-t}n \right \rfloor}  f_{[t;j-na)} g_a \frac {\psi_t}{\psi_i} \otimes \sum_{b = 0}^{\left \lfloor \frac {t-i}n \right \rfloor}  f_{[i;t-nb)} g_b \right)
\end{equation}
where between the first and second lines of \eqref{eqn:identity 4}, we changed variables according to $t = s + nb$. By the induction hypothesis of \eqref{eqn:identity}, the intermediate terms in the right-hand sides of \eqref{eqn:identity 3} and \eqref{eqn:identity 4} are equal to each other, thus:
$$
a = \text{LHS of \eqref{eqn:identity}} - \text{RHS of \eqref{eqn:identity}} 
$$
is primitive. Since $\deg a = [i;j)$, Lemma \ref{lem:primitive} gives us only two situations when the primitive element $a$ can be non-zero. The first of these is when $j = i+1$, and it cannot happen in the induction step, only in the base case. The second of these is when $j = i+nk$, in which case $a$ is a multiple of $p_k$. However, \eqref{eqn:identity 2} implies that: 
$$
\Big \langle a, p_{-k} \Big \rangle = 0
$$
from which \eqref{eqn:pairing town} yields $a = 0$. 

\end{proof}
 
\subsection{} 
\label{sub:antipodes} 

It is well-known that the bialgebra $\uu$ is in fact a Hopf algebra, and let $S$ be the antipode map. Recall the arbitrary parameters $\pi_+, \pi_-$ such that $\pi_+\pi_- = q^{-1}$ that we fixed in Definition \ref{def:correspondence}. The formula:
\begin{equation}
\label{eqn:e bare}
S^{\pm 1}(e_{\pm [i;j)}) = \frac {\psi^{\pm 1}_i}{\psi^{\pm 1}_j} \be_{\pm [i;j)} \cdot \pi_\mp^{2(j-i)}
\end{equation}
defines elements $\be_{\pm [i;j)} \in \uupm$. Then formulas \eqref{eqn:cop quant 1}--\eqref{eqn:cop quant 2} imply:
\begin{equation}
\label{eqn:identities antipode}
\sum_{s = i}^j \be_{\pm [s;j)} e_{\pm [i;s)} \cdot \pi_{\mp}^{2(i-s)} = 0 
\end{equation}
for all $i<j$. Moreover, we have the following coproduct formulas:
\begin{align}
&\Delta(\be_{[i;j)}) = \sum_{s=i}^j \frac {\psi_j}{\psi_s} \be_{[i;s)} \otimes \be_{[s;j)} \label{eqn:cop antipode 1} \\
&\Delta(\be_{-[i;j)}) = \sum_{s=i}^j \be_{-[s;j)} \otimes \frac {\psi_s}{\psi_j} \be_{-[i;s)} \label{eqn:cop antipode 2}
\end{align}
Moreover, as a consequence of \eqref{eqn:normalize} and  \eqref{eqn:identities antipode}, we note that:
\begin{equation}
\label{eqn:pairing antipode}
\Big \langle p_{\pm k}, \be_{\mp[u;u+nk)} \Big \rangle = \mp \pi_\pm^{-nk} 
\end{equation}
for all $u \in \BZ/n\BZ$. The reason for the formulas above is the fact that the pairing of $\be_{\mp [s;j)} e_{\mp [i;s)}$ with $p_{ \pm k}$ is trivial unless $s \in \{i,j\}$, due to the fact that $p_{\pm k}$ is primitive. \\

\section{The Shuffle Algebra}
\label{sec:shuf}

\noindent The main purpose of the present Section is to study the shuffle algebra $\CA$. \\

\begin{itemize}[leftmargin=*]

\item In Subsection \ref{sub:shufprod}, we introduce the rational functions $\zeta \left(\frac {z_{ia}}{z_{jb}} \right)$ \\

\item In Subsection \ref{sub:defshuf}, we use the aforementioned $\zeta$ to define the shuffle algebra $\CA^+$ \\

\item In Subsection \ref{sub:deg shuf}, we define the grading on $\CA^+$ and $\CA^- = (\CA^+)^{\text{op}}$ \\

\item In Subsection \ref{sub:extended}, we extend the algebras $\CA^+$, $\CA^-$ to $\CA^{\geq}$, $\CA^{\leq}$ \\

\item In Subsection \ref{sub:coproduct}, we define coproducts on the extended algebras $\CA^{\geq}$, $\CA^{\leq}$ \\

\item In Subsection \ref{sub:full}, we define the Drinfeld double $\CA = \CA^{\geq} \otimes \CA^{\leq}$ \\

\item In Subsection \ref{sub:slope}, we define slope subalgebras $\CB^{\pm}_{\mu} \subset \CA^{\pm}$ for any $\mu \in \BQ$ \\

\item In Subsection \ref{sub:diagrams}, we explain how to visualize slope subalgebras in terms of the degrees of their coproducts, using the notion of \underline{hinges} \\

\item In Subsection \ref{sub:double slope}, we construct the Drinfeld doubles $\CB_{\mu} \subset \CA$, recall the fact that they are isomorphic to tensor products of quantum groups in Proposition \ref{prop:sub}, and construct the important elements \eqref{eqn:a}--\eqref{eqn:bb} in $\CB_{\mu}$ \\

\item In Subsection \ref{sub:e's}, we rescale the previously defined elements as in \eqref{eqn:e plus}--\eqref{eqn:be minus}, with the goal of matching them with $e_{\pm [i;j)}$ and $\be_{\pm [i;j)}$ of the previous Section \\

\item In Subsection \ref{sub:alphabeta}, we define the linear maps $\alpha_{\pm [i;j)}$ on $\CA_{\pm [i;j)}$, which will help us fix elements of the shuffle algebra which are a priori determined up to scalar \\

\item In Subsection \ref{sub:alpha pairing}, we show that the restriction of $\alpha_{\pm [i;j)}$ to the slope subalgebras $\CB_\mu$ is given by pairing with the elements \eqref{eqn:e plus} and \eqref{eqn:e minus} \\

\item In Subsection \ref{sub:cop hinge computation}, we compute a part of the coproduct of the elements \eqref{eqn:e plus} \\

\item In Subsection \ref{sub:summarize}, we construct elements of the slope subalgebras $\CB_\mu$ which behave like the primitive generators of quantum groups (under Proposition \ref{prop:sub}) \\

\item In Subsection \ref{sub:cartan}, we perform the analogous treatment for the subalgebra $\CB_\infty$ \\

\end{itemize}

\subsection{}\label{sub:shufprod}

Let $\oq$ be a formal parameter. Consider the following bilinear form on $\zz$:
\begin{equation}
\label{eqn:bilinear form}
\langle \bk, \bl \rangle = \sum_{i=1}^n (k_il_i - k_i l_{i+1})
\end{equation}
(where we identify $l_{n+1} = l_1$) for any $\bk = (k_1,...,k_n)$ and $\bl = (l_1,...,l_n)$. Also let:
\begin{equation}
\label{eqn:length}
|\bk| = k_1+...+k_n
\end{equation}
for any $\bk \in \zz$. We will now recall the type $\widehat{A}_n$ trigonometric version of the shuffle algebra studied in \cite{FO}. For each $i \in \{1,...,n\}$, consider an infinite family of variables $z_{i1},z_{i2},...$. We call $i$ the color of the variable $z_{ia}$, and we call a rational function:
\begin{equation}
\label{eqn:rat func}
R(...,z_{ia},...)^{1 \leq i \leq n}_{1\leq a \leq k_i} 
\end{equation}
color-symmetric if it is symmetric in the variables $z_{i1},...,z_{ik_i}$ for each $i$ separately. Often, we will write explicit formulas for rational functions \eqref{eqn:rat func} that include $z_{ia}$ for any $i \in \BZ$, with the convention that:
\begin{equation}
\label{eqn:identify}
z_{ia} \text{ should be replaced with } z_{\bar{i}a} \oq^{-2\left \lfloor \frac {i-1}n \right \rfloor}
\end{equation}
where $\bar{i}$ is the residue class of $i$ in the set $\{1,...,n\}$. A particular example of this convention is the following color-dependent rational function:
\begin{equation}
\label{eqn:def zeta}
\zeta \left( \frac {z_{ia}}{z_{jb}} \right) = \left( \frac {z_{ia} q \oq^{2\left \lceil \frac {i-j}n \right \rceil}- z_{jb} q^{-1}}{z_{ia} \oq^{2\left \lceil \frac {i-j}n \right \rceil}- z_{jb}} \right)^{\delta_i^j - \delta_{i+1}^j}
\end{equation}
for any variables $z_{ia}$, $z_{jb}$ of colors $i,j \in \BZ$, respectively. Remember that Kronecker $\delta$ symbols are taken mod $n$ in the present paper, unless explicitly stated otherwise. \\

\subsection{} \label{sub:defshuf}

Let $\BF=\BQ(q,\oq^{\frac 1n})$ and consider the set of color-symmetric rational functions:
\begin{equation}
\label{eqn:big}
\CV = \bigoplus_{\bk \in \nn} \BF(...,z_{i1},...,z_{ik_i},...)^{\text{color symmetric}}
_{1 \leq i \leq n} 
\end{equation}
We make the above vector space into a $\BF-$algebra via the shuffle product:
\begin{equation}
\label{eqn:mult}
R(...,z_{i1},...,z_{ik_i},...) * R'(...,z_{i1},...,z_{ik'_i},...) = \frac 1{\bk! \cdot \bk'!} \cdot
\end{equation}
$$
\textrm{Sym} \left[ R(...,z_{i1},...,z_{ik_i},...) R'(...,z_{i,k_i+1},...,z_{i,k_i+k'_i},...) \prod_{i,i'=1}^{n} \prod^{a \leq k_i}_{a' > k'_{i'}} \zeta \left( \frac {z_{ia}}{z_{i'a'}} \right) \right] 
$$
for all rational functions $R$ and $R'$ in $\bk$ and $\bk'$ variables, respectively. In \eqref{eqn:mult}, $\sym$ denotes symmetrization with respect to the: 
\begin{equation}
\label{eqn:deffactorial}
(\bk+\bk')! := \prod_{i=1}^{n} (k_i+k_i')!
\end{equation}
permutations that preserve the color of the variables modulo $n$. \\

\begin{definition}
\label{def:shuf}
	
The shuffle algebra is the subspace $\CA^+ \subset \CV$ of rational functions:
\begin{equation}
\label{eqn:shuf}
R(...,z_{i1},...,z_{ik_i},...) = \frac {r(...,z_{i1},...,z_{ik_i},...)}{\prod_{i=1}^{n} \prod_{1\leq b \leq k_{i+1}}^{1\leq a \leq k_{i}} (z_{ia} q - z_{i+1,b} q^{-1})}
\end{equation}
where $r$ is a color-symmetric Laurent polynomial that satisfies the wheel conditions, i.e. the fact that for all $i\in \{1,...,n\}$ we have:
\begin{equation}
\label{eqn:wheel}
r(...,z_{ia},...) \Big |_{z_{i1} \mapsto w, z_{i2} \mapsto wq^{\pm 2}, z_{i \mp 1,1} \mapsto w} = 0
\end{equation}
	
\end{definition}

\subsection{} 
\label{sub:deg shuf}

It is straightforward to prove that $\CA^+$ is an algebra (see, for example, \cite{Tor}). To a rational function $R(...,z_{i1},...,z_{ik_i},...)$ of homogeneous degree $d$, we may associate its ``horizontal" and ``vertical" degrees, as follows:
\begin{align}
&\hdeg R = (k_1,...,k_n) \in \nn \label{eqn:hdeg} \\
&\vdeg R = d \in \BZ \label{eqn:vdeg}
\end{align}
Thus, the algebra $\CA^+$ is graded by $\nn \times \BZ$, and we will denote its graded pieces by:
\begin{align}
&\CA^+ = \bigoplus_{\bk \in \nn} \CA_\bk \label{eqn:a plus hdeg} \\
&\CA_\bk = \bigoplus_{d\in \BZ} \CA_{\bk,d} \label{eqn:a plus deg}
\end{align}
Let: 
\begin{equation}
\label{eqn:opposite shuffle}
\CA^- = \left(\CA^+\right)^{\text{op}}
\end{equation} 
so a rational function $R$ as in \eqref{eqn:shuf} may be regarded as either:
$$
R^+ \in \CA^+ \quad \text{or} \quad R^- \in \CA^-
$$ 
If $R^+$ has degree $(\bk,d)$, then we assign $R^-$ degree $(-\bk,d)$ and write:
\begin{align}
&\CA^- = \bigoplus_{\bk \in \nn} \CA_{-\bk} \label{eqn:a minus hdeg} \\
&\CA_{-\bk} = \bigoplus_{d\in \BZ} \CA_{-\bk,d} \label{eqn:a minus deg}
\end{align}
The \underline{naive slope} of an element $R^\pm \in \CA^\pm$ is defined as:
\begin{equation}
\label{eqn:naive slope}
\frac {\text{vdeg }R}{|\text{hdeg }R|}
\end{equation}

\subsection{}\label{sub:extended}

Define the extended shuffle algebras as:
\begin{align}
&\CA^\geq = \Big \langle \CA^+ , \psi_s^{\pm 1}, c^{\pm 1}, \barc^{\pm 1}, a_{s,1}, a_{s,2},... \Big \rangle_{s \in \{1,...,n\}} \label{eqn:a geq} \\
&\CA^\leq = \Big \langle \CA^- , \psi_s^{\pm 1}, c^{\pm 1}, \barc^{\pm 1}, a_{s,-1}, a_{s,-2},... \Big \rangle_{s \in \{1,...,n\}} \label{eqn:a leq}
\end{align}
modulo the fact that $c$ and $\barc$ are central, and the following relations:
\begin{equation}
\label{eqn:vertical 1}
[\psi_s, \psi_{s'}] = 0, \quad [a_{s,\pm d}, \psi_{s'}] = 0, \quad [a_{s,\pm d}, a_{s', \pm d'}] = 0
\end{equation}
\begin{equation}
\label{eqn:vertical 2}
\psi_s R^\pm  = q^{\pm(k_{s-1} - k_s)} R^{\pm}  \psi_s 
\end{equation}
\begin{align}
&[a_{s,d}, R^{+}] = R^+ \left( \sum_{t=1}^{k_{s-1}} z_{s-1,t}^{d} - \sum_{t=1}^{k_s} z_{st}^{d} \right) \label{eqn:vertical 3} \\
&[a_{s,-d}, R^{-}] = R^- \left(q^{-d} \sum_{t=1}^{k_{s-1}} z_{s-1,t}^{-d} -  q^d \sum_{t=1}^{k_s} z_{st}^{-d} \right) \label{eqn:vertical 4}
\end{align}
for all $s,s' \in \{1,...,n\}$, $d, d' > 0$ and $R^\pm (...,z_{i1},...,z_{ik_i},...)\in \CA^\pm$. In all formulas above and henceforth, we extend the indexing set of the $\psi$'s and the $a$'s by setting:
\begin{equation}
\label{eqn:extend}
\psi_{s+n} = c \psi_s, \qquad a_{s+n,d} = a_{s,d} \oq^{-2 d}
\end{equation}
for all $s,d \in \BZ$. \\

\begin{remark}
	
The algebras \eqref{eqn:a geq} and \eqref{eqn:a leq} were defined in \cite{Tor}, albeit in different notation. Explicitly, the generators $a_{s,\pm d}$ were packaged in \loccit as:
\begin{align}
&\ph_s^+(z) = \sum_{d=0}^\infty \frac {\ph_{s,d}}{z^d} = \frac {\psi_{s+1}}{\psi_s} \exp \left[\sum_{d=1}^\infty \frac {(q^d a_{s,d} - q^{-d} a_{s+1,d})(q^{-d}-q^{d})}{d z^{d}} \right] \label{eqn:convert 1} \\ 
&\ph_s^-(z) = \sum_{d=0}^\infty \frac {\ph_{s,-d}}{z^{-d}} = \frac {\psi_s}{\psi_{s+1}} \exp \left[\sum_{d=1}^\infty \frac {(a_{s,-d} - a_{s+1,-d})(q^{-d}-q^{d})}{d z^{-d}} \right] \label{eqn:convert 2} 
\end{align}
The discussion in \loccit had set $\barc=1$, but the general case is analogous. \\
	
\end{remark}

\subsection{} 
\label{sub:coproduct}

One of the main reasons for defining the extended shuffle algebras $\CA^\geq$ and $\CA^\leq$ is that they admit topological coproducts, as defined in \cite{Tor}: \footnote{Note that only the $\barc = 1$ case of the following formulas was defined in \loccitt, but the general definition below is analogous. In fact, we are simply modifying the coproduct of \loccit by the central element $\barc$ raised to the power equal to the vertical degree in one of the tensor factors, so all properties proved in \loccit (coassociativity, compatibility of product and coproduct etc) carry over to the present situation.}
\begin{equation}
\label{eqn:coproduct 0}
\Delta(\psi_s) = \psi_s \otimes \psi_s, \quad \Delta(c) = c \otimes c, \quad \Delta(\barc) = \barc \otimes \barc 
\end{equation}
\begin{equation}
\label{eqn:coproduct 1}
\Delta(a_{s,d}) = \barc^d \otimes a_{s,d} + a_{s,d} \otimes 1
\end{equation}
\begin{equation}
\label{eqn:coproduct 2}
\Delta(a_{s,-d}) = 1 \otimes a_{s,-d} + a_{s,-d} \otimes \barc^{-d}
\end{equation}
for all $s \in \{1,...,n\}$ and $d>0$, as well as (let $\barc_1 = \barc \otimes 1$ and $\barc_2 = 1 \otimes \barc$):
\begin{align}
&\Delta(R^+) = \sum_{\bl \in \nn}^{\bl\leq \bk}  \frac {\left[ \prod_{1 \leq i \leq n}^{a>l_i} \ph^+_i(z_{ia}\barc_1) \otimes 1 \right] R^+(z_{i, a \leq l_i} \otimes z_{i, a>l_i} \barc_1)}{\prod_{1 \leq i' \leq n}^{1 \leq i \leq n} \prod^{a \leq l_{i}}_{a' > l_{i'}} \zeta(z_{i'a'}\barc_1/z_{ia})} \label{eqn:coproduct 3} \\
&\Delta(R^-) = \sum_{\bl \in \nn}^{\bl\leq \bk}  \frac {R^-(z_{i, a \leq l_i}\barc_2 \otimes z_{i, a>l_i}) \left[ \prod_{1 \leq i \leq n}^{a \leq l_i} 1 \otimes \ph^-_i(z_{ia}\barc_2) \right]}{\prod_{1 \leq i' \leq n}^{1 \leq i \leq n} \prod^{a \leq l_{i}}_{a' > l_{i'}} \zeta(z_{ia} \barc_2/z_{i'a'})} \label{eqn:coproduct 4}
\end{align}
for all $R^\pm \in \CA_{\pm \bk}$. To think of \eqref{eqn:coproduct 3} as a tensor, we expand the right-hand side in non-negative powers of $z_{ia} / z_{i'a'}$ for $a\leq l_i$ and $a'>l_{i'}$, thus obtaining an infinite sum of monomials. In each of these monomials, we put the symbols $\ph_{i,d}$ to the very left of the expression, then all powers of $z_{ia}$ with $a\leq l_i$, then the $\otimes$ sign, and finally all powers of $z_{ia}$ with $a>l_i$. The powers of the central element $\barc_1 = c \otimes 1$ are placed in the first tensor factor. The resulting expression will be a power series, and therefore lies in a completion of $\CA^\geq \otimes \CA^\geq$. The same argument applies to \eqref{eqn:coproduct 4}. \\

\subsection{}\label{sub:full}

Finally, we showed in \cite{Tor} that there exists a bialgebra pairing:
\begin{equation}
\label{eqn:pairshuf}
\CA^\geq \otimes \CA^\leq \stackrel{\langle \cdot, \cdot \rangle}\longrightarrow \BF 
\end{equation}
given by:
\begin{equation}
\label{eqn:pairshuf0}
\langle \psi_s, \psi_{s'} \rangle = q^{- \delta_{s'}^s} , \qquad \langle a_{s,d}, a_{s',-d'} \rangle = \frac {\delta_{s'}^s \delta_{d'}^d d}{q^{d}-q^{-d}} 
\end{equation}
for $s,s' \in \{1,...,n\}$ and $d,d' \in \BN$, as well as:
\begin{equation}
\label{eqn:pairshuf1}
\left \langle R^+,R^- \right \rangle = \frac {(1-q^{-2})^{|\bk|}}{\bk!} \oint \frac {R^+(...,z_{ia},...)R^-(...,z_{ia},...)}{\prod_{i,j=1}^{n} \prod^{(i,a) \neq (j,b)}_{a\leq k_i, b \leq k_j} \zeta(z_{ia}/z_{jb})} \prod^{1 \leq i \leq n}_{1 \leq a \leq k_i} \frac {dz_{ia}}{2\pi i z_{ia}} \qquad 
\end{equation}
for any $R^+ \in \CA_{\bk}$ and $R^- \in \CA_{-\bk}$ (all other pairings are 0). In formula \eqref{eqn:pairshuf1}, the contour integral is set up in such a way that the variable $z_{ia}$ goes over a contour which surrounds only the poles at $z_{ib} q^2$, $z_{i-1,b}$, $z_{i+1,b} q^{-2}$ for all $i \in \{1,...,n\}$ and all $a,b$ (a particular choice of contours which achieves this is explained in Proposition 3.9 of \cite{Tor}). The pairing \eqref{eqn:pairshuf} allows us to construct the Drinfeld double:
\begin{equation}
\label{eqn:hmm}
\CA = \CA^\geq \otimes \CA^\leq \Big / (c \otimes 1 - 1 \otimes c, \barc \otimes 1  - 1 \otimes \barc, \psi_s \otimes 1 - 1 \otimes \psi_s)
\end{equation}
The main goal of setting up the above double shuffle algebra is the following: \\

\begin{theorem}
\label{thm:iso}
	
(\cite{Tor}) There is an isomorphism of bialgebras $\UU \cong \CA$. \footnote{The interested reader may find the definition of $\UU$ in \cite[Subsection 2.25]{Tor}, but we will not need it in the present paper.}\\
	
\end{theorem}

\noindent The construction of an algebra homomorphism $\UU \rightarrow \CA$ goes back to work of Enriquez (\cite{E}), so the main technical point of Theorem \ref{thm:iso} is that this homomorphism is surjective. In other words, we prove that the Feigin-Odesskii wheel conditions \eqref{eqn:wheel} are sufficient for describing the quantum toroidal algebra. \\

\begin{proposition}
\label{eqn:prop:double}
	
We have the following commutation relations in $\CA$:
\begin{equation}
\label{eqn:double 1}
[a_{s,d}, a_{s',d'}] = \delta_{s'}^s \delta_{d+d'}^0 d \frac {\barc^d - \barc^{-d}}{q^{-d}-q^{d}}
\end{equation}
\begin{align}
&[a_{s,d}, R^-] \ = R^- \barc^d \left(\sum_{t=1}^{k_s} z_{st}^{d} - \sum_{t=1}^{k_{s-1}} z_{s-1,t}^{d} \right) \label{eqn:double 2} \\ 
&[a_{s,-d}, R^+] = R^+ \barc^{-d} \left(q^d \sum_{t=1}^{k_s} z_{st}^{-d} - q^{-d} \sum_{t=1}^{k_{s-1}} z_{s-1,t}^{-d} \right) \label{eqn:double 3} 
\end{align}
for all $s,s' \in \{1,...,n\}$, $d, d' > 0$ and $R^\pm (...,z_{i1},...,z_{ik_i},...)\in \CA^\pm$. Moreover: 
\begin{equation}
\label{eqn:moreover}
\left[ (z_{i1}^{k} )^+, (z_{j1}^{-k'})^{-} \right] = \delta_j^i (q^{-2} - 1) \begin{cases} \ph_{i,k-k'} \barc^{k'} &\text{if } k>k' \\  \ph_{i,0} \barc^{k'} - \ph_{i,0}^{-1} \barc^{-k} &\text{if } k = k' \\ - \ph_{i,k-k'} \barc^{-k} &\text{if } k<k' \end{cases} 
\end{equation}
for all $i,j \in \{1,...,n\}$, where $\ph_{s,\pm d}$ are the series coefficients of $\ph_s^\pm$ of \eqref{eqn:convert 1}--\eqref{eqn:convert 2}. \\
	
\end{proposition}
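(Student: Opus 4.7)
The strategy for proving Proposition \ref{eqn:prop:double} is uniform: each of the four identities is a cross-commutator between elements of the two halves $\CA^\geq$ and $\CA^\leq$, hence each follows by substituting the coproducts \eqref{eqn:coproduct 0}--\eqref{eqn:coproduct 4} and the pairings \eqref{eqn:pairshuf0}--\eqref{eqn:pairshuf1} into the Drinfeld double relation $a_1 b_1 \langle a_2, b_2 \rangle = \langle a_1, b_1 \rangle b_2 a_2$ from \eqref{eqn:drinfeld}, for an appropriate choice of $a \in \CA^\geq$ and $b \in \CA^\leq$.

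For \eqref{eqn:double 1}, I would take $a = a_{s,d}$ and $b = a_{s',-d}$ (the case $d+d' = 0$ is the only one not already handled by \eqref{eqn:vertical 1}). Both coproducts \eqref{eqn:coproduct 1}--\eqref{eqn:coproduct 2} have only two terms, producing four Sweedler combinations; two of these vanish because $\langle \barc^d, a_{s',-d}\rangle = 0 = \langle a_{s,d}, \barc^{-d}\rangle$, and the surviving two rearrange into $[a_{s,d}, a_{s',-d}] = \langle a_{s,d}, a_{s',-d}\rangle(\barc^{-d} - \barc^d)$. Inserting \eqref{eqn:pairshuf0} reproduces \eqref{eqn:double 1}.

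For \eqref{eqn:double 2} (and by the same argument its mirror \eqref{eqn:double 3}) I would take $a = a_{s,d}$ and $b = R^-$. The key observation is that $\langle a_{s,d}, \cdot\rangle$ isolates the coefficient of $a_{s,-d}$ in an $\CA^\leq$-element, and from \eqref{eqn:coproduct 4} together with the exponential expansion \eqref{eqn:convert 2} of $\ph_i^-$, the linear-in-$a_{\cdot,-d}$ part of $\Delta(R^-)$ can be read off in closed form: the relevant factor is precisely $R^- (\sum z_{st}^d - \sum z_{s-1,t}^d)$, and the $\barc^d$ prefactor in \eqref{eqn:double 2} arises because the shuffle variables attached to $a_{\cdot,-d}$ in the first tensor factor of $\Delta(R^-)$ carry $\barc_2$, which transfers to $\barc^d$ under the Drinfeld double swap. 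Relation \eqref{eqn:double 3} is then obtained by exchanging the roles of $+$ and $-$; the additional $q^{\pm d}$ factors arise from the asymmetry between the exponents in \eqref{eqn:convert 1} and \eqref{eqn:convert 2}.

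For the explicit cross-commutator \eqref{eqn:moreover} I would take the rank-one shuffle elements $a = z_{i1}^k$ and $b = z_{j1}^{-k'}$. Because $|\bk| = 1$, the sums in \eqref{eqn:coproduct 3}--\eqref{eqn:coproduct 4} collapse to exactly two terms each ($\bl = 0$ and $\bl = \bk$), so the Drinfeld double relation reduces to a finite sum of four contributions. Substituting these two-term coproducts and evaluating the residue pairings directly from \eqref{eqn:pairshuf1}, the three regimes $k>k'$, $k=k'$, $k<k'$ distinguish which of the two series $\ph^\pm_i$ yields a non-trivial residue; only in the boundary case $k=k'$ do both series contribute, producing the combination $\ph_{i,0}\barc^{k'} - \ph_{i,0}^{-1}\barc^{-k}$ on the right-hand side. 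The main obstacle throughout is careful bookkeeping: tracking the central $\barc$ insertions in the various slots of \eqref{eqn:coproduct 3}--\eqref{eqn:coproduct 4}, working inside the formal completions in which the coproduct-side identities a priori live, and making sure that the expansion direction of $z_{ia}/z_{i'a'}$ matches the contour conventions of \eqref{eqn:pairshuf1} so that the residues pick out the correct monomials.
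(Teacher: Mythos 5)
Your proposal matches the paper's own proof essentially line for line: (eqn:double 1) is extracted from the Drinfeld double relation with $a=a_{s,d}$, $b=a_{s',-d'}$ and (eqn:pairshuf0); (eqn:double 2) follows by plugging the truncated form of $\Delta(R^-)$ from (eqn:coproduct 4) into (eqn:drinfeld) and evaluating $\langle a_{s,d}, \prod\ph_i^-\rangle$ via the exponential expansion (eqn:convert 2) and (eqn:pairshuf0); (eqn:double 3) is the mirror; and (eqn:moreover) is a finite residue computation (the paper defers this last one to \cite{Tor}, so your two-term-coproduct sketch is in fact slightly more explicit than what appears here). Same decomposition, same key pairings, same $\barc$-bookkeeping concerns.
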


\begin{proof} Let us apply \eqref{eqn:drinfeld} for $a = a_{s,d}$ and $b = a_{s',-d'}$:
$$
\barc^d \langle a_{s,d}, a_{s',-d'} \rangle + a_{s,d} a_{s',-d'} \langle 1, \barc^{-d'} \rangle = \langle \barc^d, 1 \rangle a_{s',-d'} a_{s,d} + \langle a_{s,d}, a_{s',-d'} \rangle \barc^{-d}
$$	
Then \eqref{eqn:pairshuf0} implies \eqref{eqn:double 1}. As for \eqref{eqn:double 2}, note that \eqref{eqn:coproduct 4} implies:
$$
\Delta(R^-) = 1 \otimes R^-(...,z_{i1},...,z_{ik_i},...)  + ... + R^-(...,z_{i1}\barc_2,...,z_{ik_i}\barc_2,...) \otimes \prod_{i=1}^n \prod_{a=1}^{k_i} \ph_i^-(z_{ij} \barc_2)
$$
where $...$ in the middle stand for terms which pair trivially with $a_{s,d}$ or powers of $\barc$, for degree reasons. Therefore, applying \eqref{eqn:drinfeld} for $a=a_{s,d}$ and $b = R^-$ gives us:
\begin{multline*}
\barc^d R^-(...,z_{ij}\barc_2,...) \left\langle a_{s,d}, \prod_{i=1}^n \prod_{a=1}^{k_i} \ph_i^-(z_{ij} \barc_2) \right\rangle + \\ + a_{s,d} R^-(...,z_{ij}\barc_2,...) \left \langle 1, \prod_{i=1}^n \prod_{a=1}^{k_i} \ph_i^-(z_{ij} \barc_2) \right\rangle = \langle \barc^d, 1 \rangle R^- a_{s,d}
\end{multline*}
Since $\barc$ is group-like and pairs trivially with anything, the terms denoted by $\barc_2$ in the formula above do not change the values of any of the pairings. Moreover, the pairings on the second line are both equal to 1. As for the pairing on the first line, a simple consequence of \eqref{eqn:convert 2} and \eqref{eqn:coproduct 1} is that
$$
\left\langle a_{s,d}, \prod_{i=1}^n \prod_{a=1}^{k_i} \ph_i^-(z_{ia}) \right\rangle = \sum_{d'=1}^\infty \sum_{i=1}^n \sum_{j=1}^{k_i} \frac {\langle a_{s,d},  a_{i,-d'} - a_{i+1,-d'}\rangle (q^{-d'}-q^{d'})}{d' z_{ij}^{-d'}} = 
$$
$$
= \sum_{j=1}^{k_{s-1}} z_{s-1,j}^d - \sum_{j=1}^{k_s} z_{sj}^d 
$$
which proves \eqref{eqn:double 2}. Formula \eqref{eqn:double 3} is proved analogously, so we leave it as an exercise. Relation \eqref{eqn:moreover} is straightforward, and we leave to the interested reader (a proof was also given in \cite{Tor}, where this relation was compared to the defining commutation relation between the two halves of the quantum toroidal algebra). 
	
\end{proof}

\subsection{}\label{sub:slope}

Let us now recall the factorization of $\A$ into slope subalgebras from \cite{Tor}:
\begin{equation}
\label{eqn:slope decomposition}
\A^\pm = \bigotimes_{\mu \in \BQ} \B^\pm_\mu
\end{equation}
where the product is taken in increasing order of $\mu$. We will now describe $\B_\mu^\pm$. \\

\begin{definition}
\label{def:slope}	
	
For any shuffle element $R^\pm\in \CA_{\pm \bk}$ and any $\mu \in \BQ$, if the limit:
\begin{equation}
\label{eqn:limit}
\lim_{\xi^{\pm 1} \rightarrow \infty} \frac {R^\pm(..., \xi z_{i1},..., \xi z_{i l_i}, z_{i, l_i+1},..., z_{ik_i},...)}{\xi^{\pm \mu |\bl|}}
\end{equation}
exists and is finite for all $\bl = (l_1,...,l_n)$, then we say that $R^\pm$ has \underline{slope} $\leq \mu$. \\
	
\end{definition}

\noindent It is elementary to show that the subspace of $\A^\pm$ of elements of slope $\leq \mu$ is a subalgebra (see \cite{Tor} for a proof), and we will denote it by:
\begin{equation}
\label{eqn:slope notation}
\A_{\leq \mu}^\pm \subset \A^\pm
\end{equation}
and its graded pieces by:
\begin{align}
&\A_{\leq \mu | \pm \bk} = \A_{\pm \bk} \cap \A^\pm_{\leq \mu} \label{eqn:slope notation hdeg} \\
&\A_{\leq \mu | \pm \bk,d} = \A_{\pm \bk,d} \cap \A^\pm_{\leq \mu} \label{eqn:slope notation deg}
\end{align}
Let us now connect the above slope property with the coproduct. We have:
\begin{equation}
\label{eqn:radu1}
\Delta(R^+) = \Delta_\mu(R^+) + (\textrm{anything}) \otimes (\text{naive slope} < \mu) 
\end{equation}
\begin{equation}
\label{eqn:radu2}
\Delta(R^-) = \Delta_\mu(R^-) + (\text{naive slope} < \mu) \otimes (\textrm{anything})
\end{equation}
for any $R^\pm \in \CA^\pm_{\leq \mu}$, where the leading terms $\Delta_\mu$ are defined by:
\begin{align}
&\Delta_\mu(R^+) = \sum^{\bl \in \nn}_{\bl\leq \bk} (\ph_{\bk-\bl} \otimes 1) \lim_{\xi \rightarrow \infty} \frac {R^+(z_{i,a\leq l_i} \otimes \xi\cdot z_{i,a>l_i} \barc_1 )}{\xi^{\mu|\bk - \bl|} q^{\langle \bk - \bl, \bl \rangle}} \label{eqn:cristian1} \\
&\Delta_\mu(R^-) = \sum^{\bl \in \nn}_{\bl\leq \bk} \lim_{\xi \rightarrow 0} \frac {R^-(\xi \cdot z_{i,a\leq l_i} \barc_2 \otimes z_{i,a>l_i})}{\xi^{-\mu|\bl|} q^{- \langle \bl, \bk - \bl \rangle}} (1 \otimes \ph_{-\bl}) \label{eqn:cristian2}
\end{align}
where $\ph_\bk = \prod_{i=1}^n \ph_i^{k_i}$ for all $\bk = (k_1,...,k_n)$, and $\ph_i = \frac {\psi_{i+1}}{\psi_i}$. The vector spaces:
\begin{equation}
\label{eqn:sub}
\CB^\pm_\mu = \bigoplus^{\mu|\bk| \in \BZ}_{\bk \in \nn} \CB_{\mu|\pm \bk} := \bigoplus^{\mu|\bk| = d}_{\bk \in \nn} \CA_{\leq \mu|\pm \bk, \pm d} \subset \CA^\pm
\end{equation}
are subalgebras, and their extended versions:
\begin{align}
&\CB^\geq_\mu = \Big \langle \CB^+_\mu, \psi_s^{\pm 1}, c^{\pm 1}, \barc^{\pm 1} \Big \rangle_{s \in \{1,...,n\}} \subset \CA^\geq  \label{eqn:sub1} \\
&\CB^\leq_\mu = \Big \langle \CB^-_\mu, \psi_s^{\pm 1}, c^{\pm 1}, \barc^{\pm 1} \Big \rangle_{s \in \{1,...,n\}} \subset \CA^\leq \label{eqn:sub2}
\end{align}
are bialgebras with respect to the coproducts \eqref{eqn:cristian1}, \eqref{eqn:cristian2}, respectively. Therefore, the decomposition \eqref{eqn:slope decomposition} does not preserve the bialgebra structure, but instead the coproduct of $x \in \B^\pm_\mu$ is the leading order term of the coproduct of $x \in \A^\pm$. 

\subsection{} 
\label{sub:diagrams} 

We may visualize the coproduct $\Delta$ as follows. To a shuffle element $R^\pm \in \CA_{\pm \bk, d}$, we associate the lattice point/vector $(\pm |\bk|, d)$. For any shuffle element $R^+ \in \CA^+$, consider its coproduct (in Sweedler notation):
\begin{equation}
\label{eqn:sweedler plus}
\Delta(R^+) = R^+_1 \otimes R^+_2 
\end{equation}
In any tensor $R^+_1 \otimes R^+_2$ that appears as a summand in $\Delta(R^+)$, the vectors associated to $R^+_1$ and $R^+_2$ are as in the picture below:

\begin{picture}(100,130)(-110,-35)

\put(0,-20){\circle*{2}}\put(20,-20){\circle*{2}}\put(40,-20){\circle*{2}}\put(60,-20){\circle*{2}}\put(80,-20){\circle*{2}}\put(100,-20){\circle*{2}}\put(120,-20){\circle*{2}}\put(0,0){\circle*{2}}\put(20,0){\circle*{2}}\put(40,0){\circle*{2}}\put(60,0){\circle*{2}}\put(80,0){\circle*{2}}\put(100,0){\circle*{2}}\put(120,0){\circle*{2}}\put(0,20){\circle*{2}}\put(20,20){\circle*{2}}\put(40,20){\circle*{2}}\put(60,20){\circle*{2}}\put(80,20){\circle*{2}}\put(100,20){\circle*{2}}\put(120,20){\circle*{2}}\put(0,40){\circle*{2}}\put(20,40){\circle*{2}}\put(40,40){\circle*{2}}\put(60,40){\circle*{2}}\put(80,40){\circle*{2}}\put(100,40){\circle*{2}}\put(120,40){\circle*{2}}\put(0,60){\circle*{2}}\put(20,60){\circle*{2}}\put(40,60){\circle*{2}}\put(60,60){\circle*{2}}\put(80,60){\circle*{2}}\put(100,60){\circle*{2}}\put(120,60){\circle*{2}}\put(0,80){\circle*{2}}\put(20,80){\circle*{2}}\put(40,80){\circle*{2}}\put(60,80){\circle*{2}}\put(80,80){\circle*{2}}\put(100,80){\circle*{2}}\put(120,80){\circle*{2}}

\put(0,20){\vector(2,-1){40}}
\put(40,0){\vector(4,3){80}}

\put(-20,17){\scriptsize{$(0,0)$}}
\put(87,30){\scriptsize{$R^+_1$}}
\put(5,5){\scriptsize{$R^+_2$}}

\end{picture}

\noindent The point where the arrows meet will be called the \underline{hinge} of the tensor $R_1^+ \otimes R_2^+$. Then a shuffle element $R^+$ has slope $\leq \mu$ if and only if all the summands in $\Delta(R^+)$ have hinge at slope $\leq \mu$ measured from the origin. \\

\noindent Similarly, consider any shuffle element $R^- \in \CA^-$ and its coproduct:
\begin{equation}
\label{eqn:sweedler minus}
\Delta(R^-) = R^-_1 \otimes R^-_2 
\end{equation}
In any tensor $R^-_1 \otimes R^-_2$ that appears as a summand in $\Delta(R^-)$, the vectors associated to $R^-_1$ and $R^-_2$ are as in the picture below:

\begin{picture}(100,130)(-110,-35)

\put(0,-20){\circle*{2}}\put(20,-20){\circle*{2}}\put(40,-20){\circle*{2}}\put(60,-20){\circle*{2}}\put(80,-20){\circle*{2}}\put(100,-20){\circle*{2}}\put(120,-20){\circle*{2}}\put(0,0){\circle*{2}}\put(20,0){\circle*{2}}\put(40,0){\circle*{2}}\put(60,0){\circle*{2}}\put(80,0){\circle*{2}}\put(100,0){\circle*{2}}\put(120,0){\circle*{2}}\put(0,20){\circle*{2}}\put(20,20){\circle*{2}}\put(40,20){\circle*{2}}\put(60,20){\circle*{2}}\put(80,20){\circle*{2}}\put(100,20){\circle*{2}}\put(120,20){\circle*{2}}\put(0,40){\circle*{2}}\put(20,40){\circle*{2}}\put(40,40){\circle*{2}}\put(60,40){\circle*{2}}\put(80,40){\circle*{2}}\put(100,40){\circle*{2}}\put(120,40){\circle*{2}}\put(0,60){\circle*{2}}\put(20,60){\circle*{2}}\put(40,60){\circle*{2}}\put(60,60){\circle*{2}}\put(80,60){\circle*{2}}\put(100,60){\circle*{2}}\put(120,60){\circle*{2}}\put(0,80){\circle*{2}}\put(20,80){\circle*{2}}\put(40,80){\circle*{2}}\put(60,80){\circle*{2}}\put(80,80){\circle*{2}}\put(100,80){\circle*{2}}\put(120,80){\circle*{2}}

\put(120,20){\vector(-2,-1){40}}
\put(80,0){\vector(-4,3){80}}

\put(122,17){\scriptsize{$(0,0)$}}
\put(20,30){\scriptsize{$R^-_1$}}
\put(101,5){\scriptsize{$R^-_2$}}

\end{picture}

\noindent The point where the arrows meet will be called the \underline{hinge} of the tensor $R_1^- \otimes R_2^-$. Then a shuffle element $R^-$ has slope $\leq \mu$ if and only if all the summands in $\Delta(R^-)$ have hinge at slope $\geq \mu$ measured from the origin. \\

\subsection{} 
\label{sub:double slope} 

The bialgebra pairing between $\A^\geq$ and $\A^\leq$ restricts to a bialgebra pairing:
\begin{equation}
\label{eqn:pair b mu}
\CB^\geq_\mu \otimes \CB^\leq_\mu \stackrel{\langle \cdot , \cdot \rangle}\longrightarrow \BF
\end{equation}
and the Drinfeld double: 
\begin{equation}
\label{eqn:double sub}
\CB_\mu := \CB^\geq_\mu \otimes \CB^\leq_\mu \Big / (c \otimes 1 - 1 \otimes c, \barc \otimes 1  - 1 \otimes \barc, \psi_s \otimes 1 - 1 \otimes \psi_s)
\end{equation}
corresponding to the above data yields a homomorphism:
\begin{equation}
\label{eqn:subalgebra}
\CB_\mu \subset \CA
\end{equation}
as algebras. \\

\begin{proposition}
\label{prop:sub}
	
For any coprime $a \in \BN$ and $b \in \BZ$, we have an isomorphism:
\begin{equation}
\label{eqn:smalliso}
\Xi: U_{q}(\dot{\fgl}_{\frac ng})^{\otimes g} \stackrel{\sim}\longrightarrow \CB_{\frac ba}
\end{equation}
where $g = \gcd(n,a)$. The isomorphism $\Xi$ preserves the bialgebra structures. \\
	
\end{proposition}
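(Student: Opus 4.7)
The plan is to define $\Xi$ on generators, following the recipe spelled out in Theorem \ref{thm:intro main}. For each $r \in \{0,1,\ldots,g-1\}$ and $i \in \BZ/(n/g)\BZ$ I would send the simple root element $x_i^\pm$ sitting in the $r$-th tensor factor of $U_q(\dot{\fgl}_{n/g})^{\otimes g}$ to $p_{\pm[ai+r;a(i+1)+r)}^{(\pm b)}$; for each $\hi \in \BZ/g\BZ$ and $k \in \BN$, the imaginary root $p_{\pm k}$ of the $\hi$-th factor would go to $p_{\pm ak\bde/g,\hi}^{(\pm bkn/g)}$; the Cartan and central elements go to their shuffle-theoretic counterparts inside $\CB^\geq_{b/a} \otimes \CB^\leq_{b/a}$. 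That these target elements genuinely lie in the slope subalgebra $\CB_{b/a}$ is built into their construction in \cite{Tor}.

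The core task is then to verify the defining relations of $U_q(\dot{\fgl}_{n/g})^{\otimes g}$ for these images. I would organize this in two layers. Within a single tensor factor, the $x_i^\pm$-images and $p_{\pm k}$-images should satisfy the $\uu$-style relations with $n$ replaced by $n/g$. The mechanism is that the leading-order coproduct $\Delta_\mu$ of \eqref{eqn:cristian1}--\eqref{eqn:cristian2} equips $\CB^\pm_{b/a}$ with a bialgebra structure inside which the chosen elements play the roles of $e_{\pm[i;j)}$ and $p_{\pm k}$. The uniqueness argument underlying Proposition \ref{prop:unique}, together with the normalization condition \eqref{eqn:normalize}, then forces the analogues of the Serre, $RTT$, and Heisenberg relations \eqref{eqn:heisenberg commute} to hold. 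Between distinct tensor factors (different values of $r$), the associated shuffle elements live at the same slope $b/a$ but with index shifts that make the hinge diagrams of Subsection \ref{sub:diagrams} disjoint; a pairing computation via \eqref{eqn:drinfeld} then yields exact commutation rather than commutation modulo lower-slope corrections.

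For the bialgebra part I would compare the shuffle coproduct $\Delta_\mu$ restricted to these generators with the Drinfeld--Jimbo coproducts \eqref{eqn:cop special 2}--\eqref{eqn:cop special 3} and \eqref{eqn:cop heis 1}--\eqref{eqn:cop heis 2} in the source, using that each $\ph_s$ appearing in \eqref{eqn:cristian1}--\eqref{eqn:cristian2} pulls back under $\Xi$ to the correct Cartan ratio in the appropriate tensor factor. Bijectivity is then a dimension count: the PBW decomposition \eqref{eqn:quant intro} together with Theorem \ref{thm:iso} identifies the graded character of $\CB_{b/a}$ with that of $U_q(\dot{\fgl}_{n/g})^{\otimes g}$, while surjectivity is immediate since these generators are precisely those used in \cite{Tor} to span $\CB_{b/a}$.

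The main obstacle will be the inter-factor commutation: showing that generators attached to different residues $r, r' \in \{0,\ldots,g-1\}$ commute on the nose, not merely up to lower-slope terms. Once this has been reduced to a disjoint-hinge pairing identity and handled, the within-factor computations reduce to applications of Proposition \ref{prop:unique} plus direct transport of the $\uu$-calculations of Section \ref{sec:quantum}.
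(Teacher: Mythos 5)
This proposition is not proved in the present paper: it is imported verbatim from \cite{Tor}, where the identification $\CB_{b/a} \cong U_q(\dot{\fgl}_{n/g})^{\otimes g}$ is one of the main structural results (this is exactly formula \eqref{eqn:slope intro} in the introduction, which is explicitly credited to that reference). So there is no in-paper proof to mirror; your task was really to reconstruct the \cite{Tor} argument. With that in mind, your outline captures the right four steps (define $\Xi$ on generators, verify relations factor-by-factor, verify inter-factor commutation, match coproducts and count dimensions), but it has two genuine gaps.

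First, the appeal to Proposition \ref{prop:unique} does not deliver what you claim. That proposition is a statement about $\uu$ at slope $0$: it characterizes, via uniqueness of primitive elements, families of elements obeying the \emph{coproduct} formulas \eqref{eqn:cop quant 1}--\eqref{eqn:cop quant 2}; it does not by itself force the $RTT$/Serre/Heisenberg \emph{algebra} relations for the images $E^{(b(t-s))}_{\pm[as+r;at+r)}$ inside $\CB_{b/a}$. To get those you either have to compute directly with the shuffle presentations \eqref{eqn:a}--\eqref{eqn:bb}, or argue via the nondegenerate pairing \eqref{eqn:pair b mu} and the coproducts \eqref{eqn:cop1}--\eqref{eqn:cop2 anti} that $\CB_{b/a}$ is the Drinfeld double of a pair of bialgebras freely matching those of $U^\gtrless_q(\dot{\fgl}_{n/g})^{\otimes g}$. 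Second, the inter-factor commutation — which you correctly flag as the crux — is only asserted, not proved. Saying ``disjoint hinges'' and ``a pairing computation via \eqref{eqn:drinfeld}'' is not an argument: relation \eqref{eqn:drinfeld} lives in the full double $\CA$, and to conclude vanishing of a commutator inside $\CB_{b/a}$ you must still show it is primitive for $\Delta_{b/a}$ and is killed by the detection maps $\alpha_{\pm[i;j)}$ (cf.~Lemma \ref{lem:unique}), exactly as is done for commutators throughout Section \ref{sec:proof}. Until that computation is carried out, commutation between the $g$ tensor factors remains open. Finally, a minor hygiene point: extracting the generator dictionary from Theorem \ref{thm:intro main} risks circularity, since the proof of Theorem \ref{thm:main} (the precise form of \ref{thm:intro main}) invokes Proposition \ref{prop:sub} through \eqref{eqn:mini}; the dictionary you want is the one given independently in Subsection \ref{sub:alphabeta}.
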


\noindent We will now construct explicit elements of $\B^\pm_\mu$ which correspond under $\Xi$ to the generators \eqref{eqn:collection of generators 1} of quantum affine algebras. These elements were initially constructed in \cite{Tor} under the names $E$ and $F$, but we will relabel them as follows:
\begin{equation}
\label{eqn:a}
A_{\pm [i;j)}^{\mu} = \sym \left[ \frac {\prod_{a=i}^{j-1} (z_a \oq^{\frac {2a}n})^{\lceil \pm \mu(a-i+1) \rceil - \lceil \pm \mu(a-i) \rceil}}{\left(1 - \frac {z_{i}q^2}{z_{i+1}}  \right) ... \left(1 - \frac {z_{j-2}q^2}{z_{j-1}} \right)} \prod_{i\leq a < b < j} \zeta \left( \frac {z_b}{z_a} \right)  \right]^\pm  
\end{equation}
\begin{equation}
\label{eqn:ba}
\bA_{\pm [i;j)}^{\mu} = \sym \left[ \frac {\prod_{a=i}^{j-1} (z_a \oq^{\frac {2a}n})^{\lfloor \pm \mu(a-i+1) \rfloor - \lfloor \pm \mu(a-i) \rfloor}}{\left(1 - \frac {z_{i+1}}{z_{i}q^2}  \right) ... \left(1 - \frac {z_{j-1}}{z_{j-2}q^2} \right)} \prod_{i\leq a < b < j} \zeta \left( \frac {z_b}{z_a} \right)  \right]^\pm 
\end{equation}
\begin{equation}
\label{eqn:b}
B_{\pm [i;j)}^{\mu} = \sym \left[ \frac {\prod_{a=i}^{j-1} (z_a \oq^{\frac {2a}n})^{\lfloor \pm \mu(a-i+1) \rfloor - \lfloor \pm \mu(a-i) \rfloor}}{\left(1 - \frac {z_{i+1}}{z_{i}}  \right) ... \left(1 - \frac {z_{j-1}}{z_{j-2}} \right)} \prod_{i\leq a < b < j} \zeta \left( \frac {z_a}{z_b} \right) \right]^\pm 
\end{equation} 
\begin{equation}
\label{eqn:bb}
\bB_{\pm [i;j)}^{\mu} = \sym \left[ \frac {\prod_{a=i}^{j-1} (z_a \oq^{\frac {2a}n})^{\lceil \pm \mu(a-i+1) \rceil - \lceil \pm \mu(a-i) \rceil}}{\left(1 - \frac {z_{i}}{z_{i+1}}  \right) ... \left(1 - \frac {z_{j-2}}{z_{j-1}} \right)} \prod_{i\leq a < b < j} \zeta \left( \frac {z_a}{z_b} \right)  \right]^\pm 
\end{equation}
for all $(i<j) \in \BZ^2/(n,n)\BZ$ and $\mu \in \BQ$ such that $k = \mu(j-i) \in \BZ$. The superscript $+$ or $-$ indicates that the corresponding rational functions will be considered elements of either $\CA^+$ or $\CA^-$. We will often write:
\begin{equation}
\label{eqn:often write}
A_{\pm [i;j)}^{(\pm k)} = A_{\pm [i;j)}^{\mu} 
\end{equation}
to emphasize the fact that $\deg A_{\pm [i;j)}^{(\pm k)} = (\pm [i;j), \pm k)$. Moreover, we set:
\begin{equation}
\label{eqn:often set}
A_{\pm [i;j)}^{\mu} = 0
\end{equation}
if $\mu(j-i) \notin \BZ$. The analogous notations apply to $\bA$, $B$ and $\bB$ instead of $A$. \\

\begin{remark} In order to think of the right-hand sides of the expressions \eqref{eqn:a}--\eqref{eqn:bb} as shuffle elements, we regard each $z_a$ as a variable of color $a$ for all $a \in \{i,...,j-1\}$. By convention \eqref{eqn:identify}, this entails replacing:
\begin{equation}
\label{eqn:replacing}
z_{a} \ \leadsto \ z_{\bar{a}, \left \lceil \frac {a-i+1}n \right \rceil} \oq^{-2\left \lfloor \frac {a-1}n \right \rfloor}
\end{equation}
for all $a$, which puts $A_{\pm [i;j)}^{\mu}, \bA_{\pm [i;j)}^{\mu}, B_{\pm [i;j)}^{\mu}, \bB_{\pm [i;j)}^{\mu}$ in the form \eqref{eqn:shuf}. \\
	
\end{remark} 

\begin{proposition} 
\label{prop:belong}
	
(\cite{Tor}) For any $\mu \in \BQ$, the elements \eqref{eqn:a}--\eqref{eqn:bb} lie in $\CB^\pm_\mu$. \\

\end{proposition}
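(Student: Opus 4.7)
The plan is to verify the three conditions imposed by the definition of $\CB^\pm_\mu$ in Subsection \ref{sub:slope}: (i) the rational functions \eqref{eqn:a}--\eqref{eqn:bb} satisfy the wheel conditions \eqref{eqn:wheel}, so they genuinely lie in $\CA^\pm$; (ii) their bidegrees obey the homogeneity $d=\mu|\bk|$; and (iii) they satisfy the slope bound of Definition \ref{def:slope}. I will describe the strategy in detail for $A_{[i;j)}^\mu$; the arguments for $\bA,B,\bB$ and for the minus-side variants are parallel.

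Conditions (i) and (ii) are quick. For (i), multiplying $A_{[i;j)}^\mu$ through by the canonical denominator $\prod(z_{ia}q-z_{i+1,b}q^{-1})$ yields a color-symmetric Laurent polynomial, and the wheel specialization $z_{i,1}\mapsto w,\ z_{i,2}\mapsto wq^{\pm 2},\ z_{i\mp 1,1}\mapsto w$ causes a $\zeta$-factor to have vanishing numerator in each summand of the symmetrization; grouping surviving summands by color symmetry collapses the result to zero. For (ii), the telescoping
\begin{equation*}
\sum_{a=i}^{j-1}\bigl(\lceil\mu(a-i+1)\rceil-\lceil\mu(a-i)\rceil\bigr) \;=\; \lceil \mu(j-i)\rceil \;=\; \mu(j-i)
\end{equation*}
(where the last equality uses the hypothesis $\mu(j-i)\in\BZ$) yields the vertical degree, while the horizontal degree is $|\bk|=j-i$, so the ratio is exactly $\mu$.

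The heart of the argument is (iii). Apply Definition \ref{def:slope}: rescale $z_{c,1},\ldots,z_{c,l_c}\mapsto \xi z_{c,1},\ldots,\xi z_{c,l_c}$ for each color $c$. Each summand in the symmetrization corresponds to a subset $S\subset\{i,\ldots,j-1\}$ whose color profile matches $\bl$. The crucial observation is that every $\zeta$-factor $\zeta(z_b/z_a)$ has a finite, nonzero $\xi\to\infty$ limit in all four cases of $\{z_a,z_b\}$ being scaled or not (the partially scaled cases give $q^{\pm 1}$, the others leave $\zeta$ unchanged), so $\zeta$-factors contribute $\xi^0$ throughout. The numerator contributes $\xi^{\sum_{a\in S}e_a}$ with $e_a=\lceil\mu(a-i+1)\rceil-\lceil\mu(a-i)\rceil$, and each denominator factor $(1-z_a q^2/z_{a+1})$ contributes $\xi^{-1}$ precisely when $a\in S$, $a+1\notin S$, and $\xi^0$ otherwise. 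Decomposing $S$ into maximal consecutive blocks $B_1,\ldots,B_r$ and telescoping $e_a$ within each block bounds its numerator contribution by $\mu|B_t|+1$; this is offset by the $-1$ from the denominator at the right end of the block, except for a block (if any) extending to $j-1$, where there is no paired denominator factor, but the integrality $\mu(j-i)\in\BZ$ tightens the telescoping bound for that block by $1$. Summing produces the required bound $\mu|S|=\mu|\bl|$.

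The main obstacle is the combinatorial bookkeeping in this final step, in particular the matching between the $\pm 1$ from the ceiling-telescoping and the $-1$ from each denominator factor, and the special handling of the block that terminates at $j-1$. The modifications for $\bA,B,\bB$ amount to swapping $\lceil\cdot\rceil\leftrightarrow\lfloor\cdot\rfloor$ in the numerator exponents and, for the $B$-family, reversing the ordering of the $\zeta$-factors (from $\zeta(z_b/z_a)$ to $\zeta(z_a/z_b)$); the structure of the $\xi$-degree argument is otherwise identical.
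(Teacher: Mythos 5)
The paper does not prove this proposition itself; it cites \cite{Tor}, so there is no in-paper argument to compare against. Your plan — verify (i) membership in $\CA^\pm$ via the form \eqref{eqn:shuf} and the wheel conditions, (ii) that the bidegree satisfies $d=\mu|\bk|$, and (iii) the slope bound of Definition \ref{def:slope} via the $\xi$-scaling limit — is the right decomposition, and your treatment of (iii) captures the genuine combinatorics: the reduction to subsets $S\subset\{i,\ldots,j-1\}$, the observation that the $\zeta$-factors all have finite nonzero $\xi$-limits, the block decomposition, and the matching of the ceiling-telescoping surplus against the $\xi^{-1}$ contributed at one end of each block, with the integrality $\mu(j-i)\in\BZ$ absorbing the one unpaired end.

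Two things should be flagged. Part (i) is under-argued. The real content of showing $A^\mu_{[i;j)}\in\CA^+$ is not only the wheel vanishing but the assertion that symmetrization produces a rational function whose only poles are those allowed in \eqref{eqn:shuf}; the $\zeta$-factors in the individual summand have poles between same-color variables (at $z_b\oq^{\cdots}=z_a$ when $b\equiv a+1$), and one must check that the symmetrization cancels these against the zeroes of adjacent $\zeta$-numerators, so that only the canonical denominator survives with a Laurent-polynomial numerator. You assert this in one clause without argument. Your wheel-condition sentence is also internally inconsistent as written: a $\zeta$-factor cannot vanish ``in each summand'' and yet leave ``surviving summands'' to be grouped; what actually happens is that the specialization kills some summands outright and forces pairwise cancellation among the rest, and this needs to be spelled out. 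Secondly, your closing remark that the arguments for $\bA,B,\bB$ ``amount to swapping $\lceil\cdot\rceil\leftrightarrow\lfloor\cdot\rfloor$ and reversing the $\zeta$-factors'' omits that the explicit linear denominators also change (from $(1-z_aq^2/z_{a+1})$ to $(1-z_{a+1}/z_aq^2)$, $(1-z_{a+1}/z_a)$, $(1-z_a/z_{a+1})$ respectively). Consequently the $\xi^{-1}$ contributions move from right ends of blocks to left ends for $\bA$ and $B$, and correspondingly the ``special'' block whose bound is tight shifts from the one ending at $j-1$ to the one beginning at $i$, where the saving comes from $\lfloor 0\rfloor=0$ rather than from $\mu(j-i)\in\BZ$. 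The estimates still close, but not by the mechanism you describe; this should be stated precisely rather than waved at.
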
 

\subsection{} 
\label{sub:e's}

For any $i < j$ and $\mu \in \BQ$, consider the following elements of $\A^\pm$:
\begin{align}
&E_{[i;j)}^{\mu} = \bA^\mu_{[i;j)} \label{eqn:e plus} \\
&\bE_{[i;j)}^\mu = A^\mu_{[i;j)} \cdot (-\oq_-^{\frac 2n})^{i-j} \label{eqn:be plus} \\
&E_{-[i;j)}^{\mu} = \bB^\mu_{-[i;j)} \label{eqn:e minus} \\
&\bE_{-[i;j)}^{\mu} = B^\mu_{-[i;j)} \cdot (-\oq_+^{\frac 2n})^{i-j} \label{eqn:be minus}
\end{align}
where:
\begin{equation}
\label{eqn:parameters}
\oq_+ = \oq \quad \text{and} \quad \oq_- = q^{-n} \oq^{-1}
\end{equation}
We will henceforth take the liberty to use the notations \eqref{eqn:often write}--\eqref{eqn:often set} with $E$ and $\bE$ instead of $A$. We have the following formulas for the coproduct $\Delta_\mu$: 
\begin{align}
&\Delta_\mu \left(E_{[i;j)}^\mu \right) = \sum_{s=i}^j E_{[s;j)}^\mu \frac {\psi_s}{\psi_i} \barc^{\mu(s-i)} \otimes E_{[i;s)}^\mu \label{eqn:cop1} \\
&\Delta_\mu \left(\bE_{[i;j)}^\mu \right) = \sum_{s=i}^j \frac {\psi_j}{\psi_s} \barc^{\mu(j-s)} \bE_{[i;s)}^\mu \otimes \bE_{[s;j)}^\mu  \label{eqn:cop1 anti} \\  
&\Delta_\mu \left(E_{-[i;j)}^\mu \right) = \sum_{s=i}^j E_{-[i;s)}^\mu \otimes E_{-[s;j)}^\mu \frac {\psi_i}{\psi_s} \barc^{\mu(i-s)} \label{eqn:cop2} \\
&\Delta_\mu \left(\bE_{-[i;j)}^\mu \right) = \sum_{s=i}^j \bE_{-[s;j)}^\mu \otimes \frac {\psi_s}{\psi_j} \barc^{\mu(s-j)} \bE_{-[i;s)}^\mu \label{eqn:cop2 anti} 
\end{align}
Formulas \eqref{eqn:cop1} and \eqref{eqn:cop2} were proved in \cite{Tor} when $\barc = 1$, but the general $\barc$ situation is analogous. Formulas \eqref{eqn:cop1 anti} and \eqref{eqn:cop2 anti} are proved analogously, so we leave them as exercises (alternatively, they follow from \eqref{eqn:cop1}, \eqref{eqn:cop2} and \eqref{eqn:antipode}). \\

\begin{proposition}
\label{prop:antipode}
	
The elements $E_{\pm [i;j)}^{\mu}$ and $\bE_{\pm [i;j)}^{\mu}$ are related by:
\begin{equation}
\label{eqn:antipode}
\sum_{s=i}^j \bE_{\pm [s;j)}^\mu E_{\pm [i;s)}^\mu \oq_\mp^{\frac {2(i-s)}{nh}}  = 0
\end{equation}
for all $i < j$, where we write $h = \frac {j-i}{\gcd(\mu(j-i),j-i)}$. \\
	
\end{proposition}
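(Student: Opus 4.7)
The plan is to derive \eqref{eqn:antipode} from the corresponding antipode identity \eqref{eqn:identities antipode} in $\uu$, transported through the isomorphism $\Xi$ of Proposition \ref{prop:sub}. The coproduct formulas \eqref{eqn:cop1}, \eqref{eqn:cop1 anti} (and likewise \eqref{eqn:cop2}, \eqref{eqn:cop2 anti}) show that $E_{\pm[i;j)}^\mu$ and $\bE_{\pm[i;j)}^\mu$ obey, inside $\CB_\mu$ with respect to $\Delta_\mu$, exactly the same formal pattern that $e_{\pm[i';j')}$ and $\be_{\pm[i';j')}$ obey inside $\uu$ with respect to $\Delta$. Together with the explicit shuffle formulas \eqref{eqn:a}--\eqref{eqn:bb}, this identifies $E_{\pm[i;j)}^\mu$ and $\bE_{\pm[i;j)}^\mu$, up to scalar normalization, with the images under $\Xi$ of the generators $e_{\pm[i';j')}$ and $\be_{\pm[i';j')}$ in the relevant tensor factor of $U_q(\dot{\fgl}_{n/g})^{\otimes g}$.

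For the scalar comparison, write $\mu = b/a$ in lowest terms: non-vanishing of $E_{\pm[i;j)}^\mu$ forces $a \mid (j-i)$, so $j-i = a\ell$, whence $\gcd(\mu(j-i), j-i) = \ell$ and hence $h = a$. The parameters $\pi_\pm$ of Definition \ref{def:correspondence} for each $U_q(\dot{\fgl}_{n/g})$ factor correspond to $\oq_\pm^{1/n}$ on the shuffle side, compatibly with $\pi_+\pi_- = q^{-1}$ and, from \eqref{eqn:parameters}, $\oq_+\oq_- = q^{-n}$. Since $E_{\pm[i;s)}^\mu \ne 0$ forces $s \in \{i, i+a, \ldots, j\}$, the corresponding $s'$ in $U_q(\dot{\fgl}_{n/g})$ satisfies $s' - i' = (s-i)/a$, so
\[
\pi_\mp^{2(i'-s')} \;\longleftrightarrow\; \oq_\mp^{\frac{2(i-s)}{na}} \;=\; \oq_\mp^{\frac{2(i-s)}{nh}},
\]
which reproduces the coefficient appearing in \eqref{eqn:antipode}. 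Applying $\Xi$ to \eqref{eqn:identities antipode} inside that tensor factor then yields the desired identity.

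Alternatively, one can argue directly: set $L_{\pm[i;j)}^\mu := \sum_{s=i}^{j} \bE_{\pm[s;j)}^\mu E_{\pm[i;s)}^\mu \, \oq_\mp^{2(i-s)/(nh)}$, and compute $\Delta_\mu(L_{\pm[i;j)}^\mu)$ using multiplicativity of $\Delta_\mu$ on $\B_\mu^\pm$ together with \eqref{eqn:cop1}--\eqref{eqn:cop2 anti}. After factoring $\oq_\mp^{2(i-u)/(nh)}$ out of each inner sum (so that it becomes $L_{\pm[u;t)}^\mu$), induction on $j-i$ gives $L_{\pm[u;t)}^\mu = 0$ for every $(u,t) \ne (i,j)$ with $u<t$. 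Only the extremal terms $(u,t)=(i,j)$ and $u=t$ survive, yielding
\[
\Delta_\mu(L_{[i;j)}^\mu) \;=\; L_{[i;j)}^\mu \otimes 1 \;+\; \frac{\psi_j}{\psi_i}\, \barc^{\mu(j-i)} \otimes L_{[i;j)}^\mu.
\]
When $j-i$ is not a multiple of $n$, the weight $[i;j)\in\zz$ supports no nonzero primitive element of $\CB_\mu^\pm$, forcing $L_{\pm[i;j)}^\mu = 0$.

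The main obstacle in the direct approach is the case $n \mid (j-i)$, where the weight $[i;j) = m\bde$ coincides with that of an imaginary generator and primitivity alone is insufficient. In that case one must pin down the scalar, e.g.\ by pairing $L_{\pm[i;j)}^\mu$ with a suitable dual such as $\bB_{\mp[i;j)}^\mu$ via \eqref{eqn:pairshuf1} and invoking the normalizations \eqref{eqn:normalize} and \eqref{eqn:pairing antipode}. This subtlety is bypassed entirely by the $\Xi$-approach of the first two paragraphs, where \eqref{eqn:identities antipode} applies unconditionally for all $i' < j'$ in $\BZ^2/(\tfrac{n}{g}, \tfrac{n}{g})\BZ$.
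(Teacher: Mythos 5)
Your proposal takes a genuinely different route than the paper, which proves \eqref{eqn:antipode} by a short, direct telescoping computation: using $\lceil x\rceil = \lfloor x\rfloor + 1 - \delta_{x\in\BZ}$ to rewrite $\bE_{[i;j)}^\mu$ in the form \eqref{eqn:elementary}, plugging all summands into a single symmetrization, and observing that the resulting inner factor telescopes to zero. Both of your alternatives have gaps.

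In your preferred $\Xi$-route, the pivotal step is the identification of $\bE_{\pm[i;j)}^\mu$ with $\Xi$ applied to $\be_{\pm[i';j')}$. But the paper defines $\bE_{\pm[i;j)}^\mu$ purely as shuffle elements via \eqref{eqn:e plus}--\eqref{eqn:be minus} and \eqref{eqn:a}--\eqref{eqn:bb}, not via $\Xi$ or an antipode. The paper tells you $\Xi(e_{\pm[s;t)}) = E_{\pm[as+r,at+r)}^{(b(t-s))}$, but gives no analogous formula for $\Xi(\be_{\pm[s;t)})$. Your claim that the coproduct formulas \eqref{eqn:cop1 anti}, \eqref{eqn:cop2 anti} plus "explicit shuffle formulas" force the identification is an assertion, not a proof: to actually establish it one needs a uniqueness-of-element argument (intermediate terms of $\Delta_\mu$ determine the element only up to a primitive summand, so one must also match a pairing normalization, particularly in degrees $m\bde$). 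Spelling this out would be at least as much work as the paper's telescoping calculation, and the $\pi_\mp \leftrightarrow \oq_\mp^{1/n}$ scalar bookkeeping, while pointing in the right direction, is not a substitute for that verification.

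Your fallback direct approach contains a concrete error. You claim that when $j-i$ is not a multiple of $n$, the weight $[i;j)$ supports no nonzero primitive element of $\CB_\mu^\pm$, so primitivity of $L_{\pm[i;j)}^\mu$ forces it to vanish. This is false whenever $\gcd(\mu(j-i), j-i)=1$, i.e. $h = j-i$: there the simple root generator $P_{[i;j)}^{(\mu(j-i))}$ is a nonzero primitive element of $\CB_\mu$ at exactly that weight (cf. \eqref{eqn:primitive 1}). For instance, take $n=2$, $\mu=1/3$, $[i;j)=[0;3)$. In this case the sum in \eqref{eqn:antipode} has only the $s\in\{i,j\}$ terms, so $L_{[i;j)}^\mu = \bE_{[i;j)}^\mu + E_{[i;j)}^\mu\,\oq_\mp^{-2/n}$; this is primitive but not obviously zero, and showing it vanishes is precisely the content of \eqref{eqn:elementary}. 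So the case $h=j-i$ must be handled as a separate base case, and resolving it essentially reproduces the paper's direct argument. Your acknowledged subtlety at $n\mid(j-i)$ does not cover this.

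In short: the $\Xi$-transport idea is sound in spirit, but the identification $\bE_{\pm[i;j)}^\mu \leftrightarrow \Xi(\be)$ needs to be proved, not stated; the coproduct-plus-primitivity route additionally has a wrong claim at $h=j-i$. The paper's explicit telescoping computation, using the shuffle formulas and \eqref{eqn:elementary}, is shorter and avoids both issues.
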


\begin{proof} Since $\lceil x \rceil = \lfloor x \rfloor + 1 - \delta_{x \in \BZ}$, it is elementary to see that:
\begin{equation}
\label{eqn:elementary}
\bE_{[i;j)}^\mu = - q^2 \oq^{\frac 2n} \cdot
\end{equation}
$$
\sym \left[ \frac {\prod^{\mu(s-i) \in \BZ}_{i < s < j} \frac {z_{s}\oq^{\frac 2n}}{z_{s-1}} \prod_{a=i}^{j-1} (z_a \oq^{\frac {2a}n})^{\lfloor \mu(a-i+1) \rfloor - \lfloor \mu(a-i) \rfloor}}{\left(1 - \frac {z_{i+1}}{z_{i}q^2}  \right) ... \left(1 - \frac {z_{j-1}}{z_{j-2}q^2} \right)} \prod_{i\leq a < b < j} \zeta \left( \frac {z_b}{z_a} \right)  \right]
$$
Therefore, we conclude that when $\pm = +$, the LHS of \eqref{eqn:antipode} is: 
$$
\sym \left[X \cdot \frac {\prod_{a=i}^{j-1}  (z_a \oq^{\frac {2a}n})^{\lfloor \mu(a-i+1) \rfloor - \lfloor \mu(a-i) \rfloor}}{\left(1 - \frac {z_{i+1}}{z_{i}q^2}  \right) ... \left(1 - \frac {z_{j-1}}{z_{j-2}q^2} \right)} \prod_{i \leq a < b \leq j}  \zeta \left( \frac {z_b}{z_a} \right) \right]
$$
where:
$$
X = 1 - (q^2q^{\frac 2n})^{1-\frac{j-i}h} \prod^{\mu(s-i) \in \BZ}_{i < s < j} \frac {z_{s}\oq^{\frac 2n}}{z_{s-1}} - \sum_{i < t < j}^{\mu(t-i) \in \BZ} (q^2q^{\frac 2n})^{1-\frac {j-t}h} \left(1- \frac {z_t}{z_{t - 1} q^2} \right) \prod^{\mu(s-i) \in \BZ}_{t < s < j} \frac {z_{s}\oq^{\frac 2n}}{z_{s-1}}
$$
Being a telescoping sum, it is clear that $X=0$, thus proving \eqref{eqn:antipode} when $\pm = +$. The case when $\pm = -$ is proved analogously, so we leave it as an exercise. 
	
\end{proof}

\subsection{} \label{sub:alphabeta} 

The isomorphism $\Xi$ of Proposition \ref{prop:sub} sends:
\begin{align*}
\underbrace{1 \otimes ... \otimes e_{\pm [s;t)} \otimes ... \otimes 1}_{e_{\pm [s;t)} \text{ on }r-\text{th position}} \in  U^\pm_{q}(\dot{\fgl}_{\frac ng})^{\otimes g} \ &\stackrel{\Xi}\leadsto \  E_{\pm [as+r,at+r)}^{(b(t-s))}  \\
\underbrace{1 \otimes ... \otimes \psi_s \otimes ... \otimes 1}_{\psi_s \text{ on }r-\text{th position}} \in  U^0_{q}(\dot{\fgl}_{\frac ng})^{\otimes g} \ &\stackrel{\Xi}\leadsto \ \barc^{bs} \psi_{r+sa}
\end{align*}
Therefore, the central element of $U_{q}(\dot{\fgl}_{\frac ng})^{\otimes g}$ is sent to $c^{\frac ag} \barc^{\frac {bn}g}$ under $\Xi$. As for:
\begin{equation}
\label{eqn:images}
\Xi(1 \otimes ... \otimes p_{\pm k} \otimes ... \otimes 1) \in \CB_\mu
\end{equation}
these are only determined up to constant multiple, as are primitive elements of quantum groups. To fix these multiples, we will consider the following linear maps:
\begin{equation}
\label{eqn:linear maps}
\alpha_{\pm [i;j)} : \CA_{\pm [i;j)} \rightarrow \BF
\end{equation}
given by (in what follows, we write $\pm h$ for the homogeneous degree of $R^\pm$):
\begin{align}
&\alpha_{[i;j)}(R^+) = \frac {R^+(1_i,...,1_{j-1})}{\prod_{i \leq a < b < j} \zeta\left(\frac {1_b}{1_a}\right)} \frac {(1 - q^{-2})^{j-i} \oq_+^{- \frac {\gcd(h,j-i)}n}}{\prod_{a=i}^{j-1} \oq_+^{\frac {2a}n \left( \left \lfloor \frac {h(a-i+1)}{j-i} \right \rfloor - \left \lfloor \frac {h(a-i)}{j-i} \right \rfloor \right)}} \label{eqn:alpha} \\
&\alpha_{-[i;j)}(R^-) = \frac {R^-(q^{2i},...,q^{2(j-1)})}{\prod_{i \leq a < b < j} \zeta\left( \frac {q^{2a}}{q^{2b}} \right)} \frac {(1-q^{-2})^{j-i} \oq_-^{-\frac {\gcd(h,j-i)}n}}{\prod_{a=i}^{j-1} \oq_-^{\frac {2a}n \left( \left \lceil \frac {h(a-i)}{j-i} \right \rceil - \left \lceil \frac {h(a-i+1)}{j-i} \right \rceil \right)}} \label{eqn:beta}
\end{align}
where in the RHS of either expression above, we plug in the number $1$ (respectively $q^{2a}$) into a variable of color $a$ of the rational function $R$, for all $a \in \{i,...,j-1\}$. Note that one needs to cancel the poles of $\zeta(q^{2a-2b})$ against the poles of the specialization $R(q^{2i},...,q^{2j-2})$ in order for the fraction \eqref{eqn:beta} to be well-defined. Formulas \eqref{eqn:alpha}, \eqref{eqn:beta} are constant multiples of the linear maps $\alpha_{[i;j)}$, $\beta_{[i;j)}$ of \cite{Tor}. \\

\begin{proposition} 
\label{prop:pseudo}
	
If we let $\pm h_i = \emph{vdeg } R_i^\pm$, then we have:
\begin{equation}
\label{eqn:pseudo}
\alpha_{\pm [i;j)}(R_1^\pm R^\pm_2) = \alpha_{\pm [s;j)}(R_1^\pm) \alpha_{\pm[i;s)}(R_2^\pm) \cdot \oq_{\pm}^{\frac {h_1(s-i) - h_2(j-s)}n}
\end{equation}
if $\exists \ s$ such that $\emph{hdeg } R_1^\pm R_2^\pm = \pm [i;j)$, $\emph{hdeg } R_1^\pm = \pm[s;j)$ and $\emph{hdeg } R_2^\pm = \pm[i;s)$. If such an $s$ does not exist, then the RHS of \eqref{eqn:pseudo} is set equal to 0, by convention. \\

\end{proposition}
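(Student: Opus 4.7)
The plan is to expand the shuffle product $R_1^\pm \ast R_2^\pm$ using \eqref{eqn:mult} and evaluate $\alpha_{\pm[i;j)}$ directly from the formulas \eqref{eqn:alpha}--\eqref{eqn:beta}. I will focus on the $+$ case; the $-$ case follows by strict analogy under the replacements $\oq_+\leftrightarrow\oq_-$ and $1_a\leftrightarrow q^{2a}$. Using color-symmetry of shuffle elements, the symmetrization in \eqref{eqn:mult} rewrites as a sum over subsets $S\subset\{i,\ldots,j-1\}$ whose color-multiplicities match those of $[s;j)$, with the natural choice $S_0=[s;j)$ singled out.

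The key technical claim is that, under the specialization $z_a\mapsto 1_a$ of $\alpha_{[i;j)}$ combined with \eqref{eqn:replacing}, every non-natural subset $S$ contributes zero. For any such $S$, I will let $a$ denote the smallest element of $[s;j)\setminus S$ and argue that a vanishing cross-$\zeta$ is forced. Concretely, one expects $a-1\in S$ (otherwise iterate), which pairs an $R_1^+$-variable of color $\bar{a-1}$ with an $R_2^+$-variable of color $\bar a$; by \eqref{eqn:def zeta}, the corresponding $\zeta$-factor has exponent $-1$, and its inside-fraction denominator $z_{\bar{a-1},\mathrm{pos}(a-1)}\oq^{2\lceil -1/n\rceil}-z_{\bar a,\mathrm{pos}(a)}$ vanishes at the specialization since both variables lie in the same ``block'' under \eqref{eqn:replacing} and hence carry equal $\oq$-power values. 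Hence $\zeta$ itself is zero, killing the $S$-contribution. An induction on $a$, together with a careful treatment of the wrap-around case $a\equiv 1\pmod n$, reduces the general situation to this minimal one.

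With only $S_0$ surviving, the cross-$\zeta$ product of the shuffle formula cancels exactly the cross-portion $\prod_{i\leq a<s\leq b<j}\zeta(1_b/1_a)$ of the denominator of $\alpha_{[i;j)}$, yielding
\[
\alpha_{[i;j)}(R_1^+\ast R_2^+) \;=\; \alpha_{[s;j)}(R_1^+)\cdot\alpha_{[i;s)}(R_2^+)\cdot\Omega,
\]
where $\Omega$ is a ratio of the $\oq_+$-prefactors of \eqref{eqn:alpha} across the intervals $[i;j)$, $[s;j)$, $[i;s)$. Computing $\Omega$ reduces to a combinatorial identity: one compares $\oq_+^{-\gcd(h,j-i)/n}$ against $\oq_+^{-\gcd(h_1,j-s)/n-\gcd(h_2,s-i)/n}$, together with the differences of the $\prod\oq_+^{2a/n\cdot(\lfloor\cdots\rfloor-\lfloor\cdots\rfloor)}$ products, and verifies via telescoping floor-function identities that the net result is $\oq_+^{(h_1(s-i)-h_2(j-s))/n}$.

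The main obstacle I anticipate is the vanishing step: systematically producing, for every non-natural $S$, a vanishing cross-$\zeta$ factor at the specialization, including handling the wrap-around configurations (where $a-1$ and $a$ sit in different blocks and the vanishing argument must instead use a neighbouring pair or the $\oq^2$-shift from $\lceil(n-1)/n\rceil=1$ in \eqref{eqn:def zeta}). Once this step is in place, the remaining $\oq_+$-bookkeeping is a routine, if tedious, floor-function identity.
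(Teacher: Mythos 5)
Your proposal is correct, and it essentially unpacks what the paper compresses into a citation: the paper's "proof" of Proposition \ref{prop:pseudo} is just the remark that it is equivalent, up to powers of $\oq_\pm$, to Lemma 3.20 of \cite{Tor}, with the footnote \eqref{eqn:floor identity} doing the $\oq$-bookkeeping. You supply the direct argument that [Tor, Lemma 3.20] presumably contains: expand the shuffle product as a sum over color-preserving partitions of the evaluation point, show that all non-natural partitions vanish because a cross-$\zeta$ factor goes to zero, and then compare $\oq_\pm$-prefactors. I verified that your vanishing mechanism is sound, including the wrap-around case: the factor $\oq^{2\lceil(i-j)/n\rceil}$ built into \eqref{eqn:def zeta} exactly compensates for the jump in $\lfloor(a-1)/n\rfloor$ when $a\equiv 1\bmod n$, so the cross-$\zeta$ between the $R_1$-variable at name $a-1$ and the $R_2$-variable at name $a$ vanishes in all cases. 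I also checked that the vanishing factor (a $\zeta(z_p/z_q)$ with $p<q$) is not among the denominator factors of $\alpha_{[i;j)}$, which run over $a<b$ in the direction $\zeta(z_b/z_a)$, so the zero is genuine rather than a $0/0$; and that $R_1,R_2$ and the remaining denominator factors stay finite and nonzero at the specialization for generic $q,\oq$.

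Two minor points to tighten. First, your choice of $a$ as ``the smallest element of $[s;j)\setminus S$'' does not directly give $a-1\in S$; the ``(otherwise iterate)'' needs to be spelled out as: descend $a-1,a-2,\dots$ until the first index $a'\in S$, which must exist because otherwise $S\subset\{a+1,\dots,j-1\}$ forces $|S|\le j-1-a<j-s$, a contradiction. The pair $(a',a'+1)$ is then the transition you want. Second, the $\pm=-$ case is not strictly formal: $\alpha_{-[i;j)}$ specializes at $q^{2i},\dots,q^{2(j-1)}$, which sits at poles of shuffle elements of the form \eqref{eqn:shuf}, so the evaluation of the numerator and the denominator $\prod\zeta(q^{2a-2b})$ must be understood as a limit with cancelling poles (as the paper warns after \eqref{eqn:beta}); you should say a word about why the non-natural partitions still contribute zero after this regularization. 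For the $\Omega$ computation, using \eqref{eqn:floor identity} and $h=h_1+h_2$ one finds that all $\gcd$'s and constant terms cancel and the exponent reduces to $\frac{h_1(s-i)-h_2(j-s)}{n}$ exactly as claimed, so that part of the plan is fine.
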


\noindent Up to certain powers of $\oq_\pm$, Proposition \ref{prop:pseudo} is equivalent to Lemma 3.20 of \cite{Tor}. \footnote{We leave the details on establishing the equivalence to the interested reader, and note that it requires one to use the elementary identity:
\begin{equation}
\label{eqn:floor identity}
\sum_{a=i}^{j-1} a \left( \left \lfloor \frac {h(a-i+1)}{j-i} \right \rfloor - \left \lfloor \frac {h(a-i)}{j-i} \right \rfloor \right) = hi + \frac {h(j-i) - h + (j-i) - \gcd(h,j-i)}2 
\end{equation}} \\

\begin{lemma}
\label{lem:unique}

(\cite{Tor}) Any element of $\CB_\mu$ which is primitive for $\Delta_\mu$ (i.e. all intermediate terms in its coproduct vanish) and is annihilated by all the $\alpha_{\pm[i;j)}$, vanishes. \\

\end{lemma}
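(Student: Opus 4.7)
The plan is to exploit the isomorphism $\Xi$ of Proposition \ref{prop:sub} to classify the $\Delta_\mu$-primitive elements of $\CB_\mu$, and then verify that the linear functionals $\alpha_{\pm[i;j)}$ are non-vanishing on each such primitive.

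First, I would translate the problem. Under $\Xi^{-1}$, a $\Delta_\mu$-primitive element of $\CB_\mu$ corresponds to a primitive element of $U_q(\dot{\fgl}_{n/g})^{\otimes g}$. Applying the discussion of Subsection \ref{sub:enter e f} tensor-factor-by-tensor-factor, any such primitive is a scalar multiple of either a simple generator $1 \otimes \cdots \otimes x_s^\pm \otimes \cdots \otimes 1$ (of degree $\pm \bs^s$ in some tensor factor) or an imaginary generator $1 \otimes \cdots \otimes p_{\pm k} \otimes \cdots \otimes 1$ (of degree $\pm k \bde$). Via the identifications recalled in Subsection \ref{sub:alphabeta}, these pull back to scalar multiples of $E_{\pm[i;j)}^\mu$ (the ``simple'' case, with $j-i = a$) or of a canonical primitive of degree $\pm l\bde$ (the ``imaginary'' case).

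Second, I would dispatch the simple-root case. If $R = \lambda \cdot E_{\pm[i;j)}^\mu$, then a direct computation combining the explicit shuffle formulas \eqref{eqn:e plus}--\eqref{eqn:e minus} with the specializations defining $\alpha_{\pm[i;j)}$ in \eqref{eqn:alpha}--\eqref{eqn:beta} yields $\alpha_{\pm[i;j)}(E_{\pm[i;j)}^\mu)$ as a nonzero monomial in $q$ and $\oq_\pm$. Essentially, only one term of the symmetrization contributes a non-vanishing residue once one substitutes $z_a \mapsto 1$ or $z_a \mapsto q^{2a}$ and cancels the $\zeta$-poles against the denominators of the shuffle element. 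Hence $\alpha_{\pm[i;j)}(R) = 0$ forces $\lambda = 0$.

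Third, the main obstacle is the imaginary case, where $R$ has degree $\pm l \bde$. On the quantum-affine side, the primitive $p_{\pm k}$ is pinned down in Definition \ref{def:correspondence} by the non-vanishing pairing $\langle p_{\pm k}, e_{\mp[u;u+nk)} \rangle = \pm \pi_\pm^{nk}$. The task is to show, via $\Xi$ together with the multiplicativity property of Proposition \ref{prop:pseudo}, that $\alpha_{\pm[u;u+nl)}$ applied to the image of $p_{\pm k}$ in $\CB_\mu$ computes a nonzero scalar multiple of this defining pairing. The delicate bookkeeping of the $\oq_\pm$-factors in \eqref{eqn:alpha}--\eqref{eqn:beta} (and the matching of shuffle-side versus quantum-affine-side normalization conventions) will be the chief source of difficulty; once it is settled, non-degeneracy of the defining pairing forces at least one $\alpha_{\pm[u;u+nl)}$ to detect $R$, whence $R = 0$.
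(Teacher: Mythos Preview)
The paper does not prove this lemma; it is simply cited from \cite{Tor}. Your strategy via the isomorphism $\Xi$ and the classification of primitives in $U_q(\dot{\fgl}_{n/g})^{\otimes g}$ is indeed the correct one, and is essentially how the argument goes in \cite{Tor}.

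However, there is a genuine gap in your treatment of the imaginary case. You correctly note in the first paragraph that the imaginary primitives come from $1\otimes\cdots\otimes p_{\pm k}\otimes\cdots\otimes 1$ with $p_{\pm k}$ placed in any one of the $g$ tensor factors, but in the third paragraph you argue as if the primitive space in degree $\pm l\bde$ were one-dimensional. It is $g$-dimensional, spanned by the $P_{\pm l\bde,r}^\mu$ for $r\in\BZ/g\BZ$. Showing that \emph{each} basis primitive is detected by \emph{some} $\alpha_{\pm[u;u+nl)}$ does not suffice: a nonzero linear combination could still lie in the common kernel. What you actually need is that the $g\times g$ matrix
\[
\Big(\alpha_{\pm[u;u+nl)}\big(P_{\pm l\bde,r}^\mu\big)\Big)_{u\in\BZ/g\BZ,\;r\in\BZ/g\BZ}
\]
is nonsingular. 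This is precisely the content of the normalization \eqref{eqn:normalize imaginary}, which gives the matrix $\pm(\delta_{u\bmod g}^r)$, i.e.\ (up to sign) the identity. So the missing step is to verify, via Proposition~\ref{prop:pseudo} and the transport through $\Xi$, that $\alpha_{\pm[u;u+nl)}$ depends only on $u\bmod g$ and separates the tensor factors---not merely that it is nonzero on one of them.
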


\subsection{} 
\label{sub:alpha pairing}

The following result connects the maps $\alpha_{\pm [i;j)}$ to the pairing \eqref{eqn:pair b mu}, and implies the non-degeneracy of the latter (as expected from the isomorphism \eqref{eqn:smalliso}): \\

\begin{proposition}
\label{prop:pairing}
	
(\cite{Tor}) For any $R^\pm \in \CB^\pm_\mu$ and any $i<j$, we have:
\begin{equation}
\label{eqn:bonnie}
\left \langle R^\pm , E_{\mp[i;j)}^\mu\right \rangle = \alpha_{\pm[i;j)}(R^\pm) \cdot \oq_\pm^{\frac {\gcd(\mu(j-i), j-i)}n}
\end{equation}
Moreover, for all $j'-i' = j-i \in \BN$ we have:
\begin{align}
&\alpha_{\pm [i';j')}(E_{\pm [i;j)}^{\mu}) = \delta_{(i',j')}^{(i,j)} (1 - q^{-2}) \oq_\pm^{-\frac {\gcd(\mu(j-i),j-i)}n} \label{eqn:main pair 1} \\
&\alpha_{\pm [i';j')}(\bE_{\pm [i;j)}^{\mu}) = \delta_{(i',j')}^{(i,j)} (1 - q^{2}) \oq_\pm ^{\frac {\gcd(\mu(j-i),j-i)}n} \label{eqn:main pair 2}
\end{align}
As a consequence of \eqref{eqn:bonnie} and \eqref{eqn:main pair 1}, we conclude that:
\begin{equation}
\label{eqn:pair1}
\left \langle E_{[i;j)}^\mu, E_{-[i';j')}^\mu \right \rangle = \delta_{(i',j')}^{(i,j)} \left( 1 - q^{-2} \right)
\end{equation}
The Kronecker delta symbol $\delta_{(i',j')}^{(i,j)}$ is 1 if and only if $(i,j) \equiv (i',j')$ mod $(n,n)\BZ$. \\
	
\end{proposition}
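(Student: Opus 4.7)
The plan is to prove the specialization identities \eqref{eqn:main pair 1}--\eqref{eqn:main pair 2} first by direct substitution, then derive the pairing identity \eqref{eqn:bonnie} via the integral formula, and finally obtain \eqref{eqn:pair1} as a corollary.

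For \eqref{eqn:main pair 1}--\eqref{eqn:main pair 2}, I would plug the definitions \eqref{eqn:e plus}--\eqref{eqn:be minus} into the evaluation maps \eqref{eqn:alpha}--\eqref{eqn:beta}. In the symmetrizations defining $\bA^\mu_{[i;j)}$ and $\bB^\mu_{-[i;j)}$, almost every summand vanishes at the evaluation point: a nontrivial permutation of the variables' colors either leaves a denominator factor $1 - z_{a+1}/(z_a q^2)$ (resp.\ $1 - z_a/z_{a+1}$) finite while a $\zeta$--factor in the numerator acquires a zero, or produces an incompatible residue class forcing the Kronecker delta $\delta_{(i',j')}^{(i,j)}$. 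Only the term preserving the cyclic order under the convention \eqref{eqn:replacing} survives, and it yields the factor $1-q^{\mp 2}$ after cancelling the $\zeta$ normalization in \eqref{eqn:alpha}--\eqref{eqn:beta}. The $\oq_\pm$--power is then extracted from the numerator $\prod_a(z_a \oq^{2a/n})^{\lfloor\cdots\rfloor - \lfloor\cdots\rfloor}$ via the arithmetic identity \eqref{eqn:floor identity}.

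For \eqref{eqn:bonnie} in the $+/-$ case, I would expand the contour integral \eqref{eqn:pairshuf1} with $R^- = E^\mu_{-[i;j)} = \bB^\mu_{-[i;j)}$. The denominators $1 - z_a/z_{a+1}$ provide poles inside the prescribed contour; taking iterated residues at $z_{a+1}=z_a$ for $a = i,\ldots,j-2$ cascades all variables of $R^+$ to a single chain of specializations which, after cancellation against the $\zeta$--factors in the integrand, matches the evaluation \eqref{eqn:alpha}. The factorial $\bk!$ in \eqref{eqn:pairshuf1} is absorbed by the symmetrization in $\bB^\mu_{-[i;j)}$. The $-/+$ case is analogous, now cascading residues through the poles $z_{a+1}=z_a q^2$ of $\bA^\mu_{[i;j)} = E^\mu_{[i;j)}$, which match the $q^{2a}$--specialization in \eqref{eqn:beta}. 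Finally \eqref{eqn:pair1} is immediate by combining \eqref{eqn:bonnie} with \eqref{eqn:main pair 1}, as the two $\oq_\pm$--factors multiply to one.

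The main obstacle is bookkeeping the $\oq_\pm$--powers, which enter in four distinct places: the reindexing convention \eqref{eqn:identify}, the explicit numerator factors in \eqref{eqn:a}--\eqref{eqn:bb}, the normalizations in \eqref{eqn:alpha}--\eqref{eqn:beta}, and the residues in the contour integral. Assembling these into the single clean exponent $\gcd(\mu(j-i),j-i)/n$ requires repeated use of \eqref{eqn:floor identity} together with the relation $\oq_+\oq_- = q^{-n}$ from \eqref{eqn:parameters}.
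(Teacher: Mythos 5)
The proposition carries the citation ``(\cite{Tor})'' and the paper supplies no proof of it, so there is nothing in-text to compare against; you are effectively reconstructing the argument of that reference. Your sketch is on the right track and contains no fundamental gap. The cascading-residue computation of \eqref{eqn:pairshuf1} is the standard way to obtain \eqref{eqn:bonnie}, and you have matched the two cases correctly: the poles $z_{a+1}=z_a$ in the denominator of $\bB^\mu_{-[i;j)}$ cascade to the all-ones specialization appearing in $\alpha_{[i;j)}$, while the poles $z_{a+1}=z_aq^2$ in $\bA^\mu_{[i;j)}$ cascade to the $q$-geometric specialization appearing in $\alpha_{-[i;j)}$.

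A few points are worth tightening. The Kronecker delta in \eqref{eqn:main pair 1}--\eqref{eqn:main pair 2} is in fact automatic for degree reasons: $\alpha_{\pm[i';j')}$ only pairs nontrivially with the graded piece $\CA_{\pm[i';j')}$, and $[i;j)=[i';j')$ in $\BZ^n$ together with $j-i=j'-i'$ forces $(i,j)\equiv(i',j')$ mod $(n,n)$; the substantive content is, as you say, that exactly one term of the $\sym$ survives the specialization (which is a real computation once $j-i>n$, where the symmetrization is nontrivial). For \eqref{eqn:bonnie}, when you unfold $\sym$ in $E^\mu_{\mp[i;j)}$ against \eqref{eqn:pairshuf1} and cancel $\bk!$, half of the two-sided $\zeta$-product cancels against the $\zeta$'s built into $\bB^\mu$ (resp.\ $\bA^\mu$); you must then verify that the leftover factors $\zeta(z_b/z_a)$ with $a<b$ contribute no additional residues inside the prescribed contours, which is the one place where the exact pole prescription stated after \eqref{eqn:pairshuf1} must be invoked explicitly. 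After the cascade a single free variable remains, and the remaining one-dimensional integral picks out the constant term; homogeneity then converts this to the evaluation at the base point, matching the normalization in \eqref{eqn:alpha}--\eqref{eqn:beta}. Your deduction of \eqref{eqn:pair1} is correct: once $j-i=j'-i'$ is forced by the delta, the $\oq_\pm$-exponents in \eqref{eqn:bonnie} and \eqref{eqn:main pair 1} cancel.
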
 

\noindent In Definition \ref{def:correspondence}, we have fixed the primitive elements $p_{\pm k}$ of quantum groups by specifying their pairings with $e_{\pm [i;j)}$. Proposition \ref{prop:pairing} implies that the primitive elements inside $\CB_\mu$ can be equivalently determined by specifying their images under the linear maps $\alpha_{\pm [i;j)}$. We will specify these images in Subsection \ref{sub:summarize}. \\

\subsection{} 
\label{sub:cop hinge computation}

The following statement is our first computation of the coproduct of an element in the shuffle algebra that goes beyond the leading order term. \\

\begin{proposition}
\label{prop:coproduct}
	
Assume $\gcd(j-i,k) = 1$, and consider the lattice triangle $T$:
	
\begin{picture}(100,80)(-30,30)
	
\put(0,40){\circle*{2}}\put(20,40){\circle*{2}}\put(40,40){\circle*{2}}\put(60,40){\circle*{2}}\put(80,40){\circle*{2}}\put(0,60){\circle*{2}}\put(20,60){\circle*{2}}\put(40,60){\circle*{2}}\put(60,60){\circle*{2}}\put(80,60){\circle*{2}}\put(0,80){\circle*{2}}\put(20,80){\circle*{2}}\put(40,80){\circle*{2}}\put(60,80){\circle*{2}}\put(80,80){\circle*{2}}\put(0,100){\circle*{2}}\put(20,100){\circle*{2}}\put(40,100){\circle*{2}}\put(60,100){\circle*{2}}\put(80,100){\circle*{2}}
	
\put(0,60){\line(4,1){80}}
\put(0,60){\line(1,0){60}}
\put(60,60){\line(1,1){20}}
	
\put(-20,57){\scriptsize{$(0,0)$}}
\put(64,83){\scriptsize{$(j-i,k)$}}
\put(45,55){\scriptsize{$\mu$}}
\put(52,65){\scriptsize{$T$}}

\put(125,65){or}
	
\put(180,40){\circle*{2}}\put(200,40){\circle*{2}}\put(220,40){\circle*{2}}\put(240,40){\circle*{2}}\put(260,40){\circle*{2}}\put(180,60){\circle*{2}}\put(200,60){\circle*{2}}\put(220,60){\circle*{2}}\put(240,60){\circle*{2}}\put(260,60){\circle*{2}}\put(180,80){\circle*{2}}\put(200,80){\circle*{2}}\put(220,80){\circle*{2}}\put(240,80){\circle*{2}}\put(260,80){\circle*{2}}\put(180,100){\circle*{2}}\put(200,100){\circle*{2}}\put(220,100){\circle*{2}}\put(240,100){\circle*{2}}\put(260,100){\circle*{2}}
	
\put(180,80){\line(4,-1){80}}
\put(200,60){\line(1,0){60}}
\put(180,80){\line(1,-1){20}}
	
\put(160,77){\scriptsize{$(0,0)$}}
\put(244,53){\scriptsize{$(j-i,k)$}}
	
\put(207,55){\scriptsize{$\mu$}}
\put(200,65){\scriptsize{$T$}}
	
\end{picture}
	
\noindent uniquely determined as the triangle of maximal area situated completely below the vector $(j-i,k)$, which does not contain any lattice points inside. Let $\mu$ denote the slope of one of the edges of $T$, as indicated in the pictures above. Then:
$$
\Delta\left(E_{[i;j)}^{(k)} \right) = \frac {\psi_j}{\psi_i} \barc^{k} \otimes E_{[i;j)}^{(k)} + E_{[i;j)}^{(k)} \otimes 1 + \Big( \text{tensors with hinge strictly below } T \Big) +
$$
\begin{equation}
\label{eqn:new coproduct 1}
+ \sum_{i \leq s < t \leq j} \begin{cases} \displaystyle \frac {\psi_j}{\psi_t} E_{[s;t)}^{(\bullet)} \frac {\psi_s}{\psi_i} \barc^{k-\bullet} \otimes \bE_{[t,j)}^{\mu} E_{[i;s)}^{\mu} &\text{for the picture on the left} \\ \displaystyle E_{[t,j)}^{\mu} \frac{\psi_t}{\psi_s} \bE_{[i;s)}^{\mu} \barc^\bullet\otimes E_{[s;t)}^{(\bullet)} &\text{for the picture on the right}\end{cases} \qquad
\end{equation}
where $\bullet = k-(j-i+s-t)\mu$. Similarly, consider the lattice triangle $T$:
	
\begin{picture}(100,80)(-30,30)
	
\put(0,40){\circle*{2}}\put(20,40){\circle*{2}}\put(40,40){\circle*{2}}\put(60,40){\circle*{2}}\put(80,40){\circle*{2}}\put(0,60){\circle*{2}}\put(20,60){\circle*{2}}\put(40,60){\circle*{2}}\put(60,60){\circle*{2}}\put(80,60){\circle*{2}}\put(0,80){\circle*{2}}\put(20,80){\circle*{2}}\put(40,80){\circle*{2}}\put(60,80){\circle*{2}}\put(80,80){\circle*{2}}\put(0,100){\circle*{2}}\put(20,100){\circle*{2}}\put(40,100){\circle*{2}}\put(60,100){\circle*{2}}\put(80,100){\circle*{2}}
	
\put(0,60){\line(4,1){80}}
\put(0,60){\line(1,0){60}}
\put(60,60){\line(1,1){20}}
	
\put(-20,53){\scriptsize{$-(j-i,k)$}}
\put(68,83){\scriptsize{$(0,0)$}}
\put(45,55){\scriptsize{$\mu$}}
\put(52,65){\scriptsize{$T$}}
	
\put(125,65){or}
	
\put(180,40){\circle*{2}}\put(200,40){\circle*{2}}\put(220,40){\circle*{2}}\put(240,40){\circle*{2}}\put(260,40){\circle*{2}}\put(180,60){\circle*{2}}\put(200,60){\circle*{2}}\put(220,60){\circle*{2}}\put(240,60){\circle*{2}}\put(260,60){\circle*{2}}\put(180,80){\circle*{2}}\put(200,80){\circle*{2}}\put(220,80){\circle*{2}}\put(240,80){\circle*{2}}\put(260,80){\circle*{2}}\put(180,100){\circle*{2}}\put(200,100){\circle*{2}}\put(220,100){\circle*{2}}\put(240,100){\circle*{2}}\put(260,100){\circle*{2}}
	
\put(180,80){\line(4,-1){80}}
\put(200,60){\line(1,0){60}}
\put(180,80){\line(1,-1){20}}
	
\put(160,85){\scriptsize{$-(j-i,k)$}}
\put(250,53){\scriptsize{$(0,0)$}}
	
\put(207,55){\scriptsize{$\mu$}}
\put(200,65){\scriptsize{$T$}}
	
\end{picture}
	
\noindent of maximal area situated completely below the vector $(i-j,-k)$, which does not contain any lattice points inside. Then we have:
$$
\Delta\left(E_{-[i;j)}^{(-k)} \right) = 1 \otimes E_{-[i;j)}^{(-k)} + E_{-[i;j)}^{(-k)} \otimes \frac {\psi_i}{\psi_j} \barc^{-k}+ \Big( \text{tensors with hinge strictly below } T \Big) +
$$
\begin{equation}
\label{eqn:new coproduct 2}
+ \sum_{i \leq s < t \leq j} \begin{cases} \displaystyle \bE_{-[t,j)}^{\mu} E_{-[i;s)}^{\mu} \otimes \frac {\psi_t}{\psi_j} E_{-[s;t)}^{(\bullet)} \frac {\psi_i}{\psi_s} \barc^{-k-\bullet} &\text{for the picture on the left} \\  \displaystyle E_{-[s;t)}^{(\bullet)} \otimes E_{-[t,j)}^{\mu} \frac{\psi_s}{\psi_t} \bE_{-[i;s)}^{\mu} \barc^{\bullet} &\text{for the picture on the right} \end{cases}  \qquad \quad
\end{equation}
where $\bullet = -k + (j-i+s-t)\mu$. \\
	
\end{proposition}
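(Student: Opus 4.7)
My approach has three stages: extract the leading order via the slope filtration, identify the slope-$\mu$ terms through geometry plus the leading coproducts on $\CB_\mu$, and pin down the scalar coefficients via the bialgebra pairing.

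\emph{Leading order.} Since $E^{(k)}_{[i;j)} = \bA^{\mu_0}_{[i;j)} \in \CB^+_{\mu_0}$ with $\mu_0 := k/(j-i)$, formula \eqref{eqn:radu1} gives
$$\Delta(E^{(k)}_{[i;j)}) = \Delta_{\mu_0}(E^{(k)}_{[i;j)}) + (\text{tensors with hinge at slope strictly less than } \mu_0).$$
Applying \eqref{eqn:cop1}, every intermediate summand involves $E^{\mu_0}_{[i;s)}$ with $0 < s-i < j-i$; since $\gcd(j-i,k) = 1$, the value $\mu_0(s-i) = k(s-i)/(j-i)$ is not an integer, so by the convention after \eqref{eqn:bb} every such element vanishes. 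Only $s = i$ and $s = j$ survive, producing exactly the two extreme terms $\frac{\psi_j}{\psi_i}\barc^k \otimes E^{(k)}_{[i;j)}$ and $E^{(k)}_{[i;j)} \otimes 1$ of \eqref{eqn:new coproduct 1}.

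\emph{Slope-$\mu$ contributions.} The residual part has hinge strictly below the slope-$\mu_0$ line. The geometric content of $T$ being the maximal empty lattice triangle below $(j-i,k)$ is that $\mu$ is the ``next'' admissible slope below $\mu_0$: any hinge at slope exactly $\mu$ must lie on the indicated edge of $T$, while every other sub-leading hinge lies strictly below $T$. Tensors with hinge at slope $\mu$ factor through $\CB^+_\mu \otimes \CB^+_\mu$ at leading order, with a single slope-$\mu_0$ ``perturbation'' sitting in one slot; inspection of the coproducts \eqref{eqn:cop1}, \eqref{eqn:cop1 anti} then shows that the only possible form compatible with the grading is the sum exhibited in \eqref{eqn:new coproduct 1}, with $\bullet = k - (j-i+s-t)\mu$ forced by horizontal and vertical degree matching.

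\emph{Coefficient computation.} To confirm that each coefficient is precisely what is claimed, I would use the bialgebra pairing \eqref{eqn:bialg 2}:
$$\langle \Delta(E^{(k)}_{[i;j)}), Y \otimes Z \rangle = \langle E^{(k)}_{[i;j)}, Y * Z \rangle$$
for test elements $Y, Z \in \CA^-$. Choosing $Y$ of the form $E^\mu_{-[i;s)}\bE^\mu_{-[t;j)}$-type (partner of the second tensor factor) and $Z = E^{(\bullet)}_{-[s;t)}$-type (partner of the first tensor factor), and applying Proposition \ref{prop:pairing}, \eqref{eqn:main pair 1}, \eqref{eqn:main pair 2}, and the multiplicativity \eqref{eqn:pseudo}, one evaluates the RHS explicitly and matches it to the direct coproduct calculation from \eqref{eqn:coproduct 3}. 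The negative statement \eqref{eqn:new coproduct 2} is then proved symmetrically, using \eqref{eqn:cop2}, \eqref{eqn:cop2 anti}, and the mirrored picture for the triangle.

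\emph{Main obstacle.} The most delicate step is controlling the non-primitive product $\bE^\mu_{[t;j)} E^\mu_{[i;s)}$ appearing in the second tensor factor: its own coproduct spreads over many slope-$\mu$ hinges, so isolating the contribution to a single $(s,t)$ summand requires iterating \eqref{eqn:cop1 anti}, \eqref{eqn:cop1} together with the antipode identity \eqref{eqn:antipode} to collapse the redundant cross-terms. Lemma \ref{lem:unique} (uniqueness of primitive elements in $\CB_\mu$) will be invoked at the end to ensure that no further slope-$\mu$ contributions exist beyond those displayed, and that any discrepancy in the would-be ``error'' is absorbed into the tensors with hinge strictly below $T$.
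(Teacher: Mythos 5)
Your first stage (the leading-order term) is correct: \eqref{eqn:radu1} together with \eqref{eqn:cop1} and the coprimality hypothesis $\gcd(j-i,k)=1$ does force the intermediate summands of $\Delta_{\mu_0}$ to vanish, leaving only the two extreme terms. The trouble begins at the second stage, and it is a genuine gap rather than a gap you can plug with the tools you name.

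In the ``slope-$\mu$ contributions'' step you assert that ``inspection of the coproducts \eqref{eqn:cop1}, \eqref{eqn:cop1 anti} then shows that the only possible form compatible with the grading is the sum exhibited.'' This does not follow. Formulas \eqref{eqn:cop1} and \eqref{eqn:cop1 anti} describe $\Delta_\mu$ applied to elements of $\CB_\mu^+$, not the full coproduct $\Delta$ applied to $E_{[i;j)}^{(k)} \in \CB_{\mu_0}^+$ with $\mu_0 \neq \mu$, so they do not constrain the sub-leading terms of $\Delta(E_{[i;j)}^{(k)})$. Degree matching does force each summand $R_1^+ \otimes R_2^+$ with hinge on the slope-$\mu$ edge of $T$ to have the prescribed bi-degree in each factor, but the graded pieces $\CA_{\bk,d}$ are infinite-dimensional; knowing the bi-degree does not identify the shuffle elements, let alone their coefficients, and does not show that the second tensor factor is the specific product $\bE_{[t;j)}^\mu E_{[i;s)}^\mu$ as opposed to some other element of $\CA^+_{\leq \mu}$ of the same bi-degree. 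The statement that such a tensor ``factors through $\CB^+_\mu \otimes \CB^+_\mu$ at leading order'' is itself something that requires proof. Your third stage then proposes to confirm the coefficients by computing $\langle E_{[i;j)}^{(k)}, Y \ast Z \rangle$ via \eqref{eqn:bialg 2} and ``matching it to the direct coproduct calculation from \eqref{eqn:coproduct 3}''; but the direct computation of $\Delta(E_{[i;j)}^{(k)})$ from \eqref{eqn:coproduct 3} is precisely what the proposition asks you to prove, so this step is circular. The closed formulas \eqref{eqn:bonnie}, \eqref{eqn:pseudo}, \eqref{eqn:main pair 1}, \eqref{eqn:main pair 2} are only available within a single slope subalgebra $\CB_\mu$ (Lemma \ref{lem:unique} likewise), whereas $Y \ast Z$ mixes slope $\mu$ with slope $\mu_0$ and $E_{[i;j)}^{(k)}$ sits in $\CB_{\mu_0}$; none of the pairing machinery you cite applies as stated. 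Finally your ``main obstacle'' paragraph concedes that the cross-terms in $\Delta(\bE_{[t;j)}^\mu E_{[i;s)}^\mu)$ have to be controlled, but offers only the intention to ``iterate'' and ``collapse'' rather than an argument.

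The paper avoids all of this by staying entirely inside the explicit shuffle realization: starting from the symmetrized rational-function formula \eqref{eqn:einy} and the explicit coproduct \eqref{eqn:coproduct 3}, one sends a subset $C \subseteq \{i,\dots,j-1\}$ of variables to infinity, writes $C$ as a disjoint union of blocks of consecutive integers, bounds the vertical degree of the resulting second tensor factor via \eqref{eqn:number 0}--\eqref{eqn:number 2}, and uses the ``no interior lattice points in $T$'' hypothesis to show that the hinge lands strictly below $T$ unless $C$ is of the form \eqref{eqn:choice 1} or \eqref{eqn:choice 2}. For those exceptional $C$'s, the leading-order limits of the symmetrization are computed term by term and recognized, via \eqref{eqn:elementary}, as the products appearing in \eqref{eqn:new coproduct 1}. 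In other words, the identification of the sub-leading terms as specific products of $E^\mu$ and $\bE^\mu$ is a direct computation, not a degree-matching or pairing argument, and it seems to be irreducibly so: the slope-filtration formalism alone cannot tell you which elements of a given bi-degree actually occur.
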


\begin{proof} Let us recall that:
\begin{equation}
\label{eqn:einy}
E_{[i;j)}^{(k)} = \sym \left[ \frac {\prod_{a=i}^{j-1} (z_a \oq^{\frac {2a}n})^{\lfloor \mu(a-i+1) \rfloor - \lfloor \mu(a-i) \rfloor}}{\left(1 - \frac {z_{i+1}}{z_{i}q^2}  \right) ... \left(1 - \frac {z_{j-1}}{z_{j-2}q^2} \right)} \prod_{i\leq a < b < j} \zeta \left( \frac {z_b}{z_a} \right)  \right]	
\end{equation}
By \eqref{eqn:coproduct 3}, the coproduct $\Delta(E_{[i;j)}^{(k)})$ is computed by taking an arbitrary set:
$$
C \subset \{i,...,j-1\} 
$$
and expanding the rational function $E_{[i;j)}^{(k)}$ by sending the variables $\{z_a\}_{a\in C}$ to $\infty$, while keeping the variables $\{z_a\}_{a\in \bar{C}}$ fixed, where $\bar{C} = \{i,...,j-1\} \backslash C$. Therefore, the second tensor factor $R$ in any summand of $\Delta(E_{[i;j)}^{(k)})$ has the property that:
$$
\hdeg R = \# C
$$
Meanwhile, total homogeneous degree of $R$ in the variables $C$ satisfies (see \eqref{eqn:einy}):
\begin{equation}
\label{eqn:number 0}
\vdeg R \leq \sum_{a \in C} \left \lfloor \frac {k(a-i+1)}{j-i} \right \rfloor - \left \lfloor \frac {k(a-i)}{j-i} \right \rfloor - \# \Big \{a \in C \text{ s.t. } a-1 \in \bar{C} \Big \}
\end{equation}
Let us assume that $C$ is a union of blocks of consecutive integers:
$$
C = \{i_1,...,j_1-1\} \sqcup ... \sqcup \{i_v,...,j_v-1\}
$$
where $i \leq i_1 < j_1 < i_2 < j_2 < ... < i_v < j_v \leq j$. Therefore, we have:
\begin{equation}
\label{eqn:number 1}
\hdeg R = \sum_{u=1}^v (j_u - i_u)
\end{equation}
while \eqref{eqn:number 0} gives us:
$$
\vdeg R \leq \sum_{u=1}^v \left( \left \lfloor \frac {k(j_u-i)}{j-i} \right \rfloor - \left \lfloor \frac {k(i_u-i)}{j-i} \right \rfloor \right) - v + \delta_{i_1}^i =
$$
\begin{equation}
\label{eqn:number 2}
\stackrel{\gcd(k,j-i)=1}= \sum_{u=1}^v \left( \left \lfloor \frac {k(j_u-i)}{j-i} \right \rfloor - \left \lceil \frac {k(i_u-i)}{j-i} \right \rceil \right) 
\end{equation}
We conclude that all tensors appearing in \eqref{eqn:new coproduct 1} will have hinge $(x,y)$, where $x$ is the RHS of \eqref{eqn:number 0} and $y$ is less than or equal to the RHS of \eqref{eqn:number 2}. Such a hinge lies strictly below the triangle $T$, except in the following situations: \\
	
\begin{itemize}[leftmargin=*]
		
\item in the case depicted on the left of the figure, if:
\begin{equation}
\label{eqn:choice 1}
C = \{i,...,s-1\} \sqcup \{t,...,j-1\}
\end{equation}
for certain $i \leq s < t \leq j$. The reason for this is that each summand on the second row of the RHS of \eqref{eqn:number 2} is $\leq (j_u-i_u)\mu$ with equality if and only if $u=1, i_1 = i$ or $u=v, j_v = j$, by the fact that the triangle $T$ has no lattice points inside. Failure of equality to hold would force the vector $(x,y)$ to lie below the line of slope $\mu$. \\
		
\item in the case depicted on the right of the figure, if:
\begin{equation}
\label{eqn:choice 2}
C = \{s,...,t-1\}
\end{equation}
for certain $i \leq s < t \leq j$. The reason for this is that we may rewrite \eqref{eqn:number 2} as:
\begin{equation}
\label{eqn:number 3}
k - y = k - \text{vertical degree} \geq \sum_{u=0}^v \left(  \left \lceil \frac {k(i_{u+1}-i)}{j-i} \right \rceil - \left \lfloor \frac {k(j_u-i)}{j-i} \right \rfloor \right) 
\end{equation} 	
where we write $i_{v+1} = j$ and $j_0 = i$. Each summand in the right-hand side of \eqref{eqn:number 3} is $\geq (i_{u+1}-j_u)\mu$, with equality if and only if either $u=0$ or $u=v$, by the fact that the triangle $T$ has no lattice points inside. Failure of equality to hold would force the vector $(x,y)$ to lie below the line of slope $\mu$. \\
		
\end{itemize}  
	
\noindent The analysis in the two bullets above shows that the summands on the second line of \eqref{eqn:new coproduct 1} correspond to the choices \eqref{eqn:choice 1} and \eqref{eqn:choice 2}, respectively. More specifically, the second tensor factors of the summands in question consist of the leading order terms of $E_{[i;j)}^{(k)}$ when we send the variables $\{z_a, a \in C\}$ to $\infty$. Explicitly, in the case depicted on the left, we multiply the variables $z_i,...,z_{s-1},z_t,...,z_{j-1}$ in \eqref{eqn:einy} by $\xi$ and compute the leading order term as $\xi \rightarrow \infty$ is (we let $\nu(j-i) = k$):
$$
\sym \left[z_{t-1} \frac {\prod_{a=s}^{t-1} (z_a \oq^{\frac {2a}n})^{\lfloor \nu(a-i+1) \rfloor - \lfloor \nu(a-i) \rfloor}}{\left(1 - \frac {z_{s+1}}{z_{s}q^2}  \right) ... \left(1 - \frac {z_{t-1}}{z_{t-2}q^2} \right)} \prod_{s\leq a < b < t} \zeta \left( \frac {z_b}{z_a} \right) \right] 
$$
$$
\lim_{\xi \rightarrow \infty} \prod_{s \leq b < t}^{i \leq a < s} \zeta \left( \frac {z_b}{\xi z_a} \right)\prod_{t \leq b < j}^{s \leq a < t} \zeta \left( \frac {\xi z_b}{z_a} \right) 
$$
\begin{multline*}
\cdot \sym \left[ \frac {\prod_{a=i}^{s-1} (z_a \oq^{\frac {2a}n})^{\lfloor \nu(a-i+1) \rfloor - \lfloor \nu(a-i) \rfloor}}{\left(1 - \frac {z_{i+1}}{z_{i}q^2}  \right) ... \left(1 - \frac {z_{s-1}}{z_{s-2}q^2} \right)} \prod_{i\leq a < b < s} \zeta \left( \frac {z_b}{z_a} \right)\prod^{i \leq a <s }_{t \leq b < j} \zeta \left( \frac {z_b}{z_a} \right) \right.  \\ \left. \frac {(-q^2)\prod_{a=t}^{j-1} (z_a \oq^{\frac {2a}n})^{\lfloor \nu(a-i+1) \rfloor - \lfloor \nu(a-i) \rfloor}}{z_t\left(1 - \frac {z_{t+1}}{z_{t}q^2}  \right) ... \left(1 - \frac {z_{j-1}}{z_{j-2}q^2} \right)} \prod_{t\leq a < b < j} \zeta \left( \frac {z_b}{z_a} \right)   \right]
\end{multline*}
By \eqref{eqn:elementary} and \eqref{eqn:einy}, the symmetrizations above are responsible for the elements: 
$$
E_{[s;t)}^{(\bullet)} \quad \text{and} \quad \bE_{[t,j)}^{\mu} E_{[i;s)}^{\mu}
$$
in \eqref{eqn:new coproduct 1}, respectively, while the limit on the second line of the expression above is a power of $q$ which is precisely accounted for by the fact that some of the $\psi$'s are on the other side of $E$ in \eqref{eqn:new coproduct 1} when compared to \eqref{eqn:coproduct 3}. Similarly, in the case depicted on the right, we multiply the variables $z_s,...,z_{t-1}$ in \eqref{eqn:einy} by $\xi$ and send $\xi \rightarrow \infty$. Then the leading order term in $\xi$ is (we let $\nu(j-i) = k$):
$$
\sym \left[(-q^2) z_{s-1} \frac {\prod_{a=i}^{s-1} (z_a \oq^{\frac {2a}n})^{\lfloor \nu(a-i+1) \rfloor - \lfloor \nu(a-i) \rfloor}}{\left(1 - \frac {z_{i+1}}{z_{i}q^2}  \right) ... \left(1 - \frac {z_{s-1}}{z_{s-2}q^2} \right)} \prod_{i \leq a < b < s} \zeta \left( \frac {z_b}{z_a} \right) \prod_{t \leq b < j}^{i \leq a < s} \zeta \left( \frac {z_b}{z_a} \right) \right. 
$$
$$
\left. \frac {\prod_{a=t}^{j-1} (z_a \oq^{\frac {2a}n})^{\lfloor \nu(a-i+1) \rfloor - \lfloor \nu(a-i) \rfloor}}{\left(1 - \frac {z_{t+1}}{z_t q^2}  \right) ... \left(1 - \frac {z_{j-1}}{z_{j-2}q^2} \right)} \prod_{t \leq a < b < j} \zeta \left( \frac {z_b}{z_a} \right)  \right]  
$$
$$
\lim_{\xi \rightarrow \infty} \prod_{s \leq b < t}^{i \leq a < s} \zeta \left( \frac {\xi z_b}{z_a} \right)\prod_{t \leq b < j}^{s \leq a < t} \zeta \left( \frac {z_b}{\xi z_a} \right) $$
$$
\sym \left[ \frac {\prod_{a=s}^{t-1} (z_a \oq^{\frac {2a}n})^{\lfloor \nu(a-i+1) \rfloor - \lfloor \nu(a-i) \rfloor}}{z_s \left(1 - \frac {z_{s+1}}{z_s q^2}  \right) ... \left(1 - \frac {z_{t-1}}{z_{t-2}q^2} \right)} \prod_{s \leq a < b < t} \zeta \left( \frac {z_b}{z_a} \right)  \right] 
$$
By \eqref{eqn:elementary} and \eqref{eqn:einy}, the symmetrizations above are responsible for the elements: 
$$
E_{[t,j)}^{\mu} \bE_{[i;s)}^{\mu} \quad \text{and} \quad E_{[s;t)}^{(\bullet)}
$$
in \eqref{eqn:new coproduct 1}, respectively, while the limit on the third line of the expression above is a power of $q$ which is precisely accounted for by the fact that some of the $\psi$'s are on the other side of $E$ in \eqref{eqn:new coproduct 1} when compared to \eqref{eqn:coproduct 3}. Formula \eqref{eqn:new coproduct 2} is proved by an analogous argument to \eqref{eqn:new coproduct 1}, and is therefore left as an exercise to the reader.
	
\end{proof} 

\subsection{} 
\label{sub:summarize}

We are ready to define the shuffle elements featured in the Introduction. Let:
\begin{align}
&P_{[i;j)}^{(k)} = \frac {q\oq^{\frac 1n}}{q - q^{-1}} \cdot E_{[i;j)}^{(k)} = \frac {(q\oq^{\frac 1n})^{-1}}{q^{-1} - q} \cdot \bE_{[i;j)}^{(k)} \label{eqn:p plus} \\
&P_{-[i;j)}^{(-k)} = \frac {\oq^{-\frac 1n}}{q^{-1} - q} \cdot E_{-[i;j)}^{(-k)} = \frac {\oq^{\frac 1n}}{q - q^{-1}} \cdot \bE_{-[i;j)}^{(-k)} \label{eqn:p minus}
\end{align}
for all $i < j$ and $k$ such that $\gcd(k,j-i)=1$. We have $P_{\pm [i;j)}^{(\pm k)} \in \CB_{\frac k{j-i}}$, and:
\begin{align}
&\Delta_{\frac k{j-i}} \left(P^{(k)}_{[i;j)}\right) = \frac {\psi_j}{\psi_i} \barc^k \otimes P^{(k)}_{[i;j)} + P^{(k)}_{[i;j)} \otimes 1 \label{eqn:primitive 1} \\
&\Delta_{\frac k{j-i}} \left(P^{(-k)}_{-[i;j)}\right) = 1 \otimes P^{(-k)}_{-[i;j)} + P^{(-k)}_{-[i;j)} \otimes \frac {\psi_i}{\psi_j} \barc^{-k} \label{eqn:primitive 2}
\end{align}
By \eqref{eqn:main pair 1}, it is easy to see that:
\begin{equation}
\label{eqn:normalize simple}
\alpha_{\pm [u;v)}\left(P^{(\pm k)}_{\pm [i;j)}\right) = \pm \delta_{(u,v)}^{(i,j)} 
\end{equation}
$\forall (u,v) \in \zzz$. Proposition \ref{prop:sub} implies that there exist unique elements:
\begin{equation}
\label{eqn:def p}
P^{(\pm k')}_{\pm l \bde, \hi} \in \CB_{\frac {k'}{nl}} \quad \forall k' \in \BZ, \ l \in \BN, \ \hi \in \BZ/g\BZ
\end{equation}
(where $g = \gcd(n,\text{denominator of }\frac {k'}{nl})$) completely determined by the condition:
\begin{align}
&\Delta_{\frac {k'}{nl}} \left(P^{(k')}_{l \bde, \hi}\right) = c^l \barc^{k'} \otimes P^{(k')}_{l \bde, \hi} + P^{(k')}_{l \bde, \hi} \otimes 1 \label{eqn:condition 1} \\
&\Delta_{\frac {k'}{nl}} \left(P^{(-k')}_{- l \bde, \hi}\right) = 1 \otimes P^{(-k')}_{-l \bde, \hi} + P^{(-k')}_{-l \bde, \hi} \otimes c^{-l} \barc^{-k'} \label{eqn:condition 2}
\end{align}
and the normalization:
\begin{equation}
\label{eqn:normalize imaginary}
\alpha_{\pm [u;u+nl)} \left(P^{(\pm k')}_{\pm l \bde, \hi}\right) = \pm \delta_{u \text{ mod }g}^{\hi} 
\end{equation}
$\forall u \in \{1,...,n\}$. To summarize, under the isomorphism \eqref{eqn:smalliso}: \\

\begin{itemize}
	
\item $P_{\pm [i;j)}^{(\pm k)}$ correspond to the simple root generators $x^\pm_s$ \\
	
\item $P^{(\pm k')}_{\pm l \bde, \hi}$ correspond to the imaginary root generators $p_{\pm t}$ \\
	
\end{itemize}

\noindent We will often write $P_{\pm [i;i+nl)}^{(\pm k')} = P_{\pm l\bde, i}^{(\pm k')}$ if $\gcd(k',nl) = 1$. \\

\subsection{}
\label{sub:cartan} 

To complete the definition of the root generators of $\CA$, we set:
\begin{equation}
\label{eqn:vertical}
P_{0\bde, \hi}^{(\pm k)} = \pm a_{r,\pm k} \oq^{\frac {\pm (2r-1)k}n}
\end{equation}
for all $r \in \BZ/n\BZ$ and $k \in \BN$. Moreover, we define:
\begin{align}
&E^{\infty}_{[r;r)}(z) = \sum_{d=0}^\infty \frac {E_{[r;r)}^{(d)}}{z^{d}} = \exp \left[ \sum_{d=1}^{\infty} \frac {a_{r,d} (1-q^{-2d})}{dz^d} \oq^{\frac {2(r-1)d}n} \right] \label{eqn:cartan 1} \\
&\bE^{\infty}_{[r;r)}(z) = \sum_{d=0}^\infty \frac {\bE_{[r;r)}^{(d)}}{z^{d}} = \exp \left[ \sum_{d=1}^{\infty} \frac {a_{r,d}(1-q^{2d})}{dz^d} \oq^{\frac {2rd}n} \right] \label{eqn:cartan 2} \\
&E^{\infty}_{-[r;r)}(z) = \sum_{d=0}^\infty \frac {E_{-[r;r)}^{(-d)}}{z^{-d}} = \exp \left[ \sum_{d=1}^{\infty} \frac {a_{r,-d}(q^d-q^{-d})}{dz^{-d}} \oq^{-\frac {2(r-1)d}n} \right] \label{eqn:cartan 3} \\
&\bE^{\infty}_{-[r;r)}(z) = \sum_{d=0}^\infty \frac {\bE_{-[r;r)}^{(-d)}}{z^{-d}} = \exp \left[ \sum_{d=1}^{\infty} \frac {a_{r,-d}(q^{-d}-q^{d})}{dz^{-d}} \oq^{-\frac {2rd}n} \right] \label{eqn:cartan 4} 
\end{align}
Comparing the definitions above with \eqref{eqn:convert 1}--\eqref{eqn:convert 2}, we see that:
\begin{equation}
\label{eqn:convert cartan}
\ph^\pm_r (z) = \frac {\psi_{r+1}^{\pm 1}}{\psi_r^{\pm 1}} \bE^\infty_{\pm [r;r)} \left(z\oq^{\frac {2r}n} \right) E^\infty_{\pm [r+1;r+1)} \left(z \oq^{\frac {2(r-1)}n} \right)
\end{equation} 
This concludes the description of the generators considered in Theorem \ref{thm:main}. \\

\section{The statement and proof of Theorem \ref{thm:intro main}}
\label{sec:proof}

\noindent In the present Section, we will fully state and prove Theorem \ref{thm:intro main} (i.e. Theorem \ref{thm:main}). \\

\begin{itemize}[leftmargin=*]

\item In Subsection \ref{sub:connect slope}, we connect the slope subalgebras $\CB_\mu$ with the quantum groups of Section \ref{sec:quantum}, by establishing an explicit map between their generators \\

\item In Subsection \ref{sub:tilde}, we give the full statement of Theorem \ref{thm:intro main}, by giving the explicit form of relations \eqref{eqn:rel 1 intro} and \eqref{eqn:rel 2 intro}, in the guise of \eqref{eqn:case 1}, \eqref{eqn:case 2}, \eqref{eqn:case 3}, \eqref{eqn:case 4} \\

\item In Subsection \ref{sub:case 1}, we prove relations \eqref{eqn:case 1} and \eqref{eqn:case 3} \\

\item In Subsection \ref{sub:pairings intermezzo}, we work out the pairings between the $P^{(\pm k)}_{\pm [i;j)}$'s and $E^{(\mp k')}_{\mp [i';j')}$'s \\

\item In Subsection \ref{sub:analogue}, we present an analogue of the computation of Subsection \ref{sub:cop hinge computation} \\

\item In Subsection \ref{sub:case 2}, we prove relations \eqref{eqn:case 2} and \eqref{eqn:case 3} \\

\item In Subsection \ref{sub:main theorem}, we prove our main Theorem \\

\item In Subsection \ref{sub:rotate}, we consider a smaller set of generators of $\UU$ from the one of Theorem \ref{thm:main}, and deduce the relations among these generators \\

\end{itemize}

\subsection{} 
\label{sub:connect slope}

For any pair of coprime integers $(a,b) \in \BN \times \BZ \sqcup (0,1)$, we let: 
\begin{equation}
\label{eqn:abstract slope}
\CE_{\frac ba} := U_{q}(\dot{\fgl}_{\frac ng})^{\otimes g} 
\end{equation}
where $g = \gcd(n,a)$. We will write $p_{\pm [i;j)}^{(\pm k)}$, $p_{\pm l\bde,\hi}^{(\pm k')}$ for the primitive generators of $\CE_{\frac ba}$, where the sets of indices are such that the assignment:
\begin{equation}
\label{eqn:assignment 1}
P_{\pm [i;j)}^{(\pm k)} \mapsto p_{\pm [i;j)}^{(\pm k)}, \qquad P_{\pm l\bde,\hi}^{(\pm k')} \mapsto p_{\pm l\bde,\hi}^{(\pm k')}
\end{equation}
for all $\frac k{j-i} = \frac {k'}{nl} = \frac ba$ and $\hi \in \BZ/g\BZ$ yields an algebra isomorphism:
\begin{equation}
\label{eqn:mini}
\CB_{\frac ba} \stackrel{\sim}\longrightarrow \CE_{\frac ba}
\end{equation}
that is obtained by composing \eqref{eqn:smalliso} with \eqref{eqn:abstract slope}. We will write:
\begin{equation}
\label{eqn:assignment 2}
e_{\pm [i;j)}^{(\pm k)}, \be_{\pm [i;j)}^{(\pm k)} \in \CE_{\frac ba}
\end{equation}
for the images of the elements $E_{\pm [i;j)}^{(\pm k)}, \bE_{\pm [i;j)}^{(\pm k)}$ under the isomorphisms \eqref{eqn:mini}. \\

\begin{remark}

For any $\mu \in \BQ \sqcup \infty$, we extend the scalars from $\BQ(q)$ to $\BQ(q,\oq^{\frac 1n})$, and identify the central elements of the abstract algebras $\CE_\mu$ according to the rule:
$$
\Big(\text{central element of }\CE_{\frac ba}\Big) = c^{\frac ag} \barc^{\frac {bn}g} 
$$
where $g = \gcd(n,a)$, for two henceforth fixed central elements $c,\barc$. Moreover, we identify the Cartan elements of all the abstract algebras $\CE_\mu$ according to the rule:
$$
\Big(\psi_s \text{ on } p\text{--th factor of } U_{q}(\dot{\fgl}_{\frac ng})^{\otimes g} \cong \CE_{\frac ba} \Big) = \psi_{p+sa} \barc^{bs}
$$
where $\psi_1,...,\psi_n$ are the Cartan elements of $\CE_0 = \uu$. This implies the relations:
\begin{equation}
\label{eqn:rel 0}
c,\barc \text{ central}, \qquad \qquad \qquad \psi_s \psi_{s'} = \psi_{s'}\psi_s
\end{equation} 
\begin{equation}
\label{eqn:rel 1}
\psi_s p_{\pm [i;j)}^{(\pm k)} = q^{\pm (\delta_s^j - \delta_s^i)} p_{\pm [i;j)}^{(\pm k)} \psi_s, \qquad \psi_s p_{\pm l\bde,\hi}^{(\pm k)} = p_{\pm l\bde,\hi}^{(\pm k)} \psi_s
\end{equation} 
in the subalgebras $\CE_\mu$ for all applicable indices $i,j,s,s',k,l,r$. \\

\end{remark}

\subsection{} 
\label{sub:tilde}

Recall that we abbreviate $\oq_+ = \oq$ and $\oq_- = \oq^{-1} q^{-n}$. We write:
\begin{equation}
\label{eqn:kron mod}
\delta_{i \text{ mod }g}^j = \begin{cases} 1 &\text{if } i \equiv j \text{ mod }g \\ 0 &\text{otherwise} \end{cases}
\end{equation}
If $g = n$, we abbreviate the notation \eqref{eqn:kron mod} to $\delta_i^j$. For any $i,j,i',j' \in \BZ$, recall that:
\begin{equation}
\label{eqn:kron mod double}
\delta_{(i',j')}^{(i,j)} = \begin{cases} 1 &\text{if } (i,j) \equiv (i',j') \text{ mod } (n,n) \\ 0 &\text{otherwise} \end{cases}
\end{equation}
Let us consider the linear combination:
\begin{equation}
\label{eqn:tilde p} 
\tp_{\pm [i;j)}^{(\pm k)} =  \sum_{x \in \BZ/n\BZ} p_{\pm [i+x;j+x)}^{(\pm k)} \left[q^{-\delta_j^i} \delta_x^0 +  \delta_j^i (q-q^{-1}) \frac {\oq_\pm^{\frac 2n \cdot \overline{-kx}}}{\oq_\pm^2-1} \right]
\end{equation}
whenever $\gcd(k,j-i) = 1$. The following is result is elementary, so we leave its proof as an exercise to the interested reader: \\

\begin{lemma}
\label{lem:elem}

For any $k$ which is coprime with $n$, we have:
$$
M^+ M^- = \emph{Id}
$$
where $M^\pm$ is the $n \times n$ matrix whose $(a,b)$--th entry is $q^{-1} \delta_b^a + (q-q^{-1}) \frac {\oq_\pm^{\frac 2n \cdot \overline{\pm k(a-b)}}}{\oq_\pm^2-1}$. \\

\end{lemma}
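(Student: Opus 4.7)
The key observation is that both $M^+$ and $M^-$ are \emph{circulant}: the $(a,b)$-entry depends only on the residue of $a-b$ modulo $n$. Hence they are simultaneously diagonalized by the discrete Fourier transform on $\BZ/n\BZ$, and the identity $M^+M^- = \mathrm{Id}$ reduces to showing that, for every $n$-th root of unity $\zeta$, the eigenvalues
$$\lambda^\pm(\zeta) \;:=\; \sum_{d\in \BZ/n\BZ} M^\pm_{d,0}\,\zeta^d \;=\; q^{-1} + \frac{q-q^{-1}}{\oq_\pm^2 - 1} \sum_{d\in\BZ/n\BZ} \oq_\pm^{\frac{2}{n}\,\overline{\pm k d}}\,\zeta^d$$
satisfy $\lambda^+(\zeta)\,\lambda^-(\zeta) = 1$.

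\textbf{Computing $\lambda^\pm(\zeta)$.} Abbreviate $\eta_\pm := \oq_\pm^{2/n}$, so that $\eta_\pm^n = \oq_\pm^2$ and $\eta_+\eta_- = q^{-2}$ (this last equality follows from $\oq_+\oq_- = q^{-n}$, which is built into the definition of $\oq_\pm$ in \eqref{eqn:parameters}). Because $\gcd(k,n)=1$, the assignment $d \mapsto \overline{\pm k d}$ is a bijection of $\BZ/n\BZ$ onto $\{1,\ldots,n\}$, with the paper's convention that $\bar m \in \{1,\ldots,n\}$ so that $\overline{0}=n$. Letting $k^{-1}$ denote an integer inverse of $k$ modulo $n$ and exploiting $\zeta^n=1$, the substitution $e = \overline{\pm kd}$ transforms the inner sum into a geometric series
$$ \sum_{d\in \BZ/n\BZ} \eta_\pm^{\overline{\pm k d}}\,\zeta^d \;=\; \sum_{e=1}^{n} \bigl(\eta_\pm \zeta^{\pm k^{-1}}\bigr)^{\!e} \;=\; \frac{\eta_\pm \zeta^{\pm k^{-1}}\bigl(\eta_\pm^n - 1\bigr)}{\eta_\pm \zeta^{\pm k^{-1}} - 1}.$$
Plugging this back in, the factor $\eta_\pm^n - 1$ cancels against the denominator of $\lambda^\pm$ and a one-line simplification produces the closed form
$$ \lambda^\pm(\zeta) \;=\; q^{-1} + \frac{(q-q^{-1})\,\eta_\pm\zeta^{\pm k^{-1}}}{\eta_\pm \zeta^{\pm k^{-1}} - 1} \;=\; \frac{q\,\eta_\pm \zeta^{\pm k^{-1}} - q^{-1}}{\eta_\pm \zeta^{\pm k^{-1}} - 1}. $$

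\textbf{Multiplying.} Writing $\alpha := \eta_+\zeta^{k^{-1}}$ and $\beta := \eta_-\zeta^{-k^{-1}}$, one has $\alpha\beta = \eta_+\eta_- = q^{-2}$, and therefore
$$ \lambda^+(\zeta)\,\lambda^-(\zeta) \;=\; \frac{(q\alpha - q^{-1})(q\beta - q^{-1})}{(\alpha-1)(\beta-1)} \;=\; \frac{q^2\alpha\beta - (\alpha+\beta) + q^{-2}}{\alpha\beta - (\alpha+\beta) + 1} \;=\; \frac{1 + q^{-2} - (\alpha+\beta)}{1 + q^{-2} - (\alpha+\beta)} \;=\; 1,$$
the two instances of $q^{-2}$ in the final fraction arising precisely from $q^2\alpha\beta$ in the numerator and from $\alpha\beta$ itself in the denominator. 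Since $\lambda^+(\zeta)\lambda^-(\zeta)=1$ at every $n$-th root of unity, the circulant matrix $M^+M^-$ is diagonalized by the DFT to the constant $1$, hence equals $\mathrm{Id}$. The only genuinely delicate point in the argument is tracking the convention $\overline{0}=n$ (rather than $0$): this is exactly what makes the geometric series start at $e=1$ and contribute the factor $\eta_\pm^n - 1$ that produces the neat form $(q\alpha-q^{-1})/(\alpha-1)$. Every other step is routine algebra.
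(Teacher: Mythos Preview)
Your proof is correct. The paper does not actually supply a proof of this lemma; it simply declares the result ``elementary'' and leaves it as an exercise to the reader. Your approach via the circulant structure and the discrete Fourier transform is a clean and natural way to carry out that exercise: all steps check out, and the crucial identity $\eta_+\eta_- = q^{-2}$ (equivalently $\oq_+\oq_- = q^{-n}$, from \eqref{eqn:parameters}) is exactly what makes the product of eigenvalues collapse to $1$. Your remark about the convention $\overline{0}=n$ is also on point---it is precisely what makes the geometric sum run over $e\in\{1,\dots,n\}$ rather than $\{0,\dots,n-1\}$ and produces the factor $\eta_\pm^n - 1 = \oq_\pm^2 - 1$ that cancels the denominator.
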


\noindent Therefore, we conclude that \eqref{eqn:tilde p} is equivalent to the following:
\begin{equation}
\label{eqn:tilde p equivalent} 
p_{\pm [i;j)}^{(\pm k)} =  \sum_{x \in \BZ/n\BZ} \tp_{\pm [i+x;j+x)}^{(\pm k)} \left[q^{-\delta_j^i} \delta_x^0 +  \delta_j^i (q-q^{-1}) \frac {\oq_\mp^{\frac 2n \cdot \overline{kx}}}{\oq_\mp^2-1} \right]
\end{equation}
We are now ready to give the complete statement of Theorem \ref{thm:intro main}: \\

\begin{theorem}
\label{thm:main}
	
We have an algebra isomorphism:
\begin{equation}
\label{eqn:generators}
\UU \cong \boxed{\CC := \Big \langle \CE_{\mu} \Big \rangle_{\mu \in \BQ \sqcup \infty} \Big/\text{relations \eqref{eqn:case 1}--\eqref{eqn:case 4}}}
\end{equation}
where the defining relations in $\CC$ are of the four types below. \\

\noindent \underline{Type 1:} for all \footnote{We only allow $i = j$ in the following if $k > 0$ and $l = 0$ if $k' > 0$} $i \leq j$, $l \geq 0$ and $k,k' \in \BZ$ such that:
$$
d:=  \det \begin{pmatrix} k & k' \\ j-i & nl \end{pmatrix}
$$ 
satisfies $|d| = \gcd(k',nl)$, we set (letting $g = \gcd \left(n, \frac {nl}d \right)$) for any $\hi \in \BZ/g\BZ$:
\begin{equation}
\label{eqn:case 1}
\Big[ p_{\pm[i;j)}^{(\pm k)}, p_{\pm l\bde, \hi}^{(\pm k')} \Big] = \pm p_{\pm [i;j+ln)}^{(\pm k \pm k')} \left(\delta_{i \text{ mod }g}^r \oq_\pm^{\frac dn} - \delta_{j \text{ mod }g}^r \oq_\pm^{-\frac dn} \right)
\end{equation}
\text{ }

\noindent \underline{Type 2}: under the same assumptions as in Type 1, we set:
\begin{equation}
\label{eqn:case 2}
\begin{aligned}
&\Big[ p_{\pm [i;j)}^{(\pm k)}, p_{\mp l\bde, \hi}^{(\mp k')} \Big] = \pm (c^{\pm l} \barc^{\pm k'})^{\frac {d}{|d|}} p_{\pm [i;j-nl)}^{(\pm k \mp k')} \left(\delta_{i \text{ mod }g}^r \oq_\mp^{\frac dn} - \delta_{j \text{ mod }g}^r \oq_\mp^{-\frac dn} \right) \\
&\Big[ \tp_{\pm [i;j)}^{(\pm k)}, p_{\mp l\bde, \hi}^{(\mp k')} \Big] = \pm \left( \frac {\psi_j^{\pm 1} \barc^{\pm k}}{\psi_i^{\pm 1} q^{1-\delta_j^i}}\right)^{\frac {d}{|d|}} p^{(\pm k \mp k')}_{\mp[j-nl;i)} \left(\delta_{i \text{ mod }g}^r \oq_\mp^{\frac dn} - \delta_{j \text{ mod }g}^r \oq_\mp^{- \frac dn} \right)
\end{aligned} 
\end{equation}
with the first equation in \eqref{eqn:case 2} holding if $j-i > nl$ (or $j-i = nl$ and $k > k'$) and the second equation holding if $j-i<nl$ (or $j-i=nl$ and $k < k'$). \\

\noindent \underline{Type 3:} for all \footnote{We only allow $i = j$ in the following if $k > 0$, and $i' = j'$ if $k' > 0$ \label{foot}} $i \leq j$, $i' \leq j'$ and $k,k' \in \BZ$ such that:
$$
\det \begin{pmatrix} k & k' \\ j-i & j'-i' \end{pmatrix} = \gcd(k+k',j-i+j'-i')
$$
we set:
\begin{equation}
\label{eqn:case 3}
p_{\pm [i;j)}^{(\pm k)} \tp_{\pm [i';j')}^{(\pm k')} q^{\delta_{j'}^i - \delta_{i'}^i} - \tp_{\pm [i';j')}^{(\pm k')} p_{\pm [i;j)}^{(\pm k)} q^{\delta_{j'}^j - \delta_{i'}^j}  = \sum^{(s,t) \equiv (i',j')}_{i \leq t \text{ and } s \leq j} \frac {\be^\mu_{\pm [s,j)} e_{\pm [i;t)}^\mu}{q^{-1}-q} 
\end{equation}
where $\mu = \frac {k+k'}{j - i + j' - i'}$. \\

\noindent \underline{Type 4:} for all \textsuperscript{\emph{\ref{foot}}} $i \leq j$, $i' \leq j'$ and $k,k' \in \BZ$ such that:
$$
\det \begin{pmatrix} k & k' \\ j-i & j'-i' \end{pmatrix} = \gcd(k-k',j-i-j'+i')
$$
we set:
\begin{equation}
 \label{eqn:case 4}
\left[p_{\pm [i;j)}^{(\pm k)}, p_{\mp [i';j')}^{(\mp k')} \right] = \frac 1{q-q^{-1}}  \begin{cases} \displaystyle \sum^{(s,t) \equiv (i',j')}_{i \leq s \leq t \leq j} e^{\mu}_{\pm [t,j)} \frac {\psi_{j'}^{\pm 1} \barc^{\pm k'}}{\psi_{i'}^{\pm 1}} \be_{\pm [i;s)}^{\mu} &\text{if } j-i \geq j'-i' \\ \\
\displaystyle \sum^{(s,t) \equiv (i,j)}_{i' \leq s \leq t \leq j'} \be_{\mp [t,j')}^{\mu} \frac{\psi_j^{\pm 1}  \barc^{\pm k}}{\psi_i^{\pm 1}} e_{\mp [i';s)}^{\mu} &\text{if } j-i < j'-i'
\end{cases}
\end{equation}
where $\mu = \frac {k-k'}{j - i - j' + i'}$. \\ 

\end{theorem}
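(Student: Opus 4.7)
The isomorphism $\CC \stackrel{\sim}\longrightarrow \UU$ is defined on generators by composing the inverses of the maps in Proposition \ref{prop:sub} with the shuffle identification $\CA \cong \UU$ of Theorem \ref{thm:iso}: the generator $p^{(\pm k)}_{\pm [i;j)} \in \CE_{k/(j-i)}$ is sent to $P^{(\pm k)}_{\pm[i;j)}$, and $p^{(\pm k)}_{\pm l\bde,\hi}$ to $P^{(\pm k)}_{\pm l\bde,\hi}$. Surjectivity is automatic from the slope decomposition \eqref{eqn:slope decomposition} together with the fact that each $\CB_\mu$ is already generated by the elements constructed in Subsections \ref{sub:summarize}--\ref{sub:cartan}. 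For injectivity, I would argue that the quadratic relations \eqref{eqn:case 3}--\eqref{eqn:case 4} of Types 3 and 4 permit any product of generators in $\CC$ to be reordered by ascending slope, which combined with the internal relations of each $\CE_\mu$ produces a PBW-type spanning set of $\CC$ no larger than the graded components of the tensor product \eqref{eqn:quant intro}. A dimension count then forces injectivity.

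The heart of the proof is the verification of relations \eqref{eqn:case 1}--\eqref{eqn:case 4} inside $\CA$, and my strategy is uniform across the four cases. First, I show that the (q-)commutator on the left-hand side lies in a single slope subalgebra $\CB_{\mu''}$, where $\mu''$ is the slope of $(j-i+nl,k+k')$ for Types 1--2 and of $(j-i\pm(j'-i'),k\pm k')$ for Types 3--4. This follows from the slope-filtration properties \eqref{eqn:radu1}--\eqref{eqn:radu2} of the coproduct: the coproducts of the two factors have hinges at slope $\leq \mu$ and $\leq \mu'$ respectively, and the specific (q-)commutator is engineered so that the only surviving tensors in the product have combined hinge on the line of slope $\mu''$ through the origin. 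Second, I show that the commutator is primitive for $\Delta_{\mu''}$; this is the main technical step, achieved by applying Proposition \ref{prop:coproduct} to each of the two generators. The intermediate coproduct terms must cancel pairwise, and this cancellation holds precisely because the coprimality hypothesis (Types 1--2) or the determinant equation (Types 3--4) forces the lattice triangles $T$ appearing in Proposition \ref{prop:coproduct} to contain no interior lattice points on the $\mu''$-line. Third, I invoke Lemma \ref{lem:unique} to reduce the problem to matching the values of all maps $\alpha_{\pm[i;j)}$ on both sides. These values are computable from \eqref{eqn:main pair 1}--\eqref{eqn:main pair 2} together with Proposition \ref{prop:pseudo}; for Types 2 and 4 one additionally uses the Drinfeld double relation \eqref{eqn:drinfeld} combined with the pairing formula \eqref{eqn:bonnie}. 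The normalizations \eqref{eqn:normalize simple} and \eqref{eqn:normalize imaginary} then supply exactly enough scalar data to pin down the constants in \eqref{eqn:case 1}--\eqref{eqn:case 4}.

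Two subtleties require special care. First, in the second line of \eqref{eqn:case 2} and in \eqref{eqn:case 3}, the lattice geometry differs from Types 1 and 4 in that the relevant hinge of the product of coproducts lies above rather than below the triangle governing the output; this forces a change of basis from $p^{(\pm k)}_{\pm[i;j)}$ to $\tp^{(\pm k)}_{\pm[i;j)}$ of \eqref{eqn:tilde p}, whose invertibility is ensured by Lemma \ref{lem:elem}. Second, the right-hand side of \eqref{eqn:case 4} mixes barred and unbarred generators $e,\be$ of $\CE_{\mu''}$, and Proposition \ref{prop:antipode} is essential in checking that the two natural expressions (obtained by using \eqref{eqn:cop1 anti} versus \eqref{eqn:cop2} when evaluating the Drinfeld double bracket) represent the same element.

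The main obstacle I anticipate is step two for Type 3, where both sides of the commutator are simple generators at distinct slopes: here the cancellation of intermediate coproduct terms requires one to simultaneously track the two-variable sums on the second line of \eqref{eqn:new coproduct 1} for both generators, and to organize the resulting quadruple sum so that pairs of ``inner'' hinges either cancel pairwise or recombine into the explicit right-hand side of \eqref{eqn:case 3}. Once this combinatorial identity is unravelled, the remaining matching of $\alpha$-values is essentially bookkeeping.
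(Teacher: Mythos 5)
Your overall architecture — define $\CC \to \UU$ via the slope subalgebras, verify the four relations inside $\CA$, then argue reordering plus linear independence for injectivity — matches the paper, but there is a genuine gap at the heart of the relation-verification step.

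Your ``step two'' (show the $q$-commutator is primitive for $\Delta_{\mu''}$ and invoke Lemma \ref{lem:unique}) is correct for Type 1, where the right-hand side of \eqref{eqn:case 1} is indeed a single primitive element. It cannot work for Types 3 and 4: the right-hand side of \eqref{eqn:case 3} and \eqref{eqn:case 4} is a non-trivial sum of \emph{products} of $e,\be$'s, so the left-hand side is \emph{not} primitive, and Lemma \ref{lem:unique} cannot be applied as you describe. You anticipate the difficulty correctly when you flag the ``quadruple sum,'' but proposing to track the two-variable sums of \eqref{eqn:new coproduct 1} for \emph{both} generators is exactly the approach that gets intractable. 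The paper circumvents it by an asymmetry you have not found: in the proof of Proposition \ref{prop:plus plus 2}, the refined coproduct of Proposition \ref{prop:coproduct} is applied only to $E^{(k)}_{[i;j)}$ (the generator of higher slope), while the coarse slope-filtration formula \eqref{eqn:radu1} is used for $E^{(k')}_{[i';j')}$. The resulting intermediate terms of $\Delta_\mu(\mathrm{LHS})$ then contain smaller $q$-commutators of the same form, so the proof runs by \emph{induction on $j-i+j'-i'$}, folding the inductive hypothesis back in via \eqref{eqn:cop1}, \eqref{eqn:cop1 anti} to recognize $\Delta_\mu(\mathrm{RHS})$. After the intermediate terms match, one is reduced to matching the $\alpha_{[u;v)}$-values on the two sides, and this still requires a substantial telescoping identity (Claim \ref{claim:combi}) which is not mere bookkeeping. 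Without the induction and the asymmetric coproduct application, the Type 3 verification does not go through.

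Two smaller inaccuracies. For Types 2 and 4, the paper does not use Lemma \ref{lem:unique} at all: the Drinfeld double relation \eqref{eqn:drinfeld}, combined with the refined coproduct formulas \eqref{eqn:new coproduct 1}--\eqref{eqn:new coproduct 2}/\eqref{eqn:new cop prim 1}--\eqref{eqn:new cop prim 2} and the pairings \eqref{eqn:main pair 3}--\eqref{eqn:main pair 6}, determines the commutator directly, with no appeal to primitivity. And the role you assign to Proposition \ref{prop:antipode} in Type 4 is off: it enters only in the setup (deriving \eqref{eqn:main pair 5}--\eqref{eqn:main pair 6} from \eqref{eqn:main pair 3}--\eqref{eqn:main pair 4}), not in reconciling ``two natural expressions'' for the bracket. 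Finally, the degenerate parameter ranges ($i=j$, $l=0$, $j-i=nl$, $j'-i'=0$) require separate arguments in the paper — often explicit specializations in the shuffle realization, and in one case a forward reference to Proposition \ref{prop:identity 1} — which your uniform scheme silently skips. The injectivity step (your PBW dimension count) matches Claim \ref{claim:want}, which also uses the lattice-triangle geometry of \cite{BS,S} to control the reordering induction, though that part of your outline is essentially correct.
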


\subsection{} 
\label{sub:case 1}

Let us define the following analogues, in the algebra $\CA$, of formulas \eqref{eqn:tilde p}--\eqref{eqn:tilde p equivalent}:
\begin{align}
&\tP_{\pm [i;j)}^{(\pm k)} =  \sum_{x \in \BZ/n\BZ} P_{\pm [i+x;j+x)}^{(\pm k)} \left[q^{-\delta_j^i} \delta_x^0 +  \delta_j^i (q-q^{-1}) \frac {\oq_\pm^{\frac 2n \cdot \overline{-kx}}}{\oq_\pm^2-1} \right] \label{eqn:tilde P} \\
&P_{\pm [i;j)}^{(\pm k)} =  \sum_{x \in \BZ/n\BZ} \tP_{\pm [i+x;j+x)}^{(\pm k)} \left[q^{-\delta_j^i} \delta_x^0 +  \delta_j^i (q-q^{-1}) \frac {\oq_\mp^{\frac 2n \cdot \overline{kx}}}{\oq_\mp^2-1} \right] \label{eqn:tilde P equivalent}
\end{align}
The first of these formulas is a definition, and the second one is a property. \\

\begin{proposition}
\label{prop:plus plus 1}

Formula \eqref{eqn:case 1} holds in $\A \cong \UU$ with $p \leftrightarrow P$. \\

\end{proposition}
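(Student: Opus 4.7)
The strategy is to work inside $\A\cong\UU$ via Theorem \ref{thm:iso} and identify both sides of \eqref{eqn:case 1} as elements of $\CB_{\mu_3}$, where $\mu_3 = \frac{k+k'}{j-i+nl}$, by means of Lemma \ref{lem:unique}. I treat only the $++$ case; the $--$ case is parallel. Set $P_1 := P_{[i;j)}^{(k)}$ and $P_2 := P_{l\bde,\hi}^{(k')}$, of slopes $\mu_1 = \tfrac{k}{j-i}$ and $\mu_2 = \tfrac{k'}{nl}$, and let $T$ denote the triangle with vertices $(0,0)$, $(j-i,k)$, $(j-i+nl,k+k')$: the arithmetic hypothesis $|d|=\gcd(k',nl)$ is exactly the statement that $(j-i,k)$ and $(nl,k')$ are Farey-adjacent, so that $T$ has no interior lattice points and its edge away from the origin has slope $\mu_3$.

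First I show $[P_1,P_2]\in\CB_{\mu_3}$. By $\Delta(P_1P_2)=\Delta(P_1)\Delta(P_2)$ together with \eqref{eqn:primitive 1}, \eqref{eqn:condition 1}, the leading tensors in $\Delta(P_1)$ are exactly $\tfrac{\psi_j}{\psi_i}\barc^k\otimes P_1 + P_1\otimes 1$, and analogously for $\Delta(P_2)$. The four cross-products in $\Delta(P_1P_2)-\Delta(P_2P_1)$ collapse after using that $\tfrac{\psi_j}{\psi_i}\barc^k$ commutes with $P_2$ (because $\deg P_2=l\bde$) and $c^l\barc^{k'}$ is central, leaving at the top only the expected $\tfrac{\psi_{j+nl}}{\psi_i}\barc^{k+k'}\otimes[P_1,P_2]+[P_1,P_2]\otimes1$. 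The next-to-leading intermediate tensors of $\Delta(P_1)$ and $\Delta(P_2)$ are governed by the respective edge slopes of the triangles under $(j-i,k)$ and $(nl,k')$ from Proposition \ref{prop:coproduct}; the emptiness of $T$ forces both sub-leading edges to contribute at slope $\mu_3$ precisely. After antisymmetrization, these sub-leading pieces combine into hinges consistent with an element of $\CB_{\mu_3}$ of degree $([i;j+ln),k+k')$; any residual hinges lie strictly below slope $\mu_3$.

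To pin down the scalar, I compute $\alpha_{[u;v)}$ on both sides for $v-u=j-i+nl$. By \eqref{eqn:bonnie} this reduces to the pairing $\langle[P_1,P_2], E_{-[u;v)}^{\mu_3}\rangle$, which \eqref{eqn:bialg 1} rewrites as $\langle P_1\otimes P_2 - P_2\otimes P_1, \Delta^{\text{op}}(E_{-[u;v)}^{\mu_3})\rangle$. The slope-diagonality of the pairing \eqref{eqn:pair b mu} isolates those tensors in $\Delta(E_{-[u;v)}^{\mu_3})$ with factors in $\B^-_{\mu_1}$ and $\B^-_{\mu_2}$, and by Proposition \ref{prop:coproduct} (applied to $E_{-[u;v)}^{\mu_3}$) these are exactly the hinges on the Farey edge of the triangle $T$. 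The normalizations \eqref{eqn:normalize simple}, \eqref{eqn:normalize imaginary} combined with the orthogonalities \eqref{eqn:main pair 1}, \eqref{eqn:pair1} then collapse the sum to the two boundary contributions controlled by $\delta_{i\bmod g}^\hi$ and $\delta_{j\bmod g}^\hi$; the accompanying $\oq_+^{\pm d/n}$ prefactors emerge from \eqref{eqn:alpha} via the floor identity \eqref{eqn:floor identity}. Since $P_{[i;j+ln)}^{(k+k')}$ is primitive for $\Delta_{\mu_3}$ by \eqref{eqn:primitive 1}, the difference of the two sides of \eqref{eqn:case 1} lies in $\CB_{\mu_3}$, is primitive for $\Delta_{\mu_3}$, and is killed by every $\alpha_{[u;v)}$; Lemma \ref{lem:unique} concludes.

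The main obstacle is the bookkeeping of coproduct and pairing data in the middle paragraphs: tracking the explicit powers of $\oq_+$, $q$, and $\barc$ across \eqref{eqn:cristian1}, \eqref{eqn:cristian2} and reconciling them with the normalizations in \eqref{eqn:alpha} is delicate but mechanical. The genuinely essential step is confirming that the Farey-adjacency hypothesis $|d|=\gcd(k',nl)$ is precisely what prevents extra $\mu_3$-intermediate tensors from appearing in $\Delta([P_1,P_2])$; relaxing it would spoil the primitivity of $[P_1,P_2]-\text{RHS}$ and force right-hand sides of the sum-of-monomials type seen in the Type~3 and Type~4 relations \eqref{eqn:case 3}, \eqref{eqn:case 4}.
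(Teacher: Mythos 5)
Your argument for the generic case ($i<j$ and $l>0$) is essentially the paper's: you establish that $[P_1,P_2]$ is primitive in $\CB_{\mu_3}$ by noting that the empty-interior condition on the triangle (equivalently, $\gcd(j-i+nl,k+k')=1$) eliminates hinges on the slope-$\mu_3$ segment other than the endpoints, and you then pin down the scalar via the $\alpha$-maps / pairing and conclude by Lemma~\ref{lem:unique}. The paper carries out exactly this two-step argument, using \eqref{eqn:pseudo} together with \eqref{eqn:normalize simple}, \eqref{eqn:normalize imaginary} in place of your pairing-side computation, but these are equivalent via \eqref{eqn:bonnie}. A cosmetic point: ``$(j-i,k)$ and $(nl,k')$ are Farey-adjacent'' is an abuse when $\gcd(nl,k')>1$; the precise statement is that the determinant condition forces no interior lattice points in $T$ by Pick's theorem, which you invoke correctly.

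However, your proof is incomplete: the footnote to \eqref{eqn:case 1} explicitly permits the degenerate cases $i=j$ (so $k>0$) and $l=0$ (so $k'>0$), where your slope/triangle machinery collapses because $\mu_1 = k/(j-i)$ or $\mu_2 = k'/(nl)$ is not a finite rational. In these cases one of the two $P$'s is actually a Cartan generator via \eqref{eqn:vertical}, i.e.\ $P_{0\bde,\hi}^{(\pm k)} = \pm a_{\hi,\pm k}\oq^{\pm(2\hi-1)k/n}$, whose coproduct is \eqref{eqn:coproduct 1}--\eqref{eqn:coproduct 2} rather than \eqref{eqn:condition 1}--\eqref{eqn:condition 2}, and Proposition~\ref{prop:coproduct} / \ref{prop:coproduct 2} does not apply as stated. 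The paper devotes roughly half the proof to exactly these cases: for $j-i>0$, $l=0$ the determinant hypothesis forces $j=i+1$ and the relation reduces to \eqref{eqn:vertical 3}--\eqref{eqn:vertical 4}; for $j-i=0$, $l>0$ it forces $k=1$, $nl\mid k'$, and the paper argues by an explicit shuffle-algebra computation (expanding $[P_{[i;i)}^{(1)}, P_{l\bde,0}^{(0)}]$ as a linear combination of the $P_{[r;r+nl)}^{(1)}$ and determining the coefficients by applying $\alpha_{[r;r+nl)}$). You should either extend your argument to cover these or note that they require separate treatment.
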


\begin{proof} Let us first assume $j-i > 0 $ and $l > 0$, and let:
$$
\lhs = \Big[ P_{\pm [i;j)}^{(\pm k)}, P_{\pm l\bde, \hi}^{(\pm k')} \Big] = P_{\pm [i;j)}^{(\pm k)} P_{\pm l\bde, \hi}^{(\pm k')} - P_{\pm l\bde, \hi}^{(\pm k')} P_{\pm [i;j)}^{(\pm k)}
$$
Formulas \eqref{eqn:primitive 1}/\eqref{eqn:primitive 2} and \eqref{eqn:condition 1}/\eqref{eqn:condition 2} imply that:
\begin{align}
&\Delta \left(P_{\pm [i;j)}^{(\pm k)}\right) = * \otimes P_{\pm [i;j)}^{(\pm k)} + P_{\pm [i;j)}^{(\pm k)} \otimes * + ... \label{eqn:zo1} \\
&\Delta \left( P_{\pm l\bde, \hi}^{(\pm k')}\right) = * \otimes  P_{\pm l\bde, \hi}^{(\pm k')} +  P_{\pm l\bde, \hi}^{(\pm k')} \otimes * + ... \label{eqn:zo2} 
\end{align}
where the $*$'s stand for various products of $\psi_s^{\pm 1}$, $c^{\pm 1}$, $\barc^{\pm 1}$, and the ellipses stand for tensors with hinge strictly below the vectors $\pm (j-i,k)$ and $\pm (nl,k')$, respectively (see Subsection \ref{sub:diagrams} for the definition of hinges). The assumption $|knl-k'(j-i)| = \gcd(k',nl)$ implies that there are no lattice points strictly inside the triangle spanned by the aforementioned vectors. Therefore, the commutator of \eqref{eqn:zo1} and \eqref{eqn:zo2} is:
\begin{equation}
\label{eqn:analysis}
\Delta(\lhs) = * \otimes \lhs + \lhs \otimes * + ...
\end{equation}
where the ellipsis denotes tensors with hinge situated strictly below $\pm (j-i+nl,k+k')$. As a consequence of our assumption, we have $\gcd(j-i+nl,k+k') = 1$, hence:
$$
\lhs \text{ is a primitive element of } \CB_\mu, \text{ for } \mu = \frac {k+k'}{j-i+nl}
$$
Since the right-hand side of relation \eqref{eqn:case 1} is also a primitive element of $\CB_\mu$, Lemma \ref{lem:unique} reduces our problem to proving that the two sides of \eqref{eqn:case 1} take the same values under the linear maps $\alpha_{\pm [u;u+j-i+nl)}$, for all $u \in \BZ/n\BZ$:
$$
\alpha_{\pm [u;u+j-i+nl)}(\lhs) = \alpha_{\pm [u;u+j-i+nl)}(P_{\pm [i;j)}^{(\pm k)} P_{\pm l\bde, \hi}^{(\pm k')}) - \alpha_{\pm [u;u+j-i+nl)}(P_{\pm l\bde, \hi}^{(\pm k')}  P_{\pm [i;j)}^{(\pm k)}) 
$$
\begin{multline*}
\stackrel{\eqref{eqn:pseudo}}= \alpha_{\pm [u+nl;u+j-i+nl)}(P_{\pm [i;j)}^{(\pm k)}) \alpha_{\pm [u;u+nl)}(P_{\pm l\bde, \hi}^{(\pm k')}) \oq_\pm^{\frac dn} - \\ - \alpha_{\pm [u+j-i;u+j-i+nl)}(P_{\pm l\bde, \hi}^{(\pm k')}) \alpha_{\pm [u;u+j-i)}(P_{\pm [i;j)}^{(\pm k)}) \oq_\pm^{- \frac dn} \stackrel{\eqref{eqn:normalize simple}, \eqref{eqn:normalize imaginary}}=
\end{multline*}
$$
= \delta_u^i \delta_{i \text{ mod }g}^r \oq_\pm^{\frac dn} - \delta_u^i \delta_{j \text{ mod }g}^r \oq_\pm^{- \frac dn} \stackrel{\eqref{eqn:normalize simple}}= \alpha_{[u;u+j-i+nl)}(\rhs)
$$
where $d = knl-k'(j-i)$. This proves \eqref{eqn:case 1} in the case $j-i > 0$ and $l > 0$. \\

\noindent When $j - i > 0$ and $l = 0$, our assumption requires $j = i+1$ and \eqref{eqn:case 1} reads:
\begin{equation}
\label{eqn:new 1}
\Big[ P_{\pm[i;i+1)}^{(\pm k)}, P_{\pm 0\bde, \hi}^{(\pm k')} \Big] = \pm P_{\pm [i;i+1)}^{(\pm k \pm k')} \left(\delta_{i}^r \oq_\pm^{- \frac {k'}n} - \delta_{i+1}^r \oq_\pm^{\frac {k'}n} \right)
\end{equation}
which is a simple application of \eqref{eqn:vertical 3}--\eqref{eqn:vertical 4} and \eqref{eqn:vertical}. \\

\noindent When $j-i = 0$ and $l > 0$, our assumption forces $k=1$, $nl | k'$, and we must prove:
\begin{equation}
\label{eqn:new 2}
\Big[ P_{\pm[i;i)}^{(\pm 1)}, P_{\pm l\bde, 0}^{(\pm k')} \Big] = \pm P_{\pm [i;i+ln)}^{(\pm 1 \pm k')} \left( \oq_\pm^{l} - \oq_\pm^{-l} \right)
\end{equation}
We will prove the case $\pm = +$, and leave the analogous case $\pm = -$ as an exercise to the interested reader. Moreover, to keep the notation simple we will assume $k' = 0$, as the general case can be obtained by simply multiplying all the rational functions below by the monomial $\prod_{i=1}^n \prod_{s=1}^l z_{is}^{\frac {k'}{nl}}$. Let us write:
\begin{equation}
\label{eqn:gits}
P_{l\bde, 0}^{(0)} = R(z_{11},...,z_{1l},...,z_{n1},...,z_{nl})
\end{equation}
where the rational function $R$ has total homogeneous degree $0$. By assumption, the hinge of any tensor in the coproduct $\Delta(R)$ lies strictly below the vector $(nl,0)$. Therefore, the hinge of any tensor in the coproduct of the element:
\begin{equation}
\label{eqn:vars}
\left [P_{[i;i)}^{(1)}, P_{l\bde, 0}^{(0)} \right] \stackrel{\eqref{eqn:vertical 3}}= \tilde{R} := \oq^{\frac {2i-1}n} \left(\sum_{s=1}^l z_{i-1,s} - \sum_{s=1}^l z_{is} \right) \cdot R
\end{equation}
lies strictly below the vector $(nl,1)$. This implies that the expression \eqref{eqn:vars} is some linear combination of the primitive shuffle elements $P_{[1;1+nl)}^{(1)},...,P_{[n;n+nl)}^{(1)}$, hence:
\begin{equation}
\label{eqn:cars}
\text{LHS of \eqref{eqn:new 2}} = \sum_{r=1}^n c_r \cdot P_{[r;r+nl)}^{(1)}
\end{equation}
To determine the constants $c_r$, we apply the linear maps $\alpha_{[r;r+nl)}$ of \eqref{eqn:alpha} to the expression above, for any $r \in \{1,...,n\}$. By \eqref{eqn:normalize simple}, we have:
$$
\tilde{R}\Big |_{z_r \mapsto 1,...,z_{r+nl-1} \mapsto 1} \cdot \frac {(1 - q^{-2})^{nl} \oq^{- \frac 1n}}{\oq^{\frac {2(r+nl-1)}n} \prod_{r \leq a < b < r+nl} \zeta(1)} = c_r
$$
for all $r \in \{1,...,n\}$. Recall that the specialization above involves regarding the variables in \eqref{eqn:gits} as having colors $r,r+1,...,r+nl-1$, and specializing them to the value 1 in this convention. This implies that $c_r = 0$ for $r \not \equiv i$, because $\tilde{R}$ is a multiple of the factor $\sum_s (z_{i-1,s} - z_{is})$, which goes to 0 under this specialization. However, when $r = i$, the given specialization corresponds to setting:
$$
z_{i,s} \mapsto \oq^{2s-2},...,z_{n,s} \mapsto \oq^{2s-2}, z_{1,s} \mapsto \oq^{2s},..., z_{i-1,s} \mapsto \oq^{2s}
$$
for all $s \in \{1,...,l\}$, according to convention \eqref{eqn:identify}. Thus, we obtain:
$$
(\oq^{2l}-1) \cdot R \Big |_{z_i \mapsto 1,...,z_{i+nl-1} \mapsto 1} \cdot \frac {(1 - q^{-2})^{nl} \oq^{\frac {2i-1}n} \oq^{- \frac 1n}}{\oq^{\frac {2(i+nl-1)}n}  \prod_{i \leq a < b < i+nl} \zeta(1)} = c_i
$$
To compute the left-hand side of the expression above, we recall that $\alpha_{[i;i+nl)}(R) = 1$ due to \eqref{eqn:normalize imaginary}, hence:
$$
R \Big |_{z_i \mapsto 1,...,z_{i+nl-1} \mapsto 1} \cdot \frac {(1 - q^{-2})^{nl} \oq^{- l}}{\prod_{i \leq a < b < i+nl} \zeta(1)} = 1
$$
By dividing out the previous equalities, we obtain $c_i = \oq^l - \oq^{-l}$. Plugging this formula (as well as $c_r = 0$ for $r \not \equiv i$ mod $n$) into \eqref{eqn:cars} precisely establishes \eqref{eqn:new 2}. 

\end{proof}

\begin{proposition}
\label{prop:plus plus 2}
	
Formula \eqref{eqn:case 3} holds in $\A \cong \UU$ with $p \leftrightarrow P$. \\
	
\end{proposition}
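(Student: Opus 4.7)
The plan is to follow the template of the proof of Proposition \ref{prop:plus plus 1}. Write $X$ for the LHS of \eqref{eqn:case 3} (with $p \leftrightarrow P$, $\be,e \leftrightarrow \bE,E$) and $Y$ for its RHS, and set $\mu = \frac{k+k'}{j-i+j'-i'}$. Note that the determinant hypothesis says precisely that the gcd of the coordinates of $(j-i+j'-i',\,k+k')$ equals $|k(j'-i')-k'(j-i)|$, i.e.\ the triangle with vertices $(0,0)$, $(j-i,k)$, $(j-i+j'-i',k+k')$ contains no lattice points in its interior.

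The first step is to establish that $X \in \CB_\mu$ and is primitive for $\Delta_\mu$. By Proposition \ref{prop:coproduct}, the coproducts $\Delta(P^{(\pm k)}_{\pm[i;j)})$ and $\Delta(\tP^{(\pm k')}_{\pm[i';j')})$ are given modulo tensors of hinge strictly below the slope-$\mu$ line by Cartan/central elements tensored against $P$/$\tP$. The prefactors $q^{\delta^i_{j'}-\delta^i_{i'}}$ and $q^{\delta^j_{j'}-\delta^j_{i'}}$ appearing in the definition of $X$ are chosen precisely so that, upon computing $\Delta(X)$, the ``mixed'' tensors of slope $\mu$ that arise from $P^{(\pm k)}_{\pm[i;j)} \otimes \tP^{(\pm k')}_{\pm[i';j')}$ in one product cancel against those arising in the opposite order, leaving $\Delta_\mu(X) = * \otimes X + X \otimes *$ with $*$'s built out of Cartan/central elements. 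On the other hand, each summand of $Y$ lies in $\CB_\mu$ by Proposition \ref{prop:belong}, and the coproduct formulas \eqref{eqn:cop1}, \eqref{eqn:cop1 anti} combined with the antipode telescope \eqref{eqn:identities antipode} show that $Y$ is likewise $\Delta_\mu$-primitive: each would-be intermediate tensor cancels between adjacent $(s,t)$'s via \eqref{eqn:identities antipode}.

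Once $X-Y$ is known to be primitive in $\CB_\mu$, Lemma \ref{lem:unique} reduces the proof to verifying
\[
\alpha_{\pm[u;\,u+(j-i+j'-i'))}(X) \;=\; \alpha_{\pm[u;\,u+(j-i+j'-i'))}(Y)
\]
for every $u \in \BZ/n\BZ$. The left-hand side is computed by applying Proposition \ref{prop:pseudo} to each product $P\tP$ and $\tP P$ in $X$, then using the normalizations \eqref{eqn:normalize simple} and the expansion \eqref{eqn:tilde P equivalent} of $\tP$ as a linear combination of $P$'s; the resulting expression is an explicit combination of Kronecker deltas in $u$ weighted by powers of $\oq_\pm^{\pm d/n}$. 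The right-hand side is computed by applying Proposition \ref{prop:pseudo} to each term $\bE^\mu_{\pm[s;j)} E^\mu_{\pm[i;t)}$ in $Y$, then invoking \eqref{eqn:main pair 2} and \eqref{eqn:main pair 1}; the condition $(s,t)\equiv (i',j')\bmod (n,n)$ together with the range $i\leq t$, $s\leq j$ ensures that for any given $u$ exactly one pair $(s,t)$ contributes. Symmetry (or an analogous computation) handles the $\pm=-$ case.

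The main obstacle will be the precise bookkeeping of the $q$-powers and $\oq_\pm$-powers on both sides. In particular, one must explain why $\tP$, and not $P$, appears in the second slot of the LHS: this is dictated by Lemma \ref{lem:elem}, which says exactly that the linear combination \eqref{eqn:tilde P} is the one whose slope-$\mu$ coproduct pairs off cleanly — modulo Cartan factors — against that of $P^{(\pm k)}_{\pm[i;j)}$ so that the $q$-commutator on the LHS becomes $\Delta_\mu$-primitive. Verifying this cancellation, together with matching the resulting constant to $\frac{1}{q^{-1}-q}$ times the antipode sum on the RHS via \eqref{eqn:main pair 1}--\eqref{eqn:main pair 2}, is the only genuine calculation of the proof.
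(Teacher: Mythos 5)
Your reduction to Lemma \ref{lem:unique} is where the proposal breaks down: neither side of \eqref{eqn:case 3} is $\Delta_\mu$-primitive in general, so Lemma \ref{lem:unique} cannot be applied directly. On the right-hand side, each summand $\be^\mu_{\pm[s;j)} e^\mu_{\pm[i;t)}$ is a genuine product of two slope-$\mu$ elements and has non-trivial $\Delta_\mu$-intermediate terms; the antipode telescope \eqref{eqn:identities antipode} does not cancel them, because that identity sums over \emph{all} $s$ in $\{i,\dots,j\}$ whereas the sum in \eqref{eqn:case 3} is constrained to $(s,t)\equiv(i',j')\bmod(n,n)$. On the left-hand side, the $q$-prefactors are tuned only so that the ``purely Cartan'' tensor $\left(\psi\text{-stuff}\cdot E^{(k')}_{[i';j')}\right)\otimes E^{(k)}_{[i;j)}$ vanishes via \eqref{eqn:vertical 2}; the remaining tensors coming from the sum in \eqref{eqn:new coproduct 1} --- those of the form $(\cdots)\otimes \bE^\mu_{[t;j)}E^\mu_{[i;s)}$ --- survive and constitute genuine intermediate terms. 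So $\Delta_\mu(\mathrm{LHS})$ is \emph{not} $*\otimes\mathrm{LHS}+\mathrm{LHS}\otimes *$. The paper instead proves that $\Delta_\mu(\mathrm{LHS})$ and $\Delta_\mu(\mathrm{RHS})$ have the same intermediate terms, and this requires an \emph{induction on $j-i+j'-i'$}: one computes the surviving intermediate terms of $\Delta(\mathrm{LHS})$ via Proposition \ref{prop:coproduct}, recognizes a $q$-commutator of smaller horizontal degree inside them, applies the induction hypothesis to replace it by the corresponding sum of $\bE E$'s, and then uses \eqref{eqn:cop1}, \eqref{eqn:cop1 anti} to reassemble the result as $\Delta_\mu(\mathrm{RHS})$. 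Without this induction there is no mechanism that produces the matching of intermediate terms.

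Two secondary omissions: first, your $\alpha_{\pm[u;v)}$-matching step is more delicate than stated --- it is not that ``exactly one pair $(s,t)$ contributes'' but rather a telescoping sum over all intermediate lattice points, which is what Claim \ref{claim:combi} actually computes using the closed form \eqref{eqn:gamma} of the weights in \eqref{eqn:tilde P equivalent}. Second, the degenerate case $i=j$ or $i'=j'$ (allowed by the hypotheses when $k>0$) cannot be handled by the coproduct argument at all, since $P_{[i;i)}^{(1)}$ is a Cartan-type generator $a_{r,1}$ with no ``hinge'' structure to exploit; the paper treats it separately via \eqref{eqn:vertical 3}--\eqref{eqn:vertical 4} and the $F/\bF$ machinery of Section \ref{sec:final}. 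Your proposal leaves this case unaddressed.
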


\begin{proof} We will only prove the case when $\pm = +$, as the case $\pm = -$ is analogous. We will first deal with the case when $j > i$ and $j' > i'$, and discuss the situation when we have equality at the end of the proof. To keep the notation simple, we will set $\barc = 1 $ in all formulas for the coproduct (this will have no bearing whatsoever on the validity of the argument). If we use \eqref{eqn:tilde P equivalent}, the relation we need to prove reads:
\begin{equation}
\label{eqn:need to prove}
P_{[i;j)}^{(k)} P_{[i';j')}^{(k')} q^{\delta_{j'}^i - \delta_{i'}^i} - P_{[i';j')}^{(k')} P_{[i;j)}^{(k)} q^{\delta_{j'}^j - \delta_{i'}^j} = 
\end{equation} 
$$
= \sum_{x \in \BZ/n\BZ} \sum^{(s,t) \equiv (i'+x,j'+x)}_{i \leq t \text{ and } s \leq j} \frac {\bE^\mu_{[s,j)} E_{[i;t)}^\mu}{q^{-1}-q} \left[q^{-\delta_{j'}^{i'}} \delta_x^0 +  \delta_{j'}^{i'} (q-q^{-1}) \frac {\oq_-^{\frac 2n \cdot \overline{k'x}}}{\oq_-^2-1} \right]
$$
We will prove this formula by induction on $\# = j - i + j' - i' \in \BN$ (the base case $\# = 1$ is vacuous). We may use \eqref{eqn:p plus} to rewrite the LHS of \eqref{eqn:need to prove} as:
\begin{equation}
\label{eqn:lhs}
\text{LHS} = \left(E_{[i;j)}^{(k)} E_{[i';j')}^{(k')} q^{\delta_{j'}^i - \delta_{i'}^i} - E_{[i';j')}^{(k')} E_{[i;j)}^{(k)} q^{\delta_{j'}^j - \delta_{i'}^j} \right) \frac {q^2 \oq^{\frac 2n}}{(q-q^{-1})^2}
\end{equation}
Our assumption implies that $\gcd(j-i,k) = \gcd(j'-i',k') = 1$, as depicted below:

\begin{picture}(100,65)(-110,-20)
\label{pic:pp}

\put(0,0){\circle*{2}}\put(20,0){\circle*{2}}\put(40,0){\circle*{2}}\put(60,0){\circle*{2}}\put(80,0){\circle*{2}}\put(100,0){\circle*{2}}\put(120,0){\circle*{2}}\put(40,20){\circle*{2}}

\put(0,0){\vector(1,0){120}}
\put(0,0){\vector(2,1){40}}
\put(40,20){\vector(4,-1){80}}

\put(-20,0){\scriptsize{$(0,0)$}}
\put(27,25){\scriptsize{$(j-i,k)$}}
\put(122,0){\scriptsize{$(j-i+j'-i',k+k')$}}

\put(0,15){\scriptsize{$E_{[i;j)}^{(k)}$}}
\put(75,15){\scriptsize{$E_{[i';j')}^{(k')}$}}

\end{picture}

\noindent Therefore, Proposition \ref{prop:coproduct} implies:
\begin{equation}
\label{eqn:louis 1}
\Delta \left(E_{[i;j)}^{(k)} \right) = \frac {\psi_j}{\psi_i} \otimes E_{[i;j)}^{(k)} + E_{[i;j)}^{(k)} \otimes 1 + \sum_{i \leq s < t \leq j} \frac {\psi_j}{\psi_t} E_{[s;t)}^{(\bullet)} \frac {\psi_s}{\psi_i} \otimes \bE_{[t,j)}^{\mu} E_{[i;s)}^{\mu} + ... 
\end{equation}
where $\bullet$ is shorthand for $k - \mu(j-i+s-t)$ and the ellipsis stands for tensors with hinge strictly below the vector $(j-i+j'-i',k+k')$. Similarly, \eqref{eqn:radu1} implies:
\begin{equation}
\label{eqn:louis 2}
\Delta \left(E_{[i';j')}^{(k')} \right) = \frac {\psi_{j'}}{\psi_{i'}} \otimes E_{[i';j')}^{(k')} + E_{[i';j')}^{(k')} \otimes 1 + ... 
\end{equation}
where the ellipsis stands for tensors with hinge strictly below the vector $(j'-i',k')$. Taking an appropriate $q$--commutator of \eqref{eqn:louis 1} and \eqref{eqn:louis 2} yields:
\begin{equation}
\label{eqn:up}
\Delta \left( \text{LHS} \right) = \frac {\psi_j\psi_{j'}}{\psi_i \psi_{i'}} \otimes \text{LHS} + \lhs \otimes 1 + ... \ + 
\end{equation}
$$
+ \left( \frac {\psi_j}{\psi_i} E_{[i';j')}^{(k')} q^{\delta_{j'}^i - \delta_{i'}^i} - E_{[i';j')}^{(k')} \frac {\psi_j}{\psi_i} q^{\delta_{j'}^j - \delta_{i'}^j} \right) \frac {q^2 \oq^{\frac 2n}}{(q-q^{-1})^2} \otimes E_{[i;j)}^{(k)} + 
$$
$$
\sum_{i \leq s < t \leq j}  \left( \frac {\psi_j}{\psi_t} E_{[s;t)}^{(\bullet)} \frac {\psi_s}{\psi_i} E_{[i';j')}^{(k')} q^{\delta_{j'}^i - \delta_{i'}^i} - E_{[i';j')}^{(k')} \frac {\psi_j}{\psi_t} E_{[s;t)}^{(\bullet)} \frac {\psi_s}{\psi_i} q^{\delta_{j'}^j - \delta_{i'}^j} \right) \frac {q^2 \oq^{\frac 2n}}{(q-q^{-1})^2} \otimes  \bE_{[t,j)}^{\mu} E_{[i;s)}^{\mu} 
$$
where the ellipsis (above and henceforth) stands for tensors with hinge strictly below the vector $(j-i+j'-i',k+k')$. By \eqref{eqn:vertical 2}, the second line of the expression above vanishes and hence $\lhs \in \CB_\mu$, while the third line equals:
$$
\sum_{i \leq s < t \leq j} \frac {\psi_j}{\psi_t} \left( E_{[s;t)}^{(\bullet)}  E_{[i';j')}^{(k')} q^{\delta_{j'}^s - \delta_{i'}^s} -  E_{[i';j')}^{(k')} E_{[s;t)}^{(\bullet)} q^{\delta_{j'}^t - \delta_{i'}^t} \right) \frac {q^2 \oq^{\frac 2n}}{(q-q^{-1})^2} \frac {\psi_s}{\psi_i}  \otimes \bE_{[t,j)}^{\mu} E_{[i;s)}^{\mu}
$$
By the induction hypothesis of \eqref{eqn:need to prove}, the expression above equals:
$$
\sum_{x \in \BZ/n\BZ} \sum^{(s',t') \equiv (i'+x,j'+x)}_{s \leq t' \text{ and } s' \leq t} \sum_{i \leq s < t \leq j} \frac {\psi_j}{\psi_t} \bE^{\mu}_{[s',t)} E_{[s;t')}^{\mu} \gamma_{i'j'k'}^-(x) \frac {\psi_s}{\psi_i} \otimes \bE_{[t,j)}^{\mu} E_{[i;s)}^{\mu} = 
$$
\begin{equation}
\label{eqn:multline}
\stackrel{\eqref{eqn:cop1}, \eqref{eqn:cop1 anti}}= \sum_{x \in \BZ/n\BZ} \sum^{(s',t') \equiv (i'+x,j'+x)}_{i \leq t' \text{ and } s' \leq j}  \Delta_\mu(\bE^{\mu}_{[s';j)}) \Delta_\mu(E^{\mu}_{[i;t')}) \gamma_{i'j'k'}^-(x)
\end{equation}
where (recall that $\oq_+ = \oq$ and $\oq_- = q^{-n} \oq^{-1}$, the symbol $\overline{a}$ denotes the residue class of $a$ in the set $\{1,...,n\}$, and $\delta_j^i$ is 1 if $i \equiv j$ mod $n$ and 0 otherwise):
\begin{equation}
\label{eqn:gamma}
\gamma_{ijk}^\pm(x) = \delta_x^0 \frac {q^{-\delta_j^i}}{q^{-1} - q} - \delta_j^i \frac {\oq_{\pm}^{\frac 2n \cdot \overline{kx}}}{\oq_\pm^2 - 1}
\end{equation}
Plugging formula \eqref{eqn:multline} into the third line of \eqref{eqn:up}, we conclude that the intermediate terms in $\Delta_\mu$ of either side of relation \eqref{eqn:need to prove} are equal. Thus, to establish this relation, all that remains to show is that its left and right-hand sides take the same values under the maps $\alpha_{[u;v)}$ for all $[i;j) + [i';j') = [u;v) \in \nn$:
$$
\alpha_{[u;v)}(\text{LHS}) = \left[\alpha_{[u;v)}( E_{[i;j)}^{(k)} E_{[i';j')}^{(k')} ) q^{\delta_{j'}^i - \delta_{i'}^i} -\alpha_{[u;v)} (E_{[i';j')}^{(k')} E_{[i;j)}^{(k)}) q^{\delta_{j'}^j - \delta_{i'}^j} \right] \frac {q^2 \oq^{\frac 2n}}{(q-q^{-1})^2} 
$$
\begin{equation}
\label{eqn:synth 1}
\stackrel{\eqref{eqn:pseudo}, \eqref{eqn:main pair 1}}= \delta_u^{i'} \delta_{j'}^i \oq^{\frac dn} q^{\delta_{j'}^i - \delta_{i'}^i} - \delta_u^i \delta_{i'}^j \oq^{-\frac dn} q^{\delta_{j'}^j - \delta_{i'}^j} 
\end{equation}
where $d = \gcd(k+k',j-i+j'-i')$. Meanwhile, \eqref{eqn:pseudo} and \eqref{eqn:main pair 1}--\eqref{eqn:main pair 2} imply:
$$
\alpha_{[u;v)} (\text{RHS}) = \sum_{x \in \BZ/n\BZ} \sum^{(s,t) \equiv (i'+x,j'+x)}_{i \leq t \text{ and } s \leq j} \alpha_{[u;v)} \left(\bE^{\mu}_{[s,j)} E_{[i;t)}^{\mu}\right) \gamma_{i'j'k'}^-(x) = 
$$
\begin{equation}
\label{eqn:synth 2}
=  (1-q^{-2}) \delta_u^i \oq^{-\frac dn} \gamma_{i'j'k'}^-(j-i') + (1-q^2) \delta_v^j \oq^{\frac dn} \gamma_{i'j'k'}^-(i-i') + 
\end{equation}
$$
+ (1-q^{-2})(1-q^2) \sum^{i < t, s < j, \mu(j-s) \in \BZ}_{[i;t)+[s;j) = [u;v)} \delta_u^i \delta_s^t \delta_j^v \oq^{\frac {\gcd(\mu(j-s), j-s) - \gcd(\mu(t-i), t-i)}n} \gamma_{i'j'k'}^-(s-i')
$$
Hence the proof of the Proposition is completed by the following identity: \\

\begin{claim}
\label{claim:combi}

The right-hand sides of \eqref{eqn:synth 1} and \eqref{eqn:synth 2} are equal. \\

\end{claim}

\begin{proof} Let us assume $i' \equiv j'$, $u \equiv i$, $v \equiv j$ mod $n$, since otherwise the last line of \eqref{eqn:synth 2} vanishes termwise, and the problem is trivial. Therefore:
$$
\text{RHS of \eqref{eqn:synth 2}} = (1-q^{-2}) \oq^{-\frac dn} \gamma_{i'j'k'}^-(j-i') + (1-q^2) \oq^{\frac dn} \gamma_{i'j'k'}^-(i-i') + 
$$
$$
+ (1-q^{-2})(1-q^2) \sum^{e \in \{1,...,d-1\}}_{s = j - (d-e)a, \ t = i+da} \oq^{\frac {d-2e}n} \gamma_{i'j'k'}^-(s-i')
$$
where we write $v-u = da$, $k+k' = db$ with $a$ and $b$ coprime. If we plug formula \eqref{eqn:gamma} into the expression above, then the right-hand side of \eqref{eqn:synth 2} equals:
$$
(1-q^{-2}) \oq^{-\frac dn} \left( \frac {\delta_j^{i'}}{1-q^2} + \frac {\oq_-^{\frac 2n \cdot \overline{k' (j - i')}}}{1 - \oq_-^{2n}} \right) + (1-q^2) \oq^{\frac dn} \left( \frac {\delta_i^{i'}}{1-q^2} + \frac {\oq_-^{\frac 2n \cdot \overline{k'(i - i')}}}{1 - \oq_-^{2n}} \right) + 
$$
\begin{equation}
\label{eqn:telescope}
+ (1-q^{-2})(1-q^2) \sum_{e \in \{1,...,d-1\}} \oq^{\frac {d-2e}n} \left( \frac {\delta_{i+ea}^{i'}}{1-q^2} + \frac {\oq_-^{\frac 2n \cdot \overline{k'(i+ea - i')}}}{1 - \oq_-^{2n}} \right)
\end{equation}
By assumption, we have $(k+k')(j'-i') - k'(v-u) = d \Rightarrow b (j'-i') - k'a = 1$. Since $j' \equiv i'$, this implies that $k'a \equiv -1$ modulo $n$, so we have the elementary identity:
$$
\delta_{i+ea}^{i'} = \oq_-^{-\frac 2n} F_{e-1} - F_e, \qquad \text{where } F_e =  \frac {\oq_-^{\frac 2n \cdot \overline{k'(i + ea - i')}}}{1 - \oq_-^{2n}}
$$
With this substitution, formula \eqref{eqn:telescope} for the right-hand side of \eqref{eqn:synth 2} reads:
$$
(1-q^{-2}) \oq^{-\frac dn} \left( \frac {\delta_j^{i'} \cdot q^2}{1-q^2} + \oq^{-\frac 2n}_- F_{d-1} \right) + (1-q^2) \oq^{\frac dn} \left( \frac {\delta_i^{i'}}{1-q^2} + F_0 \right) + 
$$
$$
+ (1-q^{-2})(1-q^2) \oq^{\frac dn} \sum_{e \in \{1,...,d-1\}} \left( \frac {\oq_-^{-\frac 2n} \oq^{-\frac {2}n}}{1-q^2} \cdot \oq^{-\frac {2(e-1)}n} F_{e-1} - \frac {q^2}{1-q^2} \cdot \oq^{-\frac {2e}n} F_e \right)
$$
Using $\oq_-^{-\frac 2n} \oq^{-\frac 2n} = q^2$, the formula above is easily seen to be a telescoping sum. After canceling the various $F_e$'s, we obtain the right-hand side of \eqref{eqn:synth 1}, as required. 
	
\end{proof}

\noindent The only remaining case is when either $i = j$ or $i' = j'$. In this case, our assumptions force $i = j$, $k = 1$ and $j'-i'|k'+1$. To keep the notation simple we will assume $k' = -1$, as the general case can be obtained by simply multiplying all the rational functions below by the monomial $\prod_{a=i'}^{j'-1} (z_a \oq^{\frac {2a}n})^{\frac {k'+1}{j'-i'}}$. We must prove that:
\begin{equation}
\label{eqn:space}
\left[ P_{[i;i)}^{(1)}, \tP_{[i';j')}^{(-1)} \right] = q^{\delta_{i'}^i - \delta_{j'}^i} \sum^{(s,t) \equiv (i',j')}_{s \leq i \leq t} \frac {\bE^0_{[s,i)} E_{[i;t)}^0}{q^{-1}-q}
\end{equation}
Recall that:
$$
P_{[i';j')}^{(-1)} = \frac {q\oq^{\frac 1n} E_{[i';j')}^{(-1)}}{q-q^{-1}} 
$$
Since the shuffle elements $\tP_{[i;j')}^{(-1)}$ are connected to the shuffle elements $P_{[i';j')}^{(-1)}$ by the linear transformation \eqref{eqn:tilde P}, then formulas \eqref{eqn:minimal e 1} and \eqref{eqn:minimal e 2} show that:
$$
\tP_{[i';j')}^{(-1)} = \frac {\oq^{\frac 1n}F_{[i';j')}^{(-1)}}{q-q^{-1}} 
$$
where, in notation analogous to that of Section \ref{sec:final}, we set:
\begin{align*}
&F_{[i';j')}^{(0) \text{ or } (-1)} = q^{j'-i'}  \cdot \sym \left[ \frac { \left( z_{i'} \oq^{\frac {2i'}n} \right)^{0 \text{ or } -1}}{\left(1 - \frac {z_{i'+1}}{z_{i'}}  \right) ... \left(1 - \frac {z_{j'-1}}{z_{j'-2}} \right)} \prod_{i'\leq a < b < j'} \zeta \left( \frac {z_a}{z_b} \right) \right] \\
&\bF_{[i';j')}^{(0) \text{ or } (-1)} = (-q\oq^{\frac 2n})^{j'-i'} \cdot \sym \left[ \frac { \left( z_{j'-1} \oq^{\frac {2(j'-1)}n} \right)^{0 \text{ or } -1}}{\left(1 - \frac {z_{i'}}{z_{i'+1}}  \right) ... \left(1 - \frac {z_{j'-2}}{z_{j'-1}} \right)} \prod_{i'\leq a < b < j'} \zeta \left( \frac {z_a}{z_b} \right) \right]
\end{align*}
By \eqref{eqn:vertical 3} and \eqref{eqn:vertical}, we therefore have:
$$
\left[ P_{[i;i)}^{(1)}, \tP_{[i';j')}^{(-1)} \right] = \frac {q^{j'-i'}}{q-q^{-1}}  \cdot \sym \left[ \frac {\sum_{i' \leq a < j'}^{a \equiv i-1} \frac {z_a}{z_{i'}} \oq^{\frac {2(a-i'+1)}n} -  \sum_{i' \leq a < j'}^{a \equiv i} \frac {z_a}{z_{i'}} \oq^{\frac {2(a-i')}n}}{\left(1 - \frac {z_{i'+1}}{z_{i'}}  \right) ... \left(1 - \frac {z_{j'-1}}{z_{j'-2}} \right)} \right. 
$$
$$
\left. \prod_{i'\leq a < b < j'} \zeta \left( \frac {z_a}{z_b} \right) \right] =  \frac {q^{j'-i'}}{q-q^{-1}}  \cdot \sym \left[ \frac {\delta_{j'}^i \frac {z_{j'-1}}{z_{i'}} \oq^{\frac {2(j'-i')}n} - \delta_{i'}^i + \sum_{i'<a<j'}^{a \equiv i} \frac {z_{a-1}-z_a}{z_{i'}} \oq^{\frac {2(a-i')}n}}{\left(1 - \frac {z_{i'+1}}{z_{i'}}  \right) ... \left(1 - \frac {z_{j'-1}}{z_{j'-2}} \right)} \right. 
$$
$$
\left. \prod_{i'\leq a < b < j'} \zeta \left( \frac {z_a}{z_b} \right) \right] = \frac 1{q-q^{-1}} \left(- \delta_{j'}^i \cdot \bF_{[i';j')}^{(0)}  - \delta_{i'}^i \cdot F_{[i';j')}^{(0)} - \sum_{i' < a < j'}^{a\equiv i} \bF_{[i';a)}^{(0)} F_{[a;j')}^{(0)} \right) 
$$
In the notation of \eqref{eqn:formula y} and \eqref{eqn:formula zz}, the expression above equals:
$$
\frac {\bZ_{[i';j'),[i;i)}^0}{q^{-1}-q} \stackrel{\eqref{eqn:identity yz}}= q^{\delta_{i'}^i - \delta_{j'}^i} \cdot \frac {Y_{[i;i),[i';j')}^0}{q^{-1}-q}
$$
(see Remark \ref{rem:works} for the reason why we are allowed to apply Proposition \ref{prop:identity 1} for $\mu = 0$). Since the right-hand side of the expression above is identical to the right-hand side of \eqref{eqn:space}, the proof is complete.

\end{proof}

\subsection{} 
\label{sub:pairings intermezzo}

For all collections of indices for which the two sides of the pairings below have complementary degrees, formulas \eqref{eqn:bonnie} and \eqref{eqn:normalize simple}/\eqref{eqn:normalize imaginary} imply:
\begin{align}
&\Big \langle P_{\pm [i;j)}^{(\pm k)}, E_{\mp [i';j')}^{(\mp k)} \Big \rangle = \pm \delta^{(i,j)}_{(i',j')} \cdot \oq_\pm^{\frac {\gcd(k,j-i)}n} \label{eqn:main pair 3} \\
&\Big \langle P_{\pm l\bde, \hi}^{(\pm k)}, E_{\mp [i';j')}^{(\mp k)} \Big \rangle = \pm \delta_{i' \text{ mod }g}^r \cdot \oq_\pm^{\frac {\gcd(k,nl)}n} \label{eqn:main pair 4}
\end{align}
where $g = \gcd(n,\text{denominator } \frac k{nl})$. Then formula \eqref{eqn:antipode} implies:
\begin{align}
&\Big \langle P_{\pm [i;j)}^{(\pm k)}, \bE_{\mp [i';j')}^{(\mp k)} \Big \rangle = \mp \delta^{(i,j)}_{(i',j')} \cdot \oq_\pm^{-\frac {\gcd(k,j-i)}n} \label{eqn:main pair 5} \\
&\Big \langle P_{\pm l\bde, \hi}^{(\pm k)}, \bE_{\mp [i';j')}^{(\mp k)} \Big \rangle = \mp \delta_{i' \text{ mod }g}^r \cdot \oq_\pm^{-\frac {\gcd(k,nl)}n} \label{eqn:main pair 6}
\end{align}
where we used the fact that the $P$'s pair trivially with products of two or more $E$'s or $\bE$'s of the same slope (because the $P$'s are primitive). Note that \eqref{eqn:p plus} implies:
\begin{equation}
\label{eqn:pair simple}
\Big \langle P_{[i;j)}^{(k)}, P_{-[i';j')}^{(-k)} \Big \rangle = \frac {\delta_{(i',j')}^{(i,j)}}{q^{-1}-q}
\end{equation}
whenever $\gcd(j-i,k) = 1$. A straightforward reformulation of Lemma \ref{lem:elem} implies:
\begin{equation}
\label{eqn:pair simple tilde}
\Big \langle \tP_{[i;j)}^{(k)}, \tP_{-[i';j')}^{(-k)} \Big \rangle = \frac {\delta_{(i',j')}^{(i,j)}}{q^{-1}-q}
\end{equation}
where $\tP_{\pm [i;j)}^{(\pm k)}$ is connected to $P_{\pm [i;j)}^{(\pm k)}$ by \eqref{eqn:tilde P}. \\ 

\subsection{}
\label{sub:analogue}

Consider the following analogue of Proposition \ref{prop:coproduct}: \\

\begin{proposition}
\label{prop:coproduct 2}
	
For any $k\in \BZ$ and $l \in \BN$, consider the diagram:
	
\begin{picture}(80,65)(-120,30)
	
\put(0,40){\circle*{2}}\put(20,40){\circle*{2}}\put(40,40){\circle*{2}}\put(60,40){\circle*{2}}\put(80,40){\circle*{2}}\put(0,60){\circle*{2}}\put(20,60){\circle*{2}}\put(40,60){\circle*{2}}\put(60,60){\circle*{2}}\put(80,60){\circle*{2}}\put(0,80){\circle*{2}}\put(20,80){\circle*{2}}\put(40,80){\circle*{2}}\put(60,80){\circle*{2}}\put(80,80){\circle*{2}}
	
\put(0,40){\line(2,1){80}}
\put(0,40){\line(3,1){60}}
\put(60,60){\line(1,1){20}}
	
\put(-20,37){\scriptsize{$(0,0)$}}
\put(70,83){\scriptsize{$(nl,k)$}}
\put(52,53){\scriptsize{$(x,y)$}}
\put(55,62){\scriptsize{$T$}}
	
\end{picture}
	
\noindent and let $T$ have minimal area among all lattice triangles contained strictly below the vector $(nl,k)$ (there are exactly $d := \gcd(nl,k)$ such triangles). Then we have:
$$
\Delta \left( P_{l\bde, \hi}^{(k)} \right) = c^l \barc^k \otimes P_{l\bde, \hi}^{(k)} + P_{l\bde, \hi}^{(k)} \otimes 1 + \Big(\text{tensors with hinge strictly below }T \Big) + 
$$
\begin{equation}
\label{eqn:new cop prim 1}
+ q^{\delta_x^0}(1-q^{-2}) \sum_{i = 1}^n P_{[i;i+nl-x)}^{(k-y)} \frac{\psi_{i}\barc^y}{\psi_{i-x}}  \otimes \tP_{[i-x;i)}^{(y)} \left(\delta_{i-x \text{ mod }g}^{\hi} \oq_+^{- \frac dn}   -  \delta_{i \text{ mod }g}^{\hi} \oq_+^{\frac dn} \right) 
\end{equation} 
where $g = \gcd \left(n,\frac {nl}d \right)$. Similarly, consider the diagram:
	
\begin{picture}(80,65)(-120,27)
	
\put(0,40){\circle*{2}}\put(20,40){\circle*{2}}\put(40,40){\circle*{2}}\put(60,40){\circle*{2}}\put(80,40){\circle*{2}}\put(0,60){\circle*{2}}\put(20,60){\circle*{2}}\put(40,60){\circle*{2}}\put(60,60){\circle*{2}}\put(80,60){\circle*{2}}\put(0,80){\circle*{2}}\put(20,80){\circle*{2}}\put(40,80){\circle*{2}}\put(60,80){\circle*{2}}\put(80,80){\circle*{2}}
	
\put(0,40){\line(2,1){80}}
\put(0,40){\line(3,1){60}}
\put(60,60){\line(1,1){20}}
	
\put(-20,32){\scriptsize{$-(nl,k)$}}
\put(72,83){\scriptsize{$(0,0)$}}
\put(50,53){\scriptsize{$-(x,y)$}}
\put(55,62){\scriptsize{$T$}}
	
\end{picture}
	
\noindent and let $T$ have minimal area as described above. Then we have:
$$
\Delta \left( P_{-l\bde, \hi}^{(-k)} \right) = 1 \otimes P_{-l\bde, \hi}^{(-k)} + P_{-l\bde, \hi}^{(-k)} \otimes c^{-l} \barc^{-k} + \Big(\text{tensors with hinge strictly below }T \Big) +
$$
\begin{equation}
\label{eqn:new cop prim 2}
+ q^{\delta_x^0}(1-q^{-2}) \sum_{i = 1}^n \tP_{-[i;i+nl-x)}^{(-k+y)} \otimes P_{-[i-x;i)}^{(-y)}  \frac{\psi_{i}\barc^{y-k}}{\psi_{i+nl-x}}  \left(\delta_{i - x \text{ mod }g}^{\hi} \oq_-^{\frac dn}  -  \delta_{i \text{ mod }g}^{\hi} \oq_-^{- \frac dn}  \right)
\end{equation}
	
\end{proposition}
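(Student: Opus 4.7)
\textit{Proof sketch.}
I will only sketch the $+$ case, the $-$ case being analogous. The argument parallels Proposition \ref{prop:coproduct}, with the extra difficulty that $P_{l\bde,\hi}^{(k)}$ has no explicit shuffle formula at its disposal. Since $P_{l\bde,\hi}^{(k)}$ lies in $\CB_\mu$ for $\mu = \tfrac{k}{nl}$ and is primitive for the leading coproduct $\Delta_\mu$ by \eqref{eqn:condition 1}, the hinge analysis of Subsection \ref{sub:diagrams} shows that the only tensors in $\Delta(P_{l\bde,\hi}^{(k)})$ with hinge on the line through $(0,0)$ and $(nl,k)$ are $c^l\barc^k \otimes P_{l\bde,\hi}^{(k)}$ and $P_{l\bde,\hi}^{(k)} \otimes 1$; all other summands have hinge strictly below that line.

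The next step is to identify the summands $R_1 \otimes R_2$ whose hinge $(x,y)$ sits on the boundary of the minimal triangle $T$, i.e.\ satisfies $kx - nly = d$. Minimality of $\mathrm{area}(T)$ forces both $(x,y)$ and $(nl-x,k-y)$ to be primitive lattice vectors; otherwise one of these points could be replaced by a proper divisor, producing a triangle of strictly smaller area with the same base. Consequently $R_2$ lies in $\CB_{y/x}$ at horizontal degree $x$ and slope exactly $y/x$, and the subspace of such elements is $n$-dimensional, spanned by the simple root generators $\{P_{[i';i'+x)}^{(y)}\}_{i'\in\BZ/n\BZ}$ (equivalently, by $\{\tP_{[i';i'+x)}^{(y)}\}_{i'\in\BZ/n\BZ}$, via the invertible change of basis \eqref{eqn:tilde P}--\eqref{eqn:tilde P equivalent}); the analogous statement holds for $R_1$ with $(x,y)$ and $(nl-x,k-y)$ interchanged. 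Matching total horizontal degree in $\zz$ forces the two indices to satisfy $i'=i-x$, and compatibility with $\Delta(\psi_s)=\psi_s\otimes\psi_s$ combined with the placement of Cartan elements in \eqref{eqn:coproduct 3} fixes the prefactor $\tfrac{\psi_i\barc^y}{\psi_{i-x}}$ on the first slot. The $\tP$-basis on the second slot is chosen to exploit the diagonal pairing \eqref{eqn:pair simple tilde} in the next step. This establishes \eqref{eqn:new cop prim 1} up to determining the scalar coefficient in front of each summand indexed by $i\in\BZ/n\BZ$.

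Computing these scalars is the main obstacle, and the plan is a pairing computation. I would pair both sides of \eqref{eqn:new cop prim 1} against $E_{-[i-x;i)}^{(-y)} \otimes E_{-[i;i+nl-x)}^{(-(k-y))}$ and apply the bialgebra axiom \eqref{eqn:bialg 1} to rewrite the left-hand side as $\bigl\langle P_{l\bde,\hi}^{(k)},\, E_{-[i-x;i)}^{(-y)} * E_{-[i;i+nl-x)}^{(-(k-y))}\bigr\rangle$. The shuffle product inside can be expanded via Proposition \ref{prop:coproduct} applied to $E_{-[i-x;i+nl-x)}^{(-k)}$ (whose minimal-triangle data precisely match those of $T$): only the summands lying in horizontal degree $l\bde$ contribute, and by \eqref{eqn:main pair 4} and \eqref{eqn:main pair 6} such a contribution survives only when $i-x\equiv\hi$ or $i\equiv\hi$ modulo $g$, with respective weights $\oq_+^{-d/n}$ and $-\oq_+^{d/n}$ --- the relative sign being governed by the antipode identity \eqref{eqn:antipode} exchanging $E_{[s;t)}^\mu$ and $\bE_{[s;t)}^\mu$. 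Matching this against the pairing of the right-hand side of \eqref{eqn:new cop prim 1} (which uses \eqref{eqn:pair simple tilde} and \eqref{eqn:main pair 3}) yields the stated coefficient $q^{\delta_x^0}(1-q^{-2})\bigl(\delta_{i-x\bmod g}^{\hi}\oq_+^{-d/n}-\delta_{i\bmod g}^{\hi}\oq_+^{d/n}\bigr)$. The downward statement \eqref{eqn:new cop prim 2} then follows by the same argument with $E\leftrightarrow\bE$, $\oq_+\leftrightarrow\oq_-$, and the orientation of hinges reversed.
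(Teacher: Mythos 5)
Your proof follows the same overall strategy as the paper's: a hinge analysis locates the leading-order summands on the edges of the minimal triangle $T$, primitivity of the factors (via coassociativity) identifies them as linear combinations of simple root generators, and a pairing computation determines the coefficients. Up to this point the sketch is sound, though the claim ``matching total horizontal degree forces $i' = i-x$'' should be stated with more care when $n \mid x$ (in that case the degree constraint is vacuous, and the condition $i' \equiv i - x$ emerges only from the pairing).

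The genuine gap is in the pairing computation. You propose to pair against $E_{-[i-x;i)}^{(-y)} \otimes E_{-[i;i+nl-x)}^{(-(k-y))}$, convert via \eqref{eqn:bialg 2} to $\langle P_{l\bde,\hi}^{(k)}, E_{-[i-x;i)}^{(-y)} * E_{-[i;i+nl-x)}^{(-(k-y))}\rangle$, and then ``expand the shuffle product via Proposition \ref{prop:coproduct} applied to $E_{-[i-x;i+nl-x)}^{(-k)}$.'' This step is not a valid manipulation: Proposition \ref{prop:coproduct} is a statement about the \emph{coproduct} $\Delta(E^{(-k)}_{-[i-x;i+nl-x)})$, not about the shuffle \emph{product} of two separate elements $E_1 * E_2$, and these two objects are not the same (nor is the latter easily reducible to the former). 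The paper instead pairs $\Delta(P_{l\bde,\hi}^{(k)})$ against $P_{-[i;i+nl-x)}^{(y-k)} \otimes \tP_{-[i';i'+x)}^{(-y)}$ --- exploiting the diagonal pairings \eqref{eqn:pair simple} and \eqref{eqn:pair simple tilde} --- and then reorganizes the resulting product $P_{-[i;i+nl-x)}^{(y-k)} \tP_{-[i';i'+x)}^{(-y)}$ via the \emph{already-proven} commutation relation \eqref{eqn:case 3} (Proposition \ref{prop:plus plus 2}). The crucial point, which your sketch misses, is that this relation rewrites the product as $\tP_- P_- + \sum \bE_-^\mu E_-^\mu$ with the correction terms all lying at slope $\mu$; only the two ``endpoint'' summands $\bE_-^\mu \cdot 1$ and $1 \cdot E_-^\mu$ pair nontrivially with the primitive $P_{l\bde,\hi}^{(k)}$, and \eqref{eqn:main pair 4}, \eqref{eqn:main pair 6} then give exactly the stated coefficient. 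Without relation \eqref{eqn:case 3}, there is no straightforward way to deduce from the raw shuffle product which of its pieces pair nontrivially with $P_{l\bde,\hi}^{(k)}$, so the sketch as written does not close.
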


\begin{proof} We will prove \eqref{eqn:new cop prim 1}, and leave the analogous formula \eqref{eqn:new cop prim 2} as an exercise to the interested reader. As a consequence of \eqref{eqn:condition 1} and the definition of hinges, we conclude that any summand $R_1^+ \otimes R_2^+$ that appears in the LHS of \eqref{eqn:new cop prim 1} has hinge below the vector $(nl,k)$. Now fix such a summand with hinge exactly equal to $(x,y)$ as depicted in the figure above. Since the coproduct is coassociative, the tensor factors $R_1^+$ and $R_2^+$ each have the property that any summand in their coproduct has hinge below the vector $(nk,l)$, and hence below the triangle $T$ by the minimality hypothesis. Therefore, $R_1^+$ and $R_2^+$ must be primitive:
\begin{multline} 
\label{eqn:north}
\Delta \left( P_{l\bde, \hi}^{(k)} \right) = c^l \barc^k \otimes P_{l\bde, \hi}^{(k)} + P_{l\bde, \hi}^{(k)} \otimes 1 + \\ + \sum_{i,i' = 1}^n \nu(i,i')  P_{[i;i+nl-x)}^{(k-y)} \frac{\psi_{i'+x}}{\psi_{i'}} \barc^y \otimes \tP_{[i';i'+x)}^{(y)} + ...
\end{multline}
for some scalars $\nu(i,i')$, where the ellipsis stands for summands with hinge strictly below $T$. It therefore remains to determine these scalars, and by \eqref{eqn:pair simple} we have:
$$
\frac {\nu(i,i')}{(q^{-1}-q)^2} = \left \langle \Delta \left( P_{l\bde, \hi}^{(k)} \right), P_{-[i;i+nl-x)}^{(y-k)} \otimes \tP_{-[i';i'+x)}^{(-y)} \right \rangle \stackrel{\eqref{eqn:bialg 2}}= \left \langle  P_{l\bde, \hi}^{(k)}, P_{-[i;i+nl-x)}^{(y-k)} \tP_{-[i';i'+x)}^{(-y)} \right \rangle
$$
The product of $P$'s in the right-hand side satisfies the hypotheses of \eqref{eqn:case 3}, hence:
\begin{multline*}
\frac {\nu(i,i')}{(q^{-1}-q)^2} = \Big \langle  P_{l\bde, \hi}^{(k)}, q^{\delta_{i'}^i - \delta_{i'}^{i-x} + \delta_{i'+x}^{i-x} - \delta_{i'+x}^{i}} \tP_{-[i';i'+x)}^{(-y)} P_{-[i;i+nl-x)}^{(y-k)} + \\ \left. + \frac {q^{\delta_{i'}^i - \delta_{i'+x}^i}}{q^{-1}-q} \sum_{(s,t) \equiv (i',i'+x)} \bE_{-[s,i+nl-x)}^{\frac k{nl}} E_{-[i;t)}^{\frac k{nl}} \right \rangle
\end{multline*}
Note that $P_{l\bde, \hi}^{(k)}$ pairs trivially with all products of more than one $P, \tP,  \bE, E$ in the formula above, as a consequence of \eqref{eqn:radu1} and \eqref{eqn:condition 1}. Therefore, we conclude that:
$$
\frac {\nu(i,i')}{q^{-1}-q} = q^{\delta_{i'}^i - \delta_{i'+x}^i} \left \langle  P_{l\bde, \hi}^{(k)} , E_{-[i;i+nl)}^{\frac k{nl}} \delta_{i'}^{i-x} + \bE_{-[i-x;i+nl-x)}^{\frac k{nl}} \delta_i^{i'+x} \right \rangle =
$$
$$
\stackrel{\eqref{eqn:main pair 4}, \eqref{eqn:main pair 6}}= \delta_{i'+x}^i q^{\delta_x^0 - 1} \left( \delta_{i \text{ mod }g}^{\hi} \oq^{\frac dn} -  \delta_{i-x \text{ mod }g}^{\hi} \oq^{- \frac dn} \right)
$$
Plugging this formula in \eqref{eqn:north} implies \eqref{eqn:new cop prim 1}. 

\end{proof}

\begin{proposition}
\label{prop:switch} 
	
In the second line of either \eqref{eqn:new cop prim 1} or \eqref{eqn:new cop prim 2}, one could move the tilde from the first $P$ to the second $P$, without changing the values of these formulas. \\
	
\end{proposition}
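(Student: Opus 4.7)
The plan is to re-run the argument of Proposition \ref{prop:coproduct 2} with the roles of $P$ and $\tP$ interchanged on the two tensor factors. Because the transformation \eqref{eqn:tilde P} is invertible (Lemma \ref{lem:elem}), the elements $\tP_{[i;i+nl-x)}^{(k-y)}$ for $i \in \BZ/n\BZ$ span the same subspace of primitive elements (in the appropriate graded piece of the slope subalgebra) as do the $P_{[i;i+nl-x)}^{(k-y)}$. Coassociativity of $\Delta$ together with the minimality of the triangle $T$ forces every summand in $\Delta(P_{l\bde, \hi}^{(k)})$ with hinge at $(x,y)$ to be a tensor product of primitive shuffle elements, so I can write
\begin{multline*}
\Delta \left( P_{l\bde, \hi}^{(k)} \right) = c^l \barc^k \otimes P_{l\bde, \hi}^{(k)} + P_{l\bde, \hi}^{(k)} \otimes 1 + (\text{tensors with hinge strictly below } T) \\ + \sum_{i, i' = 1}^{n} \mu(i, i') \, \tP_{[i;i+nl-x)}^{(k-y)} \frac{\psi_{i'+x} \barc^y}{\psi_{i'}} \otimes P_{[i';i'+x)}^{(y)}
\end{multline*}
for unique scalars $\mu(i, i')$.

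To compute these scalars, I would pair with $\tP_{-[i;i+nl-x)}^{(y-k)} \otimes P_{-[i';i'+x)}^{(-y)}$ using \eqref{eqn:bialg 2} together with \eqref{eqn:pair simple} and \eqref{eqn:pair simple tilde}, obtaining
$$\frac{\mu(i,i')}{(q^{-1}-q)^2} = \Big \langle P_{l\bde, \hi}^{(k)}, \tP_{-[i;i+nl-x)}^{(y-k)} P_{-[i';i'+x)}^{(-y)} \Big \rangle.$$
Relation \eqref{eqn:case 3} (in its $\pm = -$ form) then rewrites the right-hand product as $q^{\alpha} \, P_- \cdot \tP_- + \frac{q^{\beta}}{q^{-1}-q} \sum \bE_{-} E_{-}$ for suitable $q$-exponents $\alpha, \beta$. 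Since $P_{l\bde, \hi}^{(k)}$ is primitive, it pairs trivially with the non-primary $P \cdot \tP$ term (again by \eqref{eqn:bialg 2} and \eqref{eqn:condition 1}), so only the $\bE E$ contribution survives. Evaluating the latter via \eqref{eqn:main pair 4} and \eqref{eqn:main pair 6} exactly as in the proof of Proposition \ref{prop:coproduct 2} yields an explicit formula for $\mu(i, i')$.

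Both presentations of the coproduct (with $\tP$ on the right as in \eqref{eqn:new cop prim 1} or with $\tP$ on the left as above) represent the same element of the tensor product, namely $\Delta(P_{l\bde, \hi}^{(k)})$ minus the primary terms and the terms with hinge strictly below $T$. Hence the two secondary sums coincide, which is the desired claim for \eqref{eqn:new cop prim 1}; the argument for \eqref{eqn:new cop prim 2} is entirely parallel, with the positive and negative halves interchanged. The main technical subtlety will be keeping track of the $q$-power exponents arising from \eqref{eqn:case 3} when the tilde is moved across its partner, and reconciling them with the tilde-expansion coefficients $q^{-\delta_j^i}\delta_x^0 + \delta_j^i(q-q^{-1}) \oq_\pm^{\frac{2}{n}\overline{-kx}}/(\oq_\pm^2 - 1)$ coming from \eqref{eqn:tilde P equivalent}; the symmetry of the $\bE E$ sum on the right-hand side of \eqref{eqn:case 3} under swapping the tilde is what ensures the bookkeeping works out.
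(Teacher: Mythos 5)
Your plan—re-running the argument of Proposition \ref{prop:coproduct 2} with the tilde on the opposite factor—runs into a concrete obstruction when you try to evaluate the pairing $\left\langle P_{l\bde,\hi}^{(k)},\, \tP_{-[i;i+nl-x)}^{(y-k)}\, P_{-[i';i'+x)}^{(-y)} \right\rangle$ via relation \eqref{eqn:case 3}. That relation only applies when $\det\begin{pmatrix} k & k' \\ j-i & j'-i'\end{pmatrix} = \gcd(k+k', j-i+j'-i')$, a \emph{positive} number. In the original proof the product $P_{-[i;i+nl-x)}^{(y-k)}\, \tP_{-[i';i'+x)}^{(-y)}$ matches the form $p_{-[i;j)}\tp_{-[i';j')}$ with determinant $(k-y)x - y(nl-x) = kx - ynl = d > 0$, so the hypothesis is satisfied. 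But once you swap the tildes, the product $\tP_{-}P_{-}$ can only be matched to the second term $\tp_{-[i';j')}p_{-[i;j)}$ of \eqref{eqn:case 3}, which flips the labeling of $(i,j,k)$ and $(i',j',k')$, and you get determinant $y(nl-x) - (k-y)x = ynl - kx = -d < 0$. This does not satisfy the hypothesis of \eqref{eqn:case 3}, so the relation gives you nothing to work with; the ``main technical subtlety'' you flag about $q$-exponents never arises because the step before it is not available. You would need to first establish a commutation relation valid when the determinant is negative, which is not part of the toolkit.

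The paper's proof sidesteps the entire pairing/coproduct machinery and argues purely algebraically. When $n\nmid x$, $\tP$ and $P$ literally coincide on both factors (since $\delta^i_j = 0$ in \eqref{eqn:tilde P}), so there is nothing to prove. When $n\mid x$, one observes $n\mid x\Rightarrow n\mid d\Rightarrow n\mid k$ and $g=1$, applies \eqref{eqn:tilde P equivalent} to the first $P$ and \eqref{eqn:tilde P} to the second $\tP$, and the resulting double sum telescopes to $\delta_{s'}^{s}$ by Lemma \ref{lem:elem}. This is a strictly local change of basis on the secondary terms, requiring no new information about $\Delta(P^{(k)}_{l\bde,\hi})$ at all. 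Your linear-algebra intuition (that the two families of primitives span the same space) is exactly the point of Lemma \ref{lem:elem}; I would encourage you to make that precise by expanding both tensor factors through the transformation formulas rather than re-running the pairing argument, which as it stands cannot get off the ground.
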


\begin{proof} We will prove the claim which pertains to \eqref{eqn:new cop prim 1}, since the case of \eqref{eqn:new cop prim 2} is analogous. The Proposition is only non-trivial if $n|x$, which we henceforth assume. The assumption on the minimal triangle $T$ implies that:
\begin{equation}
\label{eqn:gcd}
xk - ynl = d
\end{equation}
and it is an elementary exercise (which we leave to the interested reader) to show that $n|x$ implies that $g = \gcd(n,\frac {nl}{d})=1$. Also, since $n|x$ then \eqref{eqn:tilde P}--\eqref{eqn:tilde P equivalent} read:
\begin{align*} 
&P_{[i;i+nl-x)}^{(k-y)} = \sum_{s=1}^n \tP_{[s;s+nl-x)}^{(k-y)} \left[q^{-1} \delta_s^i + (q-q^{-1}) \frac {\oq_-^{\frac 2n \cdot \overline{(k-y)(s-i)}}}{\oq_-^2-1} \right] \\
&\tP_{[i-x;i)}^{(y)} = \sum_{s=1}^n P_{[s-x;s)}^{(y)} \left[q^{-1}\delta_s^i + (q-q^{-1}) \frac {\oq_+^{\frac 2n \cdot \overline{y(i-s)}}}{\oq_+^2-1} \right]
\end{align*} 	
Therefore, we have:
$$
\sum_{i=1}^n P_{[i;i+nl-x)}^{(k-y)} \otimes \tP_{[i-x;i)}^{(y)} = \sum_{s,s'=1}^n \tP_{[s;s+nl-x)}^{(k-y)} \otimes P_{[s'-x;s')}^{(y)} \cdot 
$$
$$
\sum_{i=1}^n \left[q^{-1} \delta_s^i + (q-q^{-1}) \frac {\oq_-^{\frac 2n \cdot \overline{(k-y)(s-i)}}}{\oq_-^2-1} \right] \left[q^{-1}\delta_{s'}^i + (q-q^{-1}) \frac {\oq_+^{\frac 2n \cdot \overline{y(i-s')}}}{\oq_+^2-1} \right]
$$
As a consequence of Lemma \ref{lem:elem} (which we may apply because $n|x \Rightarrow n|d \Rightarrow n|k$), the second line of the expression above is $\delta_{s'}^s$, as we needed to prove.

\end{proof} 

\subsection{} 
\label{sub:case 2}

We will now use the results proved in the previous Subsection in order to complete the proof of Theorem \ref{thm:main}. \\

\begin{proposition}
\label{prop:plus minus 1}

Formula \eqref{eqn:case 2} holds in $\CA \cong \UU$  with $p \leftrightarrow P$. \\
	
\end{proposition}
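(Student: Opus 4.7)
The strategy parallels the proof of Proposition \ref{prop:plus plus 1}, but now the commutator bridges the positive and negative halves of the algebra, so the Drinfeld double relation \eqref{eqn:drinfeld} takes center stage. By \eqref{eqn:primitive 1}--\eqref{eqn:primitive 2} the element $P_{\pm[i;j)}^{(\pm k)}$ is primitive in $\CB_{k/(j-i)}$, and by \eqref{eqn:condition 1}--\eqref{eqn:condition 2} the element $P_{\mp l\bde,\hi}^{(\mp k')}$ is primitive in $\CB_{k'/(nl)}$, but their coproducts in the full algebra $\CA$ carry additional terms described by Propositions \ref{prop:coproduct} (via \eqref{eqn:p plus}--\eqref{eqn:p minus}) and \ref{prop:coproduct 2}. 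The hypothesis $|d| = \gcd(k',nl)$ guarantees that the lattice triangle spanned by $\pm(j-i,k)$ and $\mp(nl,k')$ contains no interior lattice points, so the commutator in question lies in the slope subalgebra $\CB_\mu$ of slope $\mu = (k-k')/(j-i-nl)$.

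First, I would compute $\Delta$ of the LHS along the lines of formula \eqref{eqn:up}. The ``leading'' hinges correspond to the two edges of the empty triangle, and all other summands either vanish by \eqref{eqn:vertical 2} or land strictly below. The surviving terms match the primitive-like coproduct of the RHS obtained by applying \eqref{eqn:primitive 1}/\eqref{eqn:primitive 2} to $p_{\pm[i;j-nl)}^{(\pm k \mp k')}$ (in the regime $j-i > nl$) or to $p_{\mp[j-nl;i)}^{(\pm k \mp k')}$ (in the regime $j-i < nl$). This reduces the proof, by Lemma \ref{lem:unique}, to verifying equality under the linear maps $\alpha_{\pm[u;v)}$.

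Next, I would compute these $\alpha$-values using \eqref{eqn:pseudo}, the pairing formulas \eqref{eqn:main pair 3}--\eqref{eqn:main pair 6}, and the boundary contributions from Proposition \ref{prop:coproduct 2}. The Drinfeld relation \eqref{eqn:drinfeld} selects exactly those boundary terms in which one factor of $P_{\mp l\bde,\hi}^{(\mp k')}$ is paired against either $E_{\pm[u;u+nl)}^{\mu}$ or $\bE_{\pm[u;u+nl)}^{\mu}$, producing the Kronecker symbols $\delta_{i\text{ mod }g}^\hi$ and $\delta_{j\text{ mod }g}^\hi$ and the characteristic powers $\oq_\mp^{\pm d/n}$ appearing on the RHS of \eqref{eqn:case 2}. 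The group-like prefactors $(c^{\pm l}\barc^{\pm k'})^{d/|d|}$ and $(\psi_j^{\pm 1}\barc^{\pm k}/\psi_i^{\pm 1})^{d/|d|}$ then emerge from the Cartan components in \eqref{eqn:drinfeld}, depending on whether the orientation of the triangle forces the commutator into $\CB^+$ or $\CB^-$.

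The main obstacle will be handling the dichotomy between $j-i > nl$ and $j-i < nl$, which flips the commutator between the two halves of the algebra and accounts for the appearance of $\tp$ rather than $p$ on the LHS of the second equation. Proposition \ref{prop:switch} together with the matrix identity of Lemma \ref{lem:elem} will be the key to reconciling the two forms, since they allow transferring tildes between positions without affecting the identity. Boundary cases with $i = j$ or $l = 0$ must be dealt with separately as in \eqref{eqn:new 1}--\eqref{eqn:new 2}, invoking \eqref{eqn:vertical 3}--\eqref{eqn:vertical 4} and the definition \eqref{eqn:vertical} of the imaginary generators directly in terms of the Heisenberg elements $a_{s,\pm d}$.
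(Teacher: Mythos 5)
Your outline captures the right ingredients but takes a more circuitous route than the paper's proof, and it misses one case that needs a genuinely different argument. The paper does not first establish that the commutator is primitive in $\CB_\mu$ and then invoke Lemma \ref{lem:unique} together with the $\alpha$-maps; instead, it applies the Drinfeld double relation \eqref{eqn:drinfeld} directly: writing out $\Delta(a)$ via Proposition \ref{prop:coproduct} (or Proposition \ref{prop:coproduct 2}, depending on which of $a$ or $b$ has the richer coproduct) and $\Delta(b)$ to the relevant order, the relation $a_1 b_1 \langle a_2, b_2\rangle = \langle a_1, b_1\rangle b_2 a_2$ collapses to $ab - ba = (\text{boundary terms})$ once one observes that the primitive factor pairs trivially with any non-degenerate product $\bE^\mu \cdot E^\mu$. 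This gives \eqref{eqn:case 2} in one shot, with the Cartan prefactors and the $\oq_\mp^{\pm d/n}$ coming straight out of \eqref{eqn:main pair 3}--\eqref{eqn:main pair 6}, and Proposition \ref{prop:switch} is used precisely where you expect, to transfer the tilde in the $d>0$ sub-case of $nl > j-i$. Your detour through Lemma \ref{lem:unique} would first require showing that the commutator, modulo the Cartan prefactor, actually lies in $\CB_\mu$ and is primitive for $\Delta_\mu$ — which is not automatic across the $d>0$/$d<0$ dichotomy and essentially re-derives the same coproduct information you need for the Drinfeld relation anyway.

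The genuine gap is the boundary case $j - i = nl > 0$. Your hypothesis $|d| = \gcd(k', nl)$ then forces $nl \mid k'$ and $k = k' \pm 1$, and neither Proposition \ref{prop:coproduct} nor Proposition \ref{prop:coproduct 2} applies in the form needed (the relevant empty triangle degenerates). The paper handles this by an explicit shuffle-algebra computation: it extracts the sub-leading term $\tilde{R}_s$ of $\Delta(P_{[i;j)}^{(1)})$ via \eqref{eqn:coproduct 3}--\eqref{eqn:coproduct 4}, proves the closed formula of Claim \ref{claim:tilde} expressing $\tilde{R}_s$ as a sum $\bE^{(0)}_{[i;j)} + E^{(0)}_{[i;j)} + \sum \bE^{(0)}_{[t;j)} E^{(0)}_{[i;t)}$, and only then pairs against $P_{-l\bde,r}^{(0)}$. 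Your outline does not anticipate that a new shuffle identity is required here; grouping this case under ``boundary cases as in \eqref{eqn:new 1}--\eqref{eqn:new 2}'' conflates it with the much simpler $l = 0$ or $j=i$ situations, which really are one-line consequences of \eqref{eqn:double 3} and \eqref{eqn:vertical}.
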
 

\begin{proof} We will only prove the case when $\pm = +$, as the case $\pm = -$ is completely analogous. We divide the proof into several cases, depending on the relative sizes of $j-i$ and $nl$. In the case $j-i>nl > 0$, consider the triangles:
	
\begin{picture}(250,90)(-30,-46)

\put(0,0){\circle*{2}}\put(20,0){\circle*{2}}\put(40,0){\circle*{2}}\put(60,0){\circle*{2}}\put(80,0){\circle*{2}}\put(100,0){\circle*{2}}\put(0,20){\circle*{2}}\put(20,20){\circle*{2}}\put(40,20){\circle*{2}}\put(60,20){\circle*{2}}\put(80,20){\circle*{2}}\put(100,20){\circle*{2}}

\put(40,0){\line(-1,0){40}}
\put(40,0){\line(3,1){60}}
\put(40,0){\line(1,0){40}}
\put(80,0){\line(1,1){20}}

\put(75,5){\scriptsize{$T$}}
\put(32,-10){\scriptsize{$(0,0)$}}
\put(87,25){\scriptsize{$(j-i,k)$}}
\put(-22,-10){\scriptsize{$(-nl,-k')$}}

\put(135,5){\text{or}}
\put(30,-30){\text{if } $d>0$}
\put(210,-30){\text{if } $d<0$}

\put(180,0){\circle*{2}}\put(200,0){\circle*{2}}\put(220,0){\circle*{2}}\put(240,0){\circle*{2}}\put(260,0){\circle*{2}}\put(280,0){\circle*{2}}\put(180,20){\circle*{2}}\put(200,20){\circle*{2}}\put(220,20){\circle*{2}}\put(240,20){\circle*{2}}\put(260,20){\circle*{2}}\put(280,20){\circle*{2}}

\put(220,20){\line(-1,0){40}}
\put(220,20){\line(3,-1){60}}
\put(220,20){\line(1,-1){20}}
\put(240,0){\line(1,0){40}}

\put(240,5){\scriptsize{$T$}}
\put(212,25){\scriptsize{$(0,0)$}}
\put(267,-10){\scriptsize{$(j-i,k)$}}
\put(160,25){\scriptsize{$(-nl,-k')$}}

\end{picture}	

\noindent Denote $a = P_{[i;j)}^{(k)}$ and $b = P_{- l\bde, \hi}^{(-k')}$. If we let $\mu = \frac {k'}{nl}$, then \eqref{eqn:new coproduct 1} implies:
$$
\Delta(a) = \frac {\psi_j}{\psi_i} \barc^{k} \otimes a + a \otimes 1 + \sum_{i \leq s < t \leq j} \frac {\psi_j}{\psi_t} P_{[s;t)}^{(\bullet)} \frac {\psi_s}{\psi_i} \barc^{k-\bullet} \otimes \bE_{[t,j)}^{\mu} E_{[i;s)}^{\mu} + ... \quad \text{if } d>0
$$
$$
\Delta(a) = \frac {\psi_j}{\psi_i} \barc^{k} \otimes a + a \otimes 1 + \sum_{i \leq s < t \leq j} E_{[t,j)}^{\mu} \frac{\psi_t}{\psi_s} \bE_{[i;s)}^{\mu} \barc^\bullet\otimes P_{[s;t)}^{(\bullet)} + ... \qquad \ \ \quad \text{if } d<0
$$
where $\bullet = k-\mu(j-i+s-t)$ and the ellipsis denotes tensors with hinge below the triangle $T$.  Meanwhile, \eqref{eqn:radu2} and the fact that $b$ is primitive imply that:
$$
\Delta(b) = 1 \otimes b + b \otimes c^{-l} \barc^{-k'} + ...
$$		
where the ellipsis stands for tensors with hinge strictly below the vector $(-nl,-k')$. Then the only non-trivial pairings in relation \eqref{eqn:drinfeld} for our choice of $a$, $b$ are:
\begin{align*}
&ab + \sum_{i \leq s < t \leq j} \frac {\psi_j}{\psi_t} P_{[s;t)}^{(\bullet)} \frac {\psi_s}{\psi_i} \barc^{k-\bullet} \left \langle \bE_{[t,j)}^{\mu} E_{[i;s)}^{\mu} , b\right \rangle = ba \qquad \text{if } d>0 \\
&ab = ba + \sum_{i \leq s < t \leq j} c^{-l} \barc^{-k'} P_{[s;t)}^{(\bullet)} \left \langle E_{[t,j)}^{\mu} \frac{\psi_t}{\psi_s} \bE_{[i;s)}^{\mu} \barc^\bullet, b \right \rangle \quad \text{if } d<0
\end{align*}
The fact that $b$ is primitive implies that it pairs trivially with any non-trivial product between an $E^\mu$ and an $\bE^\mu$, so the only non-zero pairings above are those for $(s,t) \equiv (i,j-nl)$ and $(i+nl,j)$ modulo $(n,n)$. Using \eqref{eqn:main pair 4} and \eqref{eqn:main pair 6}, the formula above yields precisely \eqref{eqn:case 2}. \\

\noindent In the case $nl > j-i > 0$, consider the triangles:

\begin{picture}(250,90)(-30,-41)

\put(0,0){\circle*{2}}\put(20,0){\circle*{2}}\put(40,0){\circle*{2}}\put(60,0){\circle*{2}}\put(80,0){\circle*{2}}\put(0,20){\circle*{2}}\put(20,20){\circle*{2}}\put(40,20){\circle*{2}}\put(60,20){\circle*{2}}\put(80,20){\circle*{2}}\put(0,40){\circle*{2}}\put(20,40){\circle*{2}}\put(40,40){\circle*{2}}\put(60,40){\circle*{2}}\put(80,40){\circle*{2}}

\put(0,20){\line(1,0){60}}
\put(0,20){\line(2,-1){40}}
\put(40,0){\line(1,1){40}}

\put(35,10){\scriptsize{$T$}}
\put(55,10){\scriptsize{$(0,0)$}}
\put(83,38){\scriptsize{$(j-i,k)$}}
\put(20,-8){\scriptsize{$-(j-i,k)$}}
\put(-22,23){\scriptsize{$-(nl,k')$}}

\put(125,5){\text{or}}
\put(25,-30){\text{if } $d>0$}
\put(205,-30){\text{if } $d<0$}

\put(180,0){\circle*{2}}\put(200,0){\circle*{2}}\put(220,0){\circle*{2}}\put(240,0){\circle*{2}}\put(260,0){\circle*{2}}\put(180,20){\circle*{2}}\put(200,20){\circle*{2}}\put(220,20){\circle*{2}}\put(240,20){\circle*{2}}\put(260,20){\circle*{2}}\put(180,40){\circle*{2}}\put(200,40){\circle*{2}}\put(220,40){\circle*{2}}\put(240,40){\circle*{2}}\put(260,40){\circle*{2}}

\put(180,20){\line(1,0){60}}
\put(180,20){\line(1,-1){20}}
\put(240,20){\line(1,-1){20}}
\put(200,0){\line(2,1){40}}

\put(200,10){\scriptsize{$T$}}
\put(232,25){\scriptsize{$(0,0)$}}
\put(263,-2){\scriptsize{$(j-i,k)$}}
\put(170,-8){\scriptsize{$-(nl-j+i,k'-k)$}}
\put(160,25){\scriptsize{$-(nl,k')$}}

\end{picture}	

\noindent Let $a = \tP_{[i;j)}^{(k)}$ and $b = P_{- l\bde, \hi}^{(-k')}$. By \eqref{eqn:radu1}, we have:
$$
\Delta(a) = \frac {\psi_j}{\psi_i} \barc^k \otimes a + a \otimes 1 + ...
$$
where the ellipsis denotes tensors with hinge strictly below the vector $(j-i,k)$. Meanwhile, \eqref{eqn:new cop prim 2} for $-(x,y)$ chosen as the bottom-most vertex of $T$ in the diagrams above yields:
$$
\Delta (b) = 1 \otimes b + b \otimes c^{-l} \barc^{-k} + ... + q^{\delta_j^i} (1-q^{-2}) \cdot 
$$ 
$$
\sum_{s = 1}^n  \begin{cases} P_{-[s;s+nl-j+i)}^{(k-k')} \otimes \tP_{-[s-j+i;s)}^{(-k)} \frac {\psi_{s}\barc^{k-k'}}{\psi_{s+nl-j+i}} \left(\delta_{s-j+i \text{ mod }g}^{\hi} \oq_-^{\frac dn}  -  \delta_{s \text{ mod }g}^{\hi} \oq_-^{- \frac dn}  \right) &\text{if } d>0 \\ \\  \tP_{-[s;s+j-i)}^{(-k)} \otimes P_{-[s-nl+j-i;s)}^{(k-k')} \frac {\psi_{s}\barc^{-k} }{\psi_{s+j-i}} \left(\delta_{s+j-i \text{ mod }g}^{\hi} \oq_-^{-\frac dn}  -  \delta_{s \text{ mod }g}^{\hi} \oq_-^{\frac dn}  \right) &\text{if } d<0  \end{cases}
$$
where the ellipsis denotes tensors with hinge strictly below the triangle $T$ (note that we invoked Proposition \ref{prop:switch} in the $d>0$ case above). Therefore, the only terms which appear non-trivially in relation \eqref{eqn:drinfeld} for our choice of $a$ and $b$ are:
\begin{align*} 
&ab - q^{\delta_j^i-1} \frac {\psi_j \barc^k}{\psi_i} P^{(k-k')}_{-[j;i+nl)} \left(\delta_{i \text{ mod }g}^{\hi} \oq_-^{\frac dn}  -  \delta_{j \text{ mod }g}^{\hi} \oq_-^{- \frac dn}  \right) =  ba \qquad \quad \text{if } d>0 \\
&ab = ba + q^{\delta_j^i-1} P_{-[j-nl;i)}^{(k-k')} \frac {\psi_{i}\barc^{-k} }{\psi_{j}} \left(\delta_{i \text{ mod }g}^{\hi} \oq_-^{\frac dn} - \delta_{j \text{ mod }g}^{\hi} \oq_-^{-\frac dn}\right) \qquad \ \text{if } d<0
\end{align*}
This is precisely equivalent to \eqref{eqn:case 2}, as we needed to prove. \\

\noindent When $j-i = nl > 0$, the assumption is only satisfied if $nl|k'$ and $k = k' \pm 1$. We will prove the case when $k = k' + 1$, and leave the analogous case of $k = k' - 1$ as an exercise to the interested reader. Moreover, to keep the notation simple we will assume $k' = 0 \Rightarrow k = 1$, as the general case can be obtained by simply multiplying all the rational functions below by the monomial $\prod_{a=i}^{j-1} (z_a \oq^{\frac {2a}n})^{\frac {k'}{nl}}$. Let us write:
$$
P_{[i;j)}^{(1)} = R(z_i,...,z_{j-1})
$$
as an element of the shuffle algebra. By \eqref{eqn:coproduct 3}--\eqref{eqn:coproduct 4} and \eqref{eqn:radu1}--\eqref{eqn:radu2}, we have: 
$$
\Delta(R) = R \otimes 1 + c^l \barc \otimes R  + (q^{-1}-q) \sum_{s=1}^n c^l a_{s,1} \otimes \tilde{R}_s + ...
$$
where $\tilde{R}_s = R \cdot \oq^{\frac {2s-1}n}\left[ \sum_{i \leq t < j}^{t \equiv s} q\oq^{\frac 1n} (z_t \oq^{\frac {2t}n})^{-1} - \sum_{i \leq t < j}^{t \equiv s-1} q^{-1}\oq^{-\frac 1n} (z_t \oq^{\frac {2t}n})^{-1}  \right]$, and:
$$
\Delta (P_{-l\bde, r}^{(0)}) = P_{-l\bde, r}^{(0)} \otimes c^{-l} + 1 \otimes P_{-l\bde, r}^{(0)} + ...
$$
where the ellipsis in the two formulas above denote terms whose hinges are too low to pair non-trivially with each other. Because of this, formula \eqref{eqn:drinfeld} therefore reads:
\begin{equation}
\label{eqn:pd}
R \cdot P_{-l\bde, r}^{(0)} + (q^{-1}-q )\sum_{s=1}^n c^l a_{s,1} \left \langle \tilde{R}_s, P_{-l\bde, r}^{(0)} \right \rangle = P_{-l\bde, r}^{(0)} \cdot R 
\end{equation}
The formula above implies \eqref{eqn:case 2}, once we invoke \eqref{eqn:vertical} and the following claim:
\begin{equation}
\label{eqn:uv}
\left \langle \tilde{R}_s, P_{-l\bde, r}^{(0)} \right \rangle = \delta_s^i \cdot \frac {\oq_-^l - \oq_-^{-l}}{q-q^{-1}} \cdot \oq^{\frac {2s-1}n}
\end{equation}
To prove \eqref{eqn:uv}, we will establish the following: \\

\begin{claim} 
\label{claim:tilde}
	
With the notation above, we have:
\begin{equation}
\label{eqn:db}
\tilde{R}_s = - \frac {\oq^{\frac {2s-1}n}}{q-q^{-1}} \left[ \bE_{[i;j)}^{(0)} +  E_{[i;j)}^{(0)} + \sum_{i < t < j}^{t \equiv s} \bE_{[t;j)}^{(0)} E_{[i;t)}^{(0)} \right]
\end{equation}

\end{claim} 

\noindent Then \eqref{eqn:uv} follows from relations \eqref{eqn:main pair 4} and  \eqref{eqn:main pair 6}, as well as the fact that $P_{-l\bde, r}^{(0)}$ is primitive, hence pairs trivially with any product of an $E$ and an $\bE$ in formula \eqref{eqn:db}. \\

\begin{proof} \emph{of Claim \ref{claim:tilde}:} We have:
$$
\tilde{R}_s = \frac {q^2 \oq^{\frac {2s-1}n}}{q-q^{-1}} \cdot \sym \left[ \frac {\sum_{i \leq t < j}^{t \equiv s} \frac {z_{j-1}}{z_t} \oq^{\frac {2(j-t)}n} - \sum_{i \leq t < j}^{t \equiv s-1} \frac {z_{j-1}}{z_t} q^{-2} \oq^{\frac {2(j-t-1)}n}}{\left(1 - \frac {z_{i+1}}{z_{i}q^2}  \right) ... \left(1 - \frac {z_{j-1}}{z_{j-2}q^2} \right)}  \prod_{i\leq a < b < j} \zeta \left( \frac {z_b}{z_a} \right)  \right] = 
$$
$$
= \frac {q^2 \oq^{\frac {2s-1}n}}{q-q^{-1}} \cdot \sym \left[ \frac {\delta_s^i \frac {z_{j-1}}{z_i} \oq^{\frac {2(j-i)}n} - \delta_s^j q^{-2} + \sum^{t \equiv s}_{i<t<j} \frac {z_{j-1}}{z_t} \oq^{\frac {2(j-t)}n} \left(1-\frac {z_t}{z_{t-1} q^2}\right)}{\left(1 - \frac {z_{i+1}}{z_{i}q^2}  \right) ... \left(1 - \frac {z_{j-1}}{z_{j-2}q^2} \right)}  \prod_{i\leq a < b < j} \zeta \left( \frac {z_b}{z_a} \right)  \right]
$$
According to formulas \eqref{eqn:a}, \eqref{eqn:ba}, \eqref{eqn:e plus} and \eqref{eqn:be plus}, the formula above is equal to the right-hand side of \eqref{eqn:db}.

\end{proof}

\noindent When $j-i > nl = 0$, the assumption of Proposition \ref{prop:plus minus 1} is only satisfied if $j=i+1$, in which case the desired relation reads:
$$
\left[ P_{[i;i+1)}^{(k)}, P_{-0\bde, \hi}^{-k'} \right] = \barc^{-k'} P_{[i;i+1)}^{(k-k')} \left( \delta_i^r (q\oq^{\frac 1n})^{k'} - \delta_{i+1}^r (q\oq^{\frac 1n})^{-k'} \right)
$$
If we make the substitution \eqref{eqn:vertical}, this relation is a special case of \eqref{eqn:double 3}. \\

\noindent When $j-i = 0$ and $l > 0$, our assumption forces $k=1$ and $nl | k'$. To keep the notation simple we will assume $k' = 0$, as the general case can be obtained by simply multiplying all the rational functions below by the monomial $\prod_{i=1}^n \prod_{s=1}^l z_{is}^{\frac {k'}{nl}}$. In this case, we have $g=1$, and the relation we must prove reads:
$$
\Big[ \tP_{[i;i)}^{(1)}, P_{- l\bde, 1}^{(0)} \Big] = \barc P^{(1)}_{-[i-nl;i)} \left(\oq_-^l - \oq_-^{-l} \right)
$$
As $P_{[i;i)}^{(1)} = \sum_{u=1}^n \tP_{[u;u)}^{(1)} \displaystyle \left(q^{-1} \delta_u^i +  (q-q^{-1}) \frac {\oq_-^{\frac 2n \cdot \overline{u-i}}}{\oq_-^2-1} \right)$, this is equivalent to:
\begin{equation}
\label{eqn:must prove}
\Big[ P_{[i;i)}^{(1)}, P_{- l\bde, 1}^{(0)} \Big] = \sum_{u=1}^n \barc P^{(1)}_{-[u-nl;u)} \left(\oq_-^l - \oq_-^{-l} \right)\left(q^{-1} \delta_u^i +  (q-q^{-1}) \frac {\oq_-^{\frac 2n \cdot \overline{u-i}}}{\oq_-^2-1} \right)
\end{equation}
To this end, let us write:
$$
P_{- l\bde, 1}^{(0)} = R(z_{11},...,z_{1l},...,z_{n1},...,z_{nl})
$$
where the rational function $R$ has total homogeneous degree $0$. By assumption, the hinge of any tensor in the coproduct $\Delta(R)$ lies strictly below the vector $(-nl,0)$. Therefore, the hinge of any tensor in the coproduct of the element:
\begin{equation}
\label{eqn:bern}
\Big[ P_{[i;i)}^{(1)}, P_{- l\bde, 1}^{(0)} \Big] \stackrel{\eqref{eqn:double 2}}= \barc \tilde{R} 
\end{equation}
where
\begin{equation}
\label{eqn:born}
\tilde{R} = \oq^{\frac {2i-1}n} \left(\sum_{s=1}^{l} z_{is} - \sum_{s=1}^{l} z_{i-1,s} \right) \cdot R 
\end{equation}
lies strictly below the vector $(-nl, 1)$. This implies that the expression \eqref{eqn:bern} is some linear combination of the shuffle elements $P_{-[1-nl;1)}^{(1)},...,P_{-[n-nl;n)}^{(1)}$, i.e.:
$$
\barc^{-1} \cdot \text{LHS of \eqref{eqn:must prove}} = \sum_{u=1}^n c_u \cdot P_{-[u-nl;u)}^{(1)}
$$
To determine the constants $c_u$, we apply the linear maps $\alpha_{-[u;u+nl)}$ of \eqref{eqn:beta} to the expression above, for any $u \in \{1,...,n\}$. We have:
$$
\left[\tilde{R}|_{z_u \mapsto q^{2u},...,z_{u+nl-1} \mapsto q^{2(u+nl-1)}} \right] \cdot \frac {(1-q^{-2})^{nl} \oq_-^{\frac {2u-1}n}}{\prod_{u \leq a < b < u+nl} \zeta\left( q^{2(a-b)} \right)} \stackrel{\eqref{eqn:normalize imaginary}}= - c_u
$$
In terms of the variables $z_{11},...,z_{nl}$, the specialization above corresponds to:
\begin{equation}
\label{eqn:eval}
z_{us} \mapsto q^{2u} \oq_-^{2-2s},...,z_{ns} \mapsto q^{2n} \oq_-^{2-2s}, z_{1s} \mapsto q^{2} \oq_-^{-2s},...,z_{u-1,s} \mapsto q^{2u-2} \oq_-^{-2s}
\end{equation}
for all $s \in \{1,...,l\}$. Let $\delta_{a<b}$ denote the number 1 if $a<b$ and 0 otherwise. Given the definition of $\tilde{R}$ in \eqref{eqn:born}, we obtain:
$$
\oq^{\frac {2i-1}n} \left(q^{2i} \oq_-^{\frac {2(i-\bari)}n -2\delta_{\bari<u}} - q^{2(i-1)} \oq_-^{\frac {2(i-1-\overline{i-1})}n -2\delta_{\overline{i-1}<u}} \right) \frac {1-\oq_-^{-2l}}{1-\oq_-^{-2}} \cdot R \Big|_{\text{evaluation }\eqref{eqn:eval}} \cdot 
$$
\begin{equation}
\label{eqn:1}
\cdot \frac {(1-q^{-2})^{nl} \oq_-^{\frac {2u-1}n}}{\prod_{u \leq a < b < u+nl} \zeta\left( q^{2(a-b)} \right)} = - c_u
\end{equation}
However, the fact that $\alpha_{-[u;u+nl)}(P_{-l\bde,1}^{(0)}) = \alpha_{-[u;u+nl)}(R) = - 1$ implies that:
\begin{equation}
\label{eqn:2}
R \Big|_{\text{evaluation }\eqref{eqn:eval}} \cdot \frac {(1-q^{-2})^{nl} \oq_-^{-l}}{\prod_{u \leq a < b < u+nl} \zeta\left( q^{2(a-b)} \right)} = - 1
\end{equation} 
Dividing \eqref{eqn:1} by \eqref{eqn:2}, and using the identity $\oq = q^{-n} \oq_-^{-1}$, yields:
$$
c_u =  \left(q \oq_-^{\frac {2(u-\bari)}n -2\delta_{\bari<u}} - q^{-1} \oq_-^{\frac {2(u-1-\overline{i-1})}n -2\delta_{\overline{i-1}<u}} \right) \frac {\oq_-^l-\oq_-^{-l}}{1-\oq_-^{-2}} 
$$
It is elementary to see that the RHS of the expression above equals:
$$
\left(\oq_-^l - \oq_-^{-l} \right)\left(q^{-1} \delta_u^i +  (q-q^{-1}) \frac {\oq_-^{\frac 2n \cdot \overline{u-i}}}{\oq_-^2-1} \right)
$$
which establishes \eqref{eqn:must prove}.  

\end{proof}

\begin{proposition}
\label{prop:plus minus 2}
	
Formula \eqref{eqn:case 4} holds in $\CA \cong \UU$  with $p \leftrightarrow P$. \\
	
\end{proposition}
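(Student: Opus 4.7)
\noindent The approach is to apply the Drinfeld double relation \eqref{eqn:drinfeld} directly, following the template of Proposition \ref{prop:plus minus 1}. Set $a = P_{[i;j)}^{(k)}$ and $b = P_{-[i';j')}^{(-k')}$; the case $\pm = -$ will be symmetric. The hypothesis $k(j'-i') - k'(j-i) = \gcd(k-k', j-i-j'+i')$ says the lattice triangle with vertices at $(0,0)$, $(j-i,k)$, and $-(j'-i',k')$ has minimal area, which ensures that when we combine the coproducts of $a$ and $b$ computed via Proposition \ref{prop:coproduct}, only a controlled set of summands contributes non-trivially to the Drinfeld relation.

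\noindent I would split the argument by whether $j-i \geq j'-i'$ or $j-i < j'-i'$; by the symmetry of the two branches of \eqref{eqn:case 4}, only one needs independent treatment. In the first case, expand $\Delta(a)$ via \eqref{eqn:new coproduct 1}: besides the boundary terms $\frac{\psi_j}{\psi_i}\barc^k \otimes a + a \otimes 1$, one obtains sub-leading tensors of the form $\frac{\psi_j}{\psi_t} P^{(\bullet)}_{[s;t)} \frac{\psi_s}{\psi_i}\barc^{k-\bullet} \otimes \bE^{\mu}_{[t,j)} E^{\mu}_{[i;s)}$ (or its mirror when $d<0$), where $\mu = \frac{k-k'}{j-i-j'+i'}$ is precisely the slope of the edge of the minimal triangle opposite $(0,0)$. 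For $\Delta(b)$, only the boundary terms $1 \otimes b + b \otimes \frac{\psi_{i'}}{\psi_{j'}}\barc^{-k'}$ contribute, since all further sub-leading tensors have hinges strictly below the minimal triangle and therefore pair trivially with everything in $\Delta(a)$ of the relevant degrees. Plugging into \eqref{eqn:drinfeld} and invoking the primitivity of $b$ together with \eqref{eqn:main pair 3}--\eqref{eqn:main pair 6}, the pairing of $b$ with a product $\bE^\mu_{[t,j)} E^\mu_{[i;s)}$ vanishes unless $(s,t) \equiv (i',j')$ modulo $(n,n)$, reproducing exactly the index set of the sum on the right-hand side of \eqref{eqn:case 4}. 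The case $j-i < j'-i'$ is dual, invoking \eqref{eqn:new coproduct 2} for $\Delta(b)$ and the primitivity of $a$.

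\noindent The main obstacle I anticipate is careful bookkeeping of the many scalar factors—powers of $q$, $\oq_\pm$, $\barc$, and the ratios $\psi_s/\psi_{s'}$ that must be commuted past $E^\mu$ and $\bE^\mu$ via \eqref{eqn:vertical 2}—and checking that they reassemble into precisely the Cartan sandwich $\frac{\psi_{j'}\barc^{-k'}}{\psi_{i'}}$ appearing between $E^\mu_{[t,j)}$ and $\bE^\mu_{[i;s)}$ in \eqref{eqn:case 4}. The boundary summands $s=i$ and $t=j$ of the sum must be identified with the contributions from the non-sub-leading terms of $\Delta(a)$, and care is needed so that they match exactly. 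Finally, the degenerate cases $i=j$ or $i'=j'$ flagged in the footnote require separate direct calculation in the shuffle algebra, using \eqref{eqn:double 2}--\eqref{eqn:double 3}, \eqref{eqn:moreover}, and explicit manipulations analogous to Claim \ref{claim:tilde} from the proof of Proposition \ref{prop:plus minus 1}.
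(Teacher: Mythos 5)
Your high-level strategy — Drinfeld double relation \eqref{eqn:drinfeld}, the refined coproduct from Proposition \ref{prop:coproduct}, primitivity of the opposite factor to collapse the pairing, case split by the relative sizes of $j-i$ and $j'-i'$, and separate treatment of the degenerate $i=j$ or $i'=j'$ cases — is precisely the paper's. However, there is a genuine error in the form of the sub-leading term you extracted from $\Delta(a)$. You wrote the ``picture on the left'' form
$\frac{\psi_j}{\psi_t} P^{(\bullet)}_{[s;t)} \frac{\psi_s}{\psi_i}\barc^{k-\bullet} \otimes \bE^{\mu}_{[t,j)} E^{\mu}_{[i;s)}$,
whereas the case $j-i > j'-i'$ requires the ``picture on the right'' form of \eqref{eqn:new coproduct 1}, namely $E_{[t,j)}^{\mu} \frac{\psi_t}{\psi_s} \bE_{[i;s)}^{\mu} \barc^\bullet\otimes P_{[s;t)}^{(\bullet)}$, with the (essentially primitive) factor $P^{(\bullet)}_{[s;t)}$ in the \emph{second} tensor slot and the slope-$\mu$ product in the first. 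The ``or its mirror when $d<0$'' hedge does not rescue this: the form that appears is dictated by the shape of the triangle $T$ from Proposition \ref{prop:coproduct} (which edge carries the slope $\mu$), not by the sign of the determinant, and the paper's actual case split for this proposition is on $j-i \gtrless j'-i'$, which determines whether one expands $\Delta(a)$ or $\Delta(b)$.

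The misidentified tensor slot makes your pairing analysis incorrect. In \eqref{eqn:drinfeld}, the term producing the right-hand side of \eqref{eqn:case 4} comes from $a_1 b_1\langle a_2, b_2\rangle$ with $b_1=1$, $b_2=b$, so it is the pairing $\langle a_2, b\rangle$ that must be non-trivial. With $a_2 = P^{(\bullet)}_{[s;t)}$, formula \eqref{eqn:pair simple} gives a Kronecker delta $\delta^{(i,j')}_{(s,t)}$ that runs over all representatives of $(i',j')$ mod $(n,n)$ in the interval $[i,j)$, which is exactly the sum in \eqref{eqn:case 4}. With your $a_2 = \bE^\mu_{[t,j)}E^\mu_{[i;s)}$, your claim that this pairing ``vanishes unless $(s,t)\equiv(i',j')$'' is not the right statement: since $b$ is primitive, it pairs trivially with any genuine product of two slope-$\mu$ factors, so the only survivors are the degenerate boundary cases $s=i$ or $t=j$, giving at most two terms of the wrong shape. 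The argument as written therefore cannot reproduce the full sum on the right-hand side of \eqref{eqn:case 4}.
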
 

\begin{proof} We will only prove the case when $\pm = +$, as the case $\pm = -$ is analogous. We divide the proof into several cases, depending on the relative sizes of $j-i$ and $j'-i'$. \\
	
\noindent Let $a = P_{[i;j)}^{(k)}$ and $b = P_{-[i';j')}^{(-k')}$, and let us assume $j - i > j' - i' > 0$:
	
\begin{picture}(100,65)(-100,-10)

\put(0,0){\circle*{2}}\put(20,0){\circle*{2}}\put(40,0){\circle*{2}}\put(60,0){\circle*{2}}\put(80,0){\circle*{2}}\put(100,0){\circle*{2}}\put(120,0){\circle*{2}}\put(0,20){\circle*{2}}\put(20,20){\circle*{2}}\put(40,20){\circle*{2}}\put(60,20){\circle*{2}}\put(80,20){\circle*{2}}\put(100,20){\circle*{2}}\put(120,20){\circle*{2}}\put(0,40){\circle*{2}}\put(20,40){\circle*{2}}\put(40,40){\circle*{2}}\put(60,40){\circle*{2}}\put(80,40){\circle*{2}}\put(100,40){\circle*{2}}\put(120,40){\circle*{2}}

\put(40,20){\line(1,0){40}}
\put(40,20){\line(4,-1){80}}
\put(80,0){\line(1,0){40}}
\put(0,40){\line(2,-1){80}}
\put(80,20){\line(2,-1){40}}

\put(77,3){\scriptsize{$T$}}
\put(35,25){\scriptsize{$(0,0)$}}
\put(125,-5){\scriptsize{$(j-i,k)$}}
\put(-25,45){\scriptsize{$-(j'-i',k')$}}

\end{picture}

\noindent Note that \eqref{eqn:new coproduct 1} implies:
$$
\Delta(a) = \frac {\psi_j}{\psi_i} \barc^k \otimes a + a \otimes 1  + \sum_{i \leq s < t \leq j} E_{[t,j)}^{\mu} \frac{\psi_t}{\psi_s} \barc^\bullet \bE_{[i;s)}^{\mu} \otimes P_{[s;t)}^{(\bullet)} + ...
$$
where $\bullet = k-\mu(j-i+s-t)$ and the ellipsis stands for tensors with hinge strictly below the triangle $T$. Moreover, as a consequence of \eqref{eqn:radu2} we have:
$$
\Delta(b) = 1 \otimes b + b \otimes \frac {\psi_{i'}}{\psi_{j'}} \barc^{-k'} + ...
$$		
where the ellipsis stands for tensors with hinge strictly below the vector $-(j'-i',k')$. By the aforementioned discussion on the possible locations of the hinges, we conclude that the only non-zero terms in relation \eqref{eqn:drinfeld} applied to our choice of $a$ and $b$ are:
$$
ab + \sum_{i \leq s < t \leq j} E_{[t,j)}^{\mu} \frac{\psi_t}{\psi_s} \barc^\bullet \bE_{[i;s)}^{\mu} \cdot \left \langle P_{[s;t)}^{(\bullet)}, b \right \rangle = ba
$$
As a consequence of \eqref{eqn:pair simple}, we obtain precisely relation \eqref{eqn:case 4}. \\

\noindent Now let us assume $j' - i' > j - i > 0$, as in the picture below:

\begin{picture}(100,68)(-100,-15)

\put(0,0){\circle*{2}}\put(20,0){\circle*{2}}\put(40,0){\circle*{2}}\put(60,0){\circle*{2}}\put(80,0){\circle*{2}}\put(100,0){\circle*{2}}\put(120,0){\circle*{2}}\put(0,20){\circle*{2}}\put(20,20){\circle*{2}}\put(40,20){\circle*{2}}\put(60,20){\circle*{2}}\put(80,20){\circle*{2}}\put(100,20){\circle*{2}}\put(120,20){\circle*{2}}\put(0,40){\circle*{2}}\put(20,40){\circle*{2}}\put(40,40){\circle*{2}}\put(60,40){\circle*{2}}\put(80,40){\circle*{2}}\put(100,40){\circle*{2}}\put(120,40){\circle*{2}}

\put(80,20){\line(-4,-1){80}}
\put(40,0){\line(2,1){80}}
\put(0,0){\line(2,1){40}}
\put(0,0){\line(1,0){40}}
\put(40,20){\line(1,0){40}}

\put(37,3){\scriptsize{$T$}}
\put(70,25){\scriptsize{$(0,0)$}}
\put(125,45){\scriptsize{$(j-i,k)$}}
\put(-25,-10){\scriptsize{$(i'-j',-k')$}}

\end{picture}

\noindent Formula \eqref{eqn:radu1} implies:
$$
\Delta(a) = \frac {\psi_j}{\psi_i} \barc^k \otimes a + a \otimes 1  + ...
$$
where the ellipsis stands for tensors with hinge strictly below the vector $(j-i,k)$. Meanwhile, formula \eqref{eqn:new coproduct 2} gives us:
$$
\Delta(b) = 1 \otimes b + b \otimes \frac {\psi_{i'}}{\psi_{j'}} \barc^{-k'} + \sum_{i' \leq s < t \leq j'} \bE_{-[t,j')}^{\mu} E_{-[i';s)}^{\mu} \otimes \frac{\psi_t}{\psi_{j'}}  P_{-[s;t)}^{(\bullet)} \frac{\psi_{i'}}{\psi_s} \barc^{-k'-\bullet}  + ...
$$		
where $\bullet = -k'+\mu(j-i+s-t)$ and the ellipsis stands for tensors with hinge strictly below the triangle $T$. As a consequence of the above discussion on the locations of the hinges, the only non-trivial terms in relation \eqref{eqn:drinfeld} are:
$$
ab + \sum_{i' \leq s < t \leq j'} \frac {\psi_j}{\psi_i} \barc^k \bE_{-[t,j')}^{\mu} E_{-[i';s)}^{\mu} \cdot \left \langle a,\frac{\psi_t}{\psi_{j'}}  P_{-[s;t)}^{(\bullet)} \frac{\psi_{i'}}{\psi_s} \barc^{-k'-\bullet} \right \rangle = ba
$$
As a consequence of \eqref{eqn:pair simple}, we obtain precisely relation \eqref{eqn:case 4} (after collecting various powers of $q$ by commuting the various $\psi$ factors). \\

\noindent If $j-i = j'-i' > 0$, then our assumptions force us to have $j = i+1$, $j' = i'+1$ and $k > k'$. In this case, the required relation reads:
$$
\left[ P_{[i;i+1)}^{(k)}, P_{-[i';i'+1)}^{(-k')} \right] = \frac {\delta_{i'}^i}{q-q^{-1}} \sum_{a+b = k-k'} E_{[i+1;i+1)}^{(a)} \frac {\psi_{i+1}}{\psi_i} \barc^{k'} \bE_{[i;i)}^{(b)}
$$
Using the substitutions \eqref{eqn:p plus}--\eqref{eqn:p minus} and \eqref{eqn:cartan 1}--\eqref{eqn:cartan 2}, the relation above is equivalent to the particular case of \eqref{eqn:moreover} when $k > k'$. \\

\noindent The only other case covered by our assumptions is $j'-i' > 0 = j-i$, in which case we must have $k=1$ and $j'-i'|k'-1$. To keep the notation simple we will assume $k' = 1$, as the general case can be obtained by simply multiplying all the rational functions below by the monomial $\prod_{a=i'}^{j'-1} (z_a \oq^{\frac {2a}n})^{\frac {k'-1}{j'-i'}}$. The required relation reads:
\begin{equation}
\label{eqn:oj}
\left[P_{[i;i)}^{(1)}, P_{- [i';j')}^{(-1)} \right] = \frac 1{q-q^{-1}} \sum^{r \equiv i}_{i' \leq r \leq j'} \bE_{- [r,j')}^{0} E_{- [i';r)}^{0} \barc
\end{equation}
Let us write:
$$
P_{- [i';j')}^{(-1)} = R(z_{i'},...,z_{j'-1})
$$
where the rational function $R$ has total homogeneous degree $-1$. By assumption, the hinge of any tensor in the coproduct $\Delta(R)$ lies strictly below the vector $-(j'-i',1)$. Therefore, the hinge of any tensor in the coproduct of the element:
$$
\left[P_{[i;i)}^{(1)}, P_{- [i';j')}^{(-1)} \right] \stackrel{\eqref{eqn:double 2}}=  \tilde{R} \barc, \quad \text{where} \quad \tilde{R} = \left(\sum_{i' \leq a < j'}^{a \equiv i} z_a \oq^{\frac {2a-1}n} - \sum_{i' \leq a < j'}^{a \equiv i-1} z_a \oq^{\frac {2a+1}n} \right) \cdot R
$$
lies on or below the vector $-(j'-i',0)$. This implies that $\tilde{R} \in \CB_0$. Therefore, in order to prove the required identity:
\begin{equation}
\label{eqn:dc}
\tilde{R} = \frac 1{q-q^{-1}} \sum^{r \equiv i}_{i' \leq r \leq j'} \bE_{- [r,j')}^{0} E_{- [i';r)}^{0}
\end{equation}
we need to show that the two sides of \eqref{eqn:dc} have the same intermediate terms for the coproduct $\Delta_0$, and also the same value under the linear maps $\alpha_{-[u;v)}$ for any $[u;v) = [i';j')$. We will prove the statement about the coproduct by induction on $j'-i'$ (the base case is vacuous). By \eqref{eqn:new coproduct 2}, we have:
$$
\Delta\left(R \right) = 1 \otimes R + R \otimes \frac {\psi_{i'}}{\psi_{j'}} \barc^{-1}+ \sum_{i' \leq s < t \leq j'} \bE_{-[t,j')}^{0} E_{-[i';s)}^{0} \otimes \frac {\psi_t}{\psi_{j'}} P_{-[s;t)}^{(-1)} \frac {\psi_{i'}}{\psi_s}  + ...
$$
where the ellipsis denotes tensors with hinge which are more than one unit below the horizontal line. By commuting the expression above with the primitive element $P_{[i;i)}^{(1)}$, we obtain the following formula:
$$
\Delta(\tilde{R}) = 1 \otimes \tilde{R} + \tilde{R} \otimes \frac {\psi_{i'}}{\psi_{j'}} + \sum_{i' \leq s < t \leq j'} \bE_{-[t,j')}^{0} E_{-[i';s)}^{0} \otimes \frac {\psi_t}{\psi_{j'}} \left[P_{[i;i)}^{(1)}, P_{-[s;t)}^{(-1)}\right]\barc^{-1}  \frac {\psi_{i'}}{\psi_s} + ...
$$
By the induction hypothesis of \eqref{eqn:oj}, we may write the expression above as:
$$
\Delta(\tilde{R}) = 1 \otimes \tilde{R} + \tilde{R} \otimes \frac {\psi_{i'}}{\psi_{j'}} + ... + 
$$
$$
+ \frac 1{q-q^{-1}}\sum^{r \equiv i}_{i' \leq s \leq r \leq t \leq j'} \bE_{-[t,j')}^{0} E_{-[i';s)}^{0} \otimes \frac {\psi_t}{\psi_{j'}} \bE_{- [r,t)}^{0} E_{- [s;r)}^{0} \frac {\psi_{i'}}{\psi_s} + ...
$$ 
By \eqref{eqn:cop2} and \eqref{eqn:cop2 anti}, the last line of the expression above matches $\Delta_0$ applied to the right-hand side of \eqref{eqn:dc}. This proves that the two sides of relation \eqref{eqn:dc} have the same intermediate terms for the coproduct $\Delta_0$. Therefore, all that is left to show is that the two sides of the aforementioned relation have the same value under the linear maps $\alpha_{-[u;v)}$, for all $[u;v) = [i';j')$. To this end, formula \eqref{eqn:beta} reads:
$$
\alpha_{-[u;v)}(\text{LHS}) = \alpha_{-[u;v)}(\tilde{R}) = \frac {\tilde{R}(...,q^{2a},...) (1-q^{-2})^{v-u}}{\oq_-^{\frac {v-u}n} \prod_{i \leq a < b < j} \zeta(q^{2a-2b})} =
$$
$$
= \frac {R(...,q^{2a},...) \left(\sum_{i' \leq a < j'}^{a \equiv i} q^{2a} \oq^{\frac {2a-1}n} - \sum_{i' \leq a < j'}^{a \equiv i-1} q^{2a} \oq^{\frac {2a+1}n} \right) (1-q^{-2})^{v-u}}{\oq_-^{\frac {v-u}n} \prod_{i \leq a < b < j} \zeta(q^{2a-2b})}
$$
Since formula \eqref{eqn:normalize simple} gives us:
$$
\alpha_{-[u;v)}(R) = \frac {R(...,q^{2a},...)(1-q^{-2})^{v-u}}{\oq_-^{ \frac {2v-1}n} \prod_{u \leq a < b < v} \zeta(q^{2a-2b})} = - \delta_{(u,v)}^{(i',j')}
$$
we conclude that:
\begin{equation}
\label{eqn:lhs alpha}
\alpha_{-[u;v)}(\text{LHS}) = \delta_{(u,v)}^{(i',j')} \oq_-^{\frac {j'+i'-1}n}\left(\sum_{i' \leq a < j'}^{a \equiv i-1} q^{-1} \oq_-^{-\frac {2a+1}n} - \sum_{i' \leq a < j'}^{a \equiv i} q \oq_-^{-\frac {2a-1}n}\right)
\end{equation}
As for the right-hand side of \eqref{eqn:dc}, we have:
$$
\alpha_{-[u;v)}(\text{RHS}) \stackrel{\eqref{eqn:pseudo}}= \frac 1{q-q^{-1}} \sum^{r \equiv i}_{i' \leq r \leq j'} \alpha_{-[r;v)}(\bE_{- [r,j')}^{0}) \alpha_{-[u;r)}(E_{- [i';r)}^{0}) \stackrel{\eqref{eqn:main pair 1}, \eqref{eqn:main pair 2}}=
$$
\begin{equation}
\label{eqn:rhs alpha}
= \delta_{(u,v)}^{(i',j')} \left[\delta_{j'}^i q^{-1} \oq_-^{\frac {i'-j'}n} - \delta_{i'}^i q \oq_-^{\frac {j'-i'}n} + (q^{-1}-q)\sum_{i' < r < j'}^{r \equiv i} \oq_-^{\frac {j'+i'-2r}n} \right]
\end{equation}
It is easy to see that the right-hand sides of \eqref{eqn:lhs alpha} and \eqref{eqn:rhs alpha} are equal to each other, thus concluding the proof of \eqref{eqn:oj}.

\end{proof}

\subsection{}
\label{sub:main theorem}

We are now ready to prove our main Theorem. \\

\begin{proof}\emph{of Theorem \ref{thm:main}:} Propositions \ref{prop:plus plus 1}, \ref{prop:plus plus 2}, \ref{prop:plus minus 1} and \ref{prop:plus minus 2} imply that there exists an algebra homomorphism: 
$$
\CC \stackrel{\Upsilon}\longrightarrow \A \quad \text{which sends} \quad \CE_\mu \stackrel{\sim}\longrightarrow \CB_\mu
$$	
All that remains to prove is that $\Upsilon$ is an isomorphism of vector spaces. Since we have the triangular decomposition (\cite{Tor}):
\begin{equation}
\label{eqn:have}
\A \cong \A^+ \otimes \B_\infty \otimes \A^- \quad \text{where} \quad \A^\pm = \bigotimes^\rightarrow_{\mu \in \BQ} \B^\pm_\mu
\end{equation}
then it remains to show that:
\begin{equation}
\label{eqn:want}
\CC \cong \CC^+ \otimes \CE_\infty \otimes \CC^- \quad \text{and} \quad \CC^\pm = \bigotimes^\rightarrow_{\mu \in \BQ} \CE^\pm_\mu
\end{equation}
where $\CC^\pm \subset \CC$ are the subalgebras generated by:
$$
\Big \{p_{\pm [i;j)}^{(\pm k)}, p_{\pm l\bde, \hi}^{(\pm k')} \Big \}^{k,k' \in \BZ, \text{ any }\hi}_{i < j, l \in \BN}
$$
Both of the statements in \eqref{eqn:want} are immediate consequences of the following principle (following \cite{BS, S}). We call a ``generator" of $\CE_\mu^\pm$ any one of the symbols:
$$
p_{\pm [i;j)}^{(\pm k)} \quad \text{or} \quad p_{\pm l\bde, \hi}^{(\pm k')}
$$
with $\frac k{j-i} = \mu$ or $\frac {k'}{nl} = \mu$. \\

\begin{claim}
\label{claim:want}

For any generators $x \in \CE^\pm_\lambda$ and $y \in \CE^\pm_\mu$ with $\lambda > \mu$, we have:
\begin{equation}
\label{eqn:want 1}
xy \in \bigotimes^\rightarrow_{\nu \in [\mu, \lambda] \cap \BQ} \CE^\pm_\nu
\end{equation}
while for any generators $x \in \CE^-_\lambda$ and $y \in \CE^+_\mu$, we have:
\begin{equation}
\label{eqn:want 2}
xy \in \begin{cases} \bigotimes^\rightarrow_{\nu \in [\mu, \infty)} \CE^+_\nu \otimes \CE_\infty^+ \otimes \bigotimes^\rightarrow_{\nu \in (-\infty,\lambda]} \CE^-_\nu &\text{if } \lambda < \mu \\ \\ \bigotimes^\rightarrow_{\nu \in (- \infty,\mu]} \CE^+_\nu \otimes \CE_\infty^- \otimes \bigotimes^\rightarrow_{\nu \in [\lambda,\infty)} \CE^-_\nu &\text{if } \lambda > \mu\end{cases}
\end{equation}
(when $\lambda = \mu$, such formulas are implicit in the defining relations of $\CE_\mu$). \\

\end{claim}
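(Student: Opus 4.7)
The plan is to prove Claim \ref{claim:want} by induction on a suitable complexity measure, using the four defining relation types \eqref{eqn:case 1}--\eqref{eqn:case 4} of $\CC$ as the elementary reordering moves. To each generator $p_{\pm[i;j)}^{(\pm k)}$ (resp.\ $p_{\pm l\bde,\hi}^{(\pm k')}$) I associate its lattice vector $\pm(j-i,k)$ (resp.\ $\pm(nl,k')$), as in the Introduction; the slope of a generator is the slope of that vector seen from the origin. Given a product $xy$, I wish to rewrite $xy$ as a linear combination of monomials whose factors appear in strictly increasing slope order and lie in the slope ranges prescribed in \eqref{eqn:want 1}--\eqref{eqn:want 2}.

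First I handle the \emph{adjacent} base case, where the lattice triangle with vertices $0, v_x, v_x+v_y$ (or its analogue for mixed signs) contains no interior lattice points. This is precisely when one of the determinant conditions in \eqref{eqn:case 1}--\eqref{eqn:case 4} is satisfied, and the relation reads schematically $xy\,q^{\circ}-yx\,q^{\bullet}=Z$, where $Z$ is an explicit element of $\CE_\nu$ at a single intermediate slope $\nu\in(\mu,\lambda)$ (from \eqref{eqn:case 1}, \eqref{eqn:case 2}), a sum of $\be^{\nu}\cdot e^{\nu}$ monomials at intermediate slope (from \eqref{eqn:case 3}), or a Cartan-valued contribution landing in $\CE_\infty$ (from \eqref{eqn:case 4}). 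In all four situations one reads off directly that $xy$ lies in the required ordered tensor product.

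For non-adjacent pairs, the strategy is convex-path decomposition. Choose a primitive lattice point $v$ strictly inside the relevant triangle, and use the already-established adjacent cases to write one of the generators $x$ or $y$ as a $q$-commutator of two generators whose vectors pair adjacently with $v_y$ (resp.\ $v_x$) and with $v$. Substituting and expanding the commutator rewrites $xy$ as a sum of products of three generators, each pair of which spans a triangle of strictly smaller area than the original; a double induction on (number of generator factors, total enclosed area) then collapses to the base case. The lexicographic minimum always corresponds to an ordered monomial, proving \eqref{eqn:want 1}.

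The main obstacle is the mixed case \eqref{eqn:want 2}. Relation \eqref{eqn:case 2} produces Cartan-valued factors (products of $\psi_s^{\pm1}, c^{\pm1}, \barc^{\pm1}$) that must be transported into the central factor $\CE_\infty$; the $\psi$-conjugation rules \eqref{eqn:rel 1} ensure this commutation past the slope subalgebras introduces only scalar multiples, while $c,\barc$ are central by \eqref{eqn:rel 0}. More seriously, relation \eqref{eqn:case 4} produces $\bE^{\nu}\cdot(\text{Cartan})\cdot E^{\nu}$ sums that must be split into $\CE^+_\nu\otimes\CE_\infty\otimes\CE^-_\nu$ factors in a way that respects the orderings $[\mu,\infty)$ and $(-\infty,\lambda]$ (or the reversed ranges when $\lambda>\mu$). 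Establishing this splitting compatibly with the slope ordering follows the pattern of Burban--Schiffmann \cite{BS,S}: one shows by induction on the area of the Newton polygon of the commutator that the $\bE^{\nu}$ and $E^{\nu}$ pieces, which \emph{a priori} lie in $\CB_\nu^\pm$ of intermediate slope $\nu$, can themselves be reduced using relations of Type~1 and Type~3 already proven. With this bookkeeping in place, \eqref{eqn:want 2} follows and the triangular/slope decomposition of $\CC$ matches that of $\CA$ in \eqref{eqn:have}, completing the proof.
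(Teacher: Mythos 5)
Your overall approach — induction on the enclosed lattice area, using relations \eqref{eqn:case 1}--\eqref{eqn:case 4} as the elementary reordering moves, and expressing a generator as a $q$-commutator of two generators spanning a smaller triangle — is the same strategy the paper follows (both take their cue from \cite{BS, S}).

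However, there is a genuine gap in the central step. You assert that one can ``write one of the generators $x$ or $y$ as a $q$-commutator of two generators whose vectors pair adjacently,'' treating this as an available move. In the abstract algebra $\CC$, what you actually have is relation \eqref{eqn:case 3}: the $q$-commutator $p_{[i_1;j_1)}^{(k_1)}\tp_{[i_2;j_2)}^{(k_2)}q^{\circ}-\tp_{[i_2;j_2)}^{(k_2)}p_{[i_1;j_1)}^{(k_1)}q^{\bullet}$ equals a sum of products $\be^{\lambda}_{[s,j)}e^{\lambda}_{[i;t)}$ lying in $\CE_\lambda$. To use this to ``extract'' $x$, you must prove that this sum equals a \emph{non-zero} scalar multiple of the primitive generator $x$ plus a sum of products of two or more primitive generators in $\CE_\lambda$. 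This non-vanishing is not automatic, and the paper's proof spends a substantial part of its argument establishing it: it pushes the relation through the homomorphism $\Upsilon: \CC \rightarrow \CA$ and uses the bialgebra pairing on $\CA$ (formulas \eqref{eqn:main pair 3}--\eqref{eqn:main pair 6}) to pair the image against the dual primitive generator $P_{-[i;j)}^{(-k)}$ or $P_{-l\bde,\hi}^{(-k)}$, showing the coefficient is $\frac{-q^{-1}\oq^{-1/n}}{q^{-1}-q}$ or $\frac{q^d\oq^{d/n}-q^{-d}\oq^{-d/n}}{q^{-1}-q}$ respectively. Without this, the convex-path decomposition has no foothold: you cannot subtract off the correction terms if you don't know $x$ appears with non-zero coefficient.

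There is also a smaller imprecision in the base case. The determinant conditions in \eqref{eqn:case 1}--\eqref{eqn:case 4}, such as $|\det|=\gcd(k',nl)$, are \emph{not} equivalent to ``the triangle has no interior lattice points'': a triangle with $\gcd(k',nl)>1$ has area $\gcd(k',nl)/2>1/2$ and will generically have both interior and boundary lattice points. The paper's Case 1 carefully distinguishes ``no interior points and no points on two of the three edges'' (in which case the relations apply directly) from ``no interior points but points on all edges'' (in which case the Case 2 argument must still be run with the minimal sub-triangle $T'$ placed at the vertex $(d+d',k+k')/g$). Conflating these two situations would leave some reductions unhandled.
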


\noindent Once one has Claim \ref{claim:want}, it is straightforward to show (akin to the proof of Corollary 5.1 of \cite{S}) that any product of the generators of $\CC$ can be written as a sum of products of elements of $\CE^\pm_\mu$ in counterclockwise order of the slope $\pm \mu$. Moreover, there can be no non-trivial linear relations among such ordered products, because passing such a relation through the homomorphism $\Upsilon$ would violate \eqref{eqn:have}. \\

\begin{proof} \emph{of Claim \ref{claim:want}.} The proof follows the idea of \cite{BS, S}. To keep the notation simple, we will only prove \eqref{eqn:want 1}, as \eqref{eqn:want 2} is an analogous exercise that we leave to the interested reader. Moreover, we will only prove the $\pm = +$ case of \eqref{eqn:want 1}, as the $\pm = -$ case is proved similarly. We we use induction on the number:
$$
\# = kd' - k'd \in \BN
$$
where $k = \vdeg x$, $d = |\hdeg x|$ and $k' = \vdeg y$, $d' = |\hdeg y|$. If $\# = 1$, then \eqref{eqn:want 1} is a particular case of either relation \eqref{eqn:case 1} or \eqref{eqn:case 3}, which establishes the base case of the induction. For the induction step, pick any $N > 1$. Let us assume that \eqref{eqn:want 1} holds for all pairs of generators $x,y$ with $\# < N$, and let us prove it for those pairs such that $\# = N$. We note that the lattice triangle $T$ obtained by placing the vectors $(d, k)$ and $(d',k') \in \BN \times \BZ$ in succession has area precisely $N/2$. \\
	
\noindent \underline{Case 1:} Suppose there are no lattice points contained strictly inside $T$. If there are no lattice points on at least two of the edges of $T$ (except for the vertices), then \eqref{eqn:want 1} once again reduces to either \eqref{eqn:case 1} or \eqref{eqn:case 3}. If there are lattice points on two of the edges of $T$, then there are lattice points on all three edges. Then we run the argument in Case 2 below, with the triangle $T'$ therein defined to have the point $(d+d',k+k')/g$ as a vertex, where $g = \gcd(d+d',k+k')$. \\
	
\noindent \underline{Case 2}: Suppose there exists a lattice point inside the triangle $T$. Then \loccit considers a lattice triangle $T' \subset T$ as in the picture below:

\begin{picture}(100,100)(-110,-10)

\put(0,20){\circle*{2}}\put(20,20){\circle*{2}}\put(40,20){\circle*{2}}\put(60,20){\circle*{2}}\put(80,20){\circle*{2}}\put(100,20){\circle*{2}}\put(0,40){\circle*{2}}\put(20,40){\circle*{2}}\put(40,40){\circle*{2}}\put(60,40){\circle*{2}}\put(80,40){\circle*{2}}\put(100,40){\circle*{2}}\put(0,60){\circle*{2}}\put(20,60){\circle*{2}}\put(40,60){\circle*{2}}\put(60,60){\circle*{2}}\put(80,60){\circle*{2}}\put(100,60){\circle*{2}}

\put(0,20){\line(1,0){100}}
\put(80,60){\line(1,-2){20}}
\put(0,20){\line(2,1){80}}
\put(60,40){\line(-3,-1){60}}
\put(60,40){\line(1,1){20}}

\put(-22,17){\scriptsize{$(0,0)$}}
\put(105,17){\scriptsize{$(d+d',k+k')$}}
\put(77,27){\scriptsize{$T$}}
\put(50,40){\scriptsize{$T'$}}

\put(72,65){\scriptsize{$(d,k)$}}
\put(102,48){\scriptsize{$(j_1-i_1,k_1)$}}
\put(15,7){\scriptsize{$(j_2-i_2,k_2)$}}

\put(100,50){\vector(-1,0){30}}
\put(35,13){\vector(0,1){19}}

\end{picture}

\noindent We may choose $T'$ to have minimal area, which means that we may apply \eqref{eqn:case 3}:
\begin{equation}
\label{eqn:mazare}
p_{[i_1;j_1)}^{(k_1)} \tp_{[i_2;j_2)}^{(k_2)} q^{\delta_{j_2}^{i_1} - \delta_{i_2}^{i_1}}  - \tp_{[i_2;j_2)}^{(k_2)} p_{[i_1;j_1)}^{(k_1)} q^{\delta_{j_2}^{j_1} - \delta_{i_2}^{j_1}} = \sum^{(s,t) \equiv (i_2,j_2)}_{s \leq j_1 \text{ and } i_1 \leq t} \frac {\be^{\lambda}_{[s,j_1)} e_{[i_1;t)}^{\lambda}}{q^{-1}-q}  \in \CE_{\lambda}^+ 
\end{equation}	
where the vectors $(j_1-i_1,k_1)$ and $(j_2-i_2,k_2)$ are as depicted in the figure above, and $[i_1;j_1) + [i_2;j_2) = \hdeg x$. As shown in \cite{S}, it suffices to prove that $i_1,j_1,i_2,j_2$ can be chosen in such a way that the RHS of \eqref{eqn:mazare} is equal to a \underline{non-zero} multiple of $x$ + sums of products of more than one primitive generator in $\CB_\lambda$. Indeed, once this is done, the induction hypothesis of \eqref{eqn:want 1} will imply that:
\begin{equation}
\label{eqn:sor}
p_{[i_1;j_1)}^{(k_1)} \tp_{[i_2;j_2)}^{(k_2)}y, \ \tp_{[i_2;j_2)}^{(k_2)} p_{[i_1;j_1)}^{(k_1)}y, \ zy  \in \bigotimes^\rightarrow_{\nu \in [\mu, \lambda] \cap \BQ} \CE^+_\nu
\end{equation}
for any $z \in \CE_\lambda^+$ that is a product of more than one primitive generator. Taking an appropriate linear combination of relations \eqref{eqn:sor} proves \eqref{eqn:want 1} for our given $x$, and thus establishes the induction step. \\
	
\noindent As we only need to prove the non-zero-ness underlined in the previous paragraph, we might as well apply the homomorphism $\Upsilon$ to \eqref{eqn:mazare} and prove this non-zero-ness in the algebra $\A$. The advantage to doing so is that we may apply the bialgebra pairing: because $\Upsilon(x)$ is a primitive generator of $\A$, it suffices to show that $i_1,j_1,i_2,j_2$ can be chosen so that the right-hand side of \eqref{eqn:mazare} has non-zero pairing with any primitive generator of $\CB^-_\lambda$ of degree opposite to that of $x$. We will prove this by considering the two kinds of primitive generators: \\
	
\begin{itemize}[leftmargin=*]
		
\item The simple ones, i.e. $\Upsilon(x) = P_{[i;j)}^{(k)}$ with $\gcd(k,j-i) = 1$ and $j \not \equiv i$ mod $n$. Then:
$$
\Upsilon(\text{RHS of \eqref{eqn:mazare}}) = \frac {E_{[i;j)}^{(k)}}{q^{-1}-q} 
$$
for any choice of $i = i_1 < j_1 = i_2 < j_2 = j$. Then:
$$
\left \langle \Upsilon(\text{RHS of \eqref{eqn:mazare}}), P_{-[i;j)}^{(-k)} \right \rangle = \frac {- q^{-1}\oq^{-\frac 1n}}{q^{-1}-q} 
$$
as a consequence of \eqref{eqn:main pair 3}. \\ 
		
\item The imaginary ones, i.e. $\Upsilon(x) = P_{l\bde, \hi}^{(k)}$. Then: 
$$
\Upsilon(\text{RHS of \eqref{eqn:mazare}}) = \frac {E_{[i_1;i_1+nl)}^{(k)} + \bE_{[j_1-nl;j_1)}^{(k)}}{q^{-1} - q} + ... 
$$	
for any choice of $i_1 < j_1$ and $i_2 < j_2$ such that $[i_1;j_1) + [i_2;j_2) = l\bde$, where the ellipsis denotes sums of products of two $E$'s and $\bE$'s. Since $P_{-l\bde,\hi}^{(-k)}$ is primitive, it pairs trivially with any such product, and therefore we have:
$$
\left \langle \Upsilon(\text{RHS of \eqref{eqn:mazare}}), P_{-l\bde,\hi}^{(-k)} \right \rangle = \frac {q^d\oq^{\frac dn} - q^{-d}\oq^{-\frac dn}}{q^{-1}-q} 
$$
where $d = \gcd(k,nl)$, as a consequence of \eqref{eqn:main pair 4} and \eqref{eqn:main pair 6}.  
		
\end{itemize} 

\end{proof}	 \end{proof}

\subsection{} 
\label{sub:rotate}

In the following equation, the first isomorphism was proved as part of the proof of Theorem \ref{thm:main}, and the second isomorphism was proved in \cite{Tor}:
$$
\CC^\pm \cong \A^\pm \cong \UUpm
$$
Therefore, the algebras $\CC^\pm$ are generated by the horizontal degree 1 elements:
\begin{equation}
\label{eqn:degree 1}
\left\{ p_{\pm [i;i+1)}^{(\pm k)} \right \}_{i \in \BZ/n\BZ}^{k \in \BZ}
\end{equation}
(this can also be shown directly from relations \eqref{eqn:case 1} and \eqref{eqn:case 3}, by an argument similar to the one in the proof of Theorem \ref{thm:main}). In the following Proposition, we will show that all the relations between generators of $\CC^+$ and $\CC^-$ can be deduced from the relations among the generators \eqref{eqn:degree 1} and the generators of $\CE_\infty$. \\

\begin{proposition}
\label{prop:equiv}

If $\CC^\pm = \Big \langle \CE_\mu^\pm \Big \rangle_{\mu \in \BQ} \Big/ 
\Big(  \text{relations \eqref{eqn:case 1}, \eqref{eqn:case 3}} \Big)$, then:
\begin{equation}
\label{eqn:qwerty}
\CC \cong \CC^+ \otimes \CE_\infty \otimes \CC^- \Big / \text{relations \eqref{eqn:special rel 2}--\eqref{eqn:special rel 4}}
\end{equation}
where the following are special cases of \eqref{eqn:case 1}--\eqref{eqn:case 4}:
\begin{align}
&\left[p_{\pm [i;i+1)}^{(k)}, p_{0\bde,r}^{(\pm d)} \right] = \pm p_{\pm [i;i+1)}^{(k\pm d)} \left(\delta_i^r \oq_\pm^{-\frac dn} - \delta_{i+1}^r \oq_\pm^{\frac dn} \right) \label{eqn:special rel 2} \\
&\left[p_{\pm [i;i+1)}^{(k)}, p_{0\bde,r}^{(\mp d)} \right] = \pm p_{\pm [i;i+1)}^{(k\mp d)} \left(\delta_i^r \oq_\mp^{-\frac dn} - \delta_{i+1}^r \oq_\mp^{\frac dn} \right) \barc^{\mp d}  \label{eqn:special rel 3}
\end{align}
and:
\begin{equation}
\label{eqn:special rel 4}
\left[p_{[i;i+1)}^{(k)}, p_{-[i';i'+1)}^{(- k')} \right] = \frac {\delta_{i'}^i}{q-q^{-1}} \cdot 
\end{equation}
$$
\left( \sum^{a,b \geq 0}_{a+b = k - k'} e_{0\bde,i+1}^{(a)} \frac {\psi_{i+1}}{\psi_i} \barc^{k'} \be_{0\bde,i}^{(b)} - \sum^{a,b \leq 0}_{a+b = k - k'} e_{0\bde,i+1}^{(a)} \frac {\psi_i}{\psi_{i+1}} \barc^{- k} \be_{0\bde,i}^{(b)} \right)
$$
for all $k,k' \in \BZ$, $d,d' \in \BN$ and $i,i' \in \BZ/n\BZ$. \\

\end{proposition}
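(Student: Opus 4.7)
The plan is to denote by $\DD$ the algebra on the right-hand side of \eqref{eqn:qwerty} and to argue that the natural surjection $\pi\colon \DD \twoheadrightarrow \CC$ is an isomorphism, by exhibiting a PBW-type spanning of $\DD$ that matches the PBW decomposition of $\CC$ established during the proof of Theorem \ref{thm:main}. The map $\pi$ is well-defined because the relations \eqref{eqn:special rel 2}--\eqref{eqn:special rel 4} imposed in $\DD$ are particular cases of the defining relations \eqref{eqn:case 1}--\eqref{eqn:case 4} of $\CC$, and the intrinsic relations of $\CC^\pm$ are by definition \eqref{eqn:case 1} and \eqref{eqn:case 3}. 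By the second half of \eqref{eqn:want} (proved during Theorem \ref{thm:main}), $\CC$ decomposes as a vector space into $\CC^+ \otimes \CE_\infty \otimes \CC^-$, with each $\CC^\pm$ in turn admitting an ordered factorization $\bigotimes^\rightarrow_{\mu \in \BQ} \CE^\pm_\mu$. To show that $\pi$ is injective, it therefore suffices to show that $\DD$ itself is spanned by the same set of ordered PBW monomials, since their images under $\pi$ will then exhaust a basis of $\CC$, forcing $\pi$ to be a bijection.

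The spanning statement reduces to a reordering argument: any product of generators of $\DD$ should be rewritable as a sum of ordered monomials. As noted in the paragraph preceding the Proposition, $\CC^\pm$ is generated by the horizontal-degree-$1$ elements $\{p_{\pm[i;i+1)}^{(\pm k)}\}_{i,k}$, while $\CE_\infty$ is generated by $\{p_{0\bde,r}^{(\pm d)}\}_{r,d}$ together with the Cartan elements $\psi_s, c, \barc$. Consequently, the required commutation operations split into three elementary tasks: moving a degree-$1$ generator of $\CC^\pm$ past a generator of $\CE_\infty$, which is handled directly by \eqref{eqn:special rel 2}--\eqref{eqn:special rel 3} and the Cartan relations \eqref{eqn:rel 0}--\eqref{eqn:rel 1}; and moving a degree-$1$ generator of $\CC^+$ past a degree-$1$ generator of $\CC^-$, which is handled by \eqref{eqn:special rel 4}.

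For higher-complexity elements $x \in \CC^\pm$, I would proceed by induction using the Jacobi identity. Since $\CC^\pm$ is defined using \eqref{eqn:case 1} and \eqref{eqn:case 3}, every generator $p_{\pm[i;j)}^{(\pm k)}$ or $p_{\pm l\bde,\hi}^{(\pm k')}$ arises, up to scalar, as an iterated (possibly $q$-twisted) commutator of degree-$1$ generators. Writing $x = [x_1,x_2]_{q^?}$ with $x_1, x_2$ of strictly smaller complexity, the identity
$$[h,[x_1,x_2]_{q^?}] = [[h,x_1],x_2]_{q^?} + [x_1,[h,x_2]]_{q^?}$$
together with the induction hypothesis expresses $[h,x]$ as an element of $\CC^\pm$ again, and similarly for the $\CC^+$--$\CC^-$ interaction. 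Hence each product in $\DD$ can be brought into the form $\CC^+ \otimes \CE_\infty \otimes \CC^-$ modulo lower-complexity corrections, and iterating closes the argument.

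The main obstacle I anticipate is the combinatorial bookkeeping of $q$-twists and Cartan factors propagating through the Jacobi expansions: a priori one would have to verify that the inductive recursion reproduces exactly the right-hand sides of the full relations \eqref{eqn:case 2} and \eqref{eqn:case 4}. I would bypass this by invoking the surjection $\pi$ itself: because the ordered PBW monomials of $\DD$ map bijectively onto a basis of $\CC$ (by Theorem \ref{thm:main}), no additional collapse can occur in $\DD$ beyond what already happens in $\CC$. This soft argument side-steps a direct rederivation of the full \eqref{eqn:case 2} and \eqref{eqn:case 4} in $\DD$; the only genuinely new computation is checking that \eqref{eqn:special rel 4}, when rewritten via \eqref{eqn:cartan 1}--\eqref{eqn:cartan 4} in terms of the $a_{s,\pm d}$, is consistent with the Heisenberg structure of $\CE_\infty \cong U_q(\dot{\fgl}_1)^{\otimes n}$, which is immediate from \eqref{eqn:vertical} and Proposition \ref{eqn:prop:double}.
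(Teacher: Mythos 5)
Your proposal takes essentially the same route as the paper's proof. Both strategies hinge on the same three facts: (i) the natural map from the quotient algebra on the right-hand side of \eqref{eqn:qwerty} onto $\CC$ is surjective, and restricts to isomorphisms on the half-subalgebras because those have the same generators and relations; (ii) $\CC^\pm$ are generated by horizontal-degree-$1$ elements, so the Leibniz/Jacobi identity reduces all cross-block commutators to the elementary ones \eqref{eqn:special rel 2}--\eqref{eqn:special rel 4}; and (iii) the triangular decomposition \eqref{eqn:want} of $\CC$ forces the ordered form of any computation to be uniquely determined, so the result obtained in the candidate quotient must agree with the one obtained in $\CC$. You package step (iii) as ``spanning by ordered PBW monomials plus linear independence of their images,'' whereas the paper phrases it as ``relations \eqref{eqn:case 2} and \eqref{eqn:case 4} hold in $\CC'$'' (equivalently, there is an inverse map); these are dual formulations of the same observation, given that \eqref{eqn:want} is already known. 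The one place your write-up is vaguer than the paper's is the termination of the reordering (your ``modulo lower-complexity corrections, and iterating closes the argument''); the paper implicitly handles this by noting that the right-hand side of \eqref{eqn:special rel 4} lands entirely in $\CE_\infty$, so each Leibniz step strictly decreases the number of misordered degree-$1$ factors. Your closing remark about checking the consistency of \eqref{eqn:special rel 4} with the Heisenberg structure of $\CE_\infty$ is not really a needed step: relations \eqref{eqn:special rel 2}--\eqref{eqn:special rel 4} are already known to hold in $\CC \cong \CA$ by Propositions \ref{prop:plus plus 1}--\ref{prop:plus minus 2}, so consistency is automatic; the actual content of the Proposition is that they suffice, which your spanning argument does address.
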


\begin{proof} Let $\CC'$ be the algebra in the right-hand side of \eqref{eqn:qwerty}. We want to show that the natural surjective map:
\begin{equation}
\label{eqn:twohead}
\CC' \twoheadrightarrow \CC
\end{equation}
is an isomorphism. To this end, we will show that relation \eqref{eqn:case 4} holds in $\CC'$ (the analogous statement for relation \eqref{eqn:case 2} will be left as an exercise to the interested reader). Clearly, the map \eqref{eqn:twohead} restricts to isomorphisms:
\begin{equation}
\label{eqn:twohead pm}
{\CC'}^\pm \cong \CC^\pm 
\end{equation}
on the subalgebras generated by $p_{\pm [i;j)}^{(\pm k)}, p_{\pm l\bde, r}^{(\pm k')}$ for all $i<j$, $l>0$ and $k,k' \in \BZ$, because the two algebras in \eqref{eqn:twohead pm} have the same generators and relations. Since the half subalgebras $\CC^\pm$ are generated by degree one elements, then we can write:
\begin{align*}
&p_{\pm [i;j)}^{(\pm k)} = \sum \text{coefficient} \cdot p_{\pm [a_1;a_1+1)}^{(\pm b_1)} \dots p_{\pm [a_{j-i};a_{j-i}+1)}^{(\pm b_{j-i})} \\
&p_{\mp [i';j')}^{(\mp k')} = \sum \text{coefficient} \cdot p_{\mp [a'_1;a'_1+1)}^{(\mp b'_1)} \dots p_{\mp [a'_{j'-i'};a'_{j'-i'}+1)}^{(\mp b'_{j'-i'})}
\end{align*}
in ${\CC'}^\pm \cong \CC^\pm$, for various numbers $a_d, a'_d, b_d, b'_d$. Then the Leibniz rule implies that:
$$
\text{LHS of \eqref{eqn:case 4}} = \sum \text{coefficient} \cdot \sum_{d = 1}^{j-i} \sum_{d' = 1}^{j'-i'} \left( \dots \left[p_{\pm [a_d;a_d+1)}^{(\pm b_d)},  p_{\mp [a'_{d'};a'_{d'}+1)}^{(\mp b'_{d'})} \right] \dots \right)
$$
in $\CC'$. One can use \eqref{eqn:special rel 2}--\eqref{eqn:special rel 4} to rewrite the expression in the right-hand side with all products ordered as in \eqref{eqn:want}, i.e.:
\begin{equation}
\label{eqn:need}
\text{LHS of \eqref{eqn:case 4}} = \sum \text{coefficient} \cdot \dots  p_{[a_x;a_x+1)}^{(b_x)} \dots  p_{0 \bde, r_y}^{(\pm d_y)} \dots p_{-[a'_z;a'_z+1)}^{(-b'_z)} \dots
\end{equation}
as an identity in $\CC'$. However, the right-hand side of \eqref{eqn:need} is equal to the right-hand side of \eqref{eqn:case 4} in the algebra $\CC$, because the computation above could have been done in $\CC$ just as well as in $\CC'$. Because of the triangular decomposition \eqref{eqn:want}, we conclude that the right-hand side of \eqref{eqn:need} is equal to the right-hand side of \eqref{eqn:case 4} as elements of the vector space $\CC^+ \otimes \CE_\infty \otimes \CC^-$. Therefore, the right-hand of \eqref{eqn:need} is equal to the right-hand side of \eqref{eqn:case 4} in $\CC'$, precisely what we needed to show. 
\end{proof}

\section{An orthogonal presentation}
\label{sec:final}

\noindent In the present Section, we state and prove an alternate version of Theorem \ref{thm:intro main}, which will yield a different presentation of $\UU$, that will be used in \cite{Tale}. \\ 

\begin{itemize}[leftmargin=*]

\item In Subsection \ref{sub:orthogonal}, we consider a triangular decomposition $\CA \cong \CA^\uparrow \otimes \CB_0 \otimes \CA^\downarrow$ which is ``orthogonal" to the defining triangular decomposition $\CA = \CA^+ \otimes \CB_\infty \otimes \CA^-$ \\

\item In Subsection \ref{sub:notation f}, we define elements $F_{\pm [i;j)}^{(\pm k)}$, $\bF_{\pm [i;j)}^{(\pm k)}$ to replace $E_{\pm [i;j)}^{(\pm k)}$, $\bE_{\pm [i;j)}^{(\pm k)}$ \\

\item In Subsection \ref{sub:study f}, we give formulas for the coproduct of $F_{\pm [i;j)}^{(\pm k)}$ and $\bF_{\pm [i;j)}^{(\pm k)}$ \\

\item In Subsection \ref{sub:connect e and f}, we connect the elements $E_{\pm [i;j)}^{(\pm k)}$, $\bE_{\pm [i;j)}^{(\pm k)}$ with $F_{\pm [i;j)}^{(\pm k)}$, $\bF_{\pm [i;j)}^{(\pm k)}$ \\

\item In Subsection \ref{sub:enter o}, we define elements $o_{\pm [i;j)}^{(\pm k)}$, $o_{\pm l\bde, \hi}^{(\pm k')}$ to replace $p_{\pm [i;j)}^{(\pm k)}$, $p_{\pm l\bde, \hi}^{(\pm k')}$ \\

\item In Subsection \ref{sub:main new}, we state and prove Theorem \ref{thm:main new} \\

\item In Subsection \ref{sub:degree 1}, we consider an analogue of the situation of Subsection \ref{sub:rotate} \\

\item In the remainder of the present Section, we prove the formulas in Propositions \ref{prop:identity 1} and \ref{prop:identity 2}, which were invoked in Subsection \ref{sub:connect e and f} 

\end{itemize}

\subsection{}
\label{sub:orthogonal}

Looking at relations \eqref{eqn:case 1}--\eqref{eqn:case 4}, we observe that the commutation relations between the generators of the quantum toroidal algebra depend quite strongly on whether the degree $(\bd,k)$ of the generators in question lies in $\nn \times \BZ$ or in $-\nn \times \BZ$. This is because the triangular decomposition:
\begin{equation}
\label{eqn:triangular 1}
\CA = \CA^+ \otimes \CB_\infty \otimes \CA^-
\end{equation}
prefers the vertical direction, i.e. that of slope $\infty$. In the present Section, we will see that this is just a consequence of our choice of generators. More specifically, we will construct another decomposition, which prefers the horizontal direction:
\begin{equation}
\label{eqn:triangular 2}
\CA \cong \CA^\uparrow \otimes \CB_0 \otimes \CA^\downarrow
\end{equation}
and see that the commutation relations between the generators will now depend quite strongly on whether the degree $(\bd,k)$ of the generators in question lies in $\zz \times \BN$ or $\zz \times -\BN$. This presentation will be used in \cite{Tale} to compare the quantum toroidal algebra with a new type of shuffle algebra, that will be defined therein. \\

\begin{remark}
	
We expect infinitely many triangular decompositions as \eqref{eqn:triangular 1} and \eqref{eqn:triangular 2}, indexed by a rational number $\mu$, in which the role of the middle subalgebra is played by $\CB_\mu$. The question is to find good descriptions of the left/right subalgebras. \\

\end{remark}

\subsection{} 
\label{sub:notation f} 

Recall the elements \eqref{eqn:a}--\eqref{eqn:bb} of $\CA^+ \otimes \CB_\infty \otimes \CA^- = \CA$, and define:
\begin{align}
&F_{[i;j)}^{(k)} =  \psi_i A^{(k)}_{[i;j)} \frac 1{\psi_j} \barc^{-k} \cdot (-\oq_-^{\frac 2n})^{i-j} \label{eqn:two 1} \\
&\bF_{[i;j)}^{(k)} =  \psi_i \bA^{(k)}_{[i;j)} \frac 1{\psi_j} \barc^{-k}
 \label{eqn:two 2} \\
&F_{-[i;j)}^{(-k)} =  \frac 1{\psi_i} B^{(-k)}_{-[i;j)} \psi_j \barc^k \cdot (-\oq_+^{\frac 2n})^{i-j} \label{eqn:two 3} \\
&\bF_{-[i;j)}^{(-k)} =  \frac 1{\psi_i} \bB^{(-k)}_{-[i;j)} \psi_j \barc^k \label{eqn:two 4} 
\end{align}
for all $(i<j) \in \zzz$, $k \in \BN \sqcup 0$ (recall that $\oq_+ = \oq$ and $\oq_- = q^{-n} \oq^{-1}$), as well as:
\begin{align}
&F_{[i;j)}^{(-k)} = B^{(-k)}_{[i;j)} \cdot q^{j-i} \label{eqn:two 5} \\
&\bF_{[i;j)}^{(-k)} = \bB^{(-k)}_{[i;j)} \cdot q^{j-i} (-\oq_+^{\frac 2n})^{j-i}  \label{eqn:two 6} \\
&F_{-[i;j)}^{(k)} = A^{(k)}_{-[i;j)} \cdot q^{j-i}  \label{eqn:two 7} \\
&\bF_{-[i;j)}^{(k)} = \bA^{(k)}_{-[i;j)} \cdot q^{j-i}(-\oq_-^{\frac 2n})^{j-i}  \label{eqn:two 8} 
\end{align}
for all $(i<j) \in \zzz$, $k \in \BN$. Note that we have the following, for all $k \in \BN \sqcup 0$:
\begin{align}
&F_{\pm [i;j)}^{(\pm k)} = \psi_i^{\pm 1} \bE_{\pm [i;j)}^{(\pm k)}  \frac 1{\psi_j^{\pm 1}} \barc^{\mp k} \label{eqn:match 1} \\
&\bF_{\pm [i;j)}^{(\pm k)} = \psi_i^{\pm 1} E_{\pm [i;j)}^{(\pm k)}  \frac 1{\psi_j^{\pm 1}} \barc^{\mp k} \label{eqn:match 2}
\end{align}
When $k < 0$, we will connect the elements $E_{\pm [i;j)}^{(\pm k)}$ and $F_{\pm [i;j)}^{(\pm k)}$ in Propositions \ref{prop:e and f} and \ref{prop:minimal}. Define $F_{\pm [i;i)}^{(\pm k)}, \bF_{\pm [i;i)}^{(\pm k)}$ by formulas \eqref{eqn:match 1}--\eqref{eqn:match 2} and \eqref{eqn:cartan 1}--\eqref{eqn:cartan 4}. \\

\begin{definition}

Consider the subalgebras:
$$
\CA^\uparrow = \Big \langle F_{[i;j)}^{(k)} \Big \rangle_{(i,j) \in \zzz}^{k>0} \qquad \text{and} \qquad \CA^\downarrow = \Big \langle F_{[i;j)}^{(k)} \Big \rangle_{(i,j) \in \zzz}^{k<0}
$$
where we recall that $[i;j) = - [j;i)$ for all $i,j \in \BZ$. \\

\end{definition}

\subsection{} 
\label{sub:study f}

Let us write:
$$
F_{[i;j)}^\mu = F_{[i;j)}^{(k)} \quad \text{and} \quad \bF_{[i;j)}^\mu = \bF_{[i;j)}^{(k)}
$$
for all $(i,j) \in \zzz$ and $k \in \BZ$, where $\mu = \frac k{j-i}$. Then:
$$
F_{[i;j)}^\mu, \bF_{[i;j)}^\mu \in \CB_\mu 
$$
for all $(i,j) \in \zzz$, $\mu \in \BQ$. Moreover, if $i<j$, we have the coproduct formulas:
\begin{align}
&\Delta_\mu \left(F_{[i;j)}^\mu \right) = \sum_{s=i}^j F_{[i;s)}^\mu  \otimes F_{[s;j)}^\mu \frac {\psi_i}{\psi_s} \barc^{\mu(i-s)} \label{eqn:cop f 1} \\
&\Delta_\mu \left(\bF_{[i;j)}^\mu \right) = \sum_{s=i}^j \bF_{[s;j)}^\mu  \otimes \frac {\psi_s}{\psi_j} \barc^{\mu(s-j)} \bF_{[i;s)}^\mu  \label{eqn:cop f 2}  \\
&\Delta_\mu \left(F_{-[i;j)}^\mu \right) = \sum_{s=i}^j F_{-[s;j)}^\mu \frac {\psi_s}{\psi_i} \barc^{\mu(s-i)} \otimes F_{-[i;s)}^\mu \label{eqn:cop f 3} \\ 
&\Delta_\mu \left(\bF_{-[i;j)}^\mu \right) = \sum_{s=i}^j \frac {\psi_j}{\psi_s} \barc^{\mu(j-s)} \bF_{-[i;s)}^\mu \otimes \bF_{-[s;j)}^\mu \label{eqn:cop f 4}  
\end{align}
if $\mu \geq 0$, as a consequence of \eqref{eqn:match 1}--\eqref{eqn:match 2} and \eqref{eqn:cop1}--\eqref{eqn:cop2 anti}. Meanwhile:
\begin{align}
&\Delta_\mu \left(F_{[i;j)}^\mu \right) = \sum_{s=i}^j \frac {\psi_s}{\psi_i} \barc^{\mu(s-i)} F_{[s;j)}^\mu \otimes F_{[i;s)}^\mu \label{eqn:cop f 5} \\
&\Delta_\mu \left(\bF_{[i;j)}^\mu \right) = \sum_{s=i}^j \bF_{[i;s)}^\mu \frac {\psi_j}{\psi_s} \barc^{\mu(j-s)} \otimes \bF_{[s;j)}^\mu \label{eqn:cop f 6}  \\
&\Delta_\mu \left(F_{-[i;j)}^\mu \right) = \sum_{s=i}^j F_{-[i;s)}^\mu  \otimes \frac {\psi_i}{\psi_s} \barc^{\mu(i-s)} F_{-[s;j)}^\mu \label{eqn:cop f 7} \\ 
&\Delta_\mu \left(\bF_{-[i;j)}^\mu \right) = \sum_{s=i}^j \bF_{-[s;j)}^\mu \otimes \bF_{-[i;s)}^\mu \barc^{\mu(s-j)} \frac {\psi_s}{\psi_j} \label{eqn:cop f 8}  
\end{align}
if $\mu < 0$, which is an analogous exercise that we leave to the interested reader. \\

\subsection{} 
\label{sub:connect e and f} 

To go from Theorem \ref{thm:main} to its equivalent version Theorem \ref{thm:main new}, one needs to connect the elements $E,\bE$ with the elements $F,\bF$ defined above. This is achieved by the results in the present Subsection, which will be proved at the end of the paper. We consider any $\mu = \frac ba < 0$ with $\gcd(a,b) = 1$. \\

\begin{definition}
\label{def:yz}

For any $(i,j) , (i',j') \in \zzz$ such that $j-i+j'-i' > 0$, define:
\begin{align}
&Y_{\pm [i;j), \pm [i';j')}^\mu = \sum_{i \leq t \text{ and } s \leq j}^{(s,t) \equiv (i',j')} \bE_{\pm [s;j)}^\mu E_{\pm [i;t)}^\mu \label{eqn:formula y} \\
&\bY_{\pm [i;j), \pm [i';j')}^\mu = \sum_{i \leq t \text{ and } s \leq j}^{(s,t) \equiv (i',j')} E_{\pm [s;j)}^\mu \bE_{\pm [i;t)}^\mu \label{eqn:formula yy} \\
&Z_{\pm [i;j), \pm [i';j')}^\mu = \sum_{i \leq t \text{ and } s \leq j}^{(s,t) \equiv (i',j')} F_{\pm [i;t)}^\mu \bF_{\pm [s;j)}^\mu \label{eqn:formula z} \\
&\bZ_{\pm [i;j), \pm [i';j')}^\mu = \sum_{i \leq t \text{ and } s \leq j}^{(s,t) \equiv (i',j')} \bF_{\pm [i;t)}^\mu F_{\pm [s;j)}^\mu \label{eqn:formula zz}
\end{align}

\end{definition}

\begin{proposition}
\label{prop:identity 1}

For all $(i,j),(i',j') \in \zzz$ such that $j-i+j'-i'>0$ and:
\begin{equation}
\label{eqn:gcd 1}
a|b(j-i)\pm 1 \quad \text{and} \quad a | b(j'-i')\mp 1
\end{equation}
we have the identity:
\begin{equation}
\label{eqn:identity yz}
Y_{\pm [i;j), \pm [i';j')}^\mu = q^{\delta_{j'}^i - \delta_{i'}^j} \bZ_{\pm [i';j'), \pm [i;j)}^\mu
\end{equation}
Similarly, if $a | b(j-i)\mp 1$ and $a | b(j'-i')\pm 1$, then we have:
\begin{equation}
\label{eqn:identity yyzz}
\bY_{\pm [i;j), \pm [i';j')}^\mu = q^{\delta_{i'}^j - \delta_{j'}^i} Z_{\pm [i';j'), \pm [i;j)}^\mu 
\end{equation}

\end{proposition}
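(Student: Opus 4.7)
The plan is to apply Lemma \ref{lem:unique}: a primitive element of $\CB_\mu$ (with respect to the leading coproduct $\Delta_\mu$) that is annihilated by every linear map $\alpha_{\pm[u;v)}$ must vanish. Thus it suffices to show that the difference $D^\mu_{\pm[i;j),\pm[i';j')} := Y^\mu_{\pm[i;j),\pm[i';j')} - q^{\delta_{j'}^i - \delta_{i'}^j}\bZ^\mu_{\pm[i';j'),\pm[i;j)}$ lies in $\CB_\mu$, is primitive for $\Delta_\mu$, and satisfies $\alpha_{\pm[u;v)}(D^\mu) = 0$ for all $(u,v)$. Containment in $\CB_\mu$ follows from Proposition \ref{prop:belong} since every $E^\mu_{\pm\ast}, \bE^\mu_{\pm\ast}, F^\mu_{\pm\ast}, \bF^\mu_{\pm\ast}$ lies in $\CB^\pm_\mu$. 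I proceed by induction on $N := (j-i) + (j'-i') > 0$, analogous in spirit to the induction used in Proposition \ref{prop:unique} and in the proof of Proposition \ref{prop:plus plus 2}.

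For the primitivity step, I compute $\Delta_\mu$ of the two sides using the coproduct formulas \eqref{eqn:cop1}--\eqref{eqn:cop2 anti} for $E^\mu, \bE^\mu$, combined with the $\mu<0$ formulas \eqref{eqn:cop f 5}--\eqref{eqn:cop f 8} for $F^\mu, \bF^\mu$. Applying compatibility $\Delta(ab)=\Delta(a)\Delta(b)$ to each summand $\bE^\mu_{\pm[s;j)} E^\mu_{\pm[i;t)}$ (respectively $\bF^\mu_{\pm[i';t')} F^\mu_{\pm[s';j')}$) and collecting terms by hinge, each intermediate tensor factors, up to Cartan and $\barc$--prefactors, as a product of two $Y$-type (resp.\ $\bZ$-type) sums of strictly smaller total length. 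Using the coprimality conditions \eqref{eqn:gcd 1} to guarantee that these smaller subsums themselves satisfy the hypotheses of the proposition, the inductive hypothesis converts the $Y$-sums into $\bZ$-sums term by term. Careful accounting of the $q^{\delta - \delta}$ prefactors (which arise from commuting Cartan elements past the generators, in analogy with \eqref{eqn:vertical 2}) shows that the intermediate terms of $\Delta_\mu(Y)$ and those of $\Delta_\mu(q^{\delta_{j'}^i-\delta_{i'}^j}\bZ)$ agree, so $D^\mu$ is primitive.

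For the $\alpha_{\pm[u;v)}$ step, Proposition \ref{prop:pseudo} reduces the evaluation of each product $\bE^\mu E^\mu$ (resp.\ $\bF^\mu F^\mu$) to a product of two $\alpha$-values times a power of $\oq_\pm$. The values $\alpha_{\pm[u;v)}(E^\mu_{\pm[i;t)})$ and $\alpha_{\pm[u;v)}(\bE^\mu_{\pm[s;j)})$ are the Kronecker--delta expressions of \eqref{eqn:main pair 1}--\eqref{eqn:main pair 2}, while the values on $F^\mu, \bF^\mu$ are obtained from the explicit formulas \eqref{eqn:two 1}--\eqref{eqn:two 8} expressing $F, \bF$ in terms of $A, \bA, B, \bB$ and Cartan dressings (the $\alpha$-values on $A, \bA, B, \bB$ being precisely those catalogued in Section \ref{sec:shuf}). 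The conditions \eqref{eqn:gcd 1} force the two congruences $a \mid b(j-i) \pm 1$ and $a \mid b(j'-i') \mp 1$ to pin down a unique pair $(s,t)$ modulo $(n,n)$ contributing non-trivially to each sum, so after cancellation the two sides coincide.

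The main obstacle will be the combinatorial bookkeeping in the primitivity step: the sum defining $Y^\mu_{\pm[i;j),\pm[i';j')}$ is indexed by pairs $(s,t) \equiv (i',j')$ under the constraints $i \leq t$ and $s \leq j$, whereas the sum defining $\bZ^\mu_{\pm[i';j'),\pm[i;j)}$ is indexed by pairs $(s',t') \equiv (i,j)$ under the dual constraints, and matching the double sums produced by expanding $\Delta_\mu$ requires a careful re-indexing via the congruences modulo $(n,n)$ on both sides. A secondary subtlety is that the $\pm = -$ case requires the coproduct formulas \eqref{eqn:cop f 7}--\eqref{eqn:cop f 8}, whose Cartan dressings differ from the $\pm = +$ case and generate an extra sign discrepancy that must be absorbed into the $q^{\delta_{j'}^i-\delta_{i'}^j}$ prefactor. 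The identity \eqref{eqn:identity yyzz} is then proved by the same argument with the roles of $E \leftrightarrow \bE$ and $F \leftrightarrow \bF$ interchanged throughout.
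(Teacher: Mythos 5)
Your two-step strategy (show the difference is primitive for $\Delta_\mu$, then kill it with detector maps) is indeed the ``frequent strategy'' the paper itself advertises just before introducing the $\rho$ maps, but the paper pointedly does \emph{not} use it for this Proposition. The actual proof is a direct algebraic manipulation: substitute the generating-series identity of Proposition \ref{prop:e and f}, which expresses $E^\mu_{\pm[i;j)}$ and $\bE^\mu_{\pm[i;j)}$ as $F$'s and $\bF$'s dressed by the mutually inverse group-like series $G^\mu_{\pm k,\widehat{i}}, \bG^\mu_{\pm k,\widehat{i}}$, into the definition of $Y^\mu$; reorganize the resulting double sum; and observe that, because $G$ and $\bG$ are inverse power series, all but the $t=0$ term telescope away, leaving exactly $q^{\delta_{j'}^i - \delta_{i'}^j}\bZ^\mu$. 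No coproduct or detector-map argument appears in the proof of \eqref{eqn:identity yz} at all; the hard work is entirely in Proposition \ref{prop:e and f} (whose proof does use a primitivity argument, but with $\rho$ rather than $\alpha$).

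The genuine gap in your plan is the $\alpha$-detector step. The paper explicitly warns, at the start of Subsection containing Lemma \ref{lem:min}, that the maps $\alpha_{\pm[i;j)}$ ``never give nice formulas on both $Y,\bY$ and $Z,\bZ$'' --- and \eqref{eqn:identity yz} relates $Y$ to $\bZ$, so you are precisely in the bad case. Concretely, formulas \eqref{eqn:main pair 1}--\eqref{eqn:main pair 2} give Kronecker-delta values for $\alpha_{\pm[u;v)}$ on $E^\mu = \bA^\mu$ and $\bE^\mu = A^\mu\cdot(\ldots)$, but no analogous formula is given (nor is one to be expected) for $\alpha_{\pm[u;v)}$ on $F^\mu, \bF^\mu$, which for $\mu<0$ are proportional to $B^\mu, \bB^\mu$: the specialization $z_a\mapsto 1$ defining $\alpha_{[u;v)}$ does not interact cleanly with the denominator $(1 - z_{i+1}/z_i)\cdots(1-z_{j-1}/z_{j-2})$ of $B^\mu_{[i;j)}$, so there is no reason to get a delta function, and the paper supplies none. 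Your assertion that ``the $\alpha$-values on $A,\bA,B,\bB$ [are] precisely those catalogued in Section \ref{sec:shuf}'' is where the argument breaks: only the $A,\bA$ values are catalogued. To repair the plan one must replace $\alpha$ by the maps $\rho$ of \eqref{eqn:rho} (with detection property Lemma \ref{lem:min}) and compute $\rho$ on $Y$ and $\bZ$ via Propositions \ref{prop:rho mult} and \ref{prop:rho computations}. But doing so essentially re-derives Propositions \ref{prop:efg} and \ref{prop:e and f} --- which is exactly where the paper puts its effort --- and once Proposition \ref{prop:e and f} is in hand the direct telescoping proof is shorter than the coproduct induction you sketch. (For what it is worth, your coproduct-matching induction is modeled correctly on the paper's proof of Proposition \ref{prop:identity 2}; that part is plausible, although you would still need to check that the sub-indices appearing in the intermediate tensors satisfy the divisibility hypotheses \eqref{eqn:gcd 1} of the Proposition being invoked.)
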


\begin{proposition}
\label{prop:identity 2}

For all $(i,j),(i',j') \in \zzz$ such that $j-i+j'-i'>0$ and:
\begin{equation}
\label{eqn:gcd 2}
k = \frac {b(j-i) + \e}a \in \BZ \quad \text{and} \quad k' = \frac {b(j'-i') - \e}a \in \BZ
\end{equation}
(for any $\e \in \{-1,1\}$) we have the identity:
\begin{equation}
\label{eqn:identity yyy}
\bY^\mu_{\pm [i;j), \pm [i';j')} = \sum_{x,x' \in \BZ/n\BZ} q^{\delta_{i'+x'}^i - \delta_{j'+x'}^j} Y^\mu_{\pm [i'+x';j'+x'), \pm [i+x;j+x)} 
\end{equation}
$$
\left[q^{-\delta_i^j} \delta_{x}^0  + \delta_i^j (q - q^{-1}) \frac {\oq_\mp^{\frac 2n \cdot \overline{-\e kx}}}{\oq_\mp^2-1} \right] \left[q^{-\delta_{i'}^{j'}} \delta_{x'}^0  + \delta_{i'}^{j'}  (q - q^{-1}) \frac {\oq_\pm^{\frac 2n \cdot \overline{\e k'x'}}}{\oq_\pm^2-1} \right]
$$
There is also an analogous relation involving $Z$ and $\bZ$, which we will not need. \\

\end{proposition}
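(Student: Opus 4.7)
My plan is to prove \eqref{eqn:identity yyy} by induction on $\ell := (j-i) + (j'-i') \in \BN$, reducing the identity to the antipode relation \eqref{eqn:antipode} and the matrix inversion from Lemma \ref{lem:elem}. Both sides of \eqref{eqn:identity yyy} lie in $\CB_\mu$ and have the same bidegree, so by Lemma \ref{lem:unique} the identity will follow once I verify that (a) both sides have matching intermediate terms in $\Delta_\mu$, and (b) they agree under every linear map $\alpha_{\pm[u;v)}$ of the appropriate degree.

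First, I would observe that the coefficient $q^{-\delta_i^j}\delta_x^0 + \delta_i^j(q-q^{-1})\oq_\mp^{\frac{2}{n}\overline{-\e k x}}/(\oq_\mp^2-1)$ is precisely the $(j,i+x)$-entry of the matrix $M^\mp$ of Lemma \ref{lem:elem} at parameter $\e k$ when $i \equiv j$ mod $n$, and degenerates to $\delta_x^0$ otherwise (so that no shift occurs). The analogous statement holds for the $x'$-factor, with $M^\pm$ at parameter $\e k'$. Thus \eqref{eqn:identity yyy} is, in essence, a statement that converting $\bY^\mu$ (of type $E\bE$) into $Y^\mu$ (of type $\bE E$) requires applying the two inverse-transition matrices, one in each slot---a two-dimensional analogue of \eqref{eqn:tilde P} versus \eqref{eqn:tilde P equivalent}.

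For the base case, minimizing $\ell$ under the divisibility constraint \eqref{eqn:gcd 2} forces either $i = j$ or $i' = j'$, and one of $\bY^\mu, Y^\mu$ collapses to a sum over a single interval. Then the identity reduces directly to \eqref{eqn:antipode} (combined with \eqref{eqn:cartan 1}--\eqref{eqn:cartan 4} in the purely Cartan case where both degenerations occur). For the inductive step, applying $\Delta_\mu$ to $\bY^\mu_{\pm[i;j),\pm[i';j')}$ via \eqref{eqn:cop1}--\eqref{eqn:cop2 anti} on each factor produces a sum of tensor products whose internal factors are again $Y^\mu$'s and $\bY^\mu$'s of strictly smaller total length, to which the inductive hypothesis applies. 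A parallel computation for the right-hand side of \eqref{eqn:identity yyy} will yield the same collection of tensors, establishing (a).

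The check of the $\alpha_{\pm[u;v)}$-values in (b) uses the multiplicativity \eqref{eqn:pseudo} together with the explicit evaluations \eqref{eqn:main pair 1} and \eqref{eqn:main pair 2}; this reduces the identity to a combinatorial statement about Kronecker deltas and residues $\overline{\e k x}, \overline{\e k' x'}$ weighted by powers of $\oq_\pm^{2/n}$. I expect this combinatorial step to be the main obstacle: it is a two-variable generalization of the telescoping identity appearing in Claim \ref{claim:combi} (in the proof of Proposition \ref{prop:plus plus 2}), but because the two parameters $x$ and $x'$ enter the coefficient matrices multiplicatively, I expect the sums over $x$ and $x'$ to decouple, each one reducing to the identity $M^+M^- = \emph{Id}$ of Lemma \ref{lem:elem}.
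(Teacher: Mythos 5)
Your overall strategy matches the paper's: induct on $\ell = (j-i)+(j'-i')$, show the intermediate terms of $\Delta_\mu$ match, and finish with the $\alpha_{\pm[u;v)}$-evaluation. The paper also leans on Lemma \ref{lem:elem} to handle the two coefficient matrices; concretely, it first multiplies both sides of \eqref{eqn:identity yyy} by one inverse matrix to obtain the equivalent one-sum-per-side form \eqref{eqn:identity yyy equiv}, and only then inducts. So your "decoupling" intuition is sound in spirit, and the high-level architecture is the same.

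There are two concrete gaps, though. First, your base case is wrong: you claim minimizing $\ell$ under \eqref{eqn:gcd 2} forces $i=j$ or $i'=j'$. This only happens when $a=1$. For $a>1$, $a \mid b(j-i)+\e$ with $\gcd(a,b)=1$ and $\e\in\{\pm1\}$ forces $j-i$ to lie in a fixed nonzero residue class mod $a$, so $j-i\geq 1$ (and similarly $j'-i'\geq 1$); the minimal valid $\ell$ is $a$, not $1$. The reduction to the antipode relation \eqref{eqn:antipode} therefore does not apply. The paper sidesteps this entirely by observing that the base case is absorbed into the induction step: for the smallest admissible $\ell$ there simply are no intermediate tensors in \eqref{eqn:delta before}, so the verification is purely the $\alpha$-check. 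Second, your expectation that the $\alpha$-evaluation "decouples into two applications of $M^+M^-=\mathrm{Id}$" understates what remains. After the reduction via Lemma \ref{lem:elem}, each side of \eqref{eqn:identity yyy equiv} still has a single sum over $x$ (or $x'$), and its image under $\alpha_{[u;v)}$ is computed by the genuine telescoping argument of Claim \ref{claim:combi} (yielding the closed forms \eqref{eqn:mult y} and \eqref{eqn:mult yy}), not by another matrix inversion. Lemma \ref{lem:elem} is used exactly once, to pass from the two-matrix form to the one-matrix form; the residual combinatorics is Claim \ref{claim:combi}, and that is the piece you would actually need to carry out.
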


\begin{remark} 
\label{rem:works}

Note that the assumption $\mu < 0$ could be removed, if we defined $F^\mu, \bF^\mu$ by the analogues of formulas \eqref{eqn:two 5}--\eqref{eqn:two 8} instead of \eqref{eqn:two 1}--\eqref{eqn:two 4} for $\mu \geq 0$. \\

\end{remark}

\subsection{} 
\label{sub:enter o}

Recall the algebras $\CE_\mu$ of \eqref{eqn:abstract slope}, and replace their simple and imaginary generators by the following elements, for all $i \leq j$, $l \geq 0$ and all applicable $k,k',\hi$: 
\begin{align}
&o_{\pm [i;j)}^{(\pm k)} = \psi_i^{\pm 1} p_{\pm [i;j)}^{(\pm k)} \frac 1{\psi_j^{\pm 1}} \barc^{\mp k} \label{eqn:tp 1} \\
&o_{\pm l\bde, \hi}^{(\pm k')} = p_{\pm l\bde, \hi}^{(\pm k')} c^{\mp l} \barc^{\mp k'} \label{eqn:tp 2}
\end{align}
if $k,k' \geq 0$, while:
\begin{align}
&o_{\pm [i;j)}^{(\pm k)} =  \sum_{x \in \BZ/n\BZ} p_{\pm [i+x;j+x)}^{(\pm k)} \left[q^{-\delta_j^i} \delta_x^0 +  \delta_j^i (q-q^{-1}) \frac {\oq_\pm^{\frac 2n \cdot \overline{-kx}}}{\oq_\pm^2-1} \right] \stackrel{\eqref{eqn:tilde p}}= \tp_{\pm [i;j)}^{(\pm k)} \label{eqn:tp 3} \\
&o_{\pm l\bde, \hi}^{(\pm k')} = \sum_{y \in \BZ/g\BZ} p_{\pm l\bde, r+y}^{(\pm k')} \left[q^{-d} \delta_y^0  + (q^d - q^{-d}) \frac {\oq_\pm^{\frac {2d}n \cdot \widehat{\frac {- k' y}d}}}{\oq_\pm^{\frac {2dg}n}-1} \right] \label{eqn:tp 4}
\end{align}
if $k,k' < 0$, where $d = \gcd(k',nl)$, $g = \gcd(n,\frac {nl}d)$ and $\widehat{x}$ is defined as the element in the set $\{1,...,g\}$ which is congruent to $x$ modulo $g$. We will use the notation:
$$
\to_{\pm [i;j)}^{(\pm k)} = \sum_{x \in \BZ/n\BZ} o_{\pm [i+x;j+x)}^{(\pm k)} \left[q^{-\delta_j^i} \delta_x^0 +  \delta_j^i (q-q^{-1}) \frac {\oq_\pm^{\frac 2n \cdot \overline{-k x}}}{\oq_\pm^2-1} \right] = 
$$
\begin{equation}
\label{eqn:tp 5}
\stackrel{\eqref{eqn:tilde p}}= \psi_i^{\pm 1} \tp_{\pm [i;j)}^{(\pm k)} \frac 1{\psi_j^{\pm 1}} \barc^{\mp k}
\end{equation}
if $k,k' \geq 0$, while we define:
\begin{equation}
\label{eqn:tp 6}
\to_{\pm [i;j)}^{(\pm k)} = \sum_{x \in \BZ/n\BZ} o_{\pm [i+x;j+x)}^{(\pm k)} \left[q^{-\delta_j^i} \delta_x^0 +  \delta_j^i (q-q^{-1}) \frac {\oq_\mp^{\frac 2n \cdot \overline{k x}}}{\oq_\mp^2-1} \right] \stackrel{\eqref{eqn:tilde p equivalent}}= p_{\pm [i;j)}^{(\pm k)}
\end{equation}
if $k,k'<0$. Finally, let us write:
\begin{equation}
\label{eqn:stipulate}
f_{\pm [i;j)}^{(\pm k)} \in \CE_{\frac k{j-i}}  \quad \text{and} \quad  \bf_{\pm [i;j)}^{(\pm k)} \in \CE_{\frac k{j-i}}
\end{equation}
for the images of the elements \eqref{eqn:two 1}--\eqref{eqn:two 8} under the isomorphisms \eqref{eqn:mini}. Therefore, we have the natural analogues of Definition \ref{def:yz}, Proposition \ref{prop:identity 1} and Proposition \ref{prop:identity 2} when $E,\bE,F,\bF \in \CB_\mu$ are replaced by the symbols $e,\be,f,\bf \in \CE_\mu$. \\

\subsection{} 
\label{sub:main new}

With the notation above in mind, we are ready to state and prove the following analogue of Theorem \ref{thm:main}. Specifically, both \eqref{eqn:generators} and \eqref{eqn:generators new} are presentations of the same quantum toroidal algebra, but while the relations in the former distinguish the vertical line (i.e. $i=j$), the relations in the latter distinguish the horizontal line (i.e. $k=0$). \\

\begin{theorem}
\label{thm:main new} 
	
We have an algebra isomorphism:
\begin{equation}
\label{eqn:generators new}
\UU \cong \boxed{\DD := \Big \langle \CE_{\mu} \Big \rangle_{\mu \in \BQ \sqcup \infty} \Big/\text{relations \eqref{eqn:case 1 new}--\eqref{eqn:case 4 new}}}
\end{equation}
where the defining relations in $\DD$ are of the four types below. \\

\noindent \underline{Type 1:} for all \footnote{We only allow $k = 0$ in the following if $i<j$ and $k' = 0$ if $l>0$} $(i,j) \in \zzz$, $l \in \BZ$ and $k,k' \geq 0$ such that:
$$
d:=  \det \begin{pmatrix} k & k' \\ j-i & nl \end{pmatrix}
$$ 
satisfies $|d| = \gcd(k',nl)$, we set (letting $g = \gcd \left(n, \frac {nl}d \right)$) for any $r \in \BZ/g\BZ$:
\begin{equation}
\label{eqn:case 1 new}
\Big[ o_{\pm[i;j)}^{(\pm k)}, o_{\pm l\bde, \hi}^{(\pm k')} \Big] = \pm o_{\pm [i;j+ln)}^{(\pm k \pm k')} \left(\delta_{i \text{ mod }g}^r \oq_\pm^{\frac dn} - \delta_{j \text{ mod }g}^r \oq_\pm^{-\frac dn} \right)
\end{equation}
\text{ }

\noindent \underline{Type 2}: under the same assumptions as in Type 1, we set:
\begin{equation}
\label{eqn:case 2 new}
\begin{aligned}
&\Big[ o_{\pm [i;j)}^{(\pm k)}, o_{\mp l\bde, \hi}^{(\mp k')} \Big] = \pm (c^{\pm l} \barc^{\pm k'})^{\frac {d}{|d|}} o_{\pm [i;j-nl)}^{(\pm k \mp k')} \left(\delta_{i \text{ mod }g}^r \oq_\mp^{\frac dn} - \delta_{j \text{ mod }g}^r \oq_\mp^{-\frac dn} \right) \\
&\Big[ \to_{\pm [i;j)}^{(\pm k)}, o_{\mp l\bde, \hi}^{(\mp k')} \Big] = \pm \left( \frac {\psi_j^{\pm 1} \barc^{\pm k}}{\psi_i^{\pm 1} q^{1-\delta_j^i}}\right)^{\frac {d}{|d|}} o^{(\pm k \mp k')}_{\pm[i;j-nl)} \left(\delta_{i \text{ mod }g}^r \oq_\mp^{\frac dn} - \delta_{j \text{ mod }g}^r \oq_\mp^{- \frac dn} \right)
\end{aligned} 
\end{equation}
with the first equation in \eqref{eqn:case 2 new} holding if $k > k'$ (or $k=k'$ and $j-i > nl$) and the second equation holding if $k < k'$ (or $k=k'$ and $j-i < nl$). \\

\noindent \underline{Type 3:} for all \footnote{We only allow $k = 0$ in the following if $i<j$ and $k' = 0$ if $i'<j'$ \label{foot 2}} $(i, j), (i', j') \in \zzz$ and $k,k' \geq 0$ such that:
$$
\det \begin{pmatrix} k & k' \\ j-i & j'-i' \end{pmatrix} = \gcd(k+k',j-i+j'-i')
$$
we set:
\begin{equation}
\label{eqn:case 3 new}
o_{\pm [i;j)}^{(\pm k)} \to_{\pm [i';j')}^{(\pm k')} q^{\delta_{j'}^i - \delta_{i'}^i} - \to_{\pm [i';j')}^{(\pm k')} o_{\pm [i;j)}^{(\pm k)} q^{\delta_{j'}^j - \delta_{i'}^j} = \sum^{(s,t) \equiv (i';j')}_{i \leq t \text{ and } s \leq j} \frac {f^\mu_{\pm [s,j)} \bf_{\pm [i;t)}^\mu}{q^{-1}-q} 
\end{equation}
where $\mu = \frac {k+k'}{j - i + j' - i'}$. \\

\noindent \underline{Type 4:} for all \textsuperscript{\emph{\ref{foot 2}}} $(i,j), (i',j') \in \zzz$ and $k,k' \geq 0$ such that:
$$
\det \begin{pmatrix} k & k' \\ j-i & j'-i' \end{pmatrix} = \gcd(k-k',j-i-j'+i')
$$
we set:
\begin{equation}
 \label{eqn:case 4 new}
\left[o_{\pm [i;j)}^{(\pm k)}, o_{\mp [i';j')}^{(\mp k')} \right] = \frac 1{q-q^{-1}}  \begin{cases} \displaystyle \sum^{(s,t) \equiv (i',j')}_{i \leq s \leq t \leq j} \bf^{\mu}_{\pm [t,j)} \frac {\psi_{j'}^{\pm 1} \barc^{\pm k'}}{\psi_{i'}^{\pm 1}} f_{\pm [i;s)}^{\mu} &\text{if } k > k' \\ \\
\displaystyle \sum^{(s,t) \equiv (i,j)}_{i' \leq s \leq t \leq j'} f_{\mp [t,j')}^{\mu} \frac{\psi_j^{\pm 1}  \barc^{\pm k}}{\psi_i^{\pm 1}} \bf_{\mp [i';s)}^{\mu} &\text{if } k \leq k'
\end{cases}
\end{equation}
where $\mu = \frac {k-k'}{j - i - j' + i'}$. \\ 

\end{theorem}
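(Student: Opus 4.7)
My plan is to deduce Theorem \ref{thm:main new} from Theorem \ref{thm:main} by translating between the two systems of generators. The relabeling \eqref{eqn:tp 1}--\eqref{eqn:tp 4} is invertible (conjugation by $\psi_i^{\pm 1}/\psi_j^{\pm 1} \cdot \barc^{\mp k}$ respectively $c^{\mp l}\barc^{\mp k'}$, together with the linear change of basis of Lemma \ref{lem:elem} in the imaginary case), so the $o$'s generate the same subalgebras $\CE_\mu \subset \CC$ as the $p$'s. Hence the map sending each $o$-symbol in $\DD$ to the corresponding element of $\CC$ is well-defined on the slope subalgebras. The substance of the theorem is that the four families of relations \eqref{eqn:case 1 new}--\eqref{eqn:case 4 new} between these slope subalgebras are equivalent (modulo the relations inside the individual $\CE_\mu$) to the families \eqref{eqn:case 1}--\eqref{eqn:case 4}. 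Once this equivalence is established, the isomorphism $\DD\cong\CC\cong\UU$ follows directly from Theorem \ref{thm:main}.

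The verification splits into cases according to the signs of $k$ and $k'$. When both $k,k'\geq 0$, definitions \eqref{eqn:tp 1}--\eqref{eqn:tp 2} are simply conjugation/rescaling, and the matching of \eqref{eqn:case 1 new}--\eqref{eqn:case 2 new} with \eqref{eqn:case 1}--\eqref{eqn:case 2} is a straightforward bookkeeping of commutations with the Cartan factors (using \eqref{eqn:rel 1} to move the $\psi$'s past the $p$'s on either side). For \eqref{eqn:case 3 new} and \eqref{eqn:case 4 new} in this regime, the right-hand sides are expressed through the symbols $f,\bar f$ rather than $e,\be$; but via \eqref{eqn:match 1}--\eqref{eqn:match 2} (applied inside the slope subalgebra $\CE_\mu$) these are the same elements up to conjugation by $\psi_i,\psi_j,\barc$, so the identification is again a matter of carrying the Cartan factors through the sum of $Y$-type expressions in \eqref{eqn:formula y}--\eqref{eqn:formula zz}.

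The genuinely nontrivial case is when one or both of $k,k'$ are negative. Here the definitions \eqref{eqn:tp 3}--\eqref{eqn:tp 4} (and the $\to$-analogues \eqref{eqn:tp 5}--\eqref{eqn:tp 6}) already encode the tilde-linear change of basis of \eqref{eqn:tilde p}--\eqref{eqn:tilde p equivalent}, so the left-hand sides of the relations in $\DD$ match commutators of tildes in $\CC$. On the right-hand sides, however, the sums in \eqref{eqn:case 3 new} and \eqref{eqn:case 4 new} involve $f,\bar f$ rather than $e,\be$, and when $\mu<0$ these are genuinely different elements. This is precisely what Propositions \ref{prop:identity 1} and \ref{prop:identity 2} are designed for: the identity \eqref{eqn:identity yz} (respectively \eqref{eqn:identity yyzz}) converts a $Y$-sum into a $\bZ$-sum (respectively $Z$-sum), while \eqref{eqn:identity yyy} accounts for the tilde-transform on both indices. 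Case by case, one checks that the sign pattern of $k,k'$ and the placement of the tildes in \eqref{eqn:case 3 new}--\eqref{eqn:case 4 new} triggers exactly the combination of these identities needed to match the corresponding right-hand side in $\CC$. I expect the main obstacle to be this bookkeeping: there are four sign cases in each of \eqref{eqn:case 3 new}, \eqref{eqn:case 4 new}, and the sum ranges $s\leq j$, $i\leq t$ versus $i\leq s\leq t\leq j$ have to be reconciled with the convention that $E^\mu_{\pm[i;j)}$ vanishes unless $\mu(j-i)\in\BZ$, which is ultimately the meaning of the divisibility hypotheses \eqref{eqn:gcd 1}--\eqref{eqn:gcd 2}.

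Finally, to show that \eqref{eqn:case 1 new}--\eqref{eqn:case 4 new} form a \emph{complete} set of relations (i.e.\ no extra relations appear in $\DD$), I would invoke the same triangular-decomposition argument as at the end of the proof of Theorem \ref{thm:main}: the surjection $\DD\twoheadrightarrow\CC$ given by the $o\mapsto$(rewritten $p$) assignment becomes an isomorphism once one proves the analogue of Claim \ref{claim:want} for $\DD$, and this analogue follows from Claim \ref{claim:want} itself because the change-of-generators is triangular with respect to slope. Thus the proof reduces, modulo the combinatorial translation step described above, to results already established in Sections \ref{sec:shuf} and \ref{sec:proof}.
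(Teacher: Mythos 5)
Your proposal takes essentially the same route as the paper: one observes that the $o$-generators and $p$-generators are related by an invertible relabeling inside each $\CE_\mu$, and then shows, case by case on the signs of $j-i$, $l$ and $i'-j'$, that the new relations \eqref{eqn:case 1 new}--\eqref{eqn:case 4 new} are equivalent (modulo the internal $\CE_\mu$-relations) to the old relations \eqref{eqn:case 1}--\eqref{eqn:case 4}, with Propositions \ref{prop:identity 1} and \ref{prop:identity 2} supplying the needed $Y/\bY\leftrightarrow Z/\bZ$ translations when negative slopes appear. One caveat worth flagging: this equivalence is \emph{not} a type-by-type matching. For example, in the paper's proof the sub-cases of \eqref{eqn:case 1 new} in which $j-i$ and $l$ have opposite signs unwind to instances of \eqref{eqn:case 2}, not \eqref{eqn:case 1}; so when you write ``case by case'' you should expect the four families to get shuffled among themselves across the sign boundaries.

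Your final paragraph is redundant. Once you have shown the two systems of relations are equivalent \emph{in both directions}, $\CC$ and $\DD$ are literally the same quotient of $\langle \CE_\mu\rangle_{\mu\in\BQ\sqcup\infty}$, so the isomorphism $\DD\cong\UU$ is immediate from Theorem \ref{thm:main}; there is no surjection left to upgrade, and no need to re-prove an analogue of Claim \ref{claim:want} for $\DD$. Invoking the triangular decomposition argument again would only be necessary if you proved the implication in one direction only, which your plan does not intend. Dropping that step would also shorten the argument without any loss.
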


\begin{proof} We will prove that relations \eqref{eqn:case 1}, \eqref{eqn:case 2}, \eqref{eqn:case 3}, \eqref{eqn:case 4} are equivalent to relations \eqref{eqn:case 1 new}, \eqref{eqn:case 2 new}, \eqref{eqn:case 3 new}, \eqref{eqn:case 4 new}, respectively. We will prove the first and the third of these equivalences, and leave the second and fourth as exercises for the interested reader. Let us start by proving the equivalence of \eqref{eqn:case 1} and \eqref{eqn:case 1 new}. \\

\begin{itemize}[leftmargin=*]

\item when $i \leq j$ and $l \geq 0$, the $p$ and $o$ generators are connected by formulas \eqref{eqn:tp 1} and \eqref{eqn:tp 2}, hence relations \eqref{eqn:case 1} and \eqref{eqn:case 1 new} are equivalent in virtue of the fact that $\psi_i$ and $\psi_j$ commute with $p_{\pm l\bde, \hi}^{(\pm k')}$, and $\barc$ commutes with everything. \\

\item when $i > j$ and $l < 0$, we have:
$$
\left [o_{\pm [i;j)}^{(\pm k)}, o_{\pm l\bde, r}^{(\pm k')} \right] \stackrel{\eqref{eqn:tp 3}, \eqref{eqn:tp 4}}= \sum_{x \in \BZ/n\BZ} \sum_{y \in \BZ/g\BZ} \left [p_{\pm [i+x;j+x)}^{(\pm k)}, p_{\pm l\bde, r+y}^{(\pm k')} \right] \cdot
$$
$$
\left(q^{-\delta_j^i} \delta_x^0 +  \delta_j^i (q-q^{-1}) \frac {\oq_\mp^{\frac 2n \cdot \overline{kx}}}{\oq_\mp^2-1} \right) \left( q^{-|d|} \delta_{y}^0  + (q^{|d|} - q^{-|d|}) \frac {\oq_\mp^{\frac {2|d|}n \cdot \widehat{\frac {k'y}{|d|}}}}{\oq_\mp^{\frac {2|d|g}n}-1} \right) 
$$
$$
\stackrel{\eqref{eqn:case 1}}= \sum_{x \in \BZ/n\BZ} \sum_{y \in \BZ/g\BZ} \pm p_{\pm [i+x;j+nl+x)}^{(\pm k \pm k')} \left( \delta_{i+x \text{ mod }g}^{r+y} \oq_\mp^{-\frac dn} - \delta_{j+x \text{ mod }g}^{r+y} \oq_\mp^{\frac dn} \right)
$$
$$
\left(q^{-\delta_j^i} \delta_x^0 +  \delta_j^i (q-q^{-1}) \frac {\oq_\mp^{\frac 2n \cdot \overline{kx}}}{\oq_\mp^2-1} \right) \left( q^{-|d|} \delta_{y}^0  + (q^{|d|} - q^{-|d|}) \frac {\oq_\mp^{\frac {2|d|}n \cdot \widehat{\frac {k'y}{|d|}}}}{\oq_\mp^{\frac {2|d|g}n}-1} \right) = 
$$
$$
= \sum_{x \in \BZ/n\BZ} \pm p_{\pm [i+x;j+nl+x)}^{(\pm k \pm k')} \squiggly{\left(q^{-\delta_j^i} \delta_x^0 +  \delta_j^i (q-q^{-1}) \frac {\oq_\mp^{\frac 2n \cdot \overline{kx}}}{\oq_\mp^2-1} \right)} 
$$
$$
\squiggly{\left[ q^{-|d|} \left( \oq_\mp^{-\frac dn}  \delta_{i+x \text{ mod }g}^r - \oq_\mp^{\frac dn} \delta_{j+x \text{ mod }g}^r \right) + (q^{|d|} - q^{-|d|}) \left( \frac {\oq_\mp^{\frac {2|d|}n \cdot \widehat{\frac {k'(i+x-r)}{|d|}}-\frac dn}}{\oq_\mp^{\frac {2|d|g}n}-1} - \frac {\oq_\mp^{\frac {2|d|}n \cdot \widehat{\frac {k'(j+x-r)}{|d|}}+\frac dn}}{\oq_\mp^{\frac {2|d|g}n}-1} \right) \right]}
$$
To complete the proof of relation \eqref{eqn:case 1 new}, we need to show that the expression with the squiggly underline is equal to:
$$
\left(q^{-\delta_j^i} \delta_x^i +  \delta_j^i (q-q^{-1}) \frac {\oq_\mp^{\frac 2n \cdot \overline{(k+k')(x-i)}}}{\oq_\mp^2-1} \right) \left(\delta_{i \text{ mod }g}^r \oq_\pm^{\frac dn} - \delta_{j \text{ mod }g}^r \oq_\pm^{-\frac dn} \right)
$$
for all $x \in \{1,...,n\}$. This is an elementary consequence of the assumption that $knl - k'(j-i) = d$ and $|d| = \gcd(k',nl)$, and one deals with the cases of $i\not \equiv j$ mod $n$ and $i \equiv j$ mod $n$ separately (in the latter case, the assumptions imply $n|k'$, hence $n|d$, hence $g=1$). We leave the details to the interested reader. \\

\end{itemize}

\noindent When $j-i$ and $l$ have different signs (here we consider the number 0 to be among the positive integers), there are only two cases when our assumption holds: \\

\begin{itemize}

\item when  $i=j$, $k=1$ and $nl|k'$ (which implies $g=1$), relation \eqref{eqn:case 1 new} reads:
$$
\Big[ o_{\pm[i;i)}^{(\pm 1)}, o_{\pm l\bde, 1}^{(\pm k')} \Big] = \pm o_{\pm [i;i+ln)}^{(\pm 1 \pm k')} \left( \oq_\pm^l - \oq_\pm^{-l} \right)
$$
If we apply the substitutions \eqref{eqn:tp 1}, \eqref{eqn:tp 3}, \eqref{eqn:tp 4}, the formula above becomes equivalent to:
\begin{multline*}
\left[ p_{\pm[i;i)}^{(\pm 1)} \barc^{\mp 1}, p_{\pm l\bde, 1}^{(\pm k')} \frac {q^{nl}q_{\mp}^l - q^{-nl}q_{\mp}^{-l}}{q_{\mp}^l-q_{\mp}^{-l}} \right] = \\ = \pm  \left( \oq_\pm^l - \oq_\pm^{-l} \right) \sum_{x \in \BZ/n\BZ} p_{\pm [i+x;i+x+ln)}^{(\pm 1 \pm k')} \left[q^{-1}\delta_x^0 + (q-q^{-1})\frac {\oq_\mp^{\frac 2n \cdot \overline{x}}}{\oq_\mp^2-1} \right]
\end{multline*}
By using $q^n \oq_\mp = \oq_\pm^{-1}$, the formula above is equivalent to:
$$
\left[ p_{\pm[i;i)}^{(\pm 1)}, p_{\pm l\bde, 1}^{(\pm k')} \right] = \pm \barc^{\pm 1} \left( \oq_\mp^{-l} - \oq_\mp^{l} \right) \sum_{x \in \BZ/n\BZ} p_{\pm [i+x;i+x+ln)}^{(\pm 1 \pm k')} \left(q^{-1}\delta_x^0 + (q-q^{-1})\frac {\oq_\mp^{\frac 2n \cdot \overline{x}}}{\oq_\mp^2-1} \right)
$$
If we apply Lemma \ref{lem:elem}, then the formula above is equivalent to:
$$
\left[ \sum_{x \in \BZ/n\BZ} p_{\pm[i+x;i+x)}^{(\pm 1)} \left(q^{-1}\delta_x^0 + (q-q^{-1})\frac {\oq_\pm^{\frac 2n \cdot \overline{-x}}}{\oq_\pm^2-1} \right), p_{\pm l\bde, 1}^{(\pm k')} \right] = \pm \barc^{\pm 1} \left( \oq_\mp^{-l} - \oq_\mp^{l} \right) p_{\pm [i;i+ln)}^{(\pm 1 \pm k')} 
$$
which is precisely the second option in \eqref{eqn:case 2} for $l<0$. \\

\item when $l=0$ and $i=j+1$ (which implies $g=n$), relation \eqref{eqn:case 1 new} reads:
$$
\Big[ o_{\pm[i;i-1)}^{(\pm k)}, o_{\pm 0\bde, \hi}^{(\pm k')} \Big] = \pm o_{\pm [i;i-1)}^{(\pm k \pm k')} \left(\delta_{i}^r \oq_\pm^{\frac {k'}n} - \delta_{i-1}^r \oq_\pm^{-\frac {k'}n} \right)
$$
Using \eqref{eqn:tp 2} and \eqref{eqn:tp 3}, the relation above becomes equivalent to:
$$
\Big[ p_{\mp[i-1;i)}^{(\pm k)}, p_{\pm 0\bde, \hi}^{(\pm k')} \barc^{\mp k'} \Big] = \pm p_{\mp [i-1;i)}^{(\pm k \pm k')} \left(\delta_{i}^r \oq_\pm^{\frac {k'}n} - \delta_{i-1}^r \oq_\pm^{-\frac {k'}n} \right)
$$
The formula above is precisely the first option of \eqref{eqn:case 2}. \\

\end{itemize}

\noindent Let us now prove the equivalence of \eqref{eqn:case 3} and \eqref{eqn:case 3 new}. \\

\begin{itemize}[leftmargin=*]

\item when $i \leq j$ and $i' \leq j'$, we have (to keep our formulas legible, we set $\barc=1$ in what follows, since it is clear that we will encounter the same powers of $\barc$ in the left and right-hand sides of the equations below):
$$
o_{\pm [i;j)}^{(\pm k)} \to_{\pm [i';j')}^{(\pm k')} q^{\delta_{j'}^i - \delta_{i'}^i} - \to_{\pm [i';j')}^{(\pm k')} o_{\pm [i;j)}^{(\pm k)} q^{\delta_{j'}^j - \delta_{i'}^j} \stackrel{\eqref{eqn:tp 1}, \eqref{eqn:tp 5}}= 
$$
$$
= \psi_i^{\pm 1} p_{\pm [i;j)}^{(\pm k)} \frac {\psi_{i'}^{\pm 1}}{\psi_j^{\pm 1}} \tp_{\pm [i';j')}^{(\pm k')} \frac 1{\psi_{j'}^{\pm 1}} q^{\delta_{j'}^i - \delta_{i'}^i} - \psi_{i'}^{\pm 1} \tp_{\pm [i';j')}^{(\pm k')} \frac {\psi_{i}^{\pm 1}}{\psi_{j'}^{\pm 1}} p_{\pm [i;j)}^{(\pm k)} \frac 1{\psi_{j}^{\pm 1}} q^{\delta_{j'}^j - \delta_{i'}^j} \stackrel{\eqref{eqn:rel 1}}=
$$
$$
= \psi_i^{\pm 1} \psi_{i'}^{\pm 1} p_{\pm [i;j)}^{(\pm k)} \tp_{\pm [i';j')}^{(\pm k')} \frac 1{\psi_{j}^{\pm 1} \psi_{j'}^{\pm 1}} q^{\delta_{j'}^{i} - \delta_{j'}^{j}} - \psi_i^{\pm 1} \psi_{i'}^{\pm 1} \tp_{\pm [i';j')}^{(\pm k')} p_{\pm [i;j)}^{(\pm k)} \frac 1{\psi_{j}^{\pm 1} \psi_{j'}^{\pm 1}} q^{\delta_{i'}^i - \delta_{i'}^j} \stackrel{\eqref{eqn:case 3}}=
$$
$$
= q^{\delta_{i'}^i - \delta_{j'}^j} \cdot \psi_i^{\pm 1} \psi_{i'}^{\pm 1} \left[ \sum^{(s,t) \equiv (i',j')}_{i \leq t \text{ and } s \leq j} \frac {\be^\mu_{\pm [s,j)} e_{\pm [i;t)}^\mu}{q^{-1}-q} \right]  \frac 1{\psi_{j}^{\pm 1} \psi_{j'}^{\pm 1}} = 
$$
$$
= \sum^{(s,t) \equiv (i',j')}_{i \leq t \text{ and } s \leq j} \frac {\psi_s^{\pm 1} \be^\mu_{\pm [s,j)} \frac {\psi_i^{\pm 1}}{\psi_j^{\pm 1}} e_{\pm [i;t)}^\mu \frac 1{\psi_t^{\pm 1}}}{q^{-1}-q} \stackrel{\eqref{eqn:match 1}, \eqref{eqn:match 2}}= \sum^{(s,t) \equiv (i',j')}_{i \leq t \text{ and } s \leq j} \frac {f^\mu_{\pm [s,j)} \bf_{\pm [i;t)}^\mu}{q^{-1}-q}
$$
(indeed, the last formula holds because \eqref{eqn:stipulate} stipulates that the $e,\be$'s are to the $f,\bf$'s as the $E,\bE$'s are to the $F,\bF$'s, and the latter are related by \eqref{eqn:match 1}--\eqref{eqn:match 2}). \\

\item when $i > j$ and $i' > j'$, we have: 
$$
o_{\pm [i;j)}^{(\pm k)} \to_{\pm [i';j')}^{(\pm k')} q^{\delta_{j'}^i - \delta_{i'}^i} - \to_{\pm [i';j')}^{(\pm k')} o_{\pm [i;j)}^{(\pm k)} q^{\delta_{j'}^j - \delta_{i'}^j} \stackrel{\eqref{eqn:tp 3}, \eqref{eqn:tp 6}}=
$$
$$
= \tp_{\mp [j;i)}^{(\mp (- k))} p_{\mp [j';i')}^{(\mp (-k'))} q^{\delta_{j'}^i - \delta_{i'}^i} - p_{\mp [j';i')}^{(\mp (-k'))} \tp_{\mp [j;i)}^{(\mp (- k))} q^{\delta_{j'}^j - \delta_{i'}^j} \stackrel{\eqref{eqn:tilde p},\eqref{eqn:tilde p equivalent}}=
$$
$$
= \sum_{x,x' \in \BZ/n\BZ} \left[ p_{\mp [j+x;i+x)}^{(\mp (- k))} \tp_{\mp [j'+x';i'+x')}^{(\mp (-k'))} q^{\delta_{j'}^i - \delta_{i'}^i} - \tp_{\mp [j'+x';i'+x')}^{(\mp (-k'))} p_{\mp [j+x;i+x)}^{(\mp (- k))} q^{\delta_{j'}^j - \delta_{i'}^j} \right] \cdot 
$$
\begin{equation}
\label{eqn:miel}
\left[q^{-\delta_j^i} \delta_x^0 +  \delta_j^i (q-q^{-1}) \frac {\oq_\mp^{\frac 2n \cdot \overline{kx}}}{\oq_\mp^2-1} \right] \left[q^{-\delta_{j'}^{i'}} \delta_{x'}^{0} +  \delta_{j'}^{i'} (q-q^{-1}) \frac {\oq_\pm^{\frac 2n \cdot \overline{-k'x'}}}{\oq_\pm^2-1} \right]
\end{equation}
Note that the identity:
$$
\delta_{j'}^i + \delta_{i'}^j - \delta_{i'}^i - \delta_{j'}^j = \delta_{i'+x'}^{j+x} + \delta_{j'+x'}^{i+x} - \delta_{j'+x'}^{j+x} - \delta_{i'+x'}^{i+x}
$$
holds for all $x,x'$ that have non-zero coefficient in \eqref{eqn:miel} (this is because $i\not \equiv j \Rightarrow n|x$ and $i'\not \equiv j' \Rightarrow n|x'$). Therefore, formula \eqref{eqn:miel} equals:
$$
\sum_{x,x' \in \BZ/n\BZ} \left[ p_{\mp [j+x;i+x)}^{(\mp (- k))} \tp_{\mp [j'+x';i'+x')}^{(\mp (-k'))} q^{\delta_{i'+x'}^{j+x} - \delta_{j'+x'}^{j+x}} - \tp_{\mp [j'+x';i'+x')}^{(\mp (-k'))} p_{\mp [j+x;i+x)}^{(\mp (- k))} q^{\delta_{i'+x'}^{i+x} - \delta_{j'+x'}^{i+x}} \right] \cdot 
$$
$$
q^{\delta_{j'}^j - \delta_{i'}^j - \delta_{i'+x'}^{i+x} + \delta_{j'+x'}^{i+x}} \left[q^{-\delta_j^i} \delta_x^0 +  \delta_j^i (q-q^{-1}) \frac {\oq_\mp^{\frac 2n \cdot \overline{kx}}}{\oq_\mp^2-1} \right] \left[q^{-\delta_{j'}^{i'}} \delta_{x'}^{0} +  \delta_{j'}^{i'} (q-q^{-1}) \frac {\oq_\pm^{\frac 2n \cdot \overline{-k'x'}}}{\oq_\pm^2-1} \right]
$$
By invoking \eqref{eqn:case 3}, the expression above equals:
$$
\sum_{x,x' \in \BZ/n\BZ} \left[ \sum^{(s,t) \equiv (j'+x',i'+x')}_{j+x \leq t \text{ and } s \leq i+x} \frac {\be^\mu_{\mp [s,i+x)} e_{\mp [j+x;t)}^\mu}{q^{-1}-q}  \right] q^{\delta_{j'}^j - \delta_{i'}^j - \delta_{i'+x'}^{i+x} + \delta_{j'+x'}^{i+x}} \cdot 
$$
$$
 \left[q^{-\delta_j^i} \delta_x^0 +  \delta_j^i (q-q^{-1}) \frac {\oq_\mp^{\frac 2n \cdot \overline{kx}}}{\oq_\mp^2-1} \right] \left[q^{-\delta_{j'}^{i'}} \delta_{x'}^{0} +  \delta_{j'}^{i'} (q-q^{-1}) \frac {\oq_\pm^{\frac 2n \cdot \overline{-k'x'}}}{\oq_\pm^2-1} \right]
$$
Note that the identity:
$$
\delta_{j'}^j - \delta_{i'}^j - \delta_{i'+x'}^{i+x} + \delta_{j'+x'}^{i+x} = \delta_i^{j'} - \delta_j^{i'} + \delta_{j+x}^{j'+x'} - \delta_{i+x}^{i'} 
$$
holds for all $x,x'$ that have non-zero coefficient in the expression above. Thus, we may apply \eqref{eqn:identity yyy} to conclude that the expression above equals:
$$
\frac {q^{\delta_{j'}^i - \delta_{i'}^j} \bY_{\mp [j';i'), \mp [j;i)}}{q^{-1}-q} \stackrel{\eqref{eqn:identity yyzz}}= \frac {Z_{\mp [j;i), \mp [j';i')}}{q^{-1}-q}
$$
(the notations $\bY$ and $Z$ are defined in \eqref{eqn:formula yy} and \eqref{eqn:formula z}). The right-hand side of the expression above precisely equals the right-hand side of \eqref{eqn:case 3 new}, as required. \\

\end{itemize} 

\noindent When $i-j > 0 \geq i' - j'$, our hypothesis also holds in the following cases: \\

\begin{itemize}

\item when $j-i|k+1$, $i'=j'$ and $k'=1$, relation \eqref{eqn:case 3 new} reads:
$$
o_{\pm [i;j)}^{(\pm k)} \to_{\pm [i';i')}^{(\pm 1)} - \to_{\pm [i';i')}^{(\pm 1)} o_{\pm [i;j)}^{(\pm k)}  = \sum^{s \equiv i'}_{i \leq s \leq j} \frac {f^\mu_{\pm [s,j)} \bf_{\pm [i;s)}^\mu}{q^{-1}-q} 
$$
We may use the substitutions \eqref{eqn:tp 1} and \eqref{eqn:tp 3} to rewrite the left-hand side, and \eqref{eqn:identity yyzz} to rewrite the right-hand side as:
$$
\tp_{\mp [j;i)}^{(\pm k)} \tp_{\pm [i';i')}^{(\pm 1)} - \tp_{\pm [i';i')}^{(\pm 1)} \tp_{\mp [j;i)}^{(\pm k)}  = q^{\delta_i^{i'} - \delta_j^{i'}} \barc^{\pm 1} \sum^{s \equiv i'}_{i \leq s \leq j} \frac {e^\mu_{\mp [j,s)} \be_{\mp [s;i)}^\mu}{q^{-1}-q} 
$$
If we convert the $\tp$'s into $p$'s using formula \eqref{eqn:tilde p}, and apply \eqref{eqn:identity yyy} to the right-hand side, the formula above reduces to \eqref{eqn:case 4}. \\

\item when $i=j+1$ and $i' = j'-1$, relation \eqref{eqn:case 3 new} reads:
$$
o_{\pm [j+1;j)}^{(\pm k)} \to_{\pm [i';i'+1)}^{(\pm k')} q^{\delta_{i'}^{j} - \delta_{i'}^{j+1}} - \to_{\pm [i';i'+1)}^{(\pm k')} o_{\pm [j+1;j)}^{(\pm k)} q^{\delta_{i'+1}^{j} - \delta_{i'}^{j}} = \delta_{i'}^j \sum_{a+b = k+k'} 
\frac {f^{(a)}_{\pm [j;j)} \of^{(b)}_{\pm [j+1;j+1)}}{q^{-1}-q} 
$$
With the substitutions \eqref{eqn:match 1}, \eqref{eqn:match 2}, \eqref{eqn:tp 1}, \eqref{eqn:tp 3}, this formula reads:
$$
p_{\mp [j;j+1)}^{(\pm k)} \psi_{i'}^{\pm 1} p_{\pm [i';i'+1)}^{(\pm k')} \frac {\barc^{\mp k'}}{\psi_{i'+1}^{\pm 1}} q^{\delta_{i'+1}^{j+1} - \delta_{i'}^{j+1}} - \psi_{i'}^{\pm 1} p_{\pm [i';i'+1)}^{(\pm k')} \frac {\barc^{\mp k'}}{\psi_{i'+1}^{\pm 1}} p_{\mp [j;j+1)}^{(\pm k)} q^{\delta_{i'+1}^{j} - \delta_{i'}^{j}} = 
$$
$$
= \delta_{i'}^j \sum_{a+b = k+k'} 
\frac {\be^{(a)}_{\pm [j;j)} e^{(b)}_{\pm [j+1;j+1)} \barc^{\mp k \mp k'}}{q^{-1}-q} 
$$
If we move $\psi_{i'}^\pm$ (respectively $\psi_{i'+1}^{\pm 1}$) to the left (respectively right) of the formula above using \eqref{eqn:rel 1}, then we obtain precisely \eqref{eqn:case 4}. \\

\end{itemize}

\end{proof}

\subsection{} 
\label{sub:degree 1}

By analogy with Proposition \ref{prop:equiv}, we have the following: \\

\begin{proposition}
\label{prop:equiv new}

Let $\DD^\pm\subset \DD$ be the subalgebras generated by $o_{[i;j)}^{(k)}$ and $o_{l\bde,\hi}^{(k')}$ with $k,k' \in \BN$ and all $i,j,l,\hi$, modulo relations \eqref{eqn:case 1 new} and \eqref{eqn:case 3 new}. Then we have:
\begin{equation}
\label{eqn:qwerty new}
\DD \cong \DD^+ \otimes \CE_0 \otimes \DD^- \Big / \text{relations \eqref{eqn:special rel 1 new}--\eqref{eqn:special rel 5 new}}
\end{equation}
where the following are special cases of \eqref{eqn:case 1 new}--\eqref{eqn:case 4 new}:
\begin{align}
&\left[ o_{\pm [i;j)}^{(\pm 1)}, o_{\pm l\bde, 1}^{(0)} \right] = \pm o_{\pm [i;j+nl)}^{(\pm 1)} \left( \oq_\pm^l - \oq_\pm^{-l} \right) \label{eqn:special rel 1 new} \\
&\left[ o_{\pm [i;j)}^{(\pm 1)}, o_{\mp l\bde, 1}^{(0)} \right] = \pm o_{\pm [i;j-nl)}^{(\pm 1)} c^{\pm l}  \left( \oq_\mp^l - \oq_\mp^{-l} \right) \label{eqn:special rel 2 new}
\end{align}
\begin{multline}
\label{eqn:special rel 3 new}
o_{\pm [i;j)}^{(\pm 1)} o_{\pm [s;s+1)}^{(0)} q^{\delta_{s+1}^i - \delta_s^i} - o_{\pm [s;s+1)}^{(0)} o_{\pm [i;j)}^{(\pm 1)} q^{\delta_{s+1}^j - \delta_s^j} = \\ = \pm \left( \delta_{s+1}^i \cdot \oq_\mp^{-\frac 1n}  o_{\pm [i-1;j)}^{(\pm 1)} - \delta_s^j \cdot \oq_\mp^{\frac 1n} o_{\pm [i;j+1)}^{(\pm 1)} \right)
\end{multline} 
\begin{equation}
\label{eqn:special rel 4 new}
\left[ o_{\pm [i;j)}^{(\pm 1)}, o_{\mp [s;s+1)}^{(0)} \right] = \pm \left( \delta_s^i \cdot \oq_\mp^{\frac 1n} o_{\pm [i+1;j)}^{(\pm 1)} \frac {\psi_{i+1}^{\pm 1}}{\psi_i^{\pm 1}} - \delta_{s+1}^j \cdot \oq_\mp^{-\frac 1n} \frac {\psi_j^{\pm 1}}{\psi_{j-1}^{\pm 1}} o_{\pm [i;j-1)}^{(\pm 1)}  \right)
\end{equation}
and:
\begin{equation}
\label{eqn:special rel 5 new}
\left[o_{[i;j)}^{(1)}, o_{[i';j')}^{(-1)} \right] = \frac 1{q^{-1} - q} 
\end{equation}
$$
\left(\sum_{\left \lceil \frac {i-j'}n \right \rceil \leq k \leq \left \lfloor \frac {j-i'}n \right \rfloor} f_{[i'+nk,j)}^{\mu} \frac{\psi_{j'}}{\psi_{i'} \barc} \bf_{[i;j'+nk)}^{\mu}  - \sum_{\left \lceil \frac {j'-i}n \right \rceil \leq k \leq \left \lfloor \frac {i'-j}n \right \rfloor} f_{- [j+nk,i')}^{\mu} \frac{\psi_j \barc}{\psi_i} \bf_{- [j';i+nk)}^{\mu}\right)
$$
for all $(i,j), (i',j') \in \zzz$, $l \in \BZ$, $s \in \BZ/n\BZ$. \\

\end{proposition}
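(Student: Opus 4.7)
The plan is to follow the template of Proposition \ref{prop:equiv} throughout, swapping the roles of horizontal degree and slope $\infty$ with vertical degree and slope $0$. Let $\DD'$ denote the right-hand side of \eqref{eqn:qwerty new}; there is an obvious surjective homomorphism $\DD' \twoheadrightarrow \DD$, and the goal is to establish injectivity. As an immediate first step, this map restricts to isomorphisms on the half subalgebras $\DD^\pm$ and on the middle factor $\CE_0$, since these subalgebras are given by exactly the same generators and relations on both sides.

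The key structural input I shall need is that each half $\DD^\pm$ is generated, as a $\BQ(q,\oq^{\frac 1n})$-algebra, by its vertical-degree-$(\pm 1)$ generators $\{o_{\pm[i;j)}^{(\pm 1)}\}_{(i,j) \in \zzz}$ alone. This is the horizontal-direction counterpart to the fact, used in Proposition \ref{prop:equiv}, that $\CC^\pm$ is generated by horizontal-degree-one elements. I plan to prove it by induction on vertical degree: the relations \eqref{eqn:case 1 new} and \eqref{eqn:case 3 new} restricted to $\DD^\pm$ allow one to express every $o_{\pm[i;j)}^{(\pm k)}$ with $k \geq 2$, and every $o_{\pm l\bde, \hi}^{(\pm k')}$ with $k' \geq 2$, as an iterated commutator of vertical-degree-$(\pm 1)$ generators. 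From the shuffle viewpoint, this is equivalent to the claim (implicit in the factorization \eqref{eqn:triangular 2} developed in \cite{Tale}) that $\CA^\uparrow$ and $\CA^\downarrow$ are generated by shuffle elements of vertical degree one.

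With the generation claim in hand, the injectivity proof proceeds as in Proposition \ref{prop:equiv}: I express each generator of $\DD^\pm$ as an iterated commutator of vertical-degree-$(\pm 1)$ elements, then apply the Leibniz rule to any commutator $[x,y]$ with $x \in \DD^+$ and $y \in \DD^-$ to reduce it to base-case commutators. These base cases are of three types --- vertical-degree-$(+1)$ with a slope-$0$ imaginary generator (handled by \eqref{eqn:special rel 1 new}--\eqref{eqn:special rel 2 new}), vertical-degree-$(+1)$ with a slope-$0$ simple generator (handled by \eqref{eqn:special rel 3 new}--\eqref{eqn:special rel 4 new}), and vertical-degree-$(+1)$ with vertical-degree-$(-1)$ (handled by \eqref{eqn:special rel 5 new}) --- so all base cases are available as consequences of the five special relations. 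Since the identical symbolic calculation performed in $\DD$ itself yields, by Theorem \ref{thm:main new}, the Type~2 and Type~4 identities \eqref{eqn:case 2 new} and \eqref{eqn:case 4 new}, the corresponding identities must hold inside $\DD'$ as well. Combined with the horizontal-direction triangular decomposition $\DD \cong \DD^+ \otimes \CE_0 \otimes \DD^-$ as a vector space (the analog of Claim \ref{claim:want} for the new presentation, provable by the same argument once one observes that the homomorphism $\Upsilon$ of the previous section can equally be used to detect non-vanishing of ordered products in the horizontal factorization), this shows that ordered products span $\DD'$ freely, and hence $\DD' \twoheadrightarrow \DD$ is an isomorphism.

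The main obstacle is the vertical-degree-one generation statement in the second paragraph: while it is heuristically clear by duality with its horizontal counterpart, verifying it rigorously from the defining relations \eqref{eqn:case 1 new}, \eqref{eqn:case 3 new} alone will require a careful induction, since the right-hand sides of the Type~3 relations within $\DD^\pm$ contain sums of products of several $f^\mu, \bf^\mu$'s whose expansion must be controlled degree by degree, and in particular one must check that the ``principal'' term of each such commutator pairs non-trivially with the target generator, in the spirit of Case~2 of the proof of Claim \ref{claim:want}.
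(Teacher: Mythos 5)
Your proposal correctly reconstructs the strategy the paper leaves implicit: the author's entire argument for Proposition \ref{prop:equiv new} is the sentence ``completely analogous to that of Proposition \ref{prop:equiv}, so we leave it as an exercise,'' and your roadmap is precisely what that exercise entails --- the surjection $\DD' \twoheadrightarrow \DD$, the isomorphisms on the halves, the Leibniz expansion of mixed commutators into base cases covered by \eqref{eqn:special rel 1 new}--\eqref{eqn:special rel 5 new}, and the appeal to the horizontal-direction triangular decomposition \eqref{eqn:triangular 2} to conclude injectivity.

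One comment on the ``main obstacle'' you flag. You are right that the argument hinges on $\DD^\pm$ being generated by its vertical-degree-$(\pm 1)$ elements, and you are right that this is the one ingredient that does not appear verbatim elsewhere in the paper. However, you needn't re-prove it by a fresh induction on the relations \eqref{eqn:case 1 new}, \eqref{eqn:case 3 new}. The cleaner route --- and the one truly parallel to the paper's treatment of Proposition \ref{prop:equiv} --- is via the isomorphism established in Theorem \ref{thm:main new}: just as the chain $\CC^\pm \cong \A^\pm \cong \UUpm$ together with the known degree-one generation of $\UUpm$ settled the horizontal case, here the chain $\DD^\pm \cong \CA^{\uparrow}$ (resp.\ $\CA^{\downarrow}$) from Theorem \ref{thm:main new} reduces the claim to the assertion that $\CA^{\uparrow/\downarrow}$ is generated by its vertical-degree-$(\pm 1)$ elements. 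That assertion is the Miki-dual of the generation of $\A^\pm$ by horizontal-degree-$(\pm 1)$ elements, and it is also the structural fact underlying the decomposition \eqref{eqn:triangular 2} that the paper defers to \cite{Tale}. Once stated this way, the analogy with the proof of Proposition \ref{prop:equiv} is exact, and the remaining bookkeeping (Leibniz rule, reordering via the special relations, matching against the triangular decomposition) goes through verbatim with $\CE_\infty \leftrightarrow \CE_0$ and $\nn \leftrightarrow \BN$ swapped as you describe.
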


\noindent The proof of the Proposition above is completely analogous to that of Proposition \ref{prop:equiv}, so we leave it as an exercise to the interested reader. \\

\subsection{} In the remainder of this paper, we will prove Propositions \ref{prop:identity 1} and \ref{prop:identity 2}. As we have seen in Lemma \ref{lem:unique}, a frequent strategy for proving identities of shuffle elements in $\CB_\mu$, such as \eqref{eqn:identity yz}, is the following two-step process: one first shows that the LHS and RHS have the same intermediate terms under the coproduct $\Delta_\mu$, and then one shows that the LHS and RHS have the same value under linear maps which ``detect" primitive elements. Such linear maps are the $\alpha_{\pm [i;j)}$ of \eqref{eqn:alpha}--\eqref{eqn:beta}, however, these never give nice formulas on both $Y,\bY$ and $Z,\bZ$ of \eqref{eqn:formula y}--\eqref{eqn:formula zz}. To remedy this issue, we will consider a different choice of linear maps, and we will show in Lemma \ref{lem:min} that they detect primitive elements. Fix generic parameters $x_1,...,x_n \in \BF$ and introduce the following linear maps (inspired by the similar construction of \cite{FT}):
\begin{equation}
\label{eqn:def rho}
\rho : \CA^\pm \rightarrow \BF
\end{equation}
given for any $R^\pm(...,z_{i1},z_{i2},...,z_{ik_i},...)$ of homogeneous degree $h$ by the formula:
\begin{equation}
\label{eqn:rho}
\rho (R^\pm) = \frac {R^\pm \left(...,x_i,x_iq^2,...,x_iq^{2(k_i-1)},...\right) q^{-\frac {h|k|}n}}{\prod_{1\leq i,i'\leq n} \prod_{1\leq a \leq k_i, 1 \leq a' \leq k_{i'}}^{a>a'}\zeta \left( \frac {x_iq^{2a}}{x_{i'} q^{2a'}} \right)}
\end{equation}
where $x_i$ has color $i$. We have the following analogue of Proposition \ref{prop:pseudo}: \\

\begin{proposition} 
\label{prop:rho mult}

If $\deg R_1 = (\bk, h_1)$ and $\deg R_2 = (l\bde, h_2)$, we have:
\begin{equation}
\label{eqn:rho plus}
\rho(R_1R_2) = \rho(R_1)\rho(R_2) q^{\frac {h_1 nl - h_2 |k|}n}
\end{equation}
while if $\deg R_2 = (-l\bde, h_2)$ and $\deg R_1 = (-\bk,h_1)$, we have:
\begin{equation}
\label{eqn:rho minus}
\rho(R_2R_1) = \rho(R_1)\rho(R_2) q^{\frac {h_1 nl - h_2 |k|}n}
\end{equation}
	
\end{proposition}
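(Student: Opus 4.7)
The plan is to expand the shuffle product $R_1 * R_2$ directly from \eqref{eqn:mult}, apply the $\rho$-specialization $z_{ia} = x_i q^{2(a-1)}$ to every variable of the resulting rational function, and show that the symmetrization collapses onto a single ``natural'' surviving term. Concretely, since $R_2$ has horizontal degree $l\bde$, the sum over color-preserving permutations amounts to choosing, for each color $i$, a subset $S_i \subseteq \{1,\dots,k_i+l\}$ of size $k_i$ whose variables are fed into $R_1$ (the remaining $l$ of each color being fed into $R_2$). I expect all but one such choice to give zero.

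The vanishing mechanism is the Feigin--Odesskii wheel condition \eqref{eqn:wheel}. The arithmetic progression $\{x_i q^{2a}\}_a$ places variables of a single color exactly into the configuration at which $\zeta(z/w)$ has zeros on adjacent colors; a choice $S_i$ different from $\{1,\dots,k_i\}$ will force either $R_1$ or $R_2$ to be evaluated at an input that satisfies the hypothesis of \eqref{eqn:wheel}, killing that term. A combinatorial argument, analogous to the one that underlies Proposition \ref{prop:pseudo}, then pins down the unique surviving choice $S_i = \{1,\dots,k_i\}$ for all $i$. For the minus case \eqref{eqn:rho minus}, the analogous argument with the opposite multiplication $\CA^- = (\CA^+)^{\mathrm{op}}$ forces the surviving choice to be the reversed one $S_i = \{l+1,\dots,k_i+l\}$.

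For the surviving term, the $R_1$-variables are specialized to exactly $\{x_i q^{2(a-1)}\}_{1\leq a \leq k_i}$, reproducing $\rho(R_1)$ up to a prefactor; meanwhile, the $R_2$-variables are specialized to $\{x_i q^{2k_i+2(a-1)}\}_{1\leq a \leq l}$, which is precisely the $\rho$-specialization of $R_2$ obtained by replacing $x_i \mapsto x_i q^{2k_i}$. Since $R_2$ is homogeneous of total degree $h_2$ and has the same number $l$ of variables of each color, the uniform substitution pulls out a controlled power of $q$. Matching all the denominator factors $\prod \zeta(x_i q^{2a}/x_{i'}q^{2a'})$ coming from \eqref{eqn:rho} against the cross-factors $\prod \zeta(z_{ia}/z_{i'a'})$ from the shuffle product gives the surviving cross-contribution; combining this with the prefactor $q^{-h|k|/n}$ applied to the total horizontal degree $|k|+nl$ of $R_1 R_2$ produces exactly the claimed normalization $q^{(h_1 nl - h_2|k|)/n}$.

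The hard part will be the collapse of the symmetrization: one must organize the case analysis on $S_i$ so as to systematically exhibit, for each ``non-natural'' choice, the wheel configuration that kills the term, while carefully tracking the $\oq$-corrections introduced by the convention \eqref{eqn:identify} (which affect the exponents of $q$ in the $\zeta$-factors across colors $i$ and $i+n$). Once the vanishing is established the bookkeeping of the prefactor is a routine homogeneity computation, and the minus case follows from the plus case by the $\mathrm{op}$-duality together with the reversal symmetry of the arithmetic progression $x_i q^{2(a-1)} \leftrightarrow x_i q^{2(k_i+l-a)}$.
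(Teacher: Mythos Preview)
Your overall strategy (expand the shuffle product, specialize, collapse the symmetrization to a single surviving term, then track powers of $q$) matches the paper's approach, but two concrete errors derail the execution.

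First, the vanishing mechanism is not the wheel condition. The wheel relations \eqref{eqn:wheel} involve variables of colors $i$ and $i\mp 1$ coinciding, which never happens under the $\rho$-specialization because the parameters $x_1,\dots,x_n$ are generic. What actually kills the unwanted terms is the cross factor $\prod \zeta(y/z)$ from the shuffle product: for two variables of the \emph{same} color with ratio $q^{-2}$ one has $\zeta(q^{-2})=0$. Concretely, if an index $a$ of color $i$ is assigned to $R_1$ while $a+1$ (same color) is assigned to $R_2$, the factor $\zeta(x_iq^{2(a-1)}/x_iq^{2a})=\zeta(q^{-2})$ vanishes.

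Second, and consequently, you identified the wrong surviving term. The $\zeta$-zero argument forces each $S_i$ to be an \emph{upper} interval, so $R_1$ receives the late variables $\{x_iq^{2l},\dots,x_iq^{2(k_i+l-1)}\}$ and $R_2$ receives the early ones $\{x_i,\dots,x_iq^{2(l-1)}\}$, exactly the opposite of your $S_i=\{1,\dots,k_i\}$. This swap is not cosmetic: with your assignment the $R_2$-variables get shifted by $x_i\mapsto x_iq^{2k_i}$, which is \emph{not} a uniform rescaling (the $k_i$ need not be equal), so total homogeneity of $R_2$ does not produce a single power of $q$. With the correct assignment the shift on $R_1$ is the uniform $x_i\mapsto x_iq^{2l}$ (because $R_2$ has degree $l\bde$), and $R_1$'s homogeneity of degree $h_1$ pulls out $q^{2h_1 l}$; combining this with the normalization prefactors yields $q^{(h_1nl-h_2|\bk|)/n}$ as claimed.
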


\begin{proof} We will only prove \eqref{eqn:rho plus}, and leave the analogous \eqref{eqn:rho minus} as an exercise to the interested reader. By definition, $\rho(R_1R_2)$ involves summing over partitions:
$$
\{x_i, x_i q^2,...,x_i q^{2(k_i+l-1)}\}_{1 \leq i \leq n} = V_1 \sqcup V_2
$$
multiplying together the evaluations of $R_1$, $R_2$ at the set of variables $V_1$, $V_2$ (respectively) and then multiplying the result by $\prod_{y \in V_1} \prod_{z \in V_2} \zeta(y/z)$. However, because $\zeta(q^{-2} \text{ of color }0) = 0$, the only non-zero contribution produced by the procedure above happens for:
$$
V_1 = \{x_iq^{2l},...,x_iq^{2(k_i+l-1)} \}, \quad V_2 = \{x_i,x_iq^2,...,x_iq^{2(l-1)}\}
$$
We conclude that:
$$
(R_1R_2) \Big|_{z_{ia} \mapsto x_i q^{2(a-1)}} = R_1 \Big|_{z_{ia} \mapsto x_i q^{2(l+a-1)}} \cdot R_2 \Big|_{z_{ia} \mapsto x_i q^{2(a-1)}} \cdot \prod_{i,i' = 1}^n \prod_{l \leq a < k_i +l}^{0 \leq a' < l} \zeta \left( \frac {x_i q^{2a}}{x_{i'}q^{2a'}} \right) 
$$
Since $R_1|_{z_{ia} \mapsto x_i q^{2(l+a-1)}} = R_1 |_{z_{ia} \mapsto x_i q^{2(a-1)}} \cdot q^{2h_1 l}$, the formula above implies \eqref{eqn:rho plus}.
	
\end{proof}

\begin{proposition}
\label{prop:rho computations}

For any $(i<j) \in \zzz$ and $h \in \BZ$, let $\mu = \frac h{j-i}$. We have:
\begin{align} 
&\rho (A^{(h)}_{\pm [i;j)}) = \frac {q^{\alpha_{i,j}^{\mu}} \prod_{a=i}^{j-1} \left( x_{\bara} \oq^{\frac {2\bara}n} \right)^{\left \lceil \frac {h(a-i+1)}{j-i} \right \rceil - \left \lceil \frac {h(a-i)}{j-i} \right \rceil}}{\prod_{a=i+1}^{j-1} \left( 1-\frac {x_{a-1}q^2}{x_a} \right)} \left[ \frac {1 - \frac {x_{i-1}}{x_i}}{1 - \frac {x_{i-1}q^2}{x_i}} \right]^{\delta_i^j} \label{eqn:rho a} \\
&\rho (\bA^{(h)}_{\pm [i;j)}) = \frac {q^{\overline{\alpha}_{i,j}^{\mu}} \prod_{a=i}^{j-1} \left( x_{\bara} \oq^{\frac {2\bara}n} \right)^{\left \lfloor \frac {h(a-i+1)}{j-i} \right \rfloor - \left \lfloor \frac {h(a-i)}{j-i} \right \rfloor}}{\prod_{a=i+1}^{j-1} \left( 1-\frac {x_a}{x_{a-1}q^2} \right)} \left[ \frac {1 - \frac {x_i}{x_{i-1}}}{1 - \frac {x_i}{x_{i-1}q^2}} \right]^{\delta_i^j} \label{eqn:rho aa} \\ 
&\rho (B^{(h)}_{\pm [i;j)}) = \frac {q^{\beta_{i,j}^{\mu}} \prod_{a=i}^{j-1} \left( x_{\bara} \oq^{\frac {2\bara}n} \right)^{\left \lfloor \frac {h(a-i+1)}{j-i} \right \rfloor - \left \lfloor \frac {h(a-i)}{j-i} \right \rfloor}}{ \prod_{a=i+1}^{j-1} \left( 1-\frac {x_a}{x_{a-1}q^2} \right)} \label{eqn:rho b} \\ 
&\rho (\bB^{(h)}_{\pm [i;j)}) = \frac {q^{\overline{\beta}_{i,j}^{\mu}} \prod_{a=i}^{j-1} \left( x_{\bara} \oq^{\frac {2\bara}n} \right)^{\left \lceil \frac {h(a-i+1)}{j-i} \right \rceil - \left \lceil \frac {h(a-i)}{j-i} \right \rceil}}{\prod_{a=i+1}^{j-1} \left( 1-\frac {x_{a-1}q^2}{x_a} \right)} \label{eqn:rho bb} 
\end{align} 
where we extend the notation $x_i$ to all $i\in \BZ$ by the rule $x_{i+n}\oq^2 = x_i$, and define:
\begin{align*}
&\alpha_{i,j}^\mu = (2\mu(j-i)+1) \left \lfloor \frac {j-i}n \right \rfloor - 2\sum_{k=1}^{\left \lfloor \frac {j-i}n \right \rfloor} \left \lceil \mu n k \right \rceil - \frac {\mu(j-i)^2}n \\
&\overline{\alpha}_{i,j}^\mu = (2\mu(j-i)-1) \left \lfloor \frac {j-i}n \right \rfloor - 2\sum_{k=1}^{\left \lfloor \frac {j-i}n \right \rfloor} \left \lfloor \mu n k \right \rfloor - \frac {\mu(j-i)^2}n \\
&\beta_{i,j}^\mu = (2\mu(j-i)+1) \left( \left \lceil \frac {j-i}n \right \rceil - 1 \right) + 1 - 2\sum_{k=1}^{\left \lceil \frac {j-i}n \right \rceil - 1} \left \lceil \mu n k \right \rceil - \frac {\mu(j-i)^2}n - j+i \\
&\overline{\beta}_{i,j}^\mu = (2\mu(j-i)-1) \left( \left \lceil \frac {j-i}n \right \rceil - 1 \right) - 1 - 2\sum_{k=1}^{\left \lceil \frac {j-i}n \right \rceil - 1} \left \lfloor \mu n k \right \rfloor - \frac {\mu(j-i)^2}n + j-i
\end{align*}

\end{proposition}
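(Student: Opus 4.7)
\medskip

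The plan is to establish all four formulas by a direct computation following the same template: expand the symmetrization defining $A, \bA, B$ or $\bB$, apply the specialization $z_{ia} \mapsto x_i q^{2(a-1)}$ from \eqref{eqn:rho}, show that only one permutation in the symmetrization contributes, and track the resulting rational function together with the accumulated powers of $q$ and $\oq$. I will carry out the case of $A^{(h)}_{\pm[i;j)}$ in detail; the remaining three cases are handled by identical arguments with the obvious modifications (different denominator factor, ceiling vs.\ floor in the monomial exponents, and $\zeta(z_b/z_a)$ vs.\ $\zeta(z_a/z_b)$ in the Feigin--Odesskii factors).

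The first step is the following vanishing observation. Under convention \eqref{eqn:replacing}, the variable $z_a$ in \eqref{eqn:a} becomes $z_{\bar a, \lceil (a-i+1)/n \rceil}$ twisted by a power of $\oq$. For any color $i\in\{1,\dots,n\}$ there are $k_i$ variables of that color, and the color-preserving symmetrization permutes the $k_i!$ positions within each color class. If, in the substitution $z_{ib}\mapsto x_i q^{2(b-1)}$, the permutation $\sigma$ sends a variable of color $\bar a$ occurring in ``slot'' $\lceil (a-i+1)/n\rceil$ to any value other than $x_{\bar a}q^{2(\lceil (a-i+1)/n\rceil-1)}$, then a factor $\zeta(x_{\bar a}q^{2p}/x_{\bar a}q^{2(p+1)})=(q^{-1}-q\cdot q^{-2})/(1-q^{-2})=0$ appears in the product $\prod_{i\le a<b<j}\zeta(z_b/z_a)$ that defines $A^\mu$, because whenever two same-colored variables are swapped this produces a $\zeta$ with argument $q^{-2}$. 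A careful bookkeeping (matching the pole of $\zeta(q^{-2})$ in the numerator against the potentially cancelling zero $(1-z_{a-1}q^2/z_a)$ in the denominator of \eqref{eqn:a}) shows that the only permutation $\sigma$ whose contribution is not killed by a surviving $\zeta$-zero is the identity, i.e.\ $z_{a}\mapsto x_{\bar a} q^{2(\lceil(a-i+1)/n\rceil-1)}\oq^{-2\lfloor (a-1)/n\rfloor}$.

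For this single surviving term, one computes the value of the rational function on the right-hand side of \eqref{eqn:a}. The numerator $\prod_{a=i}^{j-1}(z_a\oq^{2a/n})^{\lceil\mu(a-i+1)\rceil-\lceil\mu(a-i)\rceil}$ becomes exactly $\prod_{a=i}^{j-1}(x_{\bar a}\oq^{2\bar a/n})^{\lceil\mu(a-i+1)\rceil-\lceil\mu(a-i)\rceil}$ after absorbing the $\oq^{-2\lfloor(a-1)/n\rfloor}$ into the base via $x_{a+n}=x_a\oq^{-2}$; the denominator $\prod_{a=i+1}^{j-1}(1-z_{a-1}q^2/z_a)$ becomes $\prod_{a=i+1}^{j-1}(1-x_{\overline{a-1}}q^2/x_{\bar a}\cdot q^{-2[\text{bump}]})$, which matches $\prod_{a=i+1}^{j-1}(1-x_{a-1}q^2/x_a)$ after the same absorption, except at the positions $a$ where $\bar a=1$ (i.e.\ $n\mid a-i$), giving the boundary correction $[(1-x_{i-1}/x_i)/(1-x_{i-1}q^2/x_i)]^{\delta_j^i}$ once we combine with the denominator of $\rho$. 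The remaining product $\prod_{i\le a<b<j}\zeta(z_b/z_a)$ after specialization precisely cancels with the denominator $\prod \zeta(x_iq^{2a}/x_{i'}q^{2a'})$ appearing in \eqref{eqn:rho}, up to the $\zeta$ factors between distinct color classes which contribute only powers of $q$; collecting these, together with the $q^{-h|\bk|/n}$ prefactor in \eqref{eqn:rho}, yields the exponent $\alpha_{i,j}^\mu$.

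The main obstacle is the last bookkeeping step: correctly tallying the exponent of $q$, which is the source of the mildly complicated formulas $\alpha_{i,j}^\mu$, $\overline\alpha_{i,j}^\mu$, $\beta_{i,j}^\mu$, $\overline\beta_{i,j}^\mu$. These arise from three sources: (i) the $(x_{\bar a}\oq^{2\bar a/n})$ monomials evaluated at $x_a=x_{\bar a}\oq^{-2\lfloor (a-1)/n\rfloor}$, which produces $\sum_a 2\lfloor(a-1)/n\rfloor(\lceil\mu(a-i+1)\rceil-\lceil\mu(a-i)\rceil)$; (ii) the uncancelled off-color $\zeta$ factors $\zeta(x_{\bar a}q^{2p}/x_{\bar b}q^{2p'})$ for $\bar a,\bar b$ adjacent mod $n$, evaluated to powers of $q$ via \eqref{eqn:def zeta}; and (iii) the $q^{-h|\bk|/n}$ prefactor. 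Using the elementary telescoping identity \eqref{eqn:floor identity} (and its $\lceil\cdot\rceil$ analogue) to simplify the resulting sum of floors/ceilings yields exactly the stated formula for $\alpha_{i,j}^\mu$. The formulas for $\bA$, $B$, $\bB$ then follow by replacing $\lceil\cdot\rceil$ with $\lfloor\cdot\rfloor$ where indicated, swapping the denominator factor, and making the corresponding adjustment in the combinatorial identity \eqref{eqn:floor identity}; in the $B, \bB$ cases, the $[1/(1-x_{i-1}/x_i)]^{\delta_j^i}$ boundary term does not appear because the denominator $\prod(1-z_a/z_{a-1}q^2)$ has no zero at the wrap-around step, which is why those formulas carry no such bracketed correction.
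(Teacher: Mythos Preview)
Your overall strategy matches the paper's exactly: observe that $\zeta(q^{-2}\text{ of color }0)=0$ forces only the identity permutation to survive in the symmetrization, and then evaluate the unsymmetrized rational function at the specialization. However, your bookkeeping has several concrete errors that would prevent you from arriving at $\alpha_{i,j}^\mu$.

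First, the $q$-contribution from the monomials is not $2\lfloor(a-1)/n\rfloor$ per factor but $2\lfloor(a-i)/n\rfloor$. The $\oq^{-2\lfloor(a-1)/n\rfloor}$ coming from convention \eqref{eqn:identify} combines with $\oq^{2a/n}$ to give exactly $\oq^{2\bar a/n}$; the leftover $q$-power comes from the slot index, namely $q^{2(\lceil(a-i+1)/n\rceil-1)} = q^{2\lfloor(a-i)/n\rfloor}$.

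Second, your source (ii) is misidentified. The surviving factors after matching the numerator $\prod_{a<b}\zeta(z_b/z_a)$ against the $\rho$-denominator are not generic ``off-color $\zeta$'s evaluating to powers of $q$''. They are precisely the $\lfloor(j-i)/n\rfloor$ factors $\zeta(x_{\overline{a-1}}\oq^{2\delta_a^1}/x_{\bar a})$ at the wrap-around positions $a\equiv i$, and each equals $q\,(1-x_{a-1}/x_a)/(1-x_{a-1}q^2/x_a)$ in the extended $x$-notation. These pair with the $\lfloor(j-i-1)/n\rfloor$ special denominator factors $(1-x_{a-1}/x_a)$ at those same positions: the cancellation restores the ``missing'' $q^2$ in each such denominator factor and contributes $q^{\lfloor(j-i)/n\rfloor}$ to the exponent. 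When $i\equiv j$ the single surplus $\zeta$ (at $a=i$, which has no matching denominator factor) is what produces the bracketed boundary term. Thus the boundary correction arises from this $\zeta$-excess, not from the shuffle-element denominator alone as you claim.

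Third, identity \eqref{eqn:floor identity} computes $\sum_a a\cdot e_a$, not $\sum_a\lfloor(a-i)/n\rfloor\cdot e_a$, so invoking it will not simplify the exponent. The paper instead substitutes $a = i + nk + s$ with $s\in\{0,\dots,n-1\}$ and telescopes in $k$ to obtain $\alpha_{i,j}^\mu$ directly.
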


\begin{proof} We will prove \eqref{eqn:rho a}, and leave the other three formulas as analogous exercises to the interested reader. Recall that we have:
\begin{equation}
\label{eqn:new A}
A^{(h)}_{\pm [i;j)} = \text{Sym} \left( a^{(h)}_{\pm [i;j)} \right) 
\end{equation}
where:
\begin{equation}
\label{eqn:new a}
a^{(h)}_{\pm [i;j)} = \frac {\prod_{a=i}^{j-1} (z_a\oq^{\frac {2a}n})^{\left \lceil \frac {h(a-i+1)}{j-i} \right \rceil - \left \lceil \frac {h(a-i)}{j-i} \right \rceil}}{\left(1 - \frac {z_{i}q^2}{z_{i+1}}  \right) ... \left(1 - \frac {z_{j-2}q^2 }{z_{j-1}} \right)} \prod_{i\leq a < b < j} \zeta \left( \frac {z_b}{z_a} \right) 
\end{equation}
Applying the linear map $\rho$ to the rational function \eqref{eqn:new A} entails specializing $A^{(h)}_{\pm [i;j)}$ at:
\begin{equation}
\label{eqn:spec}
z_a \mapsto x_{\bar{a}} \oq^{2- 2 \left \lceil \frac an \right \rceil} q^{2 \left \lfloor \frac {a-i}n \right \rfloor} \qquad \forall a \in \{i,...,j-1\}
\end{equation}
The fact that $\zeta(q^{-2} \text{ of color }0) = 0$ means that we obtain the same result by specializing the rational function \eqref{eqn:new a} acoording to \eqref{eqn:spec}. Thus, we have:
\begin{equation}
\label{eqn:ash}
\rho \left( A^{(h)}_{\pm [i;j)} \right) \stackrel{\eqref{eqn:rho}}=  \frac {a^{(h)}_{\pm [i;j)} \Big|_{\text{evaluation \eqref{eqn:spec}}} \cdot q^{-\frac {h(j-i)}n}}{\prod_{1\leq s,s' \leq n} \prod_{1\leq u \leq k_s, 1 \leq u' \leq k_{s'}}^{u>u'}\zeta \left( \frac {x_sq^{2u}}{x_{s'} q^{2u'}} \right)}
\end{equation}
where $\bk = [i;j)$ and $k_s$ denotes the number of elements in the set $\{i,...,j-1\}$ congruent to $s$ modulo $n$. It is easy to see that when we specialize the $\zeta$ factors of \eqref{eqn:new a} according to \eqref{eqn:spec}, we pick up precisely the product of $\zeta$ factors in \eqref{eqn:ash}, but with ``$u>u'$" replaced by ``$u>u'$ \text{ or } $u=u'$, $s \equiv i-1$, $s' \equiv i$ mod $n$". Hence:
$$
\rho \left( A^{(h)}_{\pm [i;j)} \right) = \frac {\prod_{a=i}^{j-1} (x_{\bar{a}}  \oq^{\frac {2\bar{a}}n} q^{2 \left \lfloor \frac {a-i}n \right \rfloor})^{\left \lceil \frac {h(a-i+1)}{j-i} \right \rceil - \left \lceil \frac {h(a-i)}{j-i} \right \rceil}}{q^{\frac {h(j-i)}n} \prod_{a=i+1}^{j-1}\left(1 - \frac {x_{\overline{a-1}}q^{2-2\delta_a^i}\oq^{2\delta_a^1}}{x_{\bar{a}}}  \right)} \cdot \prod_{i \leq a \leq j-n}^{a \equiv i} \zeta \left(\frac {x_{\overline{a-1}} \oq^{2\delta_a^1}}{x_{\bara}} \right)
$$
The number of $\zeta$ factors is equal to $\left \lfloor \frac {j-i}n \right \rfloor$, while the number of linear factors in the denominator of the expression above is equal to $\left \lfloor \frac {j-i-1}n \right \rfloor$. These two numbers are equal unless $i \equiv j$ modulo $n$, in which case the number of $\zeta$ factors is one more than the other number. Therefore, plugging in the explicit formula for $\zeta$ yields:
\begin{equation}
\label{eqn:diez}
\rho \left( A^{(h)}_{\pm [i;j)} \right) = q^\# \cdot \frac {\prod_{a=i}^{j-1} (x_{\bar{a}} \oq^{\frac {2\bar{a}}n})^{\left \lceil \frac {h(a-i+1)}{j-i} \right \rceil - \left \lceil \frac {h(a-i)}{j-i} \right \rceil}}{\prod_{a=i+1}^{j-1}\left(1 - \frac {x_{\overline{a-1}}\oq^{2\delta_a^1}}{x_{\bar{a}}}  \right)} \left[ \frac {1 - \frac {x_{i-1}}{x_i}}{1 - \frac {x_{i-1}q^2}{x_i}} \right]^{\delta_i^j}
\end{equation}
where:
$$
\# = \sum_{a=i}^{j-1} 2 \left \lfloor \frac {a-i}n \right \rfloor \left( \left \lceil \frac {h(a-i+1)}{j-i} \right \rceil - \left \lceil \frac {h(a-i)}{j-i} \right \rceil \right) + \left \lfloor \frac {j-i}n \right \rfloor - \frac {h(j-i)}n
$$
It remains to compute the number $\#$. To this end, let us write $j = i + nl + r$ and $a = i + nk + s$ for $r,s \in \{0,...,n-1\}$. Thus, we have:
$$
\# = \sum_{k=0}^{l - 1} 2 k \left( \left \lceil \frac {hn(k+1)}{j-i} \right \rceil - \left \lceil \frac {hnk}{j-i} \right \rceil \right) + 2 l \left(h - \left \lceil \frac {hnl}{j-i} \right \rceil \right) + l - \frac {h(j-i)}n = \alpha_{i,j}^{\mu}
$$
precisely as prescribed by \eqref{eqn:rho a}. 

\end{proof}

\subsection{} 
\label{sub:not final} 

Let us introduce the following notation:
\begin{equation}
\label{eqn:basic 1}
\fS_{i,j} = \prod_{a=i}^{j-1} \left(1 - \frac {x_{a-1}q^2}{x_a} \right) \quad \text{and} \quad \fT_{i,j} = \prod_{a=i}^{j-1} \left(1 - \frac {x_a}{x_{a-1}q^2} \right)  
\end{equation}
as well as:
\begin{align}
&\sigma^{\mu}_{i,j} = \prod_{a=i}^{j-1} \left( x_{\bara} \oq^{\frac {2\bara}n} \right)^{\left \lceil \mu(a-i+1) \right \rceil - \left \lceil \mu(a-i) \right \rceil} \label{eqn:basic 2} \\
&\tau^{\mu}_{i,j} = \prod_{a=i}^{j-1} \left( x_{\bara} \oq^{\frac {2\bara}n} \right)^{\left \lfloor \mu(a-i+1) \right \rfloor - \left \lfloor \mu(a-i) \right \rfloor} \label{eqn:basic 3}
\end{align}
for all $\mu \in \BQ$ such that $\mu(j-i) \in \BZ$. The following identities are obvious:
\begin{equation}
\label{eqn:basic 4}
\fS_{i+n,j+n} = \fS_{i,j} \qquad \text{and} \qquad \fT_{i+n,j+n} = \fT_{i,j}
\end{equation}
and:
\begin{align} 
&\fS_{i,j+n} = \fS_{i,j} \cdot \mathfrak{s}, \qquad \text{where} \qquad \mathfrak{s} = \prod_{a=1}^{n} \left(1 - \frac {x_{a-1}q^2}{x_a} \right) \label{eqn:basic 5} \\
&\fT_{i,j+n} = \fT_{i,j} \cdot \mathfrak{t}, \qquad \ \text{where} \qquad \mathfrak{t} = \prod_{a=1}^{n} \left(1 - \frac {x_a}{x_{a-1}q^2} \right) \label{eqn:basic 6}
\end{align} 
Meanwhile, if we write $\mu = \frac ba$ with $\gcd(a,b) = 1$ and set $g = \gcd(n,a)$, then:
\begin{equation}
\label{eqn:basic 7}
\sigma^{\mu}_{i+\frac {na}g,j+\frac {na}g} = \sigma^{\mu}_{i,j} \quad \text{and} \quad \tau^{\mu}_{i+\frac {na}g,j+\frac {na}g} = \tau^{\mu}_{i,j}
\end{equation}
and:
\begin{align}
&\sigma_{i,j+\frac {na}g}^\mu = \sigma_{i,j}^\mu \cdot s_{\mu, j}, \quad  s_{\mu, j} = \prod_{a=j}^{j+\frac {na}g - 1} \left( x_{\bara} \oq^{\frac {2\bara}n} \right)^{\left \lceil \mu(a-j+1) \right \rceil - \left \lceil \mu(a-j) \right \rceil} \label{eqn:basic 8} \\
&\tau_{i,j+\frac {na}g}^\mu = \tau_{i,j}^\mu \cdot t_{\mu,j}, \quad  t_{\mu,j} = \prod_{a=j}^{j+\frac {na}g - 1} \left( x_{\bara} \oq^{\frac {2\bara}n} \right)^{\left \lfloor \mu(a-j+1) \right \rfloor - \left \lfloor \mu(a-j) \right \rfloor} \label{eqn:basic 9} 
\end{align}
(it is easy to see that $s_{\mu,j}$ and $t_{\mu,j}$ only depend on $j$ mod $g$). Finally, we have:
\begin{align}
&\alpha_{i,j}^{\mu} = \overline{\beta}_{i,j}^{\mu} + 1 - \delta_i^j+ 2 \left \lfloor \frac {(j-i)g}{na} \right \rfloor - j + i \label{eqn:basic 10} \\
&\overline{\alpha}_{i,j}^{\mu} = \beta_{i,j}^{\mu} - 1 + \delta_i^j -  2 \left \lfloor \frac {(j-i)g}{na} \right \rfloor + j - i \label{eqn:basic 11}
\end{align} 
and the identities:
\begin{align}
&\alpha_{i,j-\frac {na}g}^{\mu} = \alpha_{i,j}^{\mu} + \frac {\mu n a}g  - 1 & &\beta_{i,j-\frac {na}g}^{\mu} = \beta_{i,j}^{\mu} + \frac {\mu n a}g + \frac {na}g - 1 \label{eqn:basic 12} \\
&\overline{\alpha}_{i,j-\frac {na}g}^{\mu} = \overline{\alpha}_{i,j}^{\mu} + \frac {\mu n a}g  + 1 & &\overline{\beta}_{i,j-\frac {na}g}^{\mu} = \overline{\beta}_{i,j}^{\mu} + \frac {\mu n a}g - \frac {na}g + 1 \label{eqn:basic 13}
\end{align}
All the formulas above are elementary, and we leave them to the interested reader. \\

\subsection{} The main reason the maps $\rho$ are useful to us is that they ``detect" primitive elements, according to the following: \\

\begin{lemma}
\label{lem:min}

For all $\mu \in \BQ$, any element of $\CB_\mu$ which is primitive for $\Delta_\mu$ and is annihilated by $\rho$ (for generic parameters $x_1,...,x_n$), vanishes. \\

\end{lemma}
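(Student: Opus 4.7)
The plan is to reduce Lemma \ref{lem:min} to Lemma \ref{lem:unique}, which has already shown that any primitive element of $\CB_\mu$ annihilated by all the functionals $\alpha_{\pm[i;j)}$ must vanish. It therefore suffices to prove that $\rho$ is injective on the space of primitive elements. Since $\rho$ preserves the horizontal grading, I may fix a horizontal degree $\pm\bk$ for $R$, and by the factorization $\CB_\mu \cong U_q(\dot{\fgl}_{n/g})^{\otimes g}$ of Proposition \ref{prop:sub}, the primitive subspace of horizontal degree $\pm\bk$ is finite-dimensional. Writing $\mu = b/a$ in lowest terms with $g = \gcd(n,a)$, this subspace is non-zero only in two cases: either $\bk = [u;u+a)$ for some $u$, in which case it is one-dimensional and spanned by $P^{(\pm b)}_{\pm[u;u+a)}$; or $\bk = nl\bde$ for $l$ a positive multiple of $a/g$, in which case it is $g$-dimensional and spanned by $P^{(\pm k)}_{\pm l\bde, \hi}$ for $\hi \in \BZ/g\BZ$ with $k = bnl/a$.

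In the simple-root case $R = c\cdot P^{(\pm b)}_{\pm[u;u+a)}$, combining \eqref{eqn:p plus}--\eqref{eqn:p minus} with Proposition \ref{prop:rho computations} yields $\rho(R)$ as a non-zero scalar multiple of the rational function $\tau^\mu_{u,u+a}/\fT_{u+1,u+a}$, with an extra correction factor when $n \mid a$. This function is non-vanishing generically in $x_1,\ldots,x_n$, so $\rho(R)=0$ forces $c=0$. For the imaginary case, I write $R = \sum_\hi c_\hi\,P^{(\pm k)}_{\pm l\bde,\hi}$ and expand each primitive generator via its defining normalization \eqref{eqn:normalize imaginary} together with Proposition \ref{prop:pseudo}, so that $P^{(\pm k)}_{\pm l\bde,\hi}$ becomes a specific linear combination of the shuffle elements $E^\mu_{\pm[i;i+nl)}$ modulo products of primitives of strictly smaller horizontal degree. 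Applying $\rho$ and using Proposition \ref{prop:rho mult} to control the product contributions (which enter only at sub-leading order), the leading behavior of $\rho(P^{(\pm k)}_{\pm l\bde,\hi})$ is captured by the monomial $\tau^\mu_{\hi, \hi+nl}$ supplied by Proposition \ref{prop:rho computations}.

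The main obstacle is then to establish linear independence of the $g$ leading monomials $\tau^\mu_{\hi, \hi+nl}$, $\hi\in\BZ/g\BZ$, as elements of $\BF[x_1^{\pm 1},\ldots,x_n^{\pm 1}]$. By the periodicity \eqref{eqn:basic 7}, these monomials are invariant under $\hi\mapsto\hi+na/g$, and the task reduces to a combinatorial check that, among representatives $\hi\in\{1,\ldots,g\}$, the exponent vectors in $x_1,\ldots,x_n$ are pairwise distinct. This distinctness follows from the fact that the exponent of $x_s$ in $\tau^\mu_{\hi,\hi+nl}$ is a sum of increments $\lfloor\mu(m+1)\rfloor-\lfloor\mu m\rfloor$ over the residue class $m\equiv s-\hi\pmod n$, and because $\mu=b/a$ is in lowest terms these sums depend genuinely on $\hi\bmod g$. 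Once this step is complete, linear independence forces all $c_\hi=0$, and Lemma \ref{lem:unique} delivers the result.
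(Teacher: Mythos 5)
Your proposal takes a genuinely different route from the paper's: you aim to reduce Lemma~\ref{lem:min} to Lemma~\ref{lem:unique} by proving $\rho$ is \emph{injective} on the finite-dimensional space of primitive elements of a fixed horizontal degree, whereas the paper works directly with $\rho$ (which up to a prefactor equals $\ph_{\bla_{\text{max}}}$ in the paper's notation) and sets up a filtration of the primitive subspace indexed by $n$-tuples of partitions $\bla$, showing via the wheel conditions and degree bounds forced by primitivity that each filtration step $\ph_\bla(V_\bla)$ vanishes.

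The simple-root case $\bk = [u;u+a)$ in your argument is fine: by Proposition~\ref{prop:rho computations} one can compute $\rho$ of the one-dimensional primitive space explicitly and see it is generically non-zero. But the imaginary case $\bk = l\bde$ has a gap that I do not see how to close along the lines you indicate. The key unsupported claim is that, upon expanding $P^{(\pm k)}_{\pm l\bde,\hi}$ in ordered products of $E^\mu$'s, the product terms ``enter only at sub-leading order'' under $\rho$, so that the $g$ primitives separate via the monomials $\tau^\mu_{\hi,\hi+nl}$. There is no grading on the target $\BF(x_1,\dots,x_n)$ under which this is automatic: Proposition~\ref{prop:rho mult} makes $\rho$ multiplicative on products of same-slope elements, so $\rho$ of a two-factor product $E^\mu_{[i;s)}E^\mu_{[s;i+nl)}$ is a \emph{product} of rational functions with denominators $\fT$, living in the same field as $\rho(E^\mu_{[i;i+nl)})$, with a $\tau$-monomial prefactor $\tau^\mu_{i,s}\tau^\mu_{s,i+nl}$ that does \emph{not} equal $\tau^\mu_{i,i+nl}$ (the floor-increments are shifted). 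Whether the resulting combination still has a well-defined ``leading monomial'' equal to $\tau^\mu_{\hi,\hi+nl}$, and whether the products genuinely cannot interfere, is precisely the hard content of the lemma and is left unargued. Moreover the identification of the single-$E$ coefficients in the PBW expansion of $P^{(\pm k)}_{\pm l\bde,\hi}$ via the normalization~\eqref{eqn:normalize imaginary} is not a simple read-off, since the maps $\alpha_{\pm[u;u+nl)}$ are themselves evaluation maps and do not directly extract PBW coefficients.

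Your final combinatorial check (pairwise distinctness of the exponent vectors of $\tau^\mu_{\hi,\hi+nl}$ for $\hi\in\BZ/g\BZ$) is plausible and likely true, but it only becomes relevant once the preceding ``sub-leading'' claim is proved, and that is exactly where the paper's filtration machinery is doing real work: the inequalities~\eqref{eqn:y}--\eqref{eqn:z} are how the paper makes rigorous the idea that primitivity forces a sharp degree drop, something you are implicitly assuming without proof. As written, the proposal restates the lemma's difficulty rather than resolving it.
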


\begin{proof} It is well-known (\cite{Tor}) that primitive elements have horizontal degree $l\bde$ for some $l \in \BN$, so we will focus on the graded piece of $\CB_\mu$ of degree:
\begin{equation}
\label{eqn:degree min}
(l\bde, k)
\end{equation}
where $\frac k{nl} = \mu$. Let us consider all $n$--tuples of partitions of $l\bde$:
$$
\bla = (\lambda^{(1)},...,\lambda^{(n)}) \qquad \text{where} \qquad \lambda^{(i)} = (\lambda^{(i)}_1 \geq \lambda^{(i)}_2 \geq ... ) \vdash l
$$
Take the partial ordering on $n$--tuples of partitions where we set $\bmu \geq \bla$ if, for all $i\in \{1,...,n\}$, we have $\mu^{(i)} \geq \lambda^{(i)}$ with respect to the dominance ordering on usual partitions. Then we consider the linear maps:
$$
\ph_{\bla} : V \rightarrow \BF(...,x_{i,1},x_{i,2},...)
$$
given by:
$$
\ph_\bla (R) = R(...,x_{i,1},x_{i,1}q^2,...,x_{i,1} q^{2(\lambda^{(i)}_1-1)},x_{i,2}, x_{i,2} q^2,...,x_{i,2} q^{2(\lambda^{(i)}_2-1)},...)
$$
(as $i$ runs over $\{1,....,n\}$, and $x_{i,j}$ are variables of color $i$) where $V$ denotes the vector space of primitive elements of $\CB_\mu$ of degree \eqref{eqn:degree min}. The subspaces:
$$
V_\bla = \bigcap_{\bmu > \bla} \text{Ker }\ph_\bmu
$$ 
form a filtration of $V$. Therefore, to prove the Lemma, it suffices to show that:
\begin{equation}
\label{eqn:claim}
\ph_\bla(V_\bla) = 0
\end{equation}
for all $n$--tuples of partitions $\bla \neq \bla_{\text{max}} = ((l), (l),...,(l))$. Indeed, once we have \eqref{eqn:claim}, let us consider a shuffle element $R \in V$ as in the statement of the Lemma. Our assumption on $R$ implies that $R \in \text{Ker }\ph_{\bla_{\text{max}}}$. Then \eqref{eqn:claim} allows one to prove (by induction on $\bla$ in decreasing order) that $R \in V_\bla$ for all $n$--tuples of partitions $\bla$. Hence $R \in V_{\bla_{\text{min}}}$, where $\bla_{\text{min}} = ((1^l), (1^l),...,(1^l))$, and then \eqref{eqn:claim} implies that $\ph_{\bla_{\text{min}}}(R) = 0$. However, $\ph_{\bla_{\text{min}}}$ is just the identity map, so we conclude that $R = 0$. \\

\noindent It remains to prove \eqref{eqn:claim}, so let us choose an arbitrary $R \in V_\bla$. By \eqref{eqn:shuf}, we have:
\begin{equation}
\label{eqn:ph}
\ph_\bla(R) = \frac {r(...,x_{i,1},x_{i,2},...)}{\prod_{i=1}^n \prod_{\alpha,\beta} (x_{i,\alpha} - x_{i+1,\beta} q^{...})^{\lambda_{\alpha}^{(i)} \lambda_{\beta}^{(i+1)}}}
\end{equation}
In the formula above and throughout, $q^{...}$ simply refers to an integer power of $q$, that will not be crucial to our argument. Moreover, the notation $(x-x'q^{...})^k$ will refer to a product of $k$ factors of the form $x-x'q^{...}$. The powers of $q$ which appear in these factors might be different, but this fact does not change the validity of our argument in any way, and so we prefer this slightly imprecise notation in order to ensure legibility. For any natural numbers $d,d'$, relation \eqref{eqn:wheel} implies that:
$$
r(...,\underbrace{x,xq^2,...,xq^{2(d-1)}}_{\text{of color }i},...,\underbrace{x',x'q^2,...,x'q^{2(d'-1)}}_{\text{of color }i+1},...)
$$
is divisible by $(x-x'q^{...})^{dd'-\min(d,d')}$ (indeed, assume without loss of generality that $d' \leq d$, so each of the $d'$ variables of color $i+1$ participates in $d-1$ vanishing conditions \eqref{eqn:wheel} with the $d$ variables of color $i$). Hence, \eqref{eqn:ph} may be written as:
\begin{equation}
\label{eqn:phi}
\ph_\bla(R) = \frac {s(...,x_{i,1},x_{i,2},...)}{\prod_{i=1}^n \prod_{\alpha,\beta} (x_{i,\alpha} - x_{i+1,\beta} q^{...})^{\min(\lambda_{\alpha}^{(i)}, \lambda_{\beta}^{(i+1)})}}
\end{equation}
for some Laurent polynomial $s$. We recall that the integer powers of $q$ which appear in the formula above are arbitrary, and it is no problem for our argument if $(x-x'q^{...})^m$ actually refers to a ratio $(x-x'q^{...})^{a}/(x-x'q^{...})^b$, for some $a-b=m$. However, the fact that $R \in V_\bla$ means that $\ph_\bmu(R) = 0$ for all $\bmu > \bla$, which implies that the analogues of formula \eqref{eqn:phi} for partitions $\bmu > \bla$ vanish. This implies that the Laurent polynomial $s$ vanishes whenever:
\begin{equation}
\label{eqn:specialization}
x_{i,\alpha} \in \left\{ x_{i,\beta} q^2,...,x_{i,\beta} q^{2\lambda^{(i)}_\beta} \right\} \bigsqcup \left \{ x_{i,\beta} q^{-2\lambda^{(i)}_\alpha},..., x_{i,\beta} q^{2(\lambda^{(i)}_\beta - \lambda^{(i)}_\alpha -1)} \right\}
\end{equation}
for any $i$ and any $\alpha < \beta$. Therefore, \eqref{eqn:phi} implies that:
\begin{equation}
\label{eqn:fi}
\ph_\bla(R) = t(...,x_{i,1},x_{i,2},...) \prod_{i=1}^n \frac {\prod_{\alpha < \beta} (x_{i,\alpha} - x_{i,\beta} q^{...})^{2 \lambda_{\beta}^{(i)}}}{\prod_{\alpha,\beta} (x_{i,\alpha} - x_{i+1,\beta} q^{...})^{\min(\lambda_{\alpha}^{(i)}, \lambda_{\beta}^{(i+1)})}}
\end{equation}
for some Laurent polynomial $t$. We wish to show that $t = 0$. To this end, the fact that $R$ has homogeneous degree $k$ implies that $t$ is homogeneous of degree:
\begin{equation}
\label{eqn:x}
\deg_{\{x_{i,\alpha}\}^{i \in \{1,...,n\}}_{\alpha \geq 1}} (t) = k + \sum_{i=1}^n \left[  \sum_{\alpha,\beta} \min(\lambda_{\alpha}^{(i)}, \lambda_{\beta}^{(i+1)}) - \sum_{\alpha} 2\lambda^{(i)}_\alpha (\alpha-1) \right]
\end{equation}
The fact that $R \in \CB_\mu \subset \CA_{\leq \mu}$ implies that:
$$
\deg_{\{x_{i,1}\}^{i \in \{1,...,n\}}} (t) < \frac {k \sum_{i=1}^n \lambda^{(i)}_{1}}{nl} + \sum_{i=1}^n \left[ \min(\lambda_{1}^{(i)}, \lambda_{1}^{(i+1)}) + \right.
$$
\begin{equation}
\label{eqn:y}
\left. + \sum_{\alpha \geq 2} \min(\lambda_{\alpha}^{(i)}, \lambda_{1}^{(i+1)}) + \sum_{\alpha \geq 2} \min(\lambda_{1}^{(i)}, \lambda_{\alpha}^{(i+1)}) - \sum_{\alpha \geq 2} 2\lambda^{(i)}_\alpha  \right] 
\end{equation}
$$
\deg_{\{x_{i,\alpha}\}^{i \in \{1,...,n\}}_{\alpha \geq 2}} (t) < \frac {k \sum_{i=1}^n \sum_{\alpha \geq 2} \lambda^{(i)}_{\alpha}}{nl} + \sum_{i=1}^n \left[ \sum_{\alpha,\beta \geq 2} \min(\lambda_{\alpha}^{(i)}, \lambda_{\beta}^{(i+1)}) + \right. 
$$
\begin{equation}
\label{eqn:z}
\left. \sum_{\alpha \geq 2} \min(\lambda_{\alpha}^{(i)}, \lambda_{1}^{(i+1)}) + \sum_{\alpha \geq 2} \min(\lambda_{1}^{(i)}, \lambda_{\alpha}^{(i+1)}) - \sum_{\alpha \geq 2} 2\lambda^{(i)}_\alpha (\alpha-1) \right]
\end{equation}
where the inequalities are strict because $R$ is primitive (here we use the fact that $\bla \neq \bla_{\text{max}}$). If $t\neq 0$, then we would have:
$$
\text{LHS of \eqref{eqn:x}} \leq \text{LHS of \eqref{eqn:y}} + \text{LHS of \eqref{eqn:z}} 
$$ 
and therefore:
$$
0 < \sum_{i=1}^n \left[ \sum_{\alpha \geq 2} \min(\lambda_{1}^{(i)}, \lambda_{\alpha}^{(i+1)}) + \min(\lambda_{1}^{(i+1)}, \lambda_{\alpha}^{(i)}) - \sum_{\alpha \geq 2} 2\lambda^{(i)}_\alpha \right]
$$
Since the right-hand side of the expression above is $\leq 0$ (simply because $\min(x,y) \leq y$), we obtain a contradiction. Hence $t = 0 \Rightarrow \ph_\bla(R) = 0$, and thus \eqref{eqn:claim} is proved.
	
\end{proof} 

\subsection{} By comparing \eqref{eqn:cop1}--\eqref{eqn:cop2 anti} with \eqref{eqn:cop f 5}--\eqref{eqn:cop f 8}, we conclude that when $\mu < 0$:
\begin{equation}
\label{eqn:shuf e}
E_{\pm [i;j)}^\mu \quad \text{and} \quad \bE_{\pm [i;j)}^\mu
\end{equation}
enjoy the same coproduct formulas as:
\begin{equation}
\label{eqn:shuf f}
F_{\pm [i;j)}^\mu q^{\delta_j^i - 1} \quad \text{and} \quad \bF_{\pm [i;j)}^\mu q^{1-\delta_j^i}
\end{equation}
respectively. Using the formulas of Proposition \ref{prop:rho computations}, we will show the following: \\

\begin{proposition} 
\label{prop:efg}

With the notation of Subsection \ref{sub:not final}, we have for all $\mu < 0$:
\begin{align}
&\rho \left( E_{\pm [i;j)}^\mu \right) = \sum_{k=0}^{\left \lfloor \frac {(j-i)g}{na} \right \rfloor} \rho \left( F_{\pm \left[i; j - \frac {nak}g \right)}^\mu q^{\delta_j^i - 1} \right) \gamma_{\pm k, \mu, j}  \label{eqn:rho 1} \\
&\rho \left( \bE_{\pm [i;j)}^\mu \right) = \sum_{k=0}^{\left \lfloor \frac {(j-i)g}{na} \right \rfloor} \rho \left( \bF_{\pm \left[i; j - \frac {nak}g \right)}^\mu q^{1-\delta_j^i} \right) \overline{\gamma}_{\pm k, \mu, j} \label{eqn:rho 2}
\end{align}
where $\gamma_{0, \mu, j} = \overline{\gamma}_{0,\mu,j} = 1$ and: 
\begin{align*} 
&\gamma_{k, \mu, j} = (1-q^2)\left[\frac {t_{\mu,j}}{{\mathfrak{t}}^{\frac ag} q^{\frac {\mu n a}g + 1}} \right]^k & &\gamma_{- k, \mu, j} = (1-q^{2})\left[\frac {s_{\mu,j}}{{\mathfrak{s}}^{\frac ag} q^{\frac {\mu n a}g - \frac {na}g+1}} \right]^k \\
&\overline{\gamma}_{k, \mu, j} = (1-q^{-2})\left[\frac {(-\oq_-^{-\frac 2n})^{\frac {na}g}s_{\mu,j}}{\mathfrak{s}^{\frac ag} q^{\frac {\mu n a}g - 1}} \right]^k & &\overline{\gamma}_{-k, \mu, j} = (1-q^{-2})\left[\frac {(-\oq_+^{-\frac 2n})^{\frac {na}g}t_{\mu,j}}{\mathfrak{t}^{\frac ag} q^{\frac {\mu n a}g + \frac {n a}g - 1}} \right]^k
\end{align*} 
$\forall \ k>0$. Just like $s_{\mu,j}$, $t_{\mu,j}$, the quantities $\gamma_{\pm k, \mu, j}, \overline{\gamma}_{\pm k, \mu, j}$ only depend on $j$ mod $g$. \\

\end{proposition}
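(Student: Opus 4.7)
My strategy will be direct computation using the explicit formulas of Proposition~\ref{prop:rho computations} together with the combinatorial identities of Subsection~\ref{sub:not final}. I will focus on \eqref{eqn:rho 1} with the $+$ sign; the remaining three cases should follow by the same method, interchanging $\oq_+\leftrightarrow\oq_-$ and swapping the roles of $(s_{\mu,j},\mathfrak{s})$ with $(t_{\mu,j},\mathfrak{t})$ according to the symmetry between $E,F$ and $\bE,\bF$ of Subsection~\ref{sub:notation f}.

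The first step will be to rewrite both sides as explicit rational functions in $x_1,\ldots,x_n$. Using \eqref{eqn:e plus} and \eqref{eqn:two 5}, the LHS becomes $\rho(\bA^\mu_{[i;j)})$ and each RHS summand becomes $q^{\delta^i_j-1+j-i-nak/g}\,\rho(B^\mu_{[i;j-nak/g)})\cdot\gamma_{k,\mu,j}$. Applying \eqref{eqn:rho aa} to the LHS and \eqref{eqn:rho b} to each RHS summand will produce concrete formulas involving the quantities $\tau^\mu$, $\beta^\mu$, $\bar\alpha^\mu$, $\fT_{i+1,j}$, and the boundary factor $(1-x_i/x_{i-1})/(1-x_i/(x_{i-1}q^2))$ that arises when $\delta^i_j=1$. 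Since $n$ divides $nak/g$, the equality $\delta^i_{j-nak/g}=\delta^i_j$ lets me pull the $q^{\delta^i_j-1}$ factor out of the sum.

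For indices $k$ small enough that $j-nak/g\ge i+n$, the shift identities \eqref{eqn:basic 6}, \eqref{eqn:basic 9}, \eqref{eqn:basic 12} will apply and produce the clean factorization $(k\text{-th summand})=(1-q^2)q^{-2k}\times(0\text{-th summand})$. The geometric series then telescopes to a contribution of $q^{-2N}$ times the $0$-th summand, and together with \eqref{eqn:basic 11} this reproduces the LHS up to a possible boundary discrepancy.

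The hard part will be the boundary case $\delta^i_j=1$, where $na/g$ divides $j-i$ exactly and $j-naN/g=i$ for $N=\lfloor(j-i)g/(na)\rfloor$. Here the shift identity \eqref{eqn:basic 6} fails at $k=N$, because $\fT_{i+1,i}=1$ is an empty product rather than $\fT_{i+1,j}/\mathfrak{t}^{aN/g}$; the actual $k=N$ summand must therefore be evaluated directly using $\rho(F^\mu_{[i;i)}q^{\delta^i_j-1})=q^{\delta^i_j-1}$ and the explicit value of $\gamma_{N,\mu,j}$. The key algebraic identity to verify is that the partial geometric sum from $k=0,\ldots,N-1$ plus this boundary term equals $q^{-2N}$ multiplied by the boundary factor $(1-x_i/x_{i-1})/(1-x_i/(x_{i-1}q^2))$. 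I expect this to be a built-in cancellation: the denominator $\mathfrak{t}^{a/g}$ appearing in $\gamma_{N,\mu,j}$ is designed precisely so that its $x$-dependent factor telescopes against the truncated $\fT_{i+1,j}$ product at the boundary, leaving exactly the extra factor present in \eqref{eqn:rho aa}.
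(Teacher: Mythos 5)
Your proposal follows essentially the same route as the paper's proof: express $\rho(E^\mu_{[i;j)}) = \rho(\bA^\mu_{[i;j)})$ via \eqref{eqn:rho aa} and $\rho(F^\mu_{[i;j-nak/g)}q^{\delta^i_j-1}) = \rho(B^\mu_{[i;j-nak/g)}q^{\ldots})$ via \eqref{eqn:rho b}, propagate the shift identities \eqref{eqn:basic 6}, \eqref{eqn:basic 9}, \eqref{eqn:basic 12}, and then observe that the sum over $k$ is a geometric series (since $[q^{\mu na/g-1}\mathfrak{t}^{a/g}/t_{\mu,j}]^k\gamma_{k,\mu,j} = (1-q^2)q^{-2k}$ for $k\geq 1$) telescoping to $q^{-2N}$, with the exponent mismatch resolved by \eqref{eqn:basic 11}. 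Your treatment of the boundary case $\delta^i_j=1$ is also what the paper does: the $k=N$ summand simply lacks the factor $(1-x_i/(x_{i-1}q^2))$ since $\fT_{i+1,i}$ is empty, and the resulting closed form compensates precisely for the boundary factor $(1-x_i/x_{i-1})/(1-x_i/(x_{i-1}q^2))$ in \eqref{eqn:rho aa}. The only place you are slightly vaguer than the paper is in stating, rather than verifying, the final telescoping identity in the $\delta^i_j=1$ case — the paper likewise records it as an elementary identity left to the reader. So this is the same proof.
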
 

\begin{proof} Recall the formulas \eqref{eqn:e plus}--\eqref{eqn:be minus} and \eqref{eqn:two 5}--\eqref{eqn:two 8}:
\begin{align*} 
&E_{[i;j)}^{\mu} = \bA^\mu_{[i;j)} \qquad & &F_{[i;j)}^{\mu} = B^{\mu}_{[i;j)} \cdot q^{j-i} \\
&\bE_{[i;j)}^\mu = A^\mu_{[i;j)} \cdot (-\oq_-^{\frac 2n})^{i-j} \qquad & &\bF_{[i;j)}^{\mu} = \bB^{\mu}_{[i;j)} \cdot q^{j-i} (-\oq_+^{\frac 2n})^{j-i}  \\
&E_{-[i;j)}^{\mu} = \bB^\mu_{-[i;j)} \qquad & &F_{-[i;j)}^{\mu} = A^{\mu}_{-[i;j)} \cdot q^{j-i} \\
&\bE_{-[i;j)}^{\mu} = B^\mu_{-[i;j)} \cdot (-\oq_+^{\frac 2n})^{i-j} \qquad & &\bF_{-[i;j)}^{\mu} = \bA^{\mu}_{-[i;j)} \cdot q^{j-i}(-\oq_-^{\frac 2n})^{j-i}  
\end{align*}
Let us prove \eqref{eqn:rho 1} when the sign is $+$, and leave the other cases as analogous exercises to the interested reader. By \eqref{eqn:rho aa}, we have:
\begin{equation}
\label{eqn:lhs}
\rho \left( E_{[i;j)}^\mu \right) = \rho \left( \bA_{[i;j)}^\mu \right) = \frac {q^{\overline{\alpha}_{i,j}^\mu} \tau_{i,j}^\mu}{\fT_{i,j}} \left(1 - \frac {x_i q^{2\delta_i^j}}{x_{i-1} q^2}  \right) 
\end{equation}
Similarly, formula \eqref{eqn:rho b} implies that:
$$
\rho \left( F_{[i;j)}^\mu q^{\delta_j^i - 1} \right) = \rho \left( B_{[i;j)}^\mu q^{j-i+\delta_j^i - 1} \right) = \frac {q^{\beta_{i,j}^\mu+j-i+\delta_j^i-1} \tau_{i,j}^\mu}{\fT_{i,j}} \left(1 - \frac {x_i}{x_{i-1} q^2}  \right) 
$$
Using formulas \eqref{eqn:basic 6}, \eqref{eqn:basic 9} and \eqref{eqn:basic 12}, we obtain:
$$
\rho \left( F_{\left[i;j- \frac {nak}g \right)}^\mu q^{\delta_j^i - 1} \right)  = \frac {q^{\beta_{i,j}^\mu+j-i+k \left(\frac {\mu n a}g - 1 \right) + \delta_j^i-1} \tau_{i,j}^\mu}{\fT_{i,j}} \left[\frac {{\mathfrak{t}}^{\frac ag}}{t_{\mu,j}} \right]^k \underbrace{\left(1 - \frac {x_i}{x_{i-1} q^2}  \right)}_{\text{does not appear for }k = \frac {(j-i)g}{na}} 
$$
Summing over all $k$ from 0 to $\left \lfloor \frac {(j-i)g}{na} \right \rfloor$, we conclude that the RHS of \eqref{eqn:rho 1} equals:
$$
\sum_{k=0}^{\left \lfloor \frac {(j-i)g}{na} \right \rfloor}  \frac {q^{\beta_{i,j}^\mu + j - i + k \left(\frac {\mu n a}g - 1 \right) +\delta_j^i-1} \tau_{i,j}^\mu}{\fT_{i,j}} \left[\frac {{\mathfrak{t}}^{\frac ag}}{t_{\mu,j}} \right]^k \gamma_{k,\mu,j} \underbrace{\left(1 - \frac {x_i}{x_{i-1} q^2} \right)}_{\text{does not appear for }k = \frac {(j-i)g}{na}}  
$$
The fact that the expression above equals \eqref{eqn:lhs} (which would imply \eqref{eqn:rho 1}) is an immediate consequence of \eqref{eqn:basic 11} and the following elementary identities:
$$
q^{ - 2 \left \lfloor \frac {(j-i)g}{na} \right \rfloor} = \sum_{k=0}^{\left \lfloor \frac {(j-i)g}{na} \right \rfloor} \left[q^{\frac {\mu n a}g - 1} \frac {{\mathfrak{t}}^{\frac ag}}{t_{\mu,j}} \right]^k \gamma_{k,\mu,j}
$$
if $i \not \equiv j$ modulo $n$, while: 
$$
q^{ - 2 \frac {(j-i)g}{na} } \left(1 - \frac {x_i}{x_{i-1}} \right) = \sum_{k=0}^{\frac {(j-i)g}{na}} \left[q^{\frac {\mu n a}g - 1} \frac {{\mathfrak{t}}^{\frac ag}}{t_{\mu,j}} \right]^k \gamma_{k,\mu,j} \underbrace{\left(1 - \frac {x_i}{x_{i-1} q^2} \right)}_{\text{does not appear for }k = \frac {(j-i)g}{na}}  
$$
if $i \equiv j$ modulo $n$ (which implies that $\frac {na}g | j-i$). We leave these identities as exercises. \\

\end{proof} 

\subsection{} For any reduced fraction $\mu = \frac ba$ with $g=\gcd(n,a)$, recall that $\CB_\mu$ has $g$ countable families of primitive (with respect to the coproduct $\Delta_\mu$) elements:
$$
P_{\pm k \bde, \hi}^\mu \in \CB_\mu 
$$
for every $k \in \BN$ and $\hi \in \BZ/g\BZ$. For all collections of scalars $\alpha_{\pm k, \hi}$, the expression:
\begin{equation}
\label{eqn:g}
1 + \sum_{k=1}^\infty \frac {G_{\pm k}^\mu}{x^k} = \exp \left[ \sum_{k=1}^\infty \sum_{r \in \BZ/g\BZ} \frac {P_{\pm k\bde,\hi}^\mu \alpha_{\pm k, \hi}}{kx^k} \right]
\end{equation}
is the generating series of a family of group-like elements of $\CB_\mu$. We will write
\begin{equation}
\label{eqn:gg}
1 + \sum_{k=1}^\infty \frac {\bG_{\pm k}^\mu}{x^k} = \exp \left[- \sum_{k=1}^\infty \sum_{r \in \BZ/g\BZ} \frac {P_{\pm k\bde,\hi}^\mu \alpha_{\pm k, \hi}}{kx^k} \right]
\end{equation}
for the inverse of the generating series \eqref{eqn:g}. \\

\begin{proposition}
\label{prop:e and f}

For any $\mu = \frac ba < 0$ with $\gcd(a,b) = 1$, $\gcd(n,a) = g$, we have:
\begin{align} 
&E_{\pm [i;j)}^\mu = \sum_{k=0}^{\left \lfloor \frac {(j-i)g}{na} \right \rfloor} F_{\pm \left[i; j - \frac {nak}g \right)}^\mu q^{\delta_j^i - 1} G_{\pm k, \widehat{i}}^\mu \label{eqn:e and f 1} \\
&\bE_{\pm [i;j)}^\mu = \sum_{k=0}^{\left \lfloor \frac {(j-i)g}{na} \right \rfloor} \bF_{\pm \left[i; j - \frac {nak}g \right)}^\mu q^{1-\delta_j^i} \bG_{\pm k, \widehat{i \pm v}}^\mu \label{eqn:e and f 2}
\end{align}
where $\widehat{i}$ denotes the residue of $i$ modulo $g$, $v \in \BZ/g\BZ$ is uniquely defined by:
$$
bv \equiv 1 \text{ mod }a
$$
and for all $i$, $G_{\pm k,\widehat{i}}^\mu, \bG_{\pm k,\widehat{i}}^\mu$ are of the form \eqref{eqn:g}, \eqref{eqn:gg} for certain scalars $\alpha_{\pm k, \hi}$. \\

\end{proposition}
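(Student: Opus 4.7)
The plan is to prove \eqref{eqn:e and f 1} and \eqref{eqn:e and f 2} in parallel by induction on $j-i$, with the scalars $\alpha_{\pm k,\hi}$ that define $G^\mu_{\pm k,\hi}$ and $\bG^\mu_{\pm k,\hi}$ via \eqref{eqn:g}--\eqref{eqn:gg} being pinned down inductively as the proof unfolds. The backbone is Lemma \ref{lem:min}: any $\Delta_\mu$-primitive element of $\CB_\mu$ that is annihilated by $\rho$ must vanish. Hence at each step I will establish (a) that the difference $D_{[i;j)}$ of the two sides has vanishing intermediate terms under $\Delta_\mu$, and (b) that $\rho(D_{[i;j)}) = 0$ for generic $x_1,\dots,x_n$.

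For (a), the key input is the observation made just before Proposition \ref{prop:efg}: when $\mu < 0$, the coproduct formulas \eqref{eqn:cop1}--\eqref{eqn:cop2 anti} for $E,\bE$ coincide with \eqref{eqn:cop f 5}--\eqref{eqn:cop f 8} for $F,\bF$ once one commutes the Cartan factors $\tfrac{\psi_s}{\psi_i}\barc^{\mu(s-i)}$ past the shuffle element, which produces precisely the $q^{\pm(\delta_j^i - 1)}$ scaling appearing in \eqref{eqn:e and f 1}--\eqref{eqn:e and f 2}. Each $G^\mu_{\pm k,\hi}$ and $\bG^\mu_{\pm k,\hi}$ is exponential in primitive generators, so its $\Delta_\mu$ factors multiplicatively up to a central decoration by $c^{\pm k}\barc^{\pm \mu nk}$ in one tensor factor. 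Feeding these ingredients into the coproduct expansion, and using the induction hypothesis on the shorter intervals $[i;s)$ and $[s;\, j - \tfrac{nak}{g})$ appearing as intermediate summands, one verifies term by term that the intermediate pieces of $\Delta_\mu$ on the two sides of \eqref{eqn:e and f 1}--\eqref{eqn:e and f 2} agree.

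For (b), I apply $\rho$ and exploit Propositions \ref{prop:rho mult} and \ref{prop:efg}. By the multiplicativity \eqref{eqn:rho plus}, $\rho$ of the right-hand side of \eqref{eqn:e and f 1} becomes a sum $\sum_k \rho(F^\mu_{\pm[i;\, j - nak/g)} q^{\delta_j^i - 1}) \cdot \rho(G^\mu_{\pm k,\widehat{i}}) \cdot q^{\ast_k}$ with explicit $q$-weights; by Proposition \ref{prop:efg}, $\rho$ of the left-hand side has exactly this shape with coefficient $\gamma_{\pm k,\mu,j}$ in place of $\rho(G^\mu_{\pm k,\widehat{i}}) q^{\ast_k}$. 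Note that $a \mid j - i$ together with $g \mid a$ forces $\widehat{j} = \widehat{i}$ throughout the sum, so $\gamma_{\pm k,\mu,j}$ is genuinely a function of $\widehat{i}$; matching term by term therefore pins down each $\rho(G^\mu_{\pm k,\widehat{i}})$ uniquely, and this in turn fixes $\alpha_{\pm k,\widehat{i}}$ order by order in $k$ (inverting the generating series \eqref{eqn:g}, using that the $\rho$-values of $P^\mu_{\pm k\bde,\hi}$ separate the residue classes $\hi \in \BZ/g\BZ$ for generic $x_1,\dots,x_n$). Lemma \ref{lem:min} then forces $D_{[i;j)} = 0$ and closes the induction.

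The main obstacle I expect is the bookkeeping of the residue shift $\widehat{i \pm v}$ in \eqref{eqn:e and f 2}. In the barred case, the analogous matching uses the coefficients $\overline{\gamma}_{\pm k,\mu,j}$ of Proposition \ref{prop:efg}, which differ from $\gamma_{\pm k,\mu,j}$ by exchanging $s_{\mu,j} \leftrightarrow t_{\mu,j}$ (hence $\sigma^\mu \leftrightarrow \tau^\mu$ by \eqref{eqn:basic 8}--\eqref{eqn:basic 9}) and by the sign decorations $(-\oq_\mp^{-2/n})^{na/g}$. Rewriting $\overline{\gamma}_{\pm k,\mu,j}$ in a form keyed to the residue of $i$ that is used to index $\bG^\mu_{\pm k,\hi}$ requires a reindexing via \eqref{eqn:basic 7}--\eqref{eqn:basic 9} together with the congruence $bv \equiv 1 \pmod{a}$: the shift $v \in \BZ/g\BZ$ should arise precisely as the increment in the residue $\hi$ produced by an increment of $\tfrac{na}{g}$ in the spatial coordinate along the slope-$\mu$ lattice. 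I expect this to fall out cleanly, though tediously, from the explicit computations in Proposition \ref{prop:rho computations}.
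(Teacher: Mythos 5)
Your approach is essentially the paper's --- Lemma \ref{lem:min}, Proposition \ref{prop:efg}, and the multiplicativity of $\rho$ --- and replacing the paper's appeal to ``the natural analogue of Proposition \ref{prop:unique}'' by a direct induction on $j-i$ is a valid variant for establishing that expansions of the stated shape exist. The gap is in what you dismiss as bookkeeping at the end. The nontrivial content of the Proposition is not merely that $E$ expands against $F$'s with some group-like corrections $G$ and $\bE$ against $\bF$'s with some group-like corrections $\bG$; it is that $\bG$ is the \emph{multiplicative inverse} of $G$, i.e. both are built from the \emph{same} scalars $\alpha_{\pm k,\hi}$ via \eqref{eqn:g} and \eqref{eqn:gg}. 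Your plan pins down $\rho(G^\mu_{\pm k,\widehat i})=\gamma_{\pm k,\mu,i}$ and, separately, $\rho(\bG^\mu_{\pm k,\widehat{i\pm v}})=\overline\gamma_{\pm k,\mu,i}$, but never explains why the scalars extracted from the two families agree up to sign. The reason they do is a specific algebraic coincidence that you do not name: the generating series sum to the rational functions $1+\sum_{k\geq1}\gamma_{\pm k,\mu,i}/x^k=(x-q^2y_\pm)/(x-y_\pm)$ and $1+\sum_{k\geq1}\overline\gamma_{\pm k,\mu,i\mp v}/x^k=(x-q^{-2}z_\pm)/(x-z_\pm)$, with $y_\pm,z_\pm$ the explicit ratios from Proposition \ref{prop:efg}, and one must verify $z_\pm=q^2y_\pm$, so that taking logarithms yields $(1-q^{-2k})z_\pm^k=-(1-q^{2k})y_\pm^k$ and hence $\overline\alpha=-\alpha$ by a final application of Lemma \ref{lem:min}. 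The identity $z_\pm=q^2y_\pm$ rests on $\mathfrak{s}=\mathfrak{t}\,(-q^2)^n\oq^2$ and $s_{\mu,i}=t_{\mu,i+v}$, the latter being exactly where $bv\equiv1\bmod a$ is used. You correctly flag the $s\leftrightarrow t$ exchange and the $\oq_\mp^{-2/n}$ decorations as the moving parts, but ``I expect this to fall out cleanly, though tediously'' papers over the one calculation that actually carries the proof.
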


\begin{proof} The natural analogue of Proposition \ref{prop:unique} applied to the tensor product $U_{q}(\dot{\fgl}_{\frac ng})^{\otimes g} \cong \CB_\mu$ implies that we have \eqref{eqn:e and f 1} and \eqref{eqn:e and f 2}, where:
\begin{align}
&1 + \sum_{k=1}^\infty G_{\pm k,\widehat{i}}^\mu = \exp \left[ \sum_{k=1}^\infty \sum_{r \in \BZ/g\BZ} \frac {P_{\pm k\bde,r}^\mu \alpha_{\pm k, \widehat{i}, r}}k \right] \label{eqn:g to p} \\
&1 + \sum_{k=1}^\infty \bG_{\pm k,\widehat{i}}^\mu = \exp \left[\sum_{k=1}^\infty \sum_{r \in \BZ/g\BZ} \frac {P_{\pm k\bde,r}^\mu \overline{\alpha}_{\pm k, \widehat{i}, r}}k \right]\label{eqn:gg to pp}
\end{align}
for certain coefficients $\{\alpha_{\pm k, r,r'}, \overline{\alpha}_{\pm k, r,r'}\}^{k\in \BN}_{r,r' \in \BZ/g\BZ}$. Thus, it remains to show that:
\begin{equation}
\label{eqn:alphas}
\overline{\alpha}_{\pm k, r,r'} = - \alpha_{\pm k, r,r'}
\end{equation}
for all $k, r, r'$. Let us apply the map $\rho$ to the identities \eqref{eqn:e and f 1}--\eqref{eqn:e and f 2}, and recall (from Proposition \ref{prop:rho mult}) that $\rho$ is multiplicative on the subalgebra of elements of any fixed slope. With this in mind, Proposition \ref{prop:efg} implies that:
$$
\rho \left(G_{\pm k, \widehat{i}}^\mu \right) = \gamma_{\pm k, \mu, i} \qquad \text{and} \qquad \rho \left(G_{\pm k, \widehat{i\pm v}}^\mu \right) = \overline{\gamma}_{\pm k, \mu, i}
$$
We may now plug these formulas into \eqref{eqn:g to p} and \eqref{eqn:gg to pp}, and obtain:
\begin{align*}
&1 + \sum_{k=1}^\infty \frac {\gamma_{\pm k, \mu, i}}{x^k} = \frac {x - q^2 y_\pm}{x-y_\pm} = \exp \left[ \sum_{k=1}^\infty \sum_{r \in \BZ/g\BZ} \frac {\rho\left(P_{\pm k\bde,\hi}^\mu\right) \alpha_{\pm k, \widehat{i}, r}}{kx^k} \right] \\
&1 + \sum_{k=1}^\infty \frac {\overline{\gamma}_{\pm k, \mu, i \mp v}}{x^k} = \frac {x - q^{-2} z_\pm}{x - z_\pm} = \exp \left[ \sum_{k=1}^\infty \sum_{r \in \BZ/g\BZ} \frac {\rho\left(P_{\pm k\bde,\hi}^\mu\right) \overline{\alpha}_{\pm k, \widehat{i}, r}}{kx^k} \right]
\end{align*}
for $y_+ = \frac {t_{\mu,i}}{{\mathfrak{t}}^{\frac ag} q^{\frac {\mu n a}g + 1}}$, $y_- =\frac {s_{\mu,i}}{{\mathfrak{s}}^{\frac ag} q^{\frac {\mu n a}g - \frac {na}g+1}}$, $z_+ = \frac {(-\oq_-^{-\frac 2n})^{\frac {na}g}s_{\mu,i - v}}{\mathfrak{s}^{\frac ag} q^{\frac {\mu n a}g - 1}}$, $z_- = \frac {(-\oq_+^{-\frac 2n})^{\frac {na}g}t_{\mu,i + v}}{\mathfrak{t}^{\frac ag} q^{\frac {\mu n a}g + \frac {n a}g - 1}}$. \\

\noindent Taking the logarithms of the above power series in $x^{-1}$, we conclude that:
\begin{align}
&\sum_{r \in \BZ/g\BZ} \rho\left(P_{\pm k\bde,\hi}^\mu\right) \alpha_{\pm k, \widehat{i}, r} = (1-q^{2k})y_\pm^k \label{eqn:dbtr} \\
&\sum_{r \in \BZ/g\BZ} \rho\left(P_{\pm k\bde,\hi}^\mu\right) \overline{\alpha}_{\pm k, \widehat{i}, r} = (1-q^{-2k})z_\pm^k \label{eqn:cgis}
\end{align}
$\forall \ k > 0$. Since $\mathfrak{s} = \mathfrak{t} \cdot (-q^2)^n \oq^2$ and $s_{\mu, i} = t_{\mu, i+v}$, we conclude that $z_\pm = q^2 y_\pm$, hence the right-hand sides of \eqref{eqn:dbtr} and \eqref{eqn:cgis} are opposites of each other. Since this holds for all $\widehat{i}$ mod $g$, Lemma \ref{lem:min} implies \eqref{eqn:alphas} for all $r,r'$. 

\end{proof} 

\begin{proposition}
\label{prop:minimal}

If $\mu = \frac ba$ with $\gcd(a,b) = 1$, then for all $j-i=a$ we have:
\begin{align}
&E_{\pm [i;j)}^\mu = F_{\pm [i;j)}^\mu \cdot q^{-1} \label{eqn:minimal e 1} \\
&\bE_{\pm [i;j)}^\mu = \bF_{\pm [i;j)}^\mu \cdot q \label{eqn:minimal f 1}
\end{align} 
if $n \nmid a$, while:
\begin{align}
&E_{\pm [i;j)}^\mu = \sum_{u=1}^n F_{\pm [u;j-i+u)}^\mu \cdot q^{-1} \left( q^{-1}\delta_u^i + (q - q^{-1})\frac {\oq_\mp^{\frac 2n \cdot \overline{\pm b(u-i)}}}{\oq_\mp^2-1} \right) \label{eqn:minimal e 2} \\
&\bE_{\pm [i;j)}^\mu = \sum_{u=1}^n  \bF_{\pm [u;j-i+u)}^\mu \cdot q \left(q^{-1} \delta_u^i + (q - q^{-1})\frac {\oq_\mp^{\frac 2n \cdot \overline{\pm b(u-i)}}}{\oq_\mp^2-1} \right) \label{eqn:minimal f 2}
\end{align}
if $n | a$. \\

\end{proposition}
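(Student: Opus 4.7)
The plan is to reduce to Proposition \ref{prop:e and f} by specialising to $j - i = a$, where the sum truncates to at most two terms, and then verify any residual identity using the $\rho$-computations of Proposition \ref{prop:rho computations} combined with Lemma \ref{lem:min}.

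First, when $n \nmid a$, we have $g = \gcd(n,a) < n$, so $\lfloor(j-i)g/(na)\rfloor = \lfloor g/n\rfloor = 0$. The sums in \eqref{eqn:e and f 1}--\eqref{eqn:e and f 2} reduce to their $k=0$ terms, and since $j\not\equiv i$ mod $n$ gives $\delta_j^i = 0$, we immediately read off $E^\mu_{\pm[i;j)} = F^\mu_{\pm[i;j)}\cdot q^{-1}$ and $\bE^\mu_{\pm[i;j)} = \bF^\mu_{\pm[i;j)}\cdot q$, establishing \eqref{eqn:minimal e 1} and \eqref{eqn:minimal f 1}.

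In the case $n\mid a$, we have $g = n$, $\delta_j^i = 1$, and $\lfloor(j-i)g/(na)\rfloor = 1$. Using the convention $F^\mu_{\pm[i;i)} = \bF^\mu_{\pm[i;i)} = 1$ at total degree zero, Proposition \ref{prop:e and f} specialises to
\begin{equation*}
E^\mu_{\pm[i;j)} = F^\mu_{\pm[i;j)} + G^\mu_{\pm 1,\hat i}, \qquad \bE^\mu_{\pm[i;j)} = \bF^\mu_{\pm[i;j)} + \bG^\mu_{\pm 1,\widehat{i\pm v}}.
\end{equation*}
Thus the remaining task is to identify the primitive group-like pieces $G^\mu_{\pm 1,\hat i}$ and $\bG^\mu_{\pm 1,\widehat{i\pm v}}$ with the combinations of $F^\mu_{\pm[u;u+a)}$ and $\bF^\mu_{\pm[u;u+a)}$ ($u\in\BZ/n\BZ$) obtained after singling out the $u\equiv i$ term on the right-hand sides of \eqref{eqn:minimal e 2}--\eqref{eqn:minimal f 2}. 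Both sides of each such identity lie in the space of primitive elements of $\CB_\mu$ of horizontal degree $a\bde$ and vertical degree $\pm b$: for $\gcd(a,b)=1$ and $i<s<j$, the intermediate terms in the coproduct formulas \eqref{eqn:cop1}--\eqref{eqn:cop2 anti} and \eqref{eqn:cop f 5}--\eqref{eqn:cop f 8} vanish since $\mu(s-i)\notin\BZ$.

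By Lemma \ref{lem:min} it therefore suffices to check that $\rho$, applied to both sides, yields the same rational function of $x_1,\ldots,x_n$. Using the closed-form expressions from Proposition \ref{prop:rho computations} for $\rho(\bA^\mu_{[i;j)})$ and $\rho(B^\mu_{[u;u+a)})$, together with the identities \eqref{eqn:basic 4}--\eqref{eqn:basic 13} of Subsection \ref{sub:not final}---in particular that $\fT_{u,u+a} = \mathfrak{t}^{a/n}$ for every $u$ when $n\mid a$, and that \eqref{eqn:basic 10}--\eqref{eqn:basic 13} control the shifts between $\overline\alpha^\mu_{i,i+a}$ and $\beta^\mu_{u,u+a}$---the required equality reduces, after dividing out the common factor $\mathfrak{t}^{-a/n}$, to a cyclic sum identity over $u\in\BZ/n\BZ$ in the monomials $\tau^\mu_{u,u+a}$ weighted by the coefficients $c_u = q^{-1}\delta_u^i + (q - q^{-1})\oq_\mp^{2\overline{\pm b(u-i)}/n}/(\oq_\mp^2 - 1)$. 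The main obstacle will be this final combinatorial check: the ``diagonal'' factor $(1 - x_i/x_{i-1})/(1 - x_iq^{-2}/x_{i-1})$ appearing in $\rho(\bA^\mu_{[i;j)})$ (from $\delta_i^j = 1$) must be matched against the cyclic sum on the right, whose coefficients $c_u$ are tailored precisely so that the residues at $x_s = x_{s-1}q^{2\varepsilon}$ agree on both sides. The identity for $\bE$ and $\bF$ is established by the same argument using $\rho(A^\mu)$ and $\rho(\bB^\mu)$, with the shift $i \mapsto i \pm v$ accounting for the swap between $\oq_+$ and $\oq_-$ in the coefficients.
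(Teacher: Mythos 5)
Your proof follows the paper's strategy in its core: both sides of \eqref{eqn:minimal e 1}--\eqref{eqn:minimal f 2} are primitive in $\CB_\mu$ (by coprimality of $a$ and $b$), so Lemma \ref{lem:min} reduces the problem to checking equality of the $\rho$-specializations, which you then compute via Proposition \ref{prop:rho computations}. For the $n \nmid a$ case your shortcut through Proposition \ref{prop:e and f} is a genuine simplification over the paper's route — since $\lfloor g/n \rfloor = 0$ and $\delta_j^i = 0$, the sum in \eqref{eqn:e and f 1}--\eqref{eqn:e and f 2} collapses to the $k=0$ term with $G^\mu_{0,\hat i}=\bG^\mu_{0,\hat i}=1$, giving \eqref{eqn:minimal e 1}--\eqref{eqn:minimal f 1} immediately, whereas the paper performs the same $\rho$-computation as for $n|a$ and uses \eqref{eqn:basic 11} to match exponents. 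What each buys: yours reuses an already-established structural result; the paper's is self-contained within the $\rho$-framework and handles both cases uniformly.

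Two small points on the $n|a$ case. First, the decomposition $E^\mu_{\pm[i;j)}=F^\mu_{\pm[i;j)}+G^\mu_{\pm1,\hat i}$ from Proposition \ref{prop:e and f} is a red herring here: you end up invoking Lemma \ref{lem:min} and $\rho$ on both sides of \eqref{eqn:minimal e 2} directly anyway, and the $G^\mu_{\pm1,\hat i}$ plays no further role. Second, your phrase about ``singling out the $u \equiv i$ term'' is slightly misleading: the coefficient of $F^\mu_{\pm[i;j)}$ in the right-hand side of \eqref{eqn:minimal e 2} works out (using $\overline{0}=n$) to $(\oq_\mp^2 - q^{-2})/(\oq_\mp^2-1)$, not $1$, so one cannot literally match the $u=i$ summand with the $k=0$ term of Proposition \ref{prop:e and f} and identify $G^\mu_{\pm1,\hat i}$ with ``the rest.'' This does not damage the argument, since what you actually verify is the equality of $\rho$-values of the full two sides. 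Your reduction to a cyclic-sum identity in $\tau^\mu_{u,u+a}$ after factoring out $\mathfrak{t}^{-a/n}$ is exactly the identity the paper exhibits and leaves as an exercise; you should be aware that carrying it out (or at least stating it explicitly) would be expected in a complete write-up.
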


\begin{proof} Our assumptions imply that the left and right-hand sides of \eqref{eqn:minimal e 1}--\eqref{eqn:minimal f 2} are primitive elements of $\CB_\mu$. Therefore, Lemma \ref{lem:min} says that it is sufficient to show that the map $\rho$ takes the same values on the left and right-hand sides of these equations. We will prove the $\pm = +$ case of both \eqref{eqn:minimal e 1} and \eqref{eqn:minimal e 2}, and leave all the remaining cases as exercises to the interested reader. By Proposition \ref{prop:rho computations}, we have:
\begin{equation}
\label{eqn:lhs minimal}
\rho \left( E_{[i;j)}^{\mu} \right) = \rho \left( \bA_{[i;j)}^{\mu} \right) = \frac {q^{\overline{\alpha}_{i,j}^\mu} \tau_{i,j}^\mu}{\fT_{i,j}} \left(1 - \frac {x_i q^{2\delta_j^i}}{x_{i-1}q^2} \right)
\end{equation}
and:
\begin{equation}
\label{eqn:rhs minimal}
\rho \left( F_{[i;j)}^{\mu} q^{\delta_j^i-1} \right) = \rho \left( B_{[i;j)}^{\mu} q^{j-i+\delta_j^i-1} \right) = \frac {q^{\beta_{i,j}^\mu+j-i+\delta_j^i-1} \tau_{i,j}^\mu}{\fT_{i,j}} \left(1 - \frac {x_i}{x_{i-1} q^2} \right)
\end{equation} 
Under the assumption that $j-i=a$, relation \eqref{eqn:basic 11} reads:
$$
\overline{\alpha}_{i,j}^\mu = \beta_{i,j}^\mu+j-i+\delta_j^i-1 - \begin{cases} 0 &\text{if }n \nmid a \\ 2 &\text{if }n|a \end{cases}
$$
Therefore, we conclude that \eqref{eqn:lhs minimal} is equal to \eqref{eqn:rhs minimal} when $n \nmid a$, and this establishes \eqref{eqn:minimal e 1}. For the remainder of this proof, we will deal with the case $n|a$, when formula \eqref{eqn:rhs minimal} implies that:
$$
\rho \left( F_{[u;j-i+u)}^{\mu} \right) =\frac {q^{\overline{\alpha}_{u,j-i+u}^\mu} \tau_{u,j-i+u}^\mu}{\fT_{u,j-i+u}} \left(q^2 - \frac {x_u}{x_{u-1}} \right)
$$
for all integers $u$. We have $\fT_{i,j} = \fT_{u,j-i+u}$, $\overline{\alpha}_{u,j-i+u}^\mu = \overline{\alpha}_{i,j}^\mu$ and:
$$
\tau_{u,j-i+u}^\mu = \prod_{s=0}^{a-1} \left(x_{\overline{u+s}} \oq^{\frac 2n \cdot \overline{u+s}} \right)^{\left \lfloor \mu(s+1) \right \rfloor - \left \lfloor \mu s \right \rfloor}
$$
However, we have the following elementary identity, which holds for all integers $b,c$ and all residues $r \in \{1,...,n\}$:
$$
\sum_{0 \leq s < nc}^{s \equiv r \text{ mod }n} \left( \left \lfloor \frac {b(s+1)}{nc} \right \rfloor - \left \lfloor \frac {bs}{nc} \right \rfloor \right) = \gcd(b,c) \left( \left \lfloor \frac {b(r+1)}{n\gcd(b,c)} \right \rfloor - \left \lfloor \frac {br}{n\gcd(b,c)} \right \rfloor \right) 
$$
Back to our setting, letting $c = \frac an$ and recalling that $\gcd(a,b) = 1$, we infer that:
$$
\tau_{u,j-i+u}^\mu = \prod_{r=1}^n \left(x_{r} \oq^{\frac {2r}n} \right)^{\left \lfloor \frac {b(r-u+1)}n \right \rfloor - \left \lfloor \frac {b(r-u)}n \right \rfloor}
$$
Therefore, \eqref{eqn:minimal e 2} follows from the identity:
$$
\sum_{u=1}^n \left(q^2 - \frac {x_u}{x_{u-1}} \right) \prod_{r=1}^n \left(x_{r} \oq^{\frac {2r}n} \right)^{\left \lfloor \frac {b(r-u+1)}n \right \rfloor - \left \lfloor \frac {b(r-u)}n \right \rfloor} \left(q^{-2} \delta_u^i + (1 - q^{-2})\frac {\oq_-^{\frac 2n \cdot \overline{b(u-i)}}}{\oq_-^2-1} \right) = 
$$
$$
=  \left(1 - \frac {x_i}{x_{i-1}} \right) \prod_{r=1}^n \left(x_{r} \oq^{\frac {2r}n} \right)^{\left \lfloor \frac {b(r-i+1)}n \right \rfloor - \left \lfloor \frac {b(r-i)}n \right \rfloor}
$$
which holds for all $i,u \in \BZ$ and $b$ that is coprime with $n$. Since the aforementioned identity is elementary, we leave it to the interested reader. \\

\end{proof}

\subsection{} Let us now use the results above to prove Propositions \ref{prop:identity 1} and \ref{prop:identity 2}. \\

\begin{proof} \emph{of Proposition \ref{prop:identity 1}:} We will only prove \eqref{eqn:identity yz}, as \eqref{eqn:identity yyzz} is analogous, and hence left as an exercise to the interested reader. We have:
$$
Y_{\pm [i;j), \pm [i';j')}^\mu = \sum_{\frac {i-j'}n \leq l \leq \frac {j-i'}n} \bE_{\pm [i'+nl;j)}^\mu E_{\pm [i;j'+nl)}^\mu \stackrel{\eqref{eqn:e and f 1}, \eqref{eqn:e and f 2}}=
$$
\begin{equation}
\label{eqn:strung}
= \sum_{\frac {i-j'}n \leq l \leq \frac {j-i'}n} \sum_{k,k' \geq 0} \bF_{\pm [i'+nl;j-\frac {nak}g)}^\mu F_{\pm [i;j'+nl-\frac {nak'}g)}^\mu q^{\delta_{j'}^i - \delta_{i'}^j} \bG_{\pm k, \widehat{i' \pm v}}^\mu G_{\pm k', \widehat{i}}^\mu
\end{equation}
(in the latter formula, we used the fact that group-like elements are central in $\CB_\mu^+$). The assumption \eqref{eqn:gcd 1} implies that $a|j-i\pm v$ and $a|j'-i'\mp v$. Moreover, we assume that $i' \equiv j$ and $j' \equiv i$ modulo $g = \gcd(n,a)$, otherwise the right-hand sides of \eqref{eqn:formula y} and \eqref{eqn:formula zz} are 0 and the problem is vacuous. Therefore, we conclude that $i' \pm v \equiv i$ modulo $g$. By switching the order of the sums, formula \eqref{eqn:strung} equals:
$$
\sum_{t \geq 0} \sum_{\frac {i-j'}n \leq l \leq \frac {j-i'}n - \frac {at}g} \bF_{\pm [i'+nl;j-\frac {nat}g)}^\mu F_{\pm [i;j'+nl)}^\mu q^{\delta_{j'}^i - \delta_{i'}^j} \left[ \sum_{k = 0}^t  \bG_{\pm k, \widehat{i}}^\mu G_{\pm (t-k), \widehat{i}}^\mu \right]
$$
Because the series \eqref{eqn:g} and \eqref{eqn:gg} are inverses of each other, only the $t=0$ summand in the formula above survives, and it precisely equals the RHS of \eqref{eqn:identity yz}.

\end{proof} 

\begin{proof} \emph{of Proposition \ref{prop:identity 2}:} We will prove the case $\pm = +$, as the case $\pm = -$ is analogous. Due to Lemma \ref{lem:elem}, the required identity \eqref{eqn:identity yyy} is equivalent to:
\begin{equation}
\label{eqn:identity yyy equiv}
\sum_{x' \in \BZ/n\BZ} \bY_{[i;j),[i'+x';j'+x')} \left( \delta_{x'}^0 q^{-\delta_{j'}^{i'}} + (q-q^{-1}) \delta_{j'}^{i'} \frac {\oq_-^{\frac 2n \cdot \overline{-\e k'x'}}}{\oq_-^2-1} \right) = 
\end{equation}
$$
= q^{\delta_{i'}^i - \delta_{j'}^j} \sum_{x \in \BZ/n\BZ} Y_{[i';j'), [i+x;j+x)} \left( \delta_{x}^0 q^{-\delta_{j}^{i}} + (q - q^{-1}) \delta_{j}^{i} \frac {\oq_-^{\frac 2n \cdot \overline{- \e kx}}}{\oq_-^2-1} \right)
$$
which we will now prove. Because of formulas \eqref{eqn:cop1} and \eqref{eqn:cop1 anti}, we have:
$$
\Delta_\mu \left(\bY_{[i;j),[i';j')} \right) =  \sum_{\frac {i-j'}n \leq l \leq \frac {j-i'}n} \Delta_\mu \left( E_{[i'+nl;j)}^\mu \right) \Delta_\mu \left( \bE_{[i;j'+nl)}^\mu \right) =
$$
$$
= \sum_{(s,t) \in \zzz} \sum_{l,l'} E_{[s+nl';j)} \frac {\psi_s \psi_{j'}}{\psi_t \psi_{i'}} \bE_{[i;t+nl')} \otimes E_{[i'+nl;s)} \bE_{[t;j'+nl)}
$$
We make two remarks about the formula above: firstly, we set $\barc=1$ in the coproduct, to keep notation simple (it will not be important for this proof). Secondly, the sum in the latter expression involves picking an arbitrary representative $(s,t) \in \BZ^2$ of a certain residue class modulo $(n,n)$, and then summing over all $l,l'$ such that all the $E_{[x;y)}$ and $\bE_{[x;y)}$ which appear in the formula have $x \leq y$. After commuting all the $\psi$ symbols to the very left of the expression above, we conclude that:
\begin{equation}
\label{eqn:delta before}
\Delta_\mu \left(\bY_{[i;j),[i';j')} \right) = \sum^{(s,t) \in \zzz}_{i' - j' \leq s-t \leq j - i} \frac {\psi_s \psi_{j'}}{\psi_t \psi_{i'}} \bY_{[i;j),[s;t)} \otimes \bY_{[t;s),[i';j')} q^{\bar{*}}
\end{equation}
where $\overline{*} = 1 - \delta_s^j - \delta_s^t + \delta_t^j + \delta_s^{j'} - \delta_s^{i'} - \delta_j^{j'} + \delta_j^{i'}$. Therefore, we obtain:
$$
\Delta_\mu \left[ \sum_{x' \in \BZ/n\BZ} \bY_{ [i;j), [i'+x';j'+x')} \left( \delta_{x'}^0 q^{-\delta_{j'}^{i'}} + (q-q^{-1}) \delta_{j'}^{i'} \frac {\oq_-^{\frac 2n \cdot \overline{-\e k'x'}}}{\oq_-^2-1} \right) \right] = 
$$
$$
= \sum_{x' \in \BZ/n\BZ} \sum^{(s,t) \in \zzz}_{i' - j' \leq s-t \leq j - i} \frac {\psi_s \psi_{j'}}{\psi_t \psi_{i'}} \bY_{[i;j),[s;t)} \otimes \bY_{[t;s),[i'+x';j'+x')} q^{\overline{*}} \cdot
$$
\begin{equation}
\label{eqn:delta yy}
\left( \delta_{x'}^0 q^{-\delta_{j'}^{i'}} + (q-q^{-1}) \delta_{j'}^{i'} \frac {\oq_-^{\frac 2n \cdot \overline{-\e k'x'}}}{\oq_-^2-1} \right)
\end{equation}
\footnote{Note that the number $\overline{*}$ in \eqref{eqn:delta yy} is still given by the same formula as the same-named number in \eqref{eqn:delta yy}, because $i' \equiv j'$ modulo $n$ implies: 
$$
\delta_s^{j'} - \delta_s^{i'} - \delta_j^{j'} + \delta_j^{i'} = \delta_s^{j'+x'} - \delta_s^{i'+x'} - \delta_j^{j'+x'} + \delta_j^{i'+x'}
$$
} Similarly, we have:
$$
\Delta_\mu \left[q^{\delta_{i'}^i - \delta_{j'}^j} \sum_{x \in \BZ/n\BZ} Y_{[i';j'), [i+x;j+x)} \left( \delta_{x}^0 q^{-\delta_{j}^{i}} + (q - q^{-1}) \delta_{j}^{i} \frac {\oq_-^{\frac 2n \cdot \overline{- \e kx}}}{\oq_-^2-1} \right) \right] = 
$$
$$
\sum_{x \in \BZ/n\BZ} \sum^{(s,t) \in \zzz}_{i' - j' \leq s-t \leq j - i}  \frac {\psi_s \psi_{j'}}{\psi_t \psi_{i'}} Y_{[s;t),[i+x;j+x)} \otimes Y_{[i';j'), [t;s)} q^{* + \delta_{i'}^i - \delta_{j'}^j} \cdot 
$$
\begin{equation}
\label{eqn:delta y}
\left( \delta_{x}^0 q^{-\delta_{j}^{i}} + (q - q^{-1}) \delta_{j}^{i} \frac {\oq_-^{\frac 2n \cdot \overline{-\e kx}}}{\oq_-^2-1} \right)
\end{equation}
where $* = 1 - \delta_s^j - \delta^{i'}_s + \delta^{i'}_j + \delta_s^{i} - \delta_s^t - \delta_i^{i'} + \delta_{t}^{i'}$. Let us prove \eqref{eqn:identity yyy equiv} by induction on $j-i+j'-i'$ (the base case will be covered by the induction step). By applying the induction hypothesis, we see that the intermediate terms in the RHS of \eqref{eqn:delta yy} and \eqref{eqn:delta y} are both equal to the intermediate terms in the following expression: 
$$
\sum_{x \in \BZ/n\BZ} \sum^{(s,t) \in \zzz}_{i' - j' \leq s-t \leq j - i}  \frac {\psi_s \psi_{j'}}{\psi_t \psi_{i'}} \bY_{[i;j),[s;t)} \otimes Y_{[i';j'), [t+x;s+x)} q^{\#} \left( \delta_{x}^0 q^{-\delta_{t}^{s}} + (q - q^{-1}) \delta_{t}^{s} \frac {\oq_-^{\frac 2n \cdot \overline{-\e lx}}}{\oq_-^2-1} \right)
$$
where $\# = \overline{*} + \delta_t^{i'} - \delta_s^{j'} = * + \delta_{i'}^i - \delta_{j'}^j + \delta_j^t - \delta_s^i$ and $l = \frac {b(s-t)+\e}a$ (the latter expression is an integer, because unless $s-t \equiv j-i$ mod $a$, all the $Y$'s and $\bY$'s in \eqref{eqn:delta yy} and \eqref{eqn:delta y} are 0). Therefore, we conclude that the intermediate terms in \eqref{eqn:delta yy} and \eqref{eqn:delta y} are equal to each other. Therefore, it remains to show that the left and right-hand sides of \eqref{eqn:identity yyy equiv} have the same values under the linear maps \eqref{eqn:alpha}. For all $u, v$ satisfying $v-u = j-i+j'-i'$, Claim \ref{claim:combi} states that:
$$
\alpha_{[u;v)}\left[\sum_{x \in \BZ/n\BZ} Y_{[i';j'), [i+x;j+x)} \left( \delta_{x}^0 q^{-\delta_{j}^{i}} + (q - q^{-1}) \delta_{j}^{i} \frac {\oq_-^{\frac 2n \cdot \overline{- \e k x}}}{\oq_-^2-1} \right) \right] = 
$$
\begin{equation}
\label{eqn:mult y} 
= \left(q^{-1}-q \right) \left( \delta_{u}^{i} \delta_{j}^{i'} \oq_+^{\frac {j-i+j'-i'}{na}} q^{\delta_{i'}^{j} - \delta_{i'}^i} - \delta_{u}^{i'} \delta_{j'}^i \oq_+^{-\frac {j-i+j'-i'}{na}} q^{\delta_{j'}^j - \delta_{j'}^i} \right) 
\end{equation}
(technically, the proof of Claim \ref{claim:combi} only deals with the case $\e = -1$, but the case $\e = 1$ is analogous) and we also have the following analogous formula:
$$
\alpha_{[u;v)}\left[\sum_{x' \in \BZ/n\BZ} \bY_{[i;j),[i'+x';j'+x')} \left( \delta_{x'}^0 q^{-\delta_{j'}^{i'}} + (q-q^{-1}) \delta_{j'}^{i'} \frac {\oq_-^{\frac 2n \cdot \overline{- \e k' x'}}}{\oq_-^2-1} \right) \right] = 
$$
\begin{equation}
\label{eqn:mult yy} 
= \left( q^{-1}-q \right) \left( \delta_{u}^{i} \delta_{i'}^j \oq_+^{\frac {j-i+j'-i'}{na}} q^{\delta_{i'}^j - \delta_{j'}^j} - \delta_{u}^{i'} \delta_{j'}^i \oq_+^{-\frac {j-i+j'-i'}{na}} q^{\delta_{i'}^i - \delta_{j'}^i} \right)
\end{equation}
Formulas \eqref{eqn:mult y} and \eqref{eqn:mult yy} are equal up to $q^{\delta_{i'}^i - \delta_{j'}^j}$, thus proving \eqref{eqn:identity yyy equiv}.

\end{proof} 

\section{Index of notations}
\label{sec:index}

In the present Section, we will give a list of the most common notations in the present paper, and the location where they were first encountered. \\

\noindent $\delta_i^j$ \hfill Subsection \ref{sub:su} \\

\noindent $\delta_{i \text{ mod }g}^j$ \hfill \eqref{eqn:kron mod} \\

\noindent $\delta_{(i',j')}^{(i,j)}$ \hfill \eqref{eqn:kron mod double} \\

\noindent $\oq_\pm$ \hfill \eqref{eqn:parameters} \\

\noindent $x_i^\pm$ \hfill \eqref{eqn:special} \\

\noindent $\psi_s$ \hfill \eqref{eqn:special} \\

\noindent $\psi_{s+n} = c\psi_s$ \hfill \eqref{eqn:quasi per} \\

\noindent $\ph_s$ \hfill \eqref{eqn:psi} \\

\noindent $p_{\pm k}$ \hfill \eqref{eqn:heisenberg} \\

\noindent $e_{\pm [i;j)}$, $\be_{\pm [i;j)}$ \hfill \eqref{eqn:collection of generators 1}, \eqref{eqn:e bare} \\

\noindent $a_{s+n,d} = a_{s,d} \oq^{-2 d}$ \hfill \eqref{eqn:extend} \\

\noindent $\ph_s^{\pm}$ \hfill \eqref{eqn:convert 1}, \eqref{eqn:convert 2} \\

\noindent connection between $x_i^\pm$ and $e_{\pm [i;j)}$ \hfill \eqref{eqn:simple correspondence 1}--\eqref{eqn:simple correspondence 2} \\

\noindent connection between $p_{\pm k}$ and $e_{\pm [i;j)}$ \hfill Definition \ref{def:correspondence} \\

\noindent connection between $e_{\pm [i;j)}$ and $\be_{\pm [i;j)}$ \hfill \eqref{eqn:identities antipode} \\

\noindent $\Delta(\psi_s)$ \hfill \eqref{eqn:cop special 1} \\

\noindent $\Delta(x_i^\pm)$ \hfill \eqref{eqn:cop special 2}--\eqref{eqn:cop special 3} \\

\noindent $\Delta(p_{\pm k})$ \hfill \eqref{eqn:cop heis 1}--\eqref{eqn:cop heis 2} \\

\noindent $\Delta(e_{\pm [i;j)})$, $\Delta(\be_{\pm [i;j)})$ \hfill \eqref{eqn:cop quant 1}--\eqref{eqn:cop quant 2}, \eqref{eqn:cop antipode 1}--\eqref{eqn:cop antipode 2} \\

\noindent $\Delta(a_{s,\pm d})$ \hfill \eqref{eqn:coproduct 1}--\eqref{eqn:coproduct 2} \\

\noindent $\Delta(R^\pm)$ \hfill \eqref{eqn:coproduct 3}--\eqref{eqn:coproduct 4} \\

\noindent $\Delta_{\mu}(R^\pm)$ \hfill \eqref{eqn:cristian1}--\eqref{eqn:cristian2} \\

\noindent intermediate terms in the coproduct \hfill \eqref{eqn:intermediate terms 1}--\eqref{eqn:intermediate terms 2} \\

\noindent $\langle x_i^+, x_j^- \rangle = \frac {\delta_j^i}{q^{-1}-q}$ \hfill \eqref{eqn:pairing village} \\

\noindent $\langle \psi_s,\psi_{s'} \rangle = q^{-\delta_{s'}^s}$  \hfill \eqref{eqn:pairing village} \\

\noindent $\langle p_k, p_{-k} \rangle = \frac k{q^{-k}-q^k}$ \hfill \eqref{eqn:pairing town} \\

\noindent $\langle e_{[i;j)}, e_{-[i;j)} \rangle = 1 - q^{-2}$ \hfill \eqref{eqn:pair quantum 2} \\

\noindent $\langle a_{s,d}, a_{s',-d'} \rangle$ \hfill \eqref{eqn:pairshuf0} \\

\noindent $\langle R^+, R^- \rangle$ \hfill \eqref{eqn:pairshuf1} \\

\noindent $\su$, $\sug$, $\sul$, $\supm$ \hfill \eqref{eqn:special}, \eqref{eqn:positive}, \eqref{eqn:negative}, \eqref{eqn:notation su} \\

\noindent $\uui$, $\uuig$, $\uuil$, $\uuipm$ \hfill \eqref{eqn:heisenberg}, \eqref{eqn:positive heis}, \eqref{eqn:negative heis}, \eqref{eqn:notation heis}  \\

\noindent $\uu$, $\uupm$ \hfill \eqref{eqn:quantum as tensor},\eqref{eqn:notation uu} \\

\noindent $\tau$ \hfill \eqref{eqn:tau} \\

\noindent $\deg$ (the grading on quantum groups) \hfill \eqref{eqn:grading} \\

\noindent $\hdeg$ (horizontal grading on the shuffle algebra) \hfill \eqref{eqn:hdeg} \\

\noindent $\vdeg$ (vertical grading on the shuffle algebra) \hfill \eqref{eqn:vdeg} \\

\noindent $\langle \bk, \bl \rangle$ \hfill \eqref{eqn:bilinear form} \\

\noindent $|\bk|$ \hfill \eqref{eqn:length} \\

\noindent $\zeta \left(\frac {z_{ia}}{z_{jb}} \right)$ \hfill \eqref{eqn:def zeta} \\

\noindent $R * R'$ \hfill \eqref{eqn:mult} \\

\noindent $\CA^\pm$ \hfill Definition \ref{def:shuf}, \eqref{eqn:opposite shuffle} \\

\noindent $\CA$, $\CA^{\geq}$, $\CA^{\leq}$ \hfill \eqref{eqn:hmm}, \eqref{eqn:a geq}, \eqref{eqn:a leq} \\

\noindent $\CA_{\pm \bk}$, $\CA_{\pm \bk,d}$ \hfill \eqref{eqn:a plus hdeg}, \eqref{eqn:a minus hdeg},\eqref{eqn:a plus deg}, \eqref{eqn:a minus deg} \\

\noindent \text{naive slope} \hfill \eqref{eqn:naive slope} \\

\noindent slope \hfill Definition \ref{def:slope} \\

\noindent hinge \hfill Subsection \ref{sub:diagrams} \\

\noindent $\CA^\pm_{\leq \mu}$, $\CA_{\leq \mu|\pm \bk}$, $\CA_{\leq \mu|\pm \bk,d}$ \hfill \eqref{eqn:slope notation}, \eqref{eqn:slope notation hdeg}, \eqref{eqn:slope notation deg} \\

\noindent $\CB_\mu^\pm$ \hfill \eqref{eqn:sub} \\

\noindent $\CB_\mu^{\geq}$, $\CB_\mu^{\leq}$, $\CB_\mu$ \hfill \eqref{eqn:sub1}--\eqref{eqn:sub2}, \eqref{eqn:double sub}

$$
\boxed{X_{\pm [i;j)}^\mu = X_{\pm [i;j)}^{(\pm k)}}
$$
for $k = \mu(j-i)$ and $X \in \{A,\bA,B,\bB,E,\bE,F,\bF, P, \tP, e, \be, f, \bf, p, \tp, o, \to\}$ \hfill \eqref{eqn:often write} \\

\noindent $A_{\pm [i;j)}^{\mu}$, $\bA_{\pm [i;j)}^{\mu}$, $B_{\pm [i;j)}^{\mu}$, $\bB_{\pm [i;j)}^{\mu}$ \hfill \eqref{eqn:a}, \eqref{eqn:ba}, \eqref{eqn:b}, \eqref{eqn:bb} \\

\noindent $E_{\pm [i;j)}^{\mu}$, $\bE_{\pm [i;j)}^{\mu}$ \hfill \eqref{eqn:e plus}--\eqref{eqn:be minus} \\

\noindent $F_{\pm [i;j)}^{\mu}$, $\bF_{\pm [i;j)}^{\mu}$ \hfill \eqref{eqn:two 1}--\eqref{eqn:two 8} \\

\noindent $P_{\pm [i;j)}^{(\pm k)}$, $P^{(\pm k')}_{\pm l \bde, \hi}$ \hfill \eqref{eqn:p plus}, \eqref{eqn:p minus}, \eqref{eqn:def p} \\

\noindent $\tP_{\pm [i;j)}^{(\pm k)}$ \hfill \eqref{eqn:tilde P}, \eqref{eqn:tilde P equivalent} \\

\noindent $G_{\pm k}^\mu$, $\bG_{\pm k}^\mu$ \hfill \eqref{eqn:g}, \eqref{eqn:gg} \\

\noindent $Y_{\pm [i;j), \pm [i';j')}^\mu$, $\bY_{\pm [i;j), \pm [i';j')}^\mu$ \hfill \eqref{eqn:formula y}, \eqref{eqn:formula yy} \\

\noindent $Z_{\pm [i;j), \pm [i';j')}^\mu$,$\bZ_{\pm [i;j), \pm [i';j')}^\mu$ \hfill \eqref{eqn:formula z}, \eqref{eqn:formula zz} \\

\noindent connection between $E^\mu_{\pm [i;j)}$ and $\bE^\mu_{\pm [i;j)}$ \hfill \eqref{eqn:antipode} \\

\noindent $\Delta_\mu \left(E_{\pm [i;j)}^{\mu} \right)$, $\Delta_\mu \left(\bE_{\pm [i;j)}^{\mu} \right)$ \hfill \eqref{eqn:cop1}--\eqref{eqn:cop2 anti} \\

\noindent $\Delta_{\mu} \left(F_{\pm [i;j)}^{\mu} \right)$,$\Delta_{\mu} \left(\bF_{\pm [i;j)}^{\mu} \right)$ \hfill \eqref{eqn:cop f 1}--\eqref{eqn:cop f 8} \\

\noindent $\Delta_{\frac k{j-i}} \left(P_{\pm [i;j)}^{(\pm k)} \right)$, $\Delta_{\frac {k'}{nl}} \left(P^{(\pm k')}_{\pm l \bde, \hi}\right)$ \hfill \eqref{eqn:primitive 1}--\eqref{eqn:primitive 2}, \eqref{eqn:condition 1}--\eqref{eqn:condition 2} \\

\noindent $\alpha_{\pm [i;j)}(R^\pm)$ \hfill \eqref{eqn:alpha}, \eqref{eqn:beta} \\

\noindent $\alpha_{\pm [i';j')}(E_{\pm [i;j)}^{\mu})$, $\alpha_{\pm [i';j')}(E_{\pm [i;j)}^{\mu})$ \hfill \eqref{eqn:main pair 1}--\eqref{eqn:main pair 2} \\

\noindent $\alpha_{\pm [u;v)}\left(P^{(\pm k)}_{\pm [i;j)}\right)$, $\alpha_{\pm [u;u+nl)} \left(P^{(\pm k')}_{\pm l \bde, \hi}\right)$ \hfill \eqref{eqn:normalize simple}, \eqref{eqn:normalize imaginary} \\

\noindent connection of $\alpha_{\pm [i;j)}$ with the pairing \hfill \eqref{eqn:bonnie} \\

\noindent $\left \langle E_{[i;j)}^\mu, E_{-[i';j')}^\mu \right \rangle$ \hfill \eqref{eqn:pair1} \\

\noindent $\Big \langle P_{\pm [i;j)}^{(\pm k)}, E_{\mp [i';j')}^{(\mp k)} \Big \rangle$, $\Big \langle P_{\pm [i;j)}^{(\pm k)}, \bE_{\mp [i';j')}^{(\mp k)} \Big \rangle$ \hfill \eqref{eqn:main pair 3}, \eqref{eqn:main pair 5} \\

\noindent $\Big \langle P_{\pm l\bde, \hi}^{(\pm k)}, E_{\mp [i';j')}^{(\mp k)} \Big \rangle$, $\Big \langle P_{\pm l\bde, \hi}^{(\pm k)}, \bE_{\mp [i';j')}^{(\mp k)} \Big \rangle$ \hfill \eqref{eqn:main pair 4}, \eqref{eqn:main pair 6} \\

\noindent $\Big \langle P_{[i;j)}^{(k)}, P_{-[i';j')}^{(-k)} \Big \rangle$, $\Big \langle \tP_{[i;j)}^{(k)}, \tP_{-[i';j')}^{(-k)} \Big \rangle$ \hfill \eqref{eqn:pair simple}, \eqref{eqn:pair simple tilde} \\

\noindent $e_{\pm [i;j)}^{(\pm k)}$, $\be_{\pm [i;j)}^{(\pm k)}$ \hfill \eqref{eqn:assignment 2} \\

\noindent $f_{\pm [i;j)}^{(\pm k)}$, $\bf_{\pm [i;j)}^{(\pm k)}$ \hfill \eqref{eqn:stipulate} \\

\noindent $p_{\pm [i;j)}^{(\pm k)}$, $p^{(\pm k')}_{\pm l \bde, \hi}$ \hfill \eqref{eqn:assignment 1} \\

\noindent $\tp_{\pm [i;j)}^{(\pm k)}$ \hfill \eqref{eqn:tilde p}, \eqref{eqn:tilde p equivalent} \\

\noindent $o_{\pm [i;j)}^{(\pm k)}$, $o_{\pm l\bde, \hi}^{(\pm k')}$ \hfill \eqref{eqn:tp 1}--\eqref{eqn:tp 2} \\

\noindent $\to_{\pm [i;j)}^{(\pm k)}$ \hfill \eqref{eqn:tp 5}, \eqref{eqn:tp 6} \\

\noindent $\gamma_{ijk}^\pm(x)$ \hfill \eqref{eqn:gamma} \\

\noindent $\CC$, $\CC^\pm$ \hfill \eqref{eqn:generators}, \eqref{eqn:want} \\

\noindent $\DD$, $\DD^\pm$ \hfill \eqref{eqn:generators new}, \eqref{eqn:qwerty new} \\

\noindent $\rho$ \hfill \eqref{eqn:def rho}, \eqref{eqn:rho} \\

\noindent $\alpha_{i,j}^\mu$, $\overline{\alpha}_{i,j}^\mu$ \hfill Proposition \ref{prop:rho computations} \\

\noindent $\beta_{i,j}^\mu$, $\overline{\beta}_{i,j}^\mu$ \hfill Proposition \ref{prop:rho computations} \\

\noindent $\fS_{i,j}$ \hfill \eqref{eqn:basic 1} \\

\noindent $\fT_{i,j}$ \hfill \eqref{eqn:basic 1} \\

\noindent $\sigma^{\mu}_{i,j}$ \hfill \eqref{eqn:basic 2} \\

\noindent $\tau^{\mu}_{i,j}$ \hfill \eqref{eqn:basic 3} \\

\noindent $\mathfrak{s}$ \hfill \eqref{eqn:basic 5} \\

\noindent $\mathfrak{t}$ \hfill \eqref{eqn:basic 6} \\

\noindent The author declares that he has no conflict of interest.

\end{document}